\newcommand{\url}[1]{#1} 
\definecolor{gray}{rgb}{0.2,0.2,.2}
\DeclareMathOperator{\mhRe}{\mathrm{Re}}
\DeclareMathOperator{\mhIm}{\mathrm{Im}}
\newcommand{\iu}{\mathtt{i}}
\newcommand{\mhexp}[1]{{{\mathtt{e}}^{#1}}}
\newcommand{\fspace}[1]{{\mathsf{#1}}}
\newcommand{\fspaceL}{\fspace{L}}
\newcommand{\fspaceH}{\fspace{H}}
\newcommand{\fspaceC}{\fspace{C}}
\newcommand{\ol}[1]{{\overline{#1}}}
\newcommand{\Rset}{{\mathbb{R}}}
\newcommand{\Zset}{{\mathbb{Z}}}
\newcommand{\Cset}{{\mathbb{C}}}
\newcommand{\ocinterval}[2]{(#1,\,#2]}%
\newcommand{\cointerval}[2]{[#1,\,#2)}%
\newcommand{\oointerval}[2]{(#1,\,#2)}%
\newcommand{\ccinterval}[2]{[#1,\,#2]}%
\newcommand{\DO}[1]{{O\at{#1}}}
\newcommand{\Do}[1]{{o\at{#1}}}
\newcommand{\nDO}[1]{{O\nat{#1}}}
\newcommand{\nDo}[1]{{o\nat{#1}}}
\newcommand{\bDO}[1]{{O\bat{#1}}}
\newcommand{\odd}{{\rm \,odd}}
\newcommand{\even}{{\rm \,even}}
\newcommand{\tdots}{{...}}%
\newcommand{\supp}{{\rm supp}}
\newcommand{\loc}{{\rm loc}}
\newcommand{\floor}[1]{{\left\lfloor{#1}\right\rfloor}}
\newcommand{\pair}[2]{{\left({#1},\,{#2}\right)}}
\newcommand{\skp}[2]{{\left\langle{#1},\,{#2}\right\rangle}}
\newcommand{\bskp}[2]{{\big\langle{#1},\,{#2}\big\rangle}}
\newcommand{\npair}[2]{{({#1},\,{#2})}}
\newcommand{\bpair}[2]{{\big({#1},\,{#2}\big)}}
\newcommand{\Bpair}[2]{{\Big({#1},\,{#2}\Big)}}
\newcommand{\at}[1]{{\left({#1}\right)}}
\newcommand{\nat}[1]{(#1)}
\newcommand{\bat}[1]{{\big(#1\big)}}
\newcommand{\Bat}[1]{{\Big(#1\Big)}}
\newcommand{\ato}[1]{{\left[{#1}\right]}}
\newcommand{\nato}[1]{[#1]}
\newcommand{\bato}[1]{{\big[#1\big]}}
\newcommand{\Bato}[1]{{\Big[#1\Big]}}
\newcommand{\triple}[3]{{\left({#1},\,{#2},\,{#3}\right)}}
\newcommand{\btriple}[3]{{\big({#1},\,{#2},\,{#3}\big)}}
\newcommand{\ul}[1]{\underline{#1}}
\newcommand{\norm}[1]{\left\|{#1}\right\|}
\newcommand{\nnorm}[1]{\|{#1}\|}
\newcommand{\bnorm}[1]{\big\|{#1}\big\|}
\newcommand{\abs}[1]{\left|{#1}\right|}
\newcommand{\babs}[1]{\big|{#1}\big|}
\newcommand{\Babs}[1]{\Big|{#1}\Big|}
\newcommand{\nabs}[1]{|{#1}|}
\newcommand{\dint}[1]{\,\mathrm{d}#1}
\newcommand{\al}{{\alpha}}
\newcommand{\be}{{\beta}}
\newcommand{\eps}{{\varepsilon}}
\newcommand{\ka}{{\kappa}}
\newcommand{\la}{{\lambda}}
\newcommand{\om}{{\omega}}
\newcommand{\calC}{\mathcal{C}}
\newcommand{\calD}{\mathcal{D}}
\newcommand{\calE}{\mathcal{E}}
\newcommand{\calF}{\mathcal{F}}
\newcommand{\calK}{\mathcal{K}}
\newcommand{\calL}{\mathcal{L}}
\newcommand{\calM}{\mathcal{M}}
\newcommand{\calR}{\mathcal{R}}
\newcommand{\calS}{\mathcal{S}}
\theoremstyle{plain}
\newtheorem{theorem}             {Theorem}[]
\newtheorem{corollary}  [theorem]{Corollary}
\newtheorem{lemma}      [theorem]{Lemma}
\newtheorem{result}{Main result}
\newtheorem{assumption} [theorem]{Assumption}
\numberwithin{figure}{section}
\numberwithin{table}{section}
\numberwithin{equation}{section}
\begin{document}
%
%
\title{\vspace{-0.025\textheight}Stability of high-energy solitary waves \\ in Fermi-Pasta-Ulam-Tsingou chains}
\date{\today}
\author{%
Michael Herrmann%
\footnote{Technische Universit\"at Braunschweig, Germany, {\tt michael.herrmann@tu-braunschweig.de}  }
\and
Karsten Matthies%
\footnote{University of Bath, United Kingdom, {\tt k.matthies@bath.ac.uk}}
} %
\maketitle
\vspace{-0.025\textheight}%
%
%
%
%
%
\begin{abstract}
The dynamical stability of solitary lattice waves in non-integrable FPUT chains is a longstanding open problem and has  been solved so far only in a certain asymptotic regime, namely by Friesecke and Pego for the KdV limit, in which the waves propagate with near sonic speed, have large wave length, and carry low energy. In this paper we derive a similar result in a complemen\-tary asymptotic regime related to fast and strongly localized waves with high energy. In particular, we show that the spectrum of the linearized FPUT operator contains asymptotically no unstable eigenvalues except for the neutral ones that stem from the shift symmetry and the spatial discreteness. This ensures that high-energy waves are linearly stable in some orbital sense, and the corresponding nonlinear stability is granted by the general, non-asymptotic part of the seminal Friesecke-Pego result   and the extension by Mizumachi.
\par
Our analytical work splits into two principal parts. First we refine
two-scale \mbox{techniques} that relate high-energy wave to a nonlinear asymptotic shape ODE and provide \mbox{accurate} approxi\-mation formulas. In this way we establish the existence, local uniqueness, smooth \mbox{parameter} depen\-dence, and exponential localization of fast lattice waves for a wide class of interaction \mbox{potentials} with algebraic singularity. Afterwards we study the crucial eigenvalue problem in exponen\-tially weighted spaces, so that there is   no   unstable essential spectrum. Our key \mbox{argument} is that all proper eigenfunctions can asymptotically be linked to the unique bounded and \mbox{normalized} solution of the linearized shape ODE, and this finally enables us to disprove the existence of unstable eigenfunctions in the symplectic complement of the neutral ones.
\end{abstract}
%
%
%
\quad\newline\noindent%
\begin{minipage}[t]{0.15\textwidth}%
 Keywords:
\end{minipage}%
\begin{minipage}[t]{0.8\textwidth}%
\emph{Hamiltonian lattices waves}, \emph{high-energy limit of FPU or FPUT chains,}\\
\emph{nonlinear orbital stability}, \emph{asymptotic analysis},
\end{minipage}%
\medskip
\newline\noindent
\begin{minipage}[t]{0.15\textwidth}%
MSC (2010): %
\end{minipage}%
\begin{minipage}[t]{0.8\textwidth}%
37K60,  
37K40,  
70H14, 	
74H10  	
\end{minipage}%
%
%
%
%
\setcounter{tocdepth}{3}
\setcounter{secnumdepth}{3}{\scriptsize{\tableofcontents}}
%
%
%
%
\section{Introduction}
\label{sect:Intro}
%
%

This paper deals with the long-time dynamics in a prototypical example of
Hamiltonian mass-spring systems, namely the nonlinear atomic chain introduced by
Fermi, Pasta, Ulam, and Tsingou in \cite{FPU55}. A key question in this context is whether coherent states persist or to which extent energy is transferred to higher modes leading to the process of thermalization. Various types of coherent motion exist for Fermi-Pasta-Ulam-Tsingou (FPUT) chains (exact breathers, different types of traveling waves) but it is in general not known whether solutions that start near such coherent states remain coherent for all times. Pioneering work for solitary waves was done by Friesecke and Pego in a series of papers \cite{FP99,FP02,FP04a,FP04b}. They set up a general framework for nonlinear orbital stability and established  the crucial linear stability for small-amplitude, long-wavelength traveling waves with speed just above the speed of sound, for which the Korteweg-de Vries (KdV) equation is the appropriate scaling limit. There are many extensions of their results as detailed below, but these still deal with situations near the KdV limit or the special situation of the Toda lattice, which is completely integrable. In this paper we study high-energy solitary waves, which are localised on an essentially single moving particle close to a hard-sphere collision, for a wide class of inter-action potentials far away from any obvious continuum approximation. We will provide orbital stability results for open sets of initial data near a universal family of high-energy traveling waves.   A crucial part is to establish linear stability for singular nonlocal operators in the limit of high velocities.

\par

The FPUT   chain describes an infinite chain of masses which
are separated by nonlinear springs and interact according to Newton's law of motion via
\begin{align}
\label{Eqn:FPU0}
\ddot{q}_j=\Phi^\prime\bat{q_{j+1}-q_j}-\Phi^\prime\bat{q_{j}-q_{j-1}}\,,\qquad j \in \Zset\,.
 \end{align}
Here, $q_j\at{t}$ denotes the position of particle $j$ at time $t$ and $\Phi$ describes the potential for the nearest neighbor forces. Using atomic distances $r_j$ and velocities $v_j$ with
\begin{align*}
r_j\at{t}:=q_{j+1}\at{t}-q_{j}\at{t}\,,\qquad v_j\at{t}:=\dot{q}_j\at{t}
\end{align*}
we obtain the first-order formulation
\begin{align}
\label{Eqn:FPU}
\dot{r}_j = v_{j+1}-v_j\,,\qquad
\dot{v}_j = \Phi^
\prime\at{r_j}-\Phi^
\prime\at{r_{j-1}}\,.
\end{align}
In this paper we study coherent behavior in the form of traveling waves, which satisfy the ansatz
\begin{align}
\label{Eqn:TWAnsatz}
r_j\at{t}=R_\omega\bat{x+\tfrac12}\,,\qquad  v_j\at{t}=V_\omega\at{x}\,,
\end{align}
where the profiles functions $R_\om$ and $V_\om$ have to be determined in dependence of the wave speed $\om>0$ and as functions in $x=j-\om t$, the space variable in the co-moving frame. Notice that the additional shift in the distance component has been chosen for convenience. It guarantees the existence of even wave profiles and allows us to write the   non-local   traveling wave equation as
\begin{align}
\label{Eqn:TWNonlIdentities.V1}
\omega \partial_x V_\omega+\nabla \Phi^\prime\at{R_\omega}=0\,,
\qquad
\omega \partial_x R_\omega+\nabla V_\omega=0\,,
\end{align}
where all spatial differences are expressed in terms of the centered nabla operator
\begin{align}
\label{Eqn:DefNabla}
\at{\nabla U}\at{x}= U\at{x+\tfrac12}-U\at{x-\tfrac12}\,.
\end{align}
Moreover, eliminating the velocities in \eqref{Eqn:TWNonlIdentities.V1} we find
the second-order advance-delay-differential equation
\begin{align}
\label{Eqn:NonlADDE2Order}
\omega^2R_\omega^{\prime\prime}=\Delta\Phi^\prime\at{R_\omega}
\end{align}
with discrete Laplacian
\begin{align}
\label{Eqn:DefLapl}
\at{\Delta U}\at{x}= U\at{x-1}+U\at{x+1}-2U\at{x}\,.
\end{align}
The existence of different types of waves (periodic, homoclinic, heteroclinic) has been proven for many potentials by several authors and using different methods. We refer to \cite{Her10,HM15}  for an overview but emphasize that rigorous results concerning the uniqueness and the dynamical stability of FPUT-type lattice waves exist only for the few integrable examples and in the universal KdV regime, see \cite{Tes01} for an overview concerning the Toda chain, and \cite{DHM06} for both the harmonic case and the hard-sphere limit. The KdV limit has been introduced formally in \cite{ZK65} and studied rigorously in \cite{FP99,FP02,FP04a,FP04b}. The stability of a single solitary wave with respect to the natural energy norms is proven in \cite{Miz09} and the linear stability part has been simplified in \cite{HW13b}. The analysis has been extended to interacting KdV waves in \cite{HW08,HW09} and $N$-solitary waves are studied in \cite{Miz11,Miz13}. Further approaches to the stability problem are given \cite{HW13a,KP17} and the orbital stability of large amplitude waves is shown in \cite{MizPe08,BHW12} for the Toda lattice. Modulation equations and the related concept of long- but finite-time stability of KdV-type solutions are discussed in \cite{SW00} for FPUT chains, and in \cite{CCPG12,GMWZ14} for other atomistic systems. Finally, more general existence proofs for KdV waves   in lattices   can be found in \cite{FM14,HML15,HW17}.
\par
Notice also that the mathematical theory of traveling waves in spatially continuous Hamiltonian systems is much more developed. First, traveling waves in such systems are determined by ODEs instead of advance-delay-differential equations and hence often known almost explicitly. Secondly, the stability investigation benefits on the linear level from the rather extensive knowledge about linear differential operators (with non-constant coefficients) and on the nonlinear side from the shift symmetry, which gives rise to a further conserved quantity according to Noether's theorem and provides easy control concerning accelerations of a given wave. Consequently, the orbital stability of solitary waves is now well-understood for a huge class of dispersive PDEs, see for instance \cite{Ang09} for an overview.
%
%
\subsection{Setting of the problem and main results}
%
%
In this paper we suppose that the interaction potential $\Phi$ possesses a repulsive algebraic singularity as illustrated in the left panel of Figure \ref{Fig:Shock} and rely on the following standing   assumptions.
\begin{assumption}[interaction potential]
\label{Ass.Pot}
The potential $\Phi$ is   a strictly convex $\fspaceC^3$-function on the semiopen interval $\ocinterval{-1}{0}$. Moreover, it   admits a global minimum at $r=0$ with
\begin{align*}
\Phi\at{0}=\Phi^\prime\at{0}=0\,,\qquad \Phi^{\prime\prime}\at{0}>0\,,
\end{align*}
and posses at $r=-1$ a normalized algebraic singularity of order $m$. The latter means
\begin{align*}
\abs{m+2+\Phi^{\prime\prime\prime}\at{r}\at{1+r}^{m+3}}\leq C \at{1+r}^k
\end{align*}
for all $r\in\oointerval{-1}{-1/4}$ and hence
\begin{align*}
\abs{1+\at{m+1}\Phi^\prime\at{r}\at{1+r}^{m+1}}+
\abs{-1+\Phi^{\prime\prime}\at{r}\at{1+r}^{m+2}}\leq C \at{1+r}^k
\end{align*}
for all $r\in\oointerval{-1}{-1/4}$ and for some real-valued exponents $m>0$ and $k>0$ and some constant $C$.  For technical reasons we also assume  $m\neq k$ as well as $\min\{k,\,m\}>1$.
\end{assumption}
Prototypical examples of Assumption~\ref{Ass.Pot} are
\begin{align}
\label{Eqn:ExPot1}
\Phi\at{r}=\frac{1}{m\at{m+1}}\at{\frac{1}{\at{1+r}^m}+mr-1}
\end{align}
with $k=m+1$ and free parameter $m>1$, as well as the Lennard-Jones-type potential
\begin{align}
\label{Eqn:ExPot2}
\Phi\at{r}=\frac{1}{2n\at{2n+1}}\at{\frac{1}{\at{1+r}^n}-1}^2
\end{align}
with free parameter $n>1/2$ corresponding to $m=2k=2n$, where the classical choice is $n=6$. Further examples and a more detailed discussion concerning  $m$ and $k$ is given below in \S\ref{sect:discussion}, and we mention that $\Phi$ can  be extended to a smooth and strictly convex  function on $\oointerval{-1}{\infty}$.
\par
We further restrict our considerations to waves that propagate with high speed $\om\gg1$ and must hence carry a large amount of energy
\begin{align}
\label{Eqn:TotalEnergy}
h_\omega :=
\int\limits_\Rset\tfrac12V_\omega\at{x}^2+\Phi\bat{R_\omega\at{x}}\dint{x}.
\end{align}
In this asymptotic regime, the wave profiles $R_\om$ and $V_\om$ from \eqref{Eqn:TWNonlIdentities.V1} can be approximated by the piecewise constant functions
\begin{align}
\label{Eqn:LimitProfiles}
R_\infty\at{x}=\min\big\{0, -1+\abs{x}\big\}\,,\qquad \hat{V}_\infty\at{x} = \chi\at{x}:=\left\{
\begin{array}{lcl} 1&&\text{for $\abs{x}<\frac12$},\\
0&&\text{else},
\end{array}
\right.
\end{align}
which are -- up to elementary transformations -- naturally related to the traveling waves in the hard-sphere model of elastically colliding particles, see Figures \ref{Fig:WavesDist} and \ref{Fig:WavesVel} for numerical examples and Figure \ref{Fig:Shock} for a schematic representation of the limit dynamics. The high-energy limit of traveling waves in singular potentials has been studied in \cite{FM02} within a variational setting and in \cite{Tre04} by fixed-point arguments. Moreover, the authors of the present paper refined the underlying asymptotic analysis  in \cite{HM15} and showed that the fine structure of the wave profiles is determined by an asymptotic shape ODE, which provides more accurate approximation formulas for the wave profiles as well as the leading order terms in the scaling laws for all other quantities of interest. Related results for non-singular but rapidly growing potentials are discussed in \cite{Her10,Her17}.
\par
However, \cite{HM15} does not guarantee that the exact wave profiles depend smoothly on the wave speed $\om$ but, roughly speaking, only that the leading order terms in the expansion around $\om=\infty$ do so. Similarly, the asymptotic estimates control the approximation error $R_\om-R_\infty$ but not the  properties of $\partial_\om R_\om$. Since both issues are essential for our spectral analysis, we  reinvestigate in \S\ref{sect:NonlWaves} the high-energy limit on the level of the nonlinear profile functions and present, as explained below, a novel and more robust asymptotic approach to the shape ODE, which also provides an analogue to the local uniqueness results from \cite{Tre04,HM17}. Our main findings are formulated in Theorem \ref{Thm:ExistenceNonlWaves}, Theorem \ref{Thm:SmoothnessNonlWaves}, Lemma \ref{Lem:ApproxVelProfile}, and Lemma \ref{Lem:Regularity}, and can informally be summarized as follows.
\begin{result}[family of high-energy waves parameterized by speed $\om$] \label{res:Ex}
Under Assumption \ref{Ass.Pot} there exists a family of solitary solutions
$(R_\om,V_\om)$  to the traveling wave equations \eqref{Eqn:TWNonlIdentities.V1} such that the following properties are satisfied for all sufficiently large $\om$:
\begin{enumerate}
\item
\emph{\ul{Qualitative behavior for fixed $\om$}}. The profiles $R_\om$ and $V_\om$ are even and exponentially decaying. Moreover, $R_\om$ takes values in $\ocinterval{-1}{0}$ and $V_\om$ is non-negative.
\item
\emph{\ul{Local uniqueness}}. The solution branch $\om\mapsto$ $\pair{R_\om}{V_\om}$  is isolated in natural function spaces.
\item
\emph{\ul{Smooth parameter dependence}}. The map $\om\mapsto \pair{R_\om}{V_\om}$ yields a smooth curve in some Sobolev space with exponentially growing weights for $x\to\pm\infty$.
\item
\emph{\ul{Accurate approximations}}. The profiles $\pair{R_\om}{V_\om}$ as well as $\pair{\partial_\om R_\om}{\partial_\om V_\om}$ and
$\pair{\partial_x R_\om}{\partial_x V_\om}$  can be approximated  by the unique solution to an asymptotic ODE initial value problem and the solution space of   the corresponding linearized equation.
\end{enumerate}
\begin{figure}[ht!] %
\centering{ %
\includegraphics[width=0.95\textwidth]{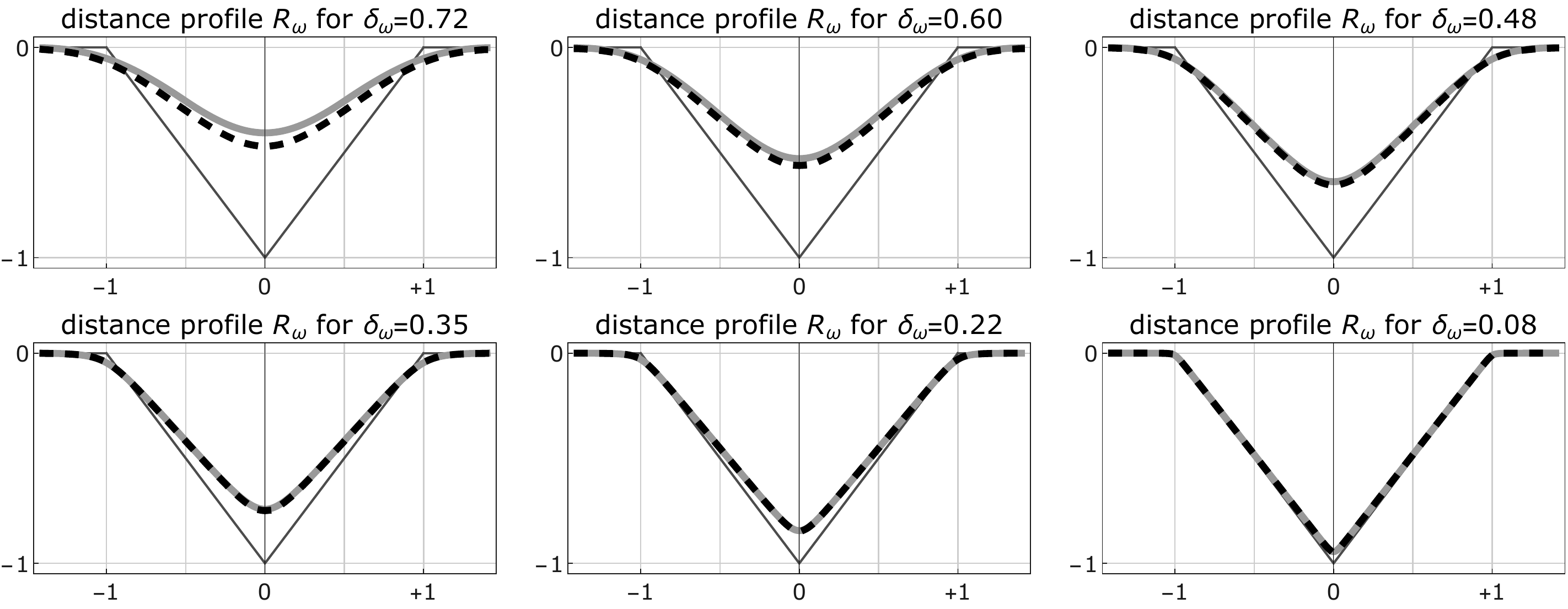}
} %
\caption{ %
Distance profiles in the high energy limit $\delta_\omega=\omega^{-2m}\to0$ for the Lennard-Jones potential \eqref{Eqn:ExPot2} with $n=2$ corresponding to $m=4$ and $k=2$ in Assumption~\ref{Ass.Pot}: Numerical solutions (black, dashed) to the nonlocal equations \eqref{Eqn:TWNonlIdentities.V2} along with the ODE approximations from Lemma~\ref{Lem:DefBreveR} (gray, solid) and the tent-shaped limit   $R_\infty$ from \eqref{Eqn:LimitProfiles}   (dark gray, thin line). All profile functions are plotted against $x$, the unscaled space variable in the comoving frame. The precise approximation results are formulated in Theorem~\ref{Thm:ExistenceNonlWaves} and Lemma~\ref{Lem:ApproxVelProfile}, where $x\in \ccinterval{-\tfrac12}{+\tfrac12}$ corresponds to $\tilde{x}\in I_\omega$, and the approximation error is plotted in Figure~\ref{Fig:WavesErr}.
} %
\label{Fig:WavesDist} %
\end{figure} %
\begin{figure}[ht!] %
\centering{ %
\includegraphics[width=0.95\textwidth]{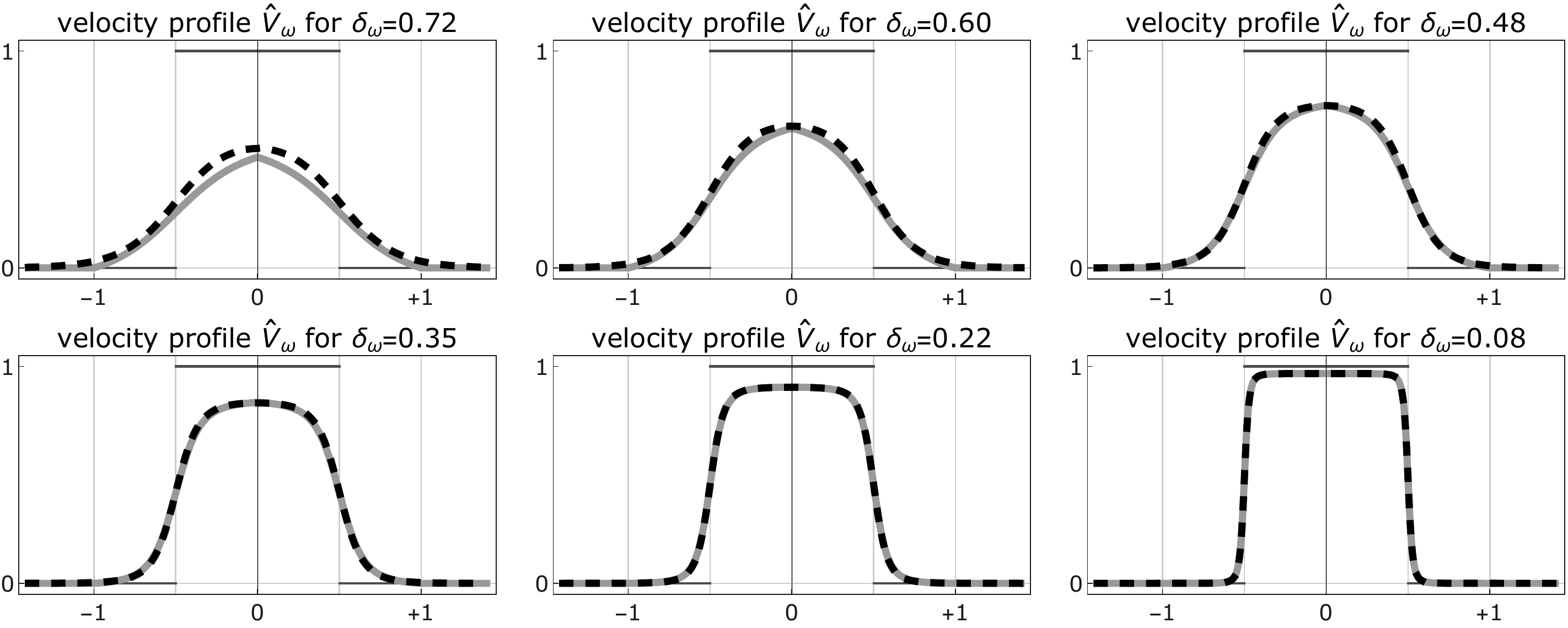}
} %
\caption{ %
Normalized velocity profiles $\hat{V}_\omega=\om^{-1} V_\omega$ corresponding to Figure~\ref{Fig:WavesDist}, which approach the indicator   function  $\hat{V}_\infty$ of the interval $\ccinterval{-1/2}{+1/2}$, see \eqref{Eqn:LimitProfiles}.   The numerical scheme to compute the nonlinear waves relies on the maximization of the potential energy under the constraint $\nnorm{\hat{V}}_2=1-\hat{\delta}$,   is easy to implement, and provides traveling waves $\triple{\omega}{V}{R}$ depending on the free parameter $\hat{\delta}$, see \cite{Her10,HM15} for the details. Our existence result \S\ref{sect:NonlWaves} is not based on constrained optimization but on explicit approximation formulas and nonlocal fixed point arguments, and yields waves parameterized by $\om$. In the high energy limit, both approaches are equivalent since we have $\omega\sim\hat{\delta}^{-m}$ and hence $\hat{\delta}\sim\delta$.
} %
\label{Fig:WavesVel} %
\end{figure}

\begin{figure}
\centering{ %
\includegraphics[width=0.275\textwidth]{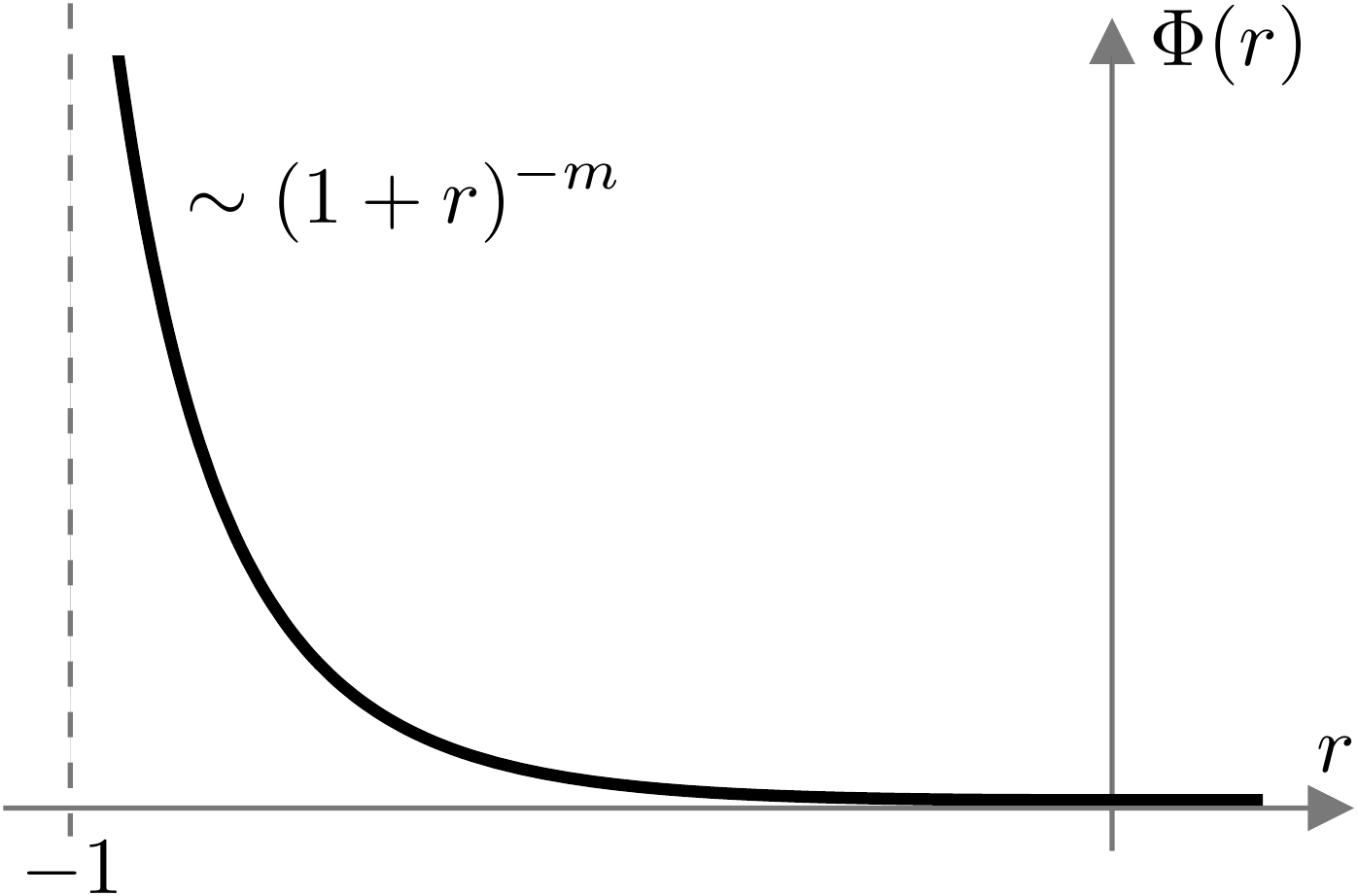} %
\hspace{.15\textwidth} %
\includegraphics[width=0.35\textwidth]{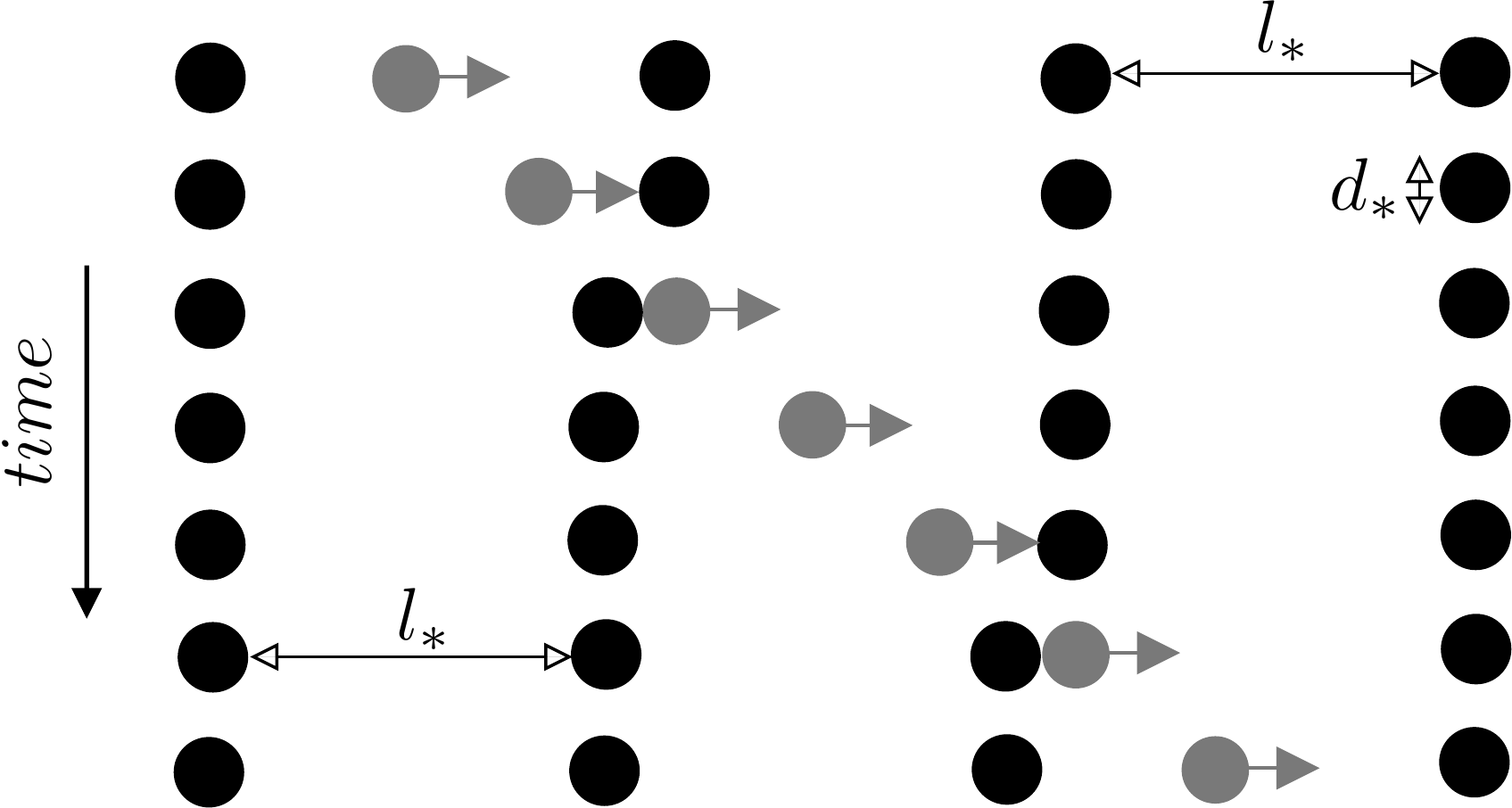} %
} %
\caption{ %
\emph{Left panel}. Potential $\Phi$ as in Assumption \ref{Ass.Pot}. \emph{Right panel}. Cartoon of a solitary wave in the hard-sphere model, which propagates by elastic collisions of a single active particle (gray) with a resting background (black). The corresponding distance and velocity profiles are affine transformations of the functions $R_\infty$ and $\hat{V}_\infty$ from \eqref{Eqn:LimitProfiles} and hence naturally related to the high-energy waves from Figures \ref{Fig:WavesDist} and \ref{Fig:WavesVel}.}
\label{Fig:Shock} %
\end{figure}
\end{result}
  The asymptotic analysis of the stability of high-energy waves is presented in \S\ref{sect:Stability} and
follows the general framework proposed by Friesecke and Pego   and uses the extension by Mizumachi to the energy space.   In particular, we study
the stability of waves in exponentially weighted spaces   as well as in the energy space   and in an orbital sense. The latter means that certain initial perturbations might shift or accelerate the wave, and this requires long-time adjustments of  the phase variable and the wave speed.   All other perturbations, however, remain small, fall behind the wave,  and decay exponentially in weighted norms centered around the moving bulk of the wave,    whereas solutions remain close in the energy norm.   We formulate our main results as follows and refer to \S\ref{sect:discussion} for more details concerning the proof strategy and the key arguments in the asymptotic analysis.
\begin{result}[nonlinear orbital stability in the sense of Friesecke and Pego]\label{res:Stab}
For any sufficiently large speed $\om_0$, the corresponding high-energy wave from Main Result \ref{res:Ex} is stable in the following sense,  where
\begin{align*}
U_{\om_0}\at{x}=\Bpair{R_{\om_0}\at{x+\tfrac12}}{V_{\om_0}\at{x}}
\end{align*}
denotes the wave profiles and where the positive constants $\eta_0$, $b_0$, $C_0$ depend on both $\Phi$ and $\om_0$:
Given FPUT initial data with
\begin{align*}
\bnorm{u_\cdot \at{0} - U_{\om_0}(. -\tau_0)}_{\ell^2\times\ell^2} \leq \sqrt\eta\,,\qquad \qquad  \bnorm{\mhexp{a\at{\cdot-\tau_0}} \bat{u_\cdot \at{0} - U_{\om_0}\at{\cdot-\tau_0} }}_{\ell^2\times\ell^2} \leq \eta
\end{align*}
for some $\tau_0\in\Rset$ and $\eta\in\oointerval{0}{\eta_0}$, there exist a unique wave speed $\om_{\infty}$ and a unique shift $\tau_\infty$ with
\begin{align*}
\abs{\om_0-\om_\infty}
+
\abs{\tau_0-\tau_\infty}+\bnorm{U_{\om_0}-U_{\om_\infty}}_{\fspaceL^2\times\fspaceL^2}\leq C_0\eta
\end{align*}
such that the solution $u=\pair{r}{v}$ to the FPUT chain \eqref{Eqn:FPU} satisfies the $\ell^2$-stability estimate
\begin{align*}
\bnorm{u_{\cdot}\at{t}-U_{\om_\infty}\at{\cdot-\om_\infty t-\tau_\infty}}_{\ell^2\times\ell^2}\leq C_0\sqrt\eta
\end{align*}
as well as the weighted decay  estimate
\begin{align*}
\bnorm{\mhexp{a\at{\cdot-\om_\infty t-\tau_\infty}}\bat{u_{\cdot}\at{t}-U_{\om_\infty}\at{\cdot-\om_\infty t-\tau_\infty}}}_{\ell^2\times\ell^2}\leq C_0\eta\mhexp{-b_0 t}
\end{align*}
for all $t\geq0$. Here, $\cdot$ stands for the integer variable $j\in\Zset$.
\end{result}

The next theorem gives stability for initial data in a larger space (the energy space $\ell^2\times\ell^2$) in the sense that solutions stay close to the set of traveling waves, but without   a decay estimate in the weighted norm and with less information about the shift $\tau(t)$, which in Main Result~\ref{res:Stab}  has the more explicit form $\tau(t)- \om_\infty t  \to \tau_\infty$ as $t \to \infty$.   In particular, the improved stability result also covers perturbation in form of small and near-sonic solitary waves, whose
exponential decay rate can be arbitrarily small.
\begin{result}[nonlinear orbital stability in the sense of Mizumachi]\label{res:StabEner}
For any sufficiently large speed $\om_0$, the corresponding high-energy wave from Main Result \ref{res:Ex} is stable within the energy space. More precisely, for every $\eta >0$ there exists a constant $\mu>0$ such that for given initial data with
\begin{align*}
\bnorm{u_\cdot \at{0} - U_{\om_0}(. -\tau_0)}_{\ell^2\times\ell^2} \leq \mu
\end{align*}
and  $\tau_0\in\Rset$, there exist constants $\om_\infty>0$, $\sigma \in (1, \om_\infty)$ and  $\fspaceC^1$-functions $\tau(t)$ and $\om(t)$
such that
\begin{align*}
\sup_{t \in \Rset}&\bnorm{u_{\cdot}\at{t}-U_{\om(t))}\at{\cdot-\tau(t)}}_{\ell^2\times\ell^2}\leq \eta\,,\qquad
&\lim_{t \to \infty} \bnorm{u_{\cdot}\at{t}-U_{\om_\infty}\at{\cdot-\tau(t)}}_{\ell^2(j \geq \sigma t) \times\ell^2(j \geq \sigma t)}=0
\end{align*}
and
\begin{align*}
\sup_{t \in \Rset} \left( |\om(t)-\omega_0| +|\dot{\tau}(t)- \omega_0|\right)  \leq C_0\mu\,,\qquad
\lim_{t \to \infty} \om(t)= \om_\infty\,, \qquad \lim_{t \to \infty} \dot{x}(t)= \om_\infty
\end{align*}
holds for the solution $u$ to \eqref{Eqn:FPU} and some constant $C_0$.
\end{result}

%
%
%
\subsection{Discussion and asymptotic proof strategy}
\label{sect:discussion}
%
%
The key observation for our asymptotic analysis of solitary waves with high energy is that the advance-delay-differential equations \eqref{Eqn:TWNonlIdentities.V1} can be replaced by an asymptotic ODE problem which dictates the fine structure of the distance profile near $x=0$ (\lq tip of the tent') and provides in turn all relevant properties of lattice waves in the limit $\om\to\infty$.   This idea will be explained in \S\ref{sect:Heuristics} on a heuristic level and has already been exploited \cite{HM15}. However, in \S\ref{sect:NonlWaves} we present a modified approximation strategy which is likewise based on the asymptotic shape ODE but has the following advantages.
\begin{enumerate}
\item
The high-energy waves in \cite{HM15} were constructed as solutions to a constrained optimization problem depending on a small parameter $\hat{\delta}$, and the wave speed $\om$ could not be described a priori but came out implicitly as a Lagrange multiplier. In particular, in the variational setting we were not able to prove that the relevant wave quantities depend smoothly on the wave speed. In the modified approach, however, $\om$ is a free parameter and we do not rely on optimization techniques. More precisely, we first identify in \S\ref{sect:AsympODE}, \S\ref{sect:Scaling}, and \S\ref{sect:AppForm} for any sufficiently large $\om$ an approximate solution and show afterwards in \S\ref{sect:AuxProb} and \S\ref{sect:ExAndUni} the existence of a unique exact   high-energy wave in a small neighborhood within a certain function space. In this way we do not only establish the smooth parameter dependence, see \S\ref{sect:Smoothness} and \S\ref{sect:Tails}, but can also provide accurate asymptotic formulas for the derivatives of the wave profiles with respect to $\om$. The latter are naturally related to the neutral Jordan modes in the linearized FPUT equation and feature prominently in the spectral analysis within \S\ref{sect:Stability}.
\par
We also mention that the numerical illustrations in Figure~\ref{Fig:WavesDist} and Figure~\ref{Fig:WavesVel} are computed by a straight forward implementation of the  constrained optimization problem. We refer to \cite{Her10}  and the caption of Figure \ref{Fig:WavesVel} for the details and recall that the uniqueness results in \S\ref{sect:ExAndUni} and \cite{HM17} guarantee that both the variational and the non-variational approach provide for large $\om$ the same family of high energy waves.
\item
In \cite{HM15}, the derivation of the shape ODE for the fine structure of the distance profile near $x{=}0$ (`tip scaling') was followed by asymptotic arguments to describe the jump like behavior of the velocity profiles near $x{=}\pm1/2$ (`transition scaling') and local properties of the distance profile near $x{=}{\pm}1$ (`base scaling'). This strategy   involved   explicit matching conditions and required several lengthy computations. The revised approach is more robust, and replaces the study of the advance-delay-differential equations \eqref{Eqn:TWNonlIdentities.V1} by the analysis of  equivalent integral equations.   More precisely,  integrating the traveling wave equations \eqref{Eqn:TWNonlIdentities.V1} we get
\begin{align}
\label{Eqn:TWNonlIdentities.V2}
\omega V_\omega+\chi\ast\Phi^\prime\at{R_\omega}=0\,,\qquad
\omega R_\omega+\chi\ast{V_\omega}=0,
\end{align}
where the convolution is with the  indicator function $\chi$ from \eqref{Eqn:LimitProfiles}. This system can also be written as
\begin{align}
\label{Eqn:NonlFPDist}
R_\omega=\chi\ast\chi \ast \bat{\omega^{-2}\Phi^\prime\at{R_\omega}},\qquad V_\om=-\chi\ast\bat{\om^{-1}\Phi^\prime\at{R_\omega}}
\end{align}
and turns out to be very useful in the asymptotic analysis. In particular, thanks to \eqref{Eqn:NonlFPDist} we do not need to introduce the transition and the base scaling anymore and satisfy the hidden matching conditions implicitly, see the discussion in \S\ref{sect:Heuristics}.
\item
In the present paper we cover a broader class of potentials   and distinguishing between the parameters $m$ and $k$ we gain   a better understanding of the different contributions to the error terms. In Assumption~\ref{Ass.Pot} we require that both $m>1$ and $k>1$ because otherwise some of the error estimates  derived below would no longer be small but could attain values of order $\DO{1}$ or larger. It is not clear, at least to the authors, whether high energy waves are unstable for $0<m\leq 1$ or $0<k\leq1$, or whether our approximation and stability results are still valid but necessitate a more detailed analysis in the proofs. To ease the notation we also require that $k\neq m$. This seems to be surprising at a first glance but gets more plausible if we study the class
\begin{align*}
\Phi\at{r}=\frac{1}{m\at{m+1}\at{1+r}^{m}}-\frac{1}{\at{m-k}\at{m+1}\at{1+r}^{m-k}}+\frac{k}{m\at{m-k}\at{m+1}}\,,\qquad m\neq k\,,
\end{align*}
which includes both \eqref{Eqn:ExPot1} and \eqref{Eqn:ExPot2} as special cases. In fact, for those potentials the next-to-leading term in the expansion for $r\to-1$ is singular and regular for $k<m$ and $k>m$, respectively, but not defined for $k=m$. The criticality of $k=m$ shows up several times in our asymptotic analysis and manifests in algebraic error estimates involving the exponent $l=\min\{k,\,m\}$. We emphasize that all asymptotic formulas can also be established for $k=m$ but entail logarithmic corrections in the error bounds, see the discussion after Lemma~\ref{Lem:DefBreveR} for more details.
\end{enumerate}
The non-asymptotic part of the    Friesecke-Pego theory   is reviewed in \S\ref{sect:FPCrit} and guarantees that the nonlinear orbital stability in the sense of Main Result \ref{res:Stab} depends only on the spectral properties of the linearized FPU chain. More precisely, inserting the formal ansatz
\begin{align*}
r_j\at{t}=R_\omega\at{j+\tfrac12-\om t}+S\pair{t}{j+\tfrac12-\om t }\,,\qquad
v_j\at{t}=V_\omega\at{j-\om t}+W\pair{t}{j-\om t }.
\end{align*}
into \eqref{Eqn:FPU}, we obtain after linearization with respect to the perturbations $S$ and $W$ the  dynamical equation
\begin{align*}
\partial _t \pair{S}{W}=\om \calL_\om\pair{S}{W}\,,
\end{align*}
where we scaled the right hand side by $\omega$ for convenience. The operator $\calL_\om$ acts on pairs $\pair{S}{W}$ of spatial functions via
\begin{align}
\label{Eqn:LinFPUOp}
\calL_\omega\pair{S}{W}
=
\Bpair{
\partial_x S+\om^{-1}\nabla W}{\partial_x W+\om^{-1}\nabla \Phi^{\prime\prime}\at{R_\omega}S}
\end{align}
and depends explicitly   on   the distance profile $R_\om$ of the underlying traveling wave. It has been observed in \cite{FP02,FP04a} that the spectrum of the operator $\calL_\om$ is $2\pi\iu$-periodic due to the spatial discreteness of \eqref{Eqn:FPU0} and that the existence of a one-parameter family of solitary waves dictates that the kernel is a two-dimensional Jordan block, where   the   proper and cyclic eigenfunctions represent the neutral modes related to shifts and accelerations of the wave, respectively. Moreover, the essential part of this spectrum can be computed explicitly and has strictly negative real part in exponentially weighted spaces that penalizes perturbations in front of the wave. These properties hold under fairly mild assumption on the interaction potential $\Phi$ and are not restricted to any asymptotic regime, see the discussion in \S\ref{sect:expspaces}.
\par
The main analytical task is to show that the operator $\calL_\omega$ does not admit
any unstable point spectrum in the symplectically orthogonal complement of the neutral modes, where the underlying symplectic product is provided by the Hamiltonian structure of \eqref{Eqn:FPU0}, see again \S\ref{sect:FPCrit} for the details. This crucial spectral property has been   established   in \cite{FP04b} for near sonic waves by rigorously relating the operator $\calL_\omega$  to the well-studied properties of solitary waves in the KdV equation and the corresponding linear differential operator.   In this paper we study the case of large $\omega$ for a wide class of interaction potentials but emphasize that the stability problem remains open for moderate wave speeds since there is no spectral theory available for linear advance-delay-differential operators with non-constant coefficients.
\par
Our proof strategy differs essentially from the asymptotic approach in \cite{FP04b} because in our case $\calL_\om$ is not related to the long-wave length limit and can hence no longer be regarded as a perturbation of the linear differential operator from the KdV theory. Instead we show $\at{i}$ that any appropriately rescaled eigenfunction solves locally the linearized shape ODE from \S\ref{sect:AsympODE} up to small error terms, and $\at{ii}$ that the global behavior of any eigenfunction is completely determined by its local properties, the desired spatial decay, and certain asymptotic matching conditions. The underlying ideas are explained in \S\ref{sect:StabilityEstimates} on   a heuristic ODE level while the corresponding rigorous results in \S\ref{sect:StabilityODEs} and \S\ref{sect:PointSpectrum}
concern an equivalent convolution equation with implicitly encoded matching conditions.
Our main results on    orbital stability are formulated in Theorem  \ref{Thm:NoOtherEigenvalues}    and  Corollary \ref{Cor:NonlStability} and
disprove   for all sufficiently large $\om$
the existence of unstable eigenfunctions  in the aforementioned symplectic complement   within suitable exponentially weighted spaces.  This implies the asymptotic stability in exponentially weighted spaces    but also in the energy space due to the additional arguments by Mizumachi.
%
%
\section{Family of nonlinear high-energy waves}
\label{sect:NonlWaves} %
%
%
In this section we prove the existence of a smooth family of nonlinear high-energy waves and derive explicit approximation formulas for the wave profiles as well as their derivatives with respect to the wave speed. For this analysis it is convenient to replace the large wave speed $\om$ by a corresponding small quantity. In this paper we choose
\begin{align}
\label{Eqn:DefDelta}
\delta_\omega:=\frac{1}{\omega^{2/m}}
\end{align}
because this quantity determines, as explained in \S\ref{sect:Heuristics}, the spatial scale of the asymptotic shape ODE and behaves asymptotically like $\hat{\delta}$, the small parameter from the variational approach in \cite{HM15}.

%
\subsection{Heuristic arguments and overview}
\label{sect:Heuristics}
%
As preparation for the rigorous asymptotic analysis derived below, we start with an informal overview on the high energy limit and explain on a heuristic level, why the solution $\pair{R_\omega}{V_\omega}$ to the nonlinear advance-delay-differential equations \eqref{Eqn:TWNonlIdentities.V1} is asymptotically determined by an ODE initial value problem. Our discussion is, as already mention in \S\ref{sect:Intro}, based on the arguments from \cite{HM15} but simplifies some computations since the crucial scaling laws and matching conditions are inferred from the nonlinear integral formulation in \eqref{Eqn:TWNonlIdentities.V2}.
\par
{\bf Distance profile near the origin.} The first key observation --- at least for unimodal and even wave profiles as in Figure~\ref{Fig:WavesDist} that attain their minimum at $x=0$ --- is the following.   If   $R\at{0}$ is close to $-1$,  the algebraic singularity of $\Phi$ implies
\begin{align}
\label{Eqn:Overview.1}
\abs{\Phi^\prime\bat{R_\omega\at{x\pm 1}}}\ll\abs{\Phi^\prime\bat{R_\omega\at{x}}}\qquad \text{for all}\quad \abs{x}<1/2\,,
\end{align}
so both the advance and the delay term in the second-order equation \eqref{Eqn:NonlADDE2Order} can be neglected. This gives the
approximate ODE
\begin{align*}
\om^2 R_\omega^{\prime\prime}\at{x}\approx -2 \Phi^\prime\bat{R_\omega\at{x}}\qquad \text{for}\quad \abs{x}<1/2
\end{align*}
for the `tip of the tent', but this equation is still singular as it involves very large quantities.
\par
{\bf {Asymptotic shape ODE.}}
In the next step we rescale both the spatial variable $x$ and the amplitude of the distance profile by the ansatz
\begin{align}
\label{Eqn:DefTildeY}
R_\omega\at{x}= -1+\delta_\omega\al_\omega\tilde{Y}_\omega\bat{\delta_\omega^{-1}\beta_\omega^{-1} x}\,,
\end{align}
with
\begin{align}
\label{Eqn:DefBeta}
\be_\omega:=\sqrt{\al_\omega^{m+2}}\,,
\end{align}
where the scaling parameter $\al_\omega$ will be determined below. Combining this with the properties of $\Phi$ from Assumption~\ref{Ass.Pot} and using
\begin{align}
\label{Eqn:SpaceScalign}
\tilde{x}:=\frac{x}{\delta_\omega\beta_\omega}
\end{align}
we get
\begin{align}
\label{Eqn:Overview.7}
\om^2 R^{\prime\prime}\at{x}=\frac{1}{\delta_\omega^{m+1}\al_\omega^{m+1}}\tilde{Y}_\omega^{\prime\prime}\at{\tilde{x}}\,,\qquad \qquad
\Phi^\prime\bat{R\at{x}}\approx -\frac{1}{m+1}\cdot\frac{1}{\delta_\omega^{m+1}\al_\omega^{m+1}\tilde{Y}_\omega\at{\tilde{x}}^{m+1}}
\end{align}
and arrive at the nonsingular ODE
\begin{align}
\label{Eqn:Overview.2}
\tilde{Y}_\omega^{\prime\prime}\at{\tilde{x}}\approx \frac{2}{m+1}\cdot\frac{1}{\tilde{Y}_\omega\at{\tilde{x}}^{m+1}}\,,
\end{align}
where we can specify initial conditions. Restricting to even profiles we prescribe
\begin{align}
\label{Eqn:Overview.3}
\tilde{Y}_\omega\at{0}\approx 1\,,\qquad \tilde{Y}_\omega^\prime\at{0}=0
\end{align}
and infer
\begin{align}
\notag
\al_\omega\approx \frac{{1+R_\omega\at{0}}}{\delta_\omega}\,.
\end{align}
as  heuristic rule from \eqref{Eqn:DefTildeY}.
\par
{\bf Further scaling law.}
Our next goal is to compute a more explicit formula for $\al_\omega$. The integral formulation \eqref{Eqn:NonlFPDist} implies
\begin{align*}
\om^2 R_\omega\at{0}=2\int\limits_{0}^{1/2}\at{1-x}\Phi^{\prime}\at{R_\omega\at{x}}\dint{x}
\end{align*}
thanks to $\at{\chi\ast\chi}\at{x}=\max\{0, 1-\abs{x}\}$ and the evenness of $R_\omega$. In view of \eqref{Eqn:DefTildeY} and \eqref{Eqn:Overview.7} we obtain
\begin{align}
\label{Eqn:Overview.5}
-1+\delta_\omega\al_\omega\tilde{Y}_\omega\at{0}\approx 2\int\limits_{0}^{\xi_\omega}\at{2\xi_\omega-\tilde{x}}\at{ -\frac{\delta_\omega \al_\omega}{\at{m+1}\tilde{Y}_\omega\at{\tilde{x}}^{m+1}}}\dint{\tilde{x}}
\end{align}
with
\begin{align}
\label{Eqn:DefXi}
\xi_\omega:=\frac{1}{2\delta_\omega\beta_\omega}.
\end{align}
Inserting the asymptotic ODE \eqref{Eqn:Overview.2} into \eqref{Eqn:Overview.5} provides
\begin{align*}
-1+\delta_\omega\al_\omega\tilde{Y}_\omega\bat{0}\approx -\delta_\omega\al_\omega\int\limits_{0}^{\xi_\omega}\at{2\xi_\omega-\tilde{x}} \tilde{Y}_\omega^{\prime\prime}\at{\tilde{x}}\dint{\tilde{x}}=
-2\delta_\omega\al_\omega\xi_\omega \tilde{Y}_\omega^{\prime}\bat{\xi_\omega}+
\delta_\omega\al_\omega\bat{\tilde{Y}_\omega^{\flat}\bat{\xi_\omega}-\tilde{Y}_\omega^{\flat}\bat{0}}\
\end{align*}
and hence
\begin{align*}
-1\approx-\frac{\al_\omega}{\be_\omega}\tilde{Y}_\omega^{\prime}\bat{\xi_\omega}+\delta_\omega\al_\omega\tilde{Y}_\omega^{\flat}\bat{\xi_\omega}
\end{align*}
due to $\tilde{Y}_\omega\bat{0}=-\tilde{Y}_\omega^\flat\bat{0}$, where the differential operator $^\flat$ is defined as
\begin{align}
\label{Eqn:FlatOperator}
\tilde{F}^\flat\at{\tilde{x}}:=
\tilde{x}\tilde{F}^\prime\at{\tilde{x}}-\tilde{F}\at{\tilde{x}}
\end{align}
and will be used frequently. By elementary ODE arguments --- see Lemma~\ref{Lem:AsympODE.Props} for precise statements --- we compute both $\tilde{Y}_\omega^{\prime}\bat{\xi_\omega}$ and  $\tilde{Y}_\omega^\flat\bat{\xi_\omega}$  and obtain
\begin{align}
\label{Eqn:Overview.6}
\al_\omega \approx \tilde{Y}_\omega^{\prime}\at{\infty}^{2/m}\approx
\at{\frac{4}{m\at{m+1}}}^{1/m}\,,
\end{align}
where we used that $\delta_\omega$ and $\xi_\omega$ are asymptotically small and large, respectively, as $\om\to\infty$. A precise definition of $\al_\omega$ --- and hence also of $\beta_\omega$ and $\xi_\omega$ --- will be given below in Lemma~\ref{Lem:DefAlpha.Eqn1}.
\par
{\bf Discussion.}
  The   formal asymptotic analysis from above can be summarized as follows. Rescaling the distance profile $R_\omega$ by \eqref{Eqn:DefTildeY}, where $\al_\omega$, $\be_\omega$,  and  $\delta_\omega$ are given as in \eqref{Eqn:DefDelta}+\eqref{Eqn:DefBeta}+\eqref{Eqn:Overview.6}, the rescaled distance profile $\tilde{Y}_\omega$ satisfies on the interval $\oointerval{-\xi_\omega}{+\xi_\omega}$ the ODE \eqref{Eqn:Overview.2} along with the initial values \eqref{Eqn:Overview.3} up to small error terms. Moreover, all these quantities are on the approximate level completely determined by $\om$ and $m$, and give rise to explicit asymptotic formulas for $R_\omega$ on the interval $x\in\oointerval{-1/2}{+1/2}$.
\par
The final argument is that the local behavior of the distance profile determines also the global one because \eqref{Eqn:Overview.1} can also be used to justify the approximate differential equations
\begin{align}
\label{Eqn:Overview.9}
\om^2 R_\omega^{\prime\prime}\at{x}  \approx \Phi^\prime\at{R_\omega\at{x-1}}   \approx -\tfrac12\omega^2
 R_\omega^{\prime\prime}\at{x-1}\qquad \text{for}\quad x\in\oointerval{+1/2}{+3/2}
\end{align}
for the `base of the tent' and
\begin{align*}
\om^2 R_\omega^{\prime\prime}\at{x}\approx 0 \qquad \text{for}\quad  x\in\oointerval{+3/2}{\infty}
\end{align*}
for the  \lq tail of the tent', which by evenness hold also for $x<-1/2$. In principle, these identities allow us --- as in \cite{HM15} --- to recover the entire distance profile provided that the constants of integration will be determined such that both $R_\omega$ and $R_\omega^\prime$ are continuous at $x=1/2$ and $x=3/2$, and vanish at $x=\infty$. For our asymptotic proofs, however, it is    more convenient   to rely on the approximation
\begin{align}
\label{Eqn:Overview.8}
 R_\omega\approx \chi\ast\chi\ast \at{\chi \frac{\Phi^\prime\at{R_\omega}}{\om^2}}\approx -\tfrac12  \chi\ast\chi\ast \bat{\chi R_\omega^{\prime\prime}}\,,
\end{align}
where the indicator function $\chi$ of the interval $\ccinterval{-1/2}{+1/2}$ serves as both convolution kernel and cut-off function.   This   formula bridges neatly between local and global approximations since the right hand side represents a $\fspaceC^1$-function on $\Rset$ which depends only on the behavior of $R_\omega$ on $\oointerval{-1/2}{+1/2}$. In particular, \eqref{Eqn:Overview.8} encodes the above mentioned matching conditions in a concise and implicit way.
\par
We finally mention that the even velocity profile $V_\omega$ can be
approximated either piecewise via the approximate differential equation
\begin{align*}
\om V_\omega^\prime\at{x}\approx \Phi^\prime\bat{R_\omega\at{x-1/2}}\approx \om^2R^{\prime\prime}_\omega\at{x-1/2}\quad \text{for}\quad x\in\ccinterval{0}{1}\,,\qquad \om V_\omega^\prime\at{x}\approx 0\quad \text{for}\quad x\in\cointerval{1}{\infty}
\end{align*}
or better globally by $\om V_\omega \approx \tfrac12\chi\ast\at{\chi \Phi^\prime\at{R_\omega}}$.
\par
{\bf Outlook.} In what follows we employ the outlined ideas and prove that the asymptotic ODE problem corresponding to \eqref{Eqn:Overview.2} and \eqref{Eqn:Overview.3} determines in fact all asymptotic properties of lattice waves in the limit $\om\to\infty$. Of course, the analytical details will be more involved   than   the above non-rigorous computations  and require explicit bounds for the error terms. In principle we could exploit ODE perturbation techniques as in \cite{HM15}, but here we proceed differently and in a more concise way. We first identify in \S\ref{sect:AppForm} almost explicit formulas for the approximate solutions and conclude afterwards in \S\ref{sect:ExAndUni} the existence of a nearby exact solution by a fixed point argument in a suitable function space. From a technical point of view, the key ingredients   to   our approach are $\at{i}$ the properties of the solution to the shape ODE studied in \S\ref{sect:AsympODE}, $\at{ii}$ the precise definition of the scaling parameters in  \S\ref{sect:Scaling}, and $\at{iii}$   the   careful investigation of a nonlocal but linear auxiliary problem, which will be introduced in \S\ref{sect:AuxProb} and may be regarded as a simplified and scaled version of the linearized traveling wave equation \eqref{Eqn:NonlFPDist}. In this way we can also establish a local uniqueness result for even high-energy waves; this question remained open in \cite{HM15} and could only be answered in \cite{HM17}. Finally, in \S\ref{sect:Smoothness} and \S\ref{sect:Tails} we quantify the smooth parameter dependence and estimate the tail decay, respectively.
\par
%
%
\subsection{Asymptotic shape ODE}
\label{sect:AsympODE}
%
In this section   we   provide detail information on the solution to the ODE initial value problem
\begin{align}
\label{Lem:AsympODE.Props.Eqn1}
\bar{Y}^{\prime\prime}\at{\tilde{x}}=\frac{2}{m+1}\frac{1}{\bar{Y}\at{\tilde{x}}^{m+1}}\,,\qquad
\bar{Y}\at{0}=1\,,\qquad \bar{Y}^\prime\at{0}=0\,,
\end{align}
which determines --- via $\tilde{Y}_\omega\approx \bar{Y}$ and as explained in \S\ref{sect:Heuristics} ---  the asymptotic shape of the lattice waves in the high-energy limit $\om\to\infty$. We also study the linearized ODE
\begin{align}
\label{Lem:LinODE.Props.Eqn1}
\bar{T}^{\prime\prime}\at{\tilde{x}}=-\frac{2}{\bar{Y}\at{\tilde{x}}^{m+2}}
\bar{T}\at{\tilde{x}},
\end{align}
because the two linearly independent solutions are important building blocks for the subsequent asymptotic and spectral analysis. Notice that the function $\bar{Y}$ is uniquely determined by the parameter $m$ from Assumption~\ref{Ass.Pot}, and see Figure~\ref{Fig:ODESolutions} for numerical examples.
\begin{figure}[ht!] %
\centering{ %
\includegraphics[width=0.9\textwidth]{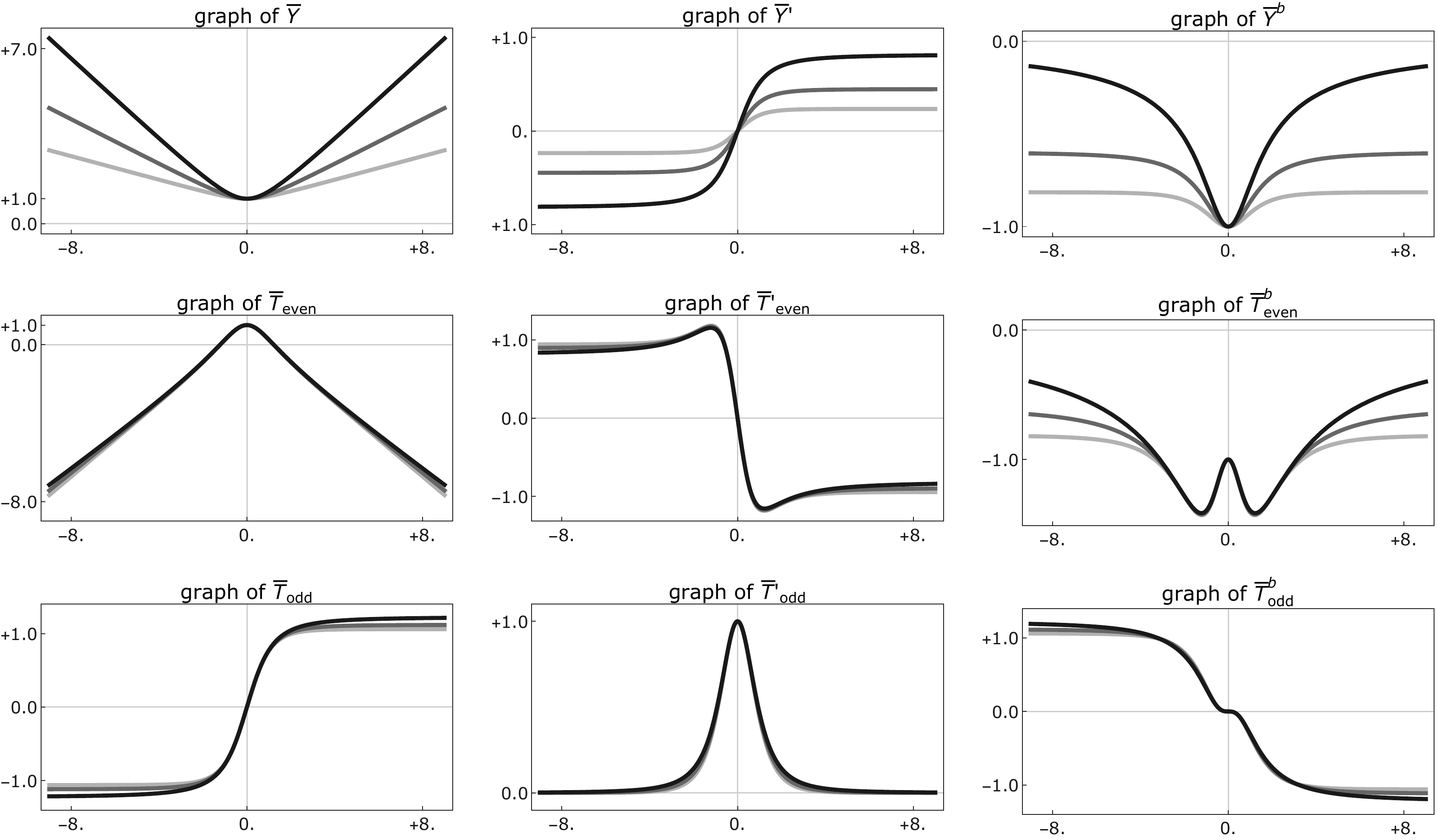} %
} %
\caption{
\emph{Top row}: Numerical solution of the nonlinear shape ODE \eqref{Lem:AsympODE.Props.Eqn1}: graphs of $\bar{Y}$ (left), $\bar{Y}^\prime$ (center), and  $\bar{Y}^\flat$ (right) for $m=2$ (black), $m=4$ (dark gray), and $m=8$ (light gray).
\emph{Centre row, bottom row}: %
The respective plots for the even solution $\ol{T}_\even$ and the odd solution
$\ol{T}_\odd$ of the linearized shape ODE \eqref{Lem:LinODE.Props.Eqn1}.%
}\label{Fig:ODESolutions} %
\end{figure} %
\begin{lemma}[asymptotic shape ODE]
\label{Lem:AsympODE.Props}
The unique solution to the ODE initial value problem \eqref{Lem:AsympODE.Props.Eqn1} is even, convex, strictly positive and grows asymptotically linear. Moreover, we have
\begin{align*}
\nat{\bar{Y}^{\prime}}^{\prime\prime}\at{\tilde{x}}=-\frac{2\bar{Y}^\prime\at{\tilde{x}}}{\bar{Y}\at{\tilde{x}}^{m+2}}\,,\qquad
\nat{\bar{Y}^\flat}^{\prime\prime}\at{\tilde{x}}=-\frac{2\bar{Y}^\flat\at{\tilde{x}}}{\bar{Y}\at{\tilde{x}}^{m+2}}-\frac{2m}{\at{m+1}\bar{Y}\at{\tilde{x}}^{m+1}}\,,
\end{align*}
where both $\bar{Y}^\prime$ and $\bar{Y}^\flat$ are asymptotically constant with
\begin{align}
\notag
\lim_{\tilde{x}\to\infty}\bar{Y}^\prime\at{\tilde{x}}=\frac{2}{\sqrt{m\at{m+1}}}\,,\qquad
\lim_{\tilde{x}\to\infty}\bar{Y}^\flat\at{\tilde{x}}= \gamma
\end{align}
and
\begin{align*}
\sup_{\tilde{x}\in\Rset}\at{
 \abs{\tilde{x}}^{m} \babs{\bar{Y}^\prime\at{\tilde{x}}-\bar{Y}^\prime\at{\infty}}
+ \abs{\tilde{x}}^{m-1} \babs{\bar{Y}^\flat\at{\tilde{x}}-\bar{Y}^\flat\at{\infty}}}\leq C\,.
\end{align*}
Here, the constants $  \gamma $, $C$ depend only on $m$ and the differential operator $^\flat$ has been defined in \eqref{Eqn:FlatOperator}.
\end{lemma}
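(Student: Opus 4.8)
The plan is to treat the initial value problem \eqref{Lem:AsympODE.Props.Eqn1} by elementary ODE arguments, extract its first integral, and then read off all quantitative statements from integral representations of $\bar Y^\prime$ and $\bar Y^\flat$. First I would establish global existence together with the qualitative properties. Since the nonlinearity $Y\mapsto\tfrac{2}{m+1}Y^{-(m+1)}$ is smooth on $\oointerval{0}{\infty}$, the Picard--Lindel\"of theorem yields a unique local solution near $\tilde x=0$. Wherever it exists we have $\bar Y^{\prime\prime}>0$, so $\bar Y$ is strictly convex, and combined with $\bar Y^\prime\at{0}=0$ this forces $\bar Y^\prime$ to be strictly increasing and hence $\bar Y\at{\tilde x}\geq1$ throughout the maximal interval of existence. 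In particular the solution stays uniformly away from the singularity at $Y=0$ and therefore extends to all of $\Rset$. Evenness follows from uniqueness since $\tilde x\mapsto\bar Y\at{-\tilde x}$ solves the same problem; consequently $\bar Y^\prime$ is odd and $\bar Y^\flat$ is even.

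Next I would multiply \eqref{Lem:AsympODE.Props.Eqn1} by $\bar Y^\prime$ and integrate from $0$, which gives the first integral
\begin{align*}
\bat{\bar Y^\prime\at{\tilde x}}^2=\frac{4}{m\at{m+1}}\Bat{1-\bar Y\at{\tilde x}^{-m}}\,.
\end{align*}
This immediately yields $0<\bar Y^\prime\at{\tilde x}<\tfrac{2}{\sqrt{m\at{m+1}}}$ for $\tilde x>0$ and hence the upper bound $\bar Y\at{\tilde x}\leq1+\tfrac{2}{\sqrt{m\at{m+1}}}\abs{\tilde x}$. Moreover $\bar Y$ cannot remain bounded, for if $\bar Y\at{\tilde x}\to L<\infty$ then the first integral forces $\bar Y^\prime\at{\tilde x}$ to converge to the strictly positive value $\sqrt{\tfrac{4}{m\at{m+1}}\at{1-L^{-m}}}$, which is incompatible with boundedness. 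Thus $\bar Y\at{\tilde x}\to\infty$, the first integral then gives $\bar Y^\prime\at{\tilde x}\to\tfrac{2}{\sqrt{m\at{m+1}}}$ (the claimed asymptotically linear growth), and together with $\bar Y\geq1$ one obtains a two-sided bound $c_0\at{1+\abs{\tilde x}}\leq\bar Y\at{\tilde x}\leq c_1\at{1+\abs{\tilde x}}$ with $c_0,c_1$ depending only on $m$.

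The two displayed second-order identities are pure computation: differentiating \eqref{Lem:AsympODE.Props.Eqn1} gives $\bar Y^{\prime\prime\prime}=-2\bar Y^\prime\bar Y^{-(m+2)}$, and using $\nat{\bar Y^\flat}^\prime=\tilde x\bar Y^{\prime\prime}$, $\nat{\bar Y^\flat}^{\prime\prime}=\bar Y^{\prime\prime}+\tilde x\bar Y^{\prime\prime\prime}$ together with $\tilde x\bar Y^\prime=\bar Y^\flat+\bar Y$ and \eqref{Lem:AsympODE.Props.Eqn1}, one collects the terms into $-2\bar Y^\flat\bar Y^{-(m+2)}-\tfrac{2m}{m+1}\bar Y^{-(m+1)}$. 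For the limits and the decay rates I would use $\bar Y^\prime\at{\infty}-\bar Y^\prime\at{\tilde x}=\tfrac{2}{m+1}\int_{\tilde x}^{\infty}\bar Y\at{s}^{-(m+1)}\dint s$ and $\nat{\bar Y^\flat}^\prime\at{s}=\tfrac{2}{m+1}\,s\,\bar Y\at{s}^{-(m+1)}$. The lower bound $\bar Y\at{s}\geq c_0\at{1+s}$ then gives $\babs{\bar Y^\prime\at{\tilde x}-\bar Y^\prime\at{\infty}}\leq C\tilde x^{-m}$ for $\tilde x\geq1$, and — this is where $m>1$ is used — it makes $\nat{\bar Y^\flat}^\prime$ integrable at infinity, so that $\bar Y^\flat$ has a finite limit $\gamma$ (depending only on $m$) with $\babs{\bar Y^\flat\at{\tilde x}-\gamma}\leq C\tilde x^{-(m-1)}$ for $\tilde x\geq1$; on bounded sets the two weighted quantities are controlled by continuity, and the estimates for $\tilde x<0$ follow from the symmetries of $\bar Y^\prime$ and $\bar Y^\flat$ established above (comparing, for the odd function $\bar Y^\prime$, with the limit $-\bar Y^\prime\at{\infty}$ at $-\infty$).

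I do not expect a genuine obstacle here; the argument is a sequence of standard ODE estimates. The only slightly delicate points are combining the global extension with the uniform two-sided bound $c_0\at{1+\abs{\tilde x}}\leq\bar Y\at{\tilde x}\leq c_1\at{1+\abs{\tilde x}}$ that feeds all the decay estimates, and checking that the integrals defining $\gamma$ and the decay constants actually converge — which is exactly where the hypothesis $m>1$ (part of $\min\{k,m\}>1$ in Assumption~\ref{Ass.Pot}) enters — while keeping every constant dependent on $m$ alone.
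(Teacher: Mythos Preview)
Your proposal is correct and follows essentially the same route as the paper: phase-plane reasoning for global existence, evenness, convexity and monotonicity; the first integral (the paper's ``conservation law'' $\tfrac12(\bar Y')^2+\tfrac{2}{m(m+1)}\bar Y^{-m}=\tfrac{2}{m(m+1)}$) to pin down $\bar Y'(\infty)$ and the two-sided linear bounds on $\bar Y$; direct differentiation for the displayed identities; and then integral tail estimates for the decay rates. Your write-up simply makes explicit the details the paper compresses into a few lines, including the observation that $m>1$ is exactly what makes $(\bar Y^\flat)'=\tilde x\,\bar Y''$ integrable at infinity and hence gives a finite $\gamma$.
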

\begin{proof}
A simple phase plane analysis shows that $\bar{Y}$ is even and that both $\bar{Y}$ and $\bar{Y}^\prime$ are increasing for $\tilde{x}>0$. The equations for $\bar{Y}^{\prime}$ and $\bar{Y}^{\flat}$ follow by differentiating \eqref{Lem:AsympODE.Props.Eqn1}, and   using   direct computations we verify the conservation law
\begin{align*}
\frac12\bar{Y}^\prime\at{\tilde{x}}^2+\frac{2}{m\at{m+1}}\frac{1}{\bar{Y}\at{\tilde{x}}^{m}}=\frac{2}{m\at{m+1}}.
\end{align*}
This energy law finally implies the asymptotic conditions for $\bar{Y}^{\prime}$ and   $\bar{Y}^{\flat}$ as well as the claimed error estimates.
\end{proof}
\begin{lemma}[linearized shape ODE]
\label{Lem:LinODE.Props}
The solution space to the linear ODE \eqref{Lem:LinODE.Props.Eqn1} is spanned by  an even function $\bar{T}_\even$ and an odd function $\bar{T}_\odd$, which are defined by
\begin{align*}
\bar{T}_\even\at{\tilde{x}}:=-\frac{m}{2} \bar{Y}\at{\tilde{x}}-\at{\frac{m}{2}+1}\bar{Y}^\flat\at{\tilde{x}}\,,\qquad
\bar{T}_\odd\at{\tilde{x}}:=\frac{\at{m+1}}{2}\bar{Y}^\prime\at{\tilde{x}}
\end{align*}
and comply with  the normalization condition
\begin{align*}
\bar{T}_\even\at{0}=\bar{T}_\odd^\prime\at{0}=1\,.
\end{align*}
Moreover,  $\bar{T}_\even$ and $\bar{T}_\odd$  are asymptotically affine and constant, respectively, with
\begin{align*}
\lim_{\tilde{x}\to\infty}\bar{T}_\even^\prime\at{\tilde{x}}
=-\sqrt{\frac{m}{m+1}}\,,\qquad
\lim_{\tilde{x}\to\infty}\bar{T}_\even^\flat\at{\tilde{x}}
=  \gamma \,,\qquad \lim_{\tilde{x}\to\infty}\bar{T}_\odd\at{\tilde{x}}
=\sqrt{\frac{m+1}{m}}
\end{align*}
and satisfy the decay estimates
\begin{align*}
\sup_{\tilde{x}\in\Rset}
\at{\abs{\tilde{x}}^{m} \babs{\bar{T}_\even^\prime\at{\tilde{x}}-\bar{T}_\even^\prime\at{\infty}}
+ \abs{\tilde{x}}^{m-1} \babs{\bar{T}_\even^\flat\at{\tilde{x}}-\bar{T}_\even^\flat\at{\infty}}}\leq C
\end{align*}
as well as
\begin{align*}
\sup_{\tilde{x}\in\Rset}\at{
 \abs{\tilde{x}}^{m+1}
\babs{\bar{T}_\odd^\prime\at{\tilde{x}}}
+ \abs{\tilde{x}}^{m} \babs{\bar{T}_\odd^\flat\at{\tilde{x}}-\bar{T}_\odd^\flat\at{\infty}}}\leq C
\end{align*}
for some constant $C$ depending on $m$ and   $\gamma$ as in Lemma \ref{Lem:AsympODE.Props}.
\end{lemma}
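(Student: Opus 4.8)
\medskip
\noindent\emph{Proof strategy.}
The plan is to derive everything from Lemma~\ref{Lem:AsympODE.Props} together with the elementary calculus of the operator $^\flat$ from \eqref{Eqn:FlatOperator}; essentially no new ODE analysis is required. First I would record the two structural facts that motivate the definitions of $\bar T_\even$ and $\bar T_\odd$. Differentiating the nonlinear equation \eqref{Lem:AsympODE.Props.Eqn1} once shows that $\bar Y^\prime$ solves the linearized equation \eqref{Lem:LinODE.Props.Eqn1}, so $\bar T_\odd=\tfrac{m+1}{2}\bar Y^\prime$ does too (this is already contained in Lemma~\ref{Lem:AsympODE.Props}). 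For the even solution I would combine the inhomogeneous identity $\nat{\bar Y^\flat}^{\prime\prime}=-2\bar Y^\flat\bar Y^{-(m+2)}-\tfrac{2m}{m+1}\bar Y^{-(m+1)}$ from Lemma~\ref{Lem:AsympODE.Props} with $\bar Y^{\prime\prime}=\tfrac{2}{m+1}\bar Y^{-(m+1)}$: the inhomogeneity is a constant multiple of $\bar Y^{\prime\prime}$, hence $a\bar Y+b\bar Y^\flat$ is annihilated by the linearized operator exactly when $a(m+2)=bm$, which singles out $(a,b)=\bat{-\tfrac m2,\,-\tfrac{m+2}{2}}$, namely precisely $\bar T_\even$. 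Evaluating at $\tilde x=0$, where $\bar Y(0)=1$ and $\bar Y^\prime(0)=0$ force $\bar Y^\flat(0)=-1$ and $\bar Y^{\prime\prime}(0)=\tfrac{2}{m+1}$, yields the normalization $\bar T_\even(0)=\bar T_\odd^\prime(0)=1$; the parity of $\bar Y$ makes $\bar Y^\flat$ even and $\bar Y^\prime$ odd, so $\bar T_\even$ is even and $\bar T_\odd$ is odd.

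Next I would check that these two solutions form a basis. Since \eqref{Lem:LinODE.Props.Eqn1} has no first-order term, the Wronskian $W=\bar T_\even\bar T_\odd^\prime-\bar T_\even^\prime\bar T_\odd$ is constant, and by the normalization and parity from the previous step $W=1$ at $\tilde x=0$; hence $\bar T_\even$ and $\bar T_\odd$ are linearly independent and span the two-dimensional solution space of \eqref{Lem:LinODE.Props.Eqn1}.

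For the asymptotics and the weighted estimates I would exploit that $^\flat$ is linear together with the auxiliary identities $\nat{\bar Y^\flat}^\prime=\tilde x\,\bar Y^{\prime\prime}$, $\nat{\bar Y^\flat}^\flat=\tilde x^2\bar Y^{\prime\prime}-\bar Y^\flat$ and $\nat{\bar Y^\prime}^\flat=\tilde x\,\bar Y^{\prime\prime}-\bar Y^\prime$, which reduce $\bar T_\even^\prime$, $\bar T_\even^\flat$, $\bar T_\odd=\tfrac{m+1}{2}\bar Y^\prime$, $\bar T_\odd^\prime=\bar Y^{-(m+1)}$ and $\bar T_\odd^\flat=\tfrac{m+1}{2}\nat{\bar Y^\prime}^\flat$ to explicit combinations of $\bar Y^\prime,\bar Y^\flat$ and $\tilde x^i\bar Y^{\prime\prime}$. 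Because $\bar Y$ is convex with $\bar Y\ge 1$ and grows asymptotically linearly (Lemma~\ref{Lem:AsympODE.Props}), one has $\bar Y(\tilde x)\ge c\,(1+\abs{\tilde x})$ and therefore $\abs{\tilde x}^{\,j}\babs{\tilde x^i\bar Y^{\prime\prime}}\le C$ whenever $i+j\le m+1$; feeding this together with the bounds on $\bar Y^\prime-\bar Y^\prime(\infty)$ and $\bar Y^\flat-\bar Y^\flat(\infty)$ from Lemma~\ref{Lem:AsympODE.Props} into the above expressions produces the stated limits --- a short computation gives $-\tfrac m2\bar Y^\prime(\infty)=-\sqrt{m/(m+1)}$, $\tfrac{m+1}{2}\bar Y^\prime(\infty)=\sqrt{(m+1)/m}$, and the cancellation $-\tfrac m2\gamma+\tfrac{m+2}{2}\gamma=\gamma$ for $\bar T_\even^\flat$ --- and, simultaneously, the four weighted decay estimates. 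Note that the slowest-decaying contribution throughout is $\bar Y^\flat-\gamma$, of order $\abs{\tilde x}^{-(m-1)}$, which is exactly why the exponent $m-1$ (rather than $m$ or $m+1$) appears in the $\bar T_\even^\flat$ and $\bar T_\odd^\flat$ estimates.

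I do not expect a genuine obstacle here: the only step with any content beyond routine bookkeeping is the algebraic coincidence that the inhomogeneity in $\nat{\bar Y^\flat}^{\prime\prime}$ is proportional to $\bar Y^{\prime\prime}$, so that one fixed linear combination of $\bar Y$ and $\bar Y^\flat$ solves the linearized shape ODE; once that is in hand, everything reduces to the $^\flat$-calculus and the decay information already supplied by Lemma~\ref{Lem:AsympODE.Props}.
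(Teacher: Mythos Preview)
Your proposal is correct and matches the paper's approach, which is simply to verify by direct computation that $\bar T_\even$ and $\bar T_\odd$ solve \eqref{Lem:LinODE.Props.Eqn1} and then read off everything else from Lemma~\ref{Lem:AsympODE.Props}; you have spelled out precisely the computations that the paper leaves implicit. One small slip in your final commentary: the exponent $m-1$ appears only in the $\bar T_\even^\flat$ estimate, not in the $\bar T_\odd^\flat$ one, since $\bar T_\odd^\flat=\tfrac{m+1}{2}\nat{\tilde x\,\bar Y^{\prime\prime}-\bar Y^\prime}$ does not involve $\bar Y^\flat$ at all and hence decays like $\abs{\tilde x}^{-m}$.
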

\begin{proof}
  We   compute that $\bar{T}_\even$ and $\bar{T}_\odd$ solve \eqref{Lem:LinODE.Props.Eqn1}.
All other assertions follow from Lemma~\ref{Lem:AsympODE.Props}.
\end{proof}
We finally mention explicit expressions for $m=2$, namely
\begin{align*}
\bar{Y}\at{\tilde{x}}=\frac{\sqrt{9 + 6 x^2}}{3}\,,\qquad
\bar{T}_\even\at{\tilde{x}}=  \frac{3-2\tilde{x}^2}{\sqrt{9 + 6 x^2}}  \,,\qquad
\bar{T}_\odd\at{\tilde{x}}=\frac{3\tilde{x}}{\sqrt{9 + 6 x^2}}\,,
\end{align*}
which imply the non-generic property $\bar{Y}^\flat\at{\infty}=0$.
%
%
%
\subsection{Scaling parameters}\label{sect:Scaling}
%
%
Motivated by the heuristic discussion in \S\ref{sect:Heuristics} we aim to study the wave profiles $R_\omega$ and $V_\omega$ as functions of the rescaled space variable $\tilde{x}$ from \eqref{Eqn:SpaceScalign}  in order to resolve the fine structure of the distance profile near $x=0$.   In a first step we specify   the $\om$-dependence of $\al_\omega$, $\beta_\omega$, and $\xi_\omega$ by the following result, which does not alter the algebraic relations in \eqref{Eqn:DefBeta} and \eqref{Eqn:DefXi} but provides a precise definition for $\al_\omega$ in terms of $\bar{Y}$. This value is asymptotically consistent with \eqref{Eqn:Overview.6} and ensures that the approximate distance profile constructed in Lemma~\ref{Lem:DefBreveR} below has sufficiently nice properties.
\begin{figure}[ht!] %
\centering{ %
\includegraphics[width=0.8\textwidth]{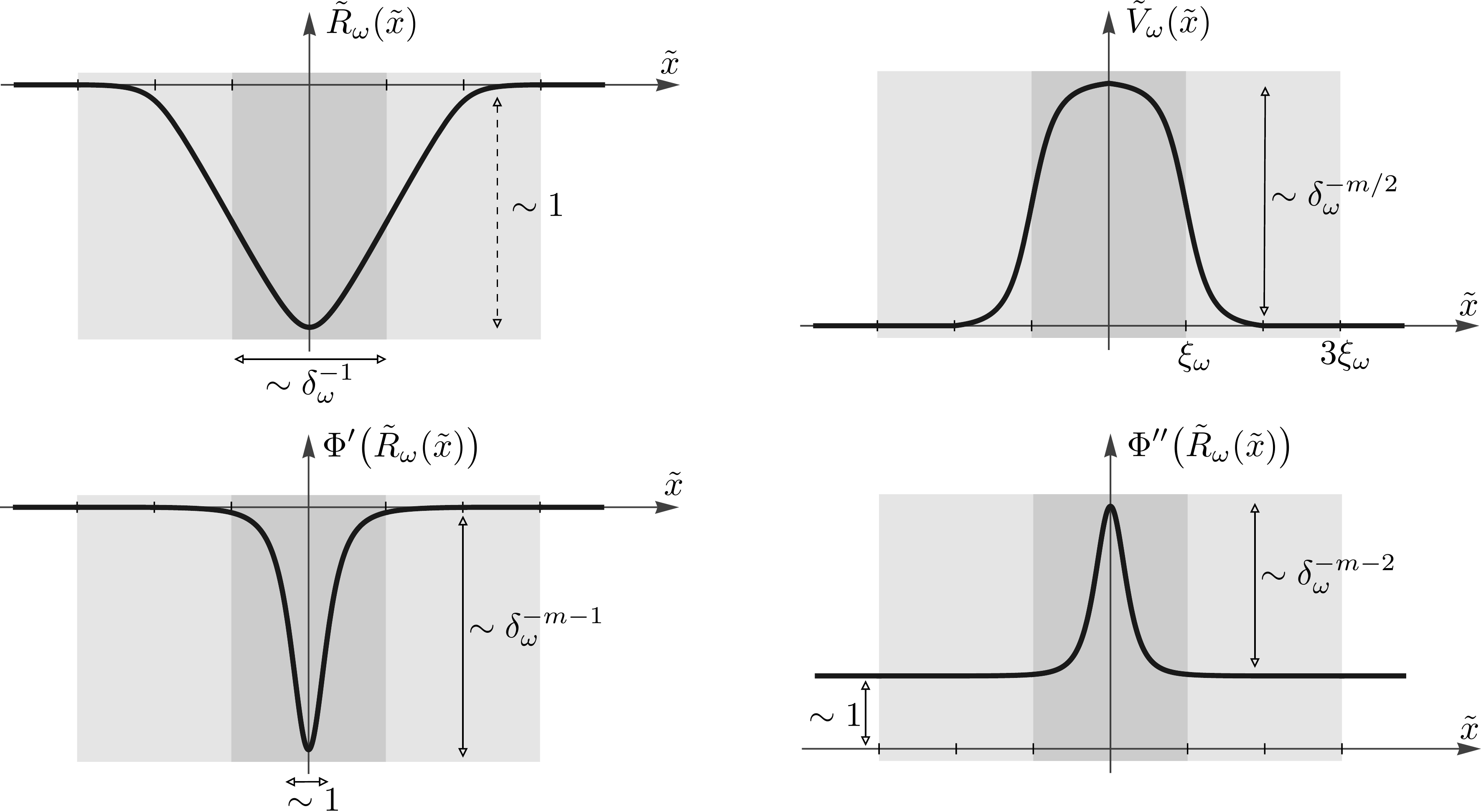} %
} %
\caption{\emph{Top row}. The scaled distance profile $\tilde{R}_\omega$ and the corresponding velocity profile $\tilde{V}_\omega$ from \eqref{Eqn:ScaledProfiles}. The shaded region indicate the intervals $I_\omega$ and   $3I_\omega\setminus I_\omega$   related to the `tip of the tent' and the `base of the tent', respectively. The explicit approximations $\breve{R}_\delta$ and $\breve{V}_\delta$ are constructed in Lemma~\ref{Lem:DefBreveR} and Lemma~\ref{Lem:ApproxVelProfile}, respectively, and vanish identically outside $3I_\omega$ and $2I_\omega$, respectively. \emph{Bottom row}. The resulting profiles after applying $\Phi^\prime$ and $\Phi^{\prime\prime}$, see Assumption~\ref{Ass.Pot}.
} %
\label{Fig:Profiles} %
\end{figure} %
\begin{lemma}[parameters $\al_\omega$, $\be_\omega$, and $\xi_\omega$]
\label{Lem:DefAlpha}
For all sufficiently large $\omega$, there exists a unique $\xi_\omega\geq1$ such that
\begin{align}
\label{Lem:DefAlpha.Eqn1}
\delta_\omega^{\frac{m}{m+2}}\at{\xi_\omega\bar{Y}^{\prime}\nat{\xi_\omega}+ \bar{Y}\nat{\xi_\omega}}= \bat{2\xi_\omega}^{\frac{2}{m+2}}\,.
\end{align}
Moreover, we have
\begin{align}
\label{Lem:DefAlpha.Eqn2}
\be_\omega^{\frac{2}{m+2}}=\al_\omega :=\bat{2\delta_\omega\xi_\omega}^{-\frac{2}{m+2}} \quad \xrightarrow{\;\;\omega\to\infty\;\;} \quad \at{\frac{4}{m\at{m+1}}}^{1/m}
\end{align}
as well as
\begin{align}
\label{Lem:DefAlpha.Eqn3}
\delta_\omega\al_\omega \xi_\omega\bar{Y}^{\prime}\nat{\xi_\omega} \quad \xrightarrow{\;\;\omega\to\infty\;\;} \quad \tfrac12\,,\qquad
\delta_\omega\al_\omega \bar{Y}\nat{\xi_\omega} \quad\xrightarrow{\;\;\omega\to\infty\;\;} \quad \tfrac12\,,
\end{align}
and $\xi_\omega$ depends smoothly on $\omega$.
\end{lemma}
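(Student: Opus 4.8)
The plan is to verify the claims of Lemma~\ref{Lem:DefAlpha} by treating \eqref{Lem:DefAlpha.Eqn1} as the defining equation for $\xi_\omega$ and then deriving \eqref{Lem:DefAlpha.Eqn2} and \eqref{Lem:DefAlpha.Eqn3} as consequences. First I would introduce the auxiliary function
\begin{align*}
G\at{\xi}:=\xi\bar{Y}^{\prime}\nat{\xi}+\bar{Y}\nat{\xi}
\end{align*}
and rewrite \eqref{Lem:DefAlpha.Eqn1} as $\delta_\omega^{m/(m+2)}G\at{\xi_\omega}=\at{2\xi_\omega}^{2/(m+2)}$, i.e. as $F\at{\xi_\omega}=\delta_\omega^{-m/(m+2)}$ where $F\at{\xi}:=\at{2\xi}^{-2/(m+2)}G\at{\xi}$. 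From Lemma~\ref{Lem:AsympODE.Props} we know $\bar{Y}^{\prime}\at{\xi}\to 2/\sqrt{m(m+1)}>0$ and $\bar{Y}\at{\xi}$ grows asymptotically linearly, so $G\at{\xi}\sim \at{4/\sqrt{m(m+1)}}\,\xi$ as $\xi\to\infty$; hence $F\at{\xi}\sim \const\cdot\xi^{m/(m+2)}$ is strictly increasing for large $\xi$ and tends to $+\infty$. A monotonicity check of $F$ on $\cointerval{1}{\infty}$ — using that $\bar{Y}$, $\bar{Y}^{\prime}$ are positive and increasing and that $G^{\prime}\at{\xi}=\bar{Y}^{\prime}\at{\xi}+\xi\bar{Y}^{\prime\prime}\at{\xi}+\bar{Y}^{\prime}\at{\xi}=2\bar{Y}^{\prime}\at{\xi}+\tfrac{2\xi}{(m+1)\bar{Y}\at{\xi}^{m+1}}>0$ — together with the fact that $\delta_\omega^{-m/(m+2)}\to\infty$ as $\omega\to\infty$ gives, for all sufficiently large $\omega$, a unique solution $\xi_\omega\geq1$, and smoothness of $\omega\mapsto\xi_\omega$ follows from the implicit function theorem since $F^{\prime}\at{\xi_\omega}\neq0$ and $\delta_\omega$ depends smoothly (indeed analytically) on $\omega$ through \eqref{Eqn:DefDelta}.

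Next, from $F\at{\xi_\omega}=\delta_\omega^{-m/(m+2)}$ and the asymptotics $F\at{\xi}\sim\bat{4/\sqrt{m(m+1)}}\,\at{2\xi}^{-2/(m+2)}\xi = 2^{-2/(m+2)}\bat{4/\sqrt{m(m+1)}}\,\xi^{m/(m+2)}$, I would solve for $\xi_\omega$ asymptotically: $\xi_\omega\sim \const\cdot\delta_\omega^{-1}$, and then substitute back to evaluate $\al_\omega=\bat{2\delta_\omega\xi_\omega}^{-2/(m+2)}$. Concretely, \eqref{Lem:DefAlpha.Eqn1} can be rearranged to
\begin{align*}
\al_\omega=\bat{2\delta_\omega\xi_\omega}^{-2/(m+2)}=\delta_\omega^{m/(m+2)}\,\delta_\omega^{-m/(m+2)}\bat{2\delta_\omega\xi_\omega}^{-2/(m+2)}\cdot 1
\end{align*}
and, using \eqref{Lem:DefAlpha.Eqn1} itself in the form $\bat{2\xi_\omega}^{2/(m+2)}=\delta_\omega^{m/(m+2)}G\at{\xi_\omega}$, one obtains the clean identity $\delta_\omega\al_\omega\,G\at{\xi_\omega}=1$, that is,
\begin{align}
\label{Eqn:KeyIdentity}
\delta_\omega\al_\omega\xi_\omega\bar{Y}^{\prime}\nat{\xi_\omega}+\delta_\omega\al_\omega\bar{Y}\nat{\xi_\omega}=1.
\end{align}
This is the backbone of \eqref{Lem:DefAlpha.Eqn3}: since $\xi_\omega\to\infty$, the decay estimates in Lemma~\ref{Lem:AsympODE.Props} give $\bar{Y}^{\prime}\nat{\xi_\omega}\to\bar{Y}^{\prime}\at{\infty}$ and $\bar{Y}\nat{\xi_\omega}=\bar{Y}^{\prime}\at{\infty}\xi_\omega+\DO{1}$ (integrating the asymptotics of $\bar{Y}^{\prime}$, or using $\bar{Y}=\xi\bar{Y}^{\prime}-\bar{Y}^\flat$ with $\bar{Y}^\flat$ bounded), so both terms on the left of \eqref{Eqn:KeyIdentity} are asymptotically equal; since they sum to $1$, each converges to $\tfrac12$, which is exactly \eqref{Lem:DefAlpha.Eqn3}. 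Dividing the first relation in \eqref{Lem:DefAlpha.Eqn3} by $\xi_\omega\bar{Y}^{\prime}\at{\infty}$ and using $\bar{Y}^{\prime}\nat{\xi_\omega}\to\bar{Y}^{\prime}\at{\infty}$ yields $\delta_\omega\al_\omega\xi_\omega\to 1/\bat{2\bar{Y}^{\prime}\at{\infty}}$, hence $2\delta_\omega\xi_\omega=\al_\omega^{-(m+2)/2}$ combined with $\delta_\omega\al_\omega\xi_\omega\to 1/(2\bar Y'(\infty))$ gives $\al_\omega^{-m/2}\to 1/\bar Y'(\infty)$, i.e. $\al_\omega\to\bar{Y}^{\prime}\at{\infty}^{2/m}=\bat{2/\sqrt{m(m+1)}}^{2/m}=\bat{4/\bat{m(m+1)}}^{1/m}$, which is \eqref{Lem:DefAlpha.Eqn2}.

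The routine but slightly delicate point is keeping the bookkeeping between the three interlocked definitions $\be_\omega=\sqrt{\al_\omega^{m+2}}$, $\xi_\omega=1/(2\delta_\omega\be_\omega)$, and the defining equation \eqref{Lem:DefAlpha.Eqn1} consistent — these are not independent, and one must present \eqref{Lem:DefAlpha.Eqn1} as a single scalar equation in $\xi_\omega$ alone (with $\al_\omega,\be_\omega$ then \emph{derived} via \eqref{Lem:DefAlpha.Eqn2}) rather than as a coupled system, otherwise the ``uniqueness'' claim is ambiguous. The main obstacle, modest as it is, will be establishing the strict monotonicity of $F$ on all of $\cointerval{1}{\infty}$ (not merely for large $\xi$) to get a genuinely unique root: for small-to-moderate $\xi$ one cannot rely on the limiting asymptotics and must instead use the explicit sign information $\bar{Y}>0$, $\bar{Y}^{\prime}>0$, $\bar{Y}^{\prime\prime}>0$ from Lemma~\ref{Lem:AsympODE.Props} to show $\tfrac{d}{d\xi}\log F\at{\xi}=\tfrac{G^{\prime}\at{\xi}}{G\at{\xi}}-\tfrac{2}{(m+2)\xi}>0$, which amounts to the inequality $\xi G^{\prime}\at{\xi}>\tfrac{2}{m+2}G\at{\xi}$, i.e. $2\xi\bar Y'(\xi)+\tfrac{2\xi^2}{(m+1)\bar Y(\xi)^{m+1}}>\tfrac{2}{m+2}\bat{\xi\bar Y'(\xi)+\bar Y(\xi)}$; since $\tfrac{2}{m+2}<1$ this is clear once one notes $\bar Y(\xi)\le \xi\bar Y'(\xi)+\bar Y(0)=\xi\bar Y'(\xi)+1$ by convexity, so the left side dominates $\tfrac{2}{m+2}$ times the right provided $\xi\ge 1$ (using $\bar Y'(\xi)\ge\bar Y'(1)>0$). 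Everything else is a matter of combining \eqref{Eqn:KeyIdentity} with the quantitative tail bounds of Lemma~\ref{Lem:AsympODE.Props}.
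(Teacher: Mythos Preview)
Your proposal is correct and follows essentially the same route as the paper's proof: both hinge on the scalar equation \eqref{Lem:DefAlpha.Eqn1} in $\xi_\omega$ alone, the asymptotics of $\bar{Y}$ from Lemma~\ref{Lem:AsympODE.Props}, and the algebraic identity you call \eqref{Eqn:KeyIdentity} (which the paper derives in the equivalent form $\at{2\xi_\omega\delta_\omega}^{m/\at{m+2}}=\nat{\bar{Y}^\prime\at{\infty}+\Do{1}}^{-1}$). The paper presents the existence/uniqueness slightly differently---as a two-curve intersection on $\cointerval{0}{\infty}$ yielding exactly two roots, of which the larger is selected---whereas you divide through and study $F\at{\xi}=\delta_\omega^{-m/\at{m+2}}$ on $\cointerval{1}{\infty}$; these are just two views of the same picture.

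One minor point: your attempt to prove strict monotonicity of $F$ on \emph{all} of $\cointerval{1}{\infty}$ is unnecessary and the final inequality you sketch is not quite closed. Since the lemma only asserts uniqueness for sufficiently large $\omega$, and $F$ is continuous and bounded on compacta, any root of $F\at{\xi}=\delta_\omega^{-m/\at{m+2}}$ must lie in the region where $\xi$ is large; there your asymptotic argument $F\at{\xi}\sim c\,\xi^{m/\at{m+2}}$ already gives strict monotonicity. You can simply drop the paragraph about $\tfrac{d}{d\xi}\log F$ on moderate $\xi$.
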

\begin{proof}
The left hand side in \eqref{Lem:DefAlpha.Eqn1} is a strictly increasing and asymptotically linear function in its argument $\xi_\omega\geq0$, attains a positive minimum at the origin, and involves the small prefactor $\delta_\omega^{2/\at{m+1}}$. On the other hand, the right hand side is concave, strictly increasing, and independent of $\delta_\omega$. It also vanishes at the origin and grows sublinearly. We therefore conclude for all sufficiently large $\omega$ that there exists precisely two solutions $\xi_{1,\omega}$ and $\xi_{2,\omega}$ to \eqref{Lem:DefAlpha.Eqn1}, which depend smoothly on $\omega$ and satisfy
\begin{align*}
0<\xi_{1,\omega}<\xi_{2,\omega}<\infty\,,\qquad \lim_{\omega\to\infty}\xi_{1,\omega}=0\,,\qquad
\lim_{\omega\to\infty}\xi_{2,\omega}=\infty\,.
\end{align*}
We set $\xi_\omega:=\xi_{2,\omega}$ and in view of the asymptotic properties of $\bar{Y}$ from Lemma~\ref{Lem:AsympODE.Props} we reformulate \eqref{Lem:DefAlpha.Eqn1} as
\begin{align*}
2\xi_\omega \Bat{\bar{Y}^\prime\at{\infty}+\DO{\xi_\omega^{-m}}}-\Bat{\bar{Y}^\flat\at{\infty}+\DO{\xi_\omega^{-m+1}}}=\bat{2\xi_\omega}^{\frac{2}{m+2}}\delta_\omega^{-\frac{m}{m+2}}\,.
\end{align*}
This implies
\begin{align*}
\at{2\xi_\omega\delta_\omega}^{\frac{m}{m+2}}=\at{\bar{Y}^\prime\at{\infty}-   \at{2\xi_\omega}^{-1}
\bar{Y}^\flat\at{\infty}+\DO{\xi_\omega^{-m}}}^{-1}
\end{align*}
and yields the desired results by direct computations and thanks to the asymptotic formulas from Lemma~\ref{Lem:AsympODE.Props}.
\end{proof}
Our subsequent analysis relies on the ansatz
\begin{align}
\label{Eqn:ScaledProfiles}
\tilde{R}_\omega\at{\tilde{x}}:= R_\omega\bat{\delta_\omega\beta_\omega\tilde{x}}\,,\qquad\qquad
\tilde{V}_\omega\at{\tilde{x}}:= V_\omega\bat{\delta_\omega\beta_\omega\tilde{x}}
\end{align}
which scales the space variable but not the amplitudes, and the integral equations in \eqref{Eqn:NonlFPDist} transform into
\begin{align}
\label{Eqn:ScaledTWEqnForDistances}
\tilde{R}_\omega = \tilde{\chi}_\omega\ast\tilde{\chi}_\omega\ast \Bat{\delta_\omega^{m+2}\al_\omega^{m+2}\Phi^\prime\bat{\tilde{R}_\omega}}
\end{align}
and
\begin{align}
\label{Eqn:ScaledTWVelocities}
\tilde{V}_\omega = -\tilde{\chi}_\omega\ast \Bat{\delta_\omega^{m/2+1}\al_\omega^{m/2+1}\Phi^\prime\bat{\tilde{R}_\omega}}\,.
\end{align}
Here, $\tilde{\chi}_\omega$ is the indicator function of the interval
\begin{align}
\label{Eqn:DefChiAndI}
 I_\omega:=\ccinterval{-\xi_\omega}{+\xi_\omega}\,,
\end{align}
which corresponds to the interval $\ccinterval{-1/2}{+1/2}$ with respect to the unscaled space variable $x$, and we readily verify
\begin{align}
\label{Eqn:ScaledKernel}
\bat{\tilde{\chi}_\omega\ast\tilde{\chi}_\omega}\at{\tilde{x}}= \max\big\{ 2\xi_\omega-\abs{\tilde{x} },0\big\}\,.
\end{align}
  In other words,    the convolution kernel in \eqref{Eqn:ScaledTWEqnForDistances} is a tent map with height $2\xi_\omega$ and base length $4\xi_\omega$.   For later use we also introduce a modified discrete Laplacian by
\begin{align}
\label{Eqn:DefModLapl}
\bat{\tilde{\Delta}_{w}\tilde{U}}\at{\tilde{x}}=\mhexp{+w}\tilde{U}\at{\tilde{x}+2\xi_\omega}+
\mhexp{-w}\tilde{U}\at{\tilde{x}-2\xi_\omega}-2\tilde{U}\at{\tilde{x}}\,,
\end{align}
where $w\in\Cset$ denotes an arbitrary weight parameter.
\par
As illustrated in Figure~\ref{Fig:Profiles}, the profiles $\Phi^\prime\bat{\tilde{R}_\omega}$ and $\Phi^{\prime\prime}\bat{\tilde{R}_\omega}$ possess  very large amplitudes
in the limit $\om\to\infty$. We therefore define
\begin{align}
\label{Eqn:DefTildeP}
\tilde{P}_\omega\at{\tilde{x}}:=\delta_\omega^{m+2}\al_\omega^{m+2}\Phi^{\prime\prime}
\bat{\tilde{R}_\omega\at{\tilde{x}}}\,,\qquad
\tilde{K}_\omega := \delta_\omega^{m+1}\al_\omega^{m+1}\Phi^{\prime}
\bat{\tilde{R}_\omega\at{\tilde{x}}}
\end{align}
and expect --- according to the informal discussion in \S\ref{sect:Heuristics} --- that
\begin{align}
\label{Eqn:FormalResultA}
\tilde{P}_\omega\bat{\tilde{x}}\approx\bar{P}\at{\tilde{x}}
\qquad \text{and}\qquad
\tilde{K}_\omega\bat{\tilde{x}}\approx\tilde{K}\at{\tilde{x}}
\qquad \text{for}\quad \tilde{x}\in I_\omega\,,
\end{align}
where the limit profiles
\begin{align}
\label{Eqn:DefBarPZ}
\bar{P}\at{\tilde{x}}:=\frac{1}{\bar{Y}\at{\tilde{x}}^{m+2}}\,,\qquad
\bar{K}\at{\tilde{x}}:=-\frac{1}{\at{m+1}\bar{Y}\at{\tilde{x}}^{m+1}}
\end{align}
can be computed from the unique solution of the asymptotic shape ODE \eqref{Lem:AsympODE.Props.Eqn1}.
\par
We finally mention that the amplitude of $\tilde{R}_\omega$ remains bounded as $\om\to\infty$ while $\tilde{V}_\omega$ attains values of order $\DO{\omega}$. We decided not to normalize $\tilde{V}_\omega$ because this would complicate the notations in \S\ref{sect:Stability}   but we   always display normalized velocities in the plots of the numerical data, see for instance Figure~\ref{Fig:WavesVel}.
%
%
%
\subsection{Explicit approximation formulas}\label{sect:AppForm}
%
%
We now introduce an approximate distance profile. The existence of a nearby exact solution is proven in Theorem~\ref{Thm:ExistenceNonlWaves} below, which also provides explicit bounds the approximation error.
\begin{lemma}[approximate distance profile]
\label{Lem:DefBreveR} %
The function
\begin{align}
\label{Lem:DefBreveR.Eqn0}
\breve{R}_\omega:=\tilde{\chi}_\omega\ast \tilde{\chi}_\omega\ast
\at{\tilde{\chi}_\omega\delta_\omega\al_\omega\bar{K}}
\end{align}
is even, continuously differentiable, compactly supported in $3I_\omega$, increasing for $0\leq\tilde{x}\leq 3 \xi_\omega$, convex for $0\leq\tilde{x}\leq \xi_\omega$, and concave for $\xi_\omega\leq\tilde{x}\leq 3 \xi_\omega$. It can be regarded as an approximate distance profile as it satisfies
\begin{align}
\label{Lem:DefBreveR.EqnB}
\breve{R}_\omega=\tilde{\chi}_\omega\ast \tilde{\chi}_\omega\ast
\at{\delta_\omega^{m+2}\al_\omega^{m+2}\Phi^\prime\bat{\breve{R}_\omega}+ \breve{E}_\omega}\,,\qquad \breve{E}_\omega:=
  - \delta_\omega \al_\omega \bat{\tilde{\chi}_\omega\bat{\breve{K}_\omega-\bar{K}}  - \at{1-\tilde{\chi}_\omega}\breve{K}_\omega}
\end{align}
with small error terms in the sense of
\begin{align}
\label{Lem:DefBreveR.EqnA}
\bnorm{\tilde{\chi}_\omega\bat{\breve{K}_\omega-\bar{K}}}_1\leq C\delta_\omega^{\min\{k,m\}}\,,\qquad
\bnorm{\nat{1-\tilde{\chi}_\omega}\breve{K}_\omega}_1\leq C\delta_\omega^{m}
\end{align}
and
\begin{align}
\label{Lem:DefBreveR.EqnC}
\bnorm{\tilde{\chi}_\omega\bat{\breve{P}_\omega-\bar{P}}\bar{Y}}_1\leq C\delta_\omega^{\min\{k,m\}}\,,\qquad
\bnorm{\nat{1-\tilde{\chi}_\omega}\breve{P}_\omega}_\infty\leq C\delta_\omega^{m+2}
\end{align}
for some constant $C$ independent of $\omega$, where $\breve{P}_\omega$ and $\breve{K}_\omega$ are defined in terms of $\breve{R}_\omega$ analogously to \eqref{Eqn:DefTildeP}. Moreover, we have
\begin{align}
\label{Lem:DefBreveR.Eqn2}
\breve{R}_\omega\at{\tilde{x}}=-1+\delta_\omega\al_\omega \bar{Y}\at{\tilde{x}} \qquad \text{for}\quad 0\leq \tilde{x}\leq\xi_\omega
\end{align}
and
\begin{align}
\label{Lem:DefBreveR.Eqn3}
\breve{R}_\omega\bat{\tilde{x}}=\tfrac12\delta_\omega\al_\omega\at{
\bar{Y}\nat{\xi_\omega}-\bar{Y}\at{\tilde{x}-2\xi_\omega}}-
\tfrac12\delta_\omega\al_\omega\bar{Y}^\prime\nat{\xi_\omega}\bat{
3\xi_\omega-\tilde{x}} \qquad \text{for}\quad \xi_\omega\leq \tilde{x}\leq3\xi_\omega
\end{align}
as well as
\begin{align}
\label{Lem:DefBreveR.Eqn1}
\xi_\omega\breve{R}_\omega^\prime\nat{\xi_\omega}=-\breve{R}_\omega\nat{\xi_\omega}\quad\to \quad\tfrac12\,,\qquad
\breve{R}_\omega\nat{2\xi_\omega}\quad\to \quad0\,,\qquad
\breve{R}_\omega^\prime\nat{2\xi_\omega}\quad \to \quad 0
\end{align}
as $\omega\to\infty$.
\end{lemma}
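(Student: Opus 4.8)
The plan is to exploit the two-fold convolution structure in \eqref{Lem:DefBreveR.Eqn0}. Since $\tilde{\chi}_\omega$ is even and, by Lemma~\ref{Lem:AsympODE.Props}, $\bar{Y}$ and hence $\bar{K}=-\tfrac12\bar{Y}''$ (use \eqref{Eqn:DefBarPZ} and \eqref{Lem:AsympODE.Props.Eqn1}) are even, the function $\breve{R}_\omega$ is even; its support lies in $3I_\omega$ because $\tilde{\chi}_\omega\ast\tilde{\chi}_\omega$ is supported in $2I_\omega$ by \eqref{Eqn:ScaledKernel} while $\tilde{\chi}_\omega\bar{K}$ is supported in $I_\omega$; and $\breve{R}_\omega\in\fspaceC^1$ because the first convolution already produces a Lipschitz function and a further convolution with $\tilde{\chi}_\omega$ then yields a continuously differentiable one. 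The key point is that differentiating \eqref{Eqn:ScaledKernel} twice in the distributional sense turns the kernel $\tilde{\chi}_\omega\ast\tilde{\chi}_\omega$ into the centered second difference of step $2\xi_\omega$, that is, the operator $\tilde{\Delta}_0$ from \eqref{Eqn:DefModLapl} with weight $w=0$, so that
\begin{align*}
\breve{R}_\omega''=\delta_\omega\al_\omega\,\tilde{\Delta}_0\bat{\tilde{\chi}_\omega\bar{K}}\,.
\end{align*}

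On $I_\omega$ both shifted terms vanish, whence $\breve{R}_\omega''=-2\delta_\omega\al_\omega\bar{K}=\delta_\omega\al_\omega\bar{Y}''$; integrating twice and discarding the linear term by evenness gives $\breve{R}_\omega=\delta_\omega\al_\omega\bar{Y}+b$ on $I_\omega$, and evaluating the convolution \eqref{Lem:DefBreveR.Eqn0} at $\tilde{x}=0$ while using $\bar{Y}(0)=1$, $\bar{Y}'(0)=0$ and the calibration identity
\begin{align*}
\delta_\omega\al_\omega\bat{\bar{Y}\nat{\xi_\omega}+\xi_\omega\bar{Y}'\nat{\xi_\omega}}=1\,,
\end{align*}
which follows by multiplying \eqref{Lem:DefAlpha.Eqn1} and \eqref{Lem:DefAlpha.Eqn2}, forces $b=-1$; this is \eqref{Lem:DefBreveR.Eqn2}. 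For $\xi_\omega<\tilde{x}<3\xi_\omega$ only the term $(\tilde{\chi}_\omega\bar{K})(\tilde{x}-2\xi_\omega)$ survives, so $\breve{R}_\omega''(\tilde{x})=-\tfrac12\delta_\omega\al_\omega\bar{Y}''(\tilde{x}-2\xi_\omega)$; integrating twice and fixing the two constants by $\breve{R}_\omega(3\xi_\omega)=\breve{R}_\omega'(3\xi_\omega)=0$ (compact support together with $\fspaceC^1$) yields \eqref{Lem:DefBreveR.Eqn3}, which matches \eqref{Lem:DefBreveR.Eqn2} at $\tilde{x}=\xi_\omega$ exactly because of the same calibration. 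The remaining qualitative assertions are then read off directly: $\breve{R}_\omega''>0$ on $I_\omega$ (convex), $\breve{R}_\omega''<0$ on $(\xi_\omega,3\xi_\omega)$ (concave), and $\breve{R}_\omega'\geq0$ on $[0,3\xi_\omega]$ because $\breve{R}_\omega'=\delta_\omega\al_\omega\bar{Y}'\geq0$ on $I_\omega$ while $\breve{R}_\omega'=\tfrac12\delta_\omega\al_\omega\bat{\bar{Y}'\nat{\xi_\omega}-\bar{Y}'(\tilde{x}-2\xi_\omega)}\geq0$ on $[\xi_\omega,3\xi_\omega]$, using that $\bar{Y}'$ is odd and increasing so that $\babs{\bar{Y}'}\leq\bar{Y}'\nat{\xi_\omega}$ on $I_\omega$. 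Finally \eqref{Lem:DefBreveR.Eqn1} drops out of \eqref{Lem:DefBreveR.Eqn2} and \eqref{Lem:DefBreveR.Eqn3} evaluated at $\tilde{x}=\xi_\omega$ and $\tilde{x}=2\xi_\omega$, together with \eqref{Lem:DefAlpha.Eqn3}, $\delta_\omega\al_\omega\to0$ and $\bar{Y}'\nat{\xi_\omega}\to\bar{Y}'\at{\infty}$.

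The identity \eqref{Lem:DefBreveR.EqnB} is then pure bookkeeping: writing $\delta_\omega^{m+2}\al_\omega^{m+2}\Phi'\bat{\breve{R}_\omega}=\delta_\omega\al_\omega\breve{K}_\omega$, a short computation with the definition of $\breve{E}_\omega$ shows $\delta_\omega^{m+2}\al_\omega^{m+2}\Phi'\bat{\breve{R}_\omega}+\breve{E}_\omega=\delta_\omega\al_\omega\tilde{\chi}_\omega\bar{K}$, and convolving with $\tilde{\chi}_\omega\ast\tilde{\chi}_\omega$ reproduces \eqref{Lem:DefBreveR.Eqn0}. For the error bounds I would split at $\tilde{x}=\pm\xi_\omega$. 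On $I_\omega$, \eqref{Lem:DefBreveR.Eqn2} gives $1+\breve{R}_\omega=\delta_\omega\al_\omega\bar{Y}$, which by \eqref{Lem:DefAlpha.Eqn3} ranges between $\delta_\omega\al_\omega$ and a quantity tending to $\tfrac12$, hence $\breve{R}_\omega\in\oointerval{-1}{-1/4}$ for large $\omega$ and the singular expansions from Assumption~\ref{Ass.Pot}, namely $\Phi'(r)=-\tfrac{1}{(m+1)(1+r)^{m+1}}+\DO{(1+r)^{k-m-1}}$ and $\Phi''(r)=\tfrac{1}{(1+r)^{m+2}}+\DO{(1+r)^{k-m-2}}$, apply. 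Substituting $1+r=\delta_\omega\al_\omega\bar{Y}(\tilde{x})$ and using that $\al_\omega$ is bounded above and below (Lemma~\ref{Lem:DefAlpha}) yields $\breve{K}_\omega-\bar{K}=\DO{\delta_\omega^{k}\bar{Y}^{k-m-1}}$ and $\breve{P}_\omega-\bar{P}=\DO{\delta_\omega^{k}\bar{Y}^{k-m-2}}$ on $I_\omega$, and integrating the first estimate and the second one weighted by $\bar{Y}$ reduces both bounds to controlling $\delta_\omega^{k}\int_{-\xi_\omega}^{\xi_\omega}\bar{Y}(\tilde{x})^{k-m-1}\dint{\tilde{x}}$. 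Off $I_\omega$, monotonicity together with \eqref{Lem:DefBreveR.Eqn1} shows that $\breve{R}_\omega$ takes values in a fixed compact subinterval of $\ocinterval{-1}{0}$, so $\Phi'\bat{\breve{R}_\omega}$ and $\Phi''\bat{\breve{R}_\omega}$ are bounded; hence $\babs{\breve{K}_\omega}\leq C\delta_\omega^{m+1}$ and $\babs{\breve{P}_\omega}\leq C\delta_\omega^{m+2}$ there, and since $\bat{1-\tilde{\chi}_\omega}\breve{K}_\omega$ is supported in $\xi_\omega<\abs{\tilde{x}}\leq3\xi_\omega$, a set of length $\DO{\xi_\omega}=\DO{\delta_\omega^{-1}}$ by \eqref{Lem:DefAlpha.Eqn2}, this produces the second ($\fspaceL^1$) bound in \eqref{Lem:DefBreveR.EqnA} and the second ($\fspaceL^\infty$) bound in \eqref{Lem:DefBreveR.EqnC}.

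The \emph{main obstacle} is the sharp estimate of $\delta_\omega^{k}\int_{-\xi_\omega}^{\xi_\omega}\bar{Y}^{k-m-1}$. Since $\bar{Y}(\tilde{x})\sim\abs{\tilde{x}}$ as $\abs{\tilde{x}}\to\infty$ by Lemma~\ref{Lem:AsympODE.Props} and $\bar{Y}\geq1$, this integral is $\DO{1}$ when $k<m$ but grows like $\xi_\omega^{k-m}\sim\delta_\omega^{m-k}$ when $k>m$, so that in either case the product is $\DO{\delta_\omega^{\min\{k,m\}}}$, which gives the first inequalities in \eqref{Lem:DefBreveR.EqnA} and \eqref{Lem:DefBreveR.EqnC}. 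It is precisely here that the hypothesis $k\neq m$ enters: for $k=m$ the integral is of order $\log\xi_\omega\sim\abs{\log\delta_\omega}$ and one only obtains $\DO{\delta_\omega^{m}\abs{\log\delta_\omega}}$, the logarithmic correction announced in the discussion following this lemma. Everything else — the convolution computations, the matching of the piecewise formulas at $\tilde{x}=\pm\xi_\omega$ and $\tilde{x}=\pm3\xi_\omega$, and the elementary bounds off $I_\omega$ — is routine once the calibration identity $\delta_\omega\al_\omega\bat{\bar{Y}\nat{\xi_\omega}+\xi_\omega\bar{Y}'\nat{\xi_\omega}}=1$ has been secured.
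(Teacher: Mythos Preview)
Your proposal is correct and follows essentially the same approach as the paper: both compute $\breve{R}_\omega''=\delta_\omega\al_\omega\tilde{\Delta}_0\bat{\tilde{\chi}_\omega\bar{K}}$, use the calibration $\delta_\omega\al_\omega\bat{\bar{Y}\nat{\xi_\omega}+\xi_\omega\bar{Y}'\nat{\xi_\omega}}=1$ to pin down the additive constant, and handle the error estimates by invoking Assumption~\ref{Ass.Pot} on $I_\omega$ and the regularity of $\Phi$ on the compact range of $\breve{R}_\omega$ off $I_\omega$. The only cosmetic difference is that the paper fixes the integration constants in \eqref{Lem:DefBreveR.Eqn3} via $\fspaceC^1$-matching at $\tilde{x}=\xi_\omega$, whereas you use the vanishing of $\breve{R}_\omega$ and $\breve{R}_\omega'$ at $\tilde{x}=3\xi_\omega$; both yield the same formula.
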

\begin{proof}
\emph{\ul{Formulas for $\breve{R}_\omega$}}: %
By construction, see \eqref{Lem:AsympODE.Props.Eqn1}+\eqref{Eqn:ScaledKernel}+\eqref{Eqn:DefBarPZ}+\eqref{Lem:DefBreveR.Eqn0}, the function $\breve{R}_\omega$ satisfies
\begin{align}
\label{Lem:DefBreveR.PEqn1}
\breve{R}_\omega^{\prime\prime}\at{\tilde{x}}=-2\delta_\omega\al_\omega
\bar{K}\at{\tilde{x}}=\delta_\omega\al_\omega\bar{Y}^{\prime\prime}\at{\tilde{x}}\qquad \text{for}\quad \tilde{x}\in I_\omega\,,
\end{align}
and
\begin{align}
\label{Lem:DefBreveR.PEqn2}
\begin{split}
\breve{R}_\omega\at{0}&=2  \delta_\omega\al_\omega  \int\limits_0^{\xi_\omega}\bat{2\tilde{\xi_\omega}-\tilde{x}}
\bar{K}\at{\tilde{x}}\dint{\tilde{x}}=
-\delta_\omega\al_\omega\int\limits_0^{\xi_\omega}\bat{2\tilde{\xi_\omega}-\tilde{x}}
\bar{Y}^{\prime\prime}\at{\tilde{x}}\dint{\tilde{x}}
\\&=
-2\delta_\omega\al_\omega \xi_\omega \bar{Y}^{\prime}\nat{\xi_\omega}+
\delta_\omega\al_\omega \bat{\bar{Y}^\flat\nat{\xi_\omega}-\bar{Y}^\flat\nat{0}}
=-\delta_\omega\al_\omega\bat{\xi_\omega\bar{Y}^{\prime}\nat{\xi_\omega}+ \bar{Y}\nat{\xi_\omega}}+\delta_\omega\al_\omega\\&
=-1+\delta_\omega\al_\omega\,,
\end{split}
\end{align}
where the last equality holds due to \eqref{Lem:DefAlpha.Eqn1}, i.e., by our choice of $\al_\omega$ and $\xi_\omega$. The combination of \eqref{Lem:DefBreveR.PEqn1} and \eqref{Lem:DefBreveR.PEqn2} gives the representation formula \eqref{Lem:DefBreveR.Eqn2},
the smoothness of $\breve{R}_\omega$ is a consequence of $\breve{R}_\omega^{\prime\prime}= \delta_\om\alpha_\om \tilde{\Delta}_{0}\bat{\tilde{\chi}_\omega \bar{K}}$ with $\tilde{\Delta}_0$ as in \eqref{Eqn:DefModLapl}, and $\supp\,\breve{R}_\omega\subseteq 3I_\omega$ is provided by \eqref{Lem:DefBreveR.Eqn0} and $\supp\, \tilde{\chi}_\omega=I_\omega$. Moreover, \eqref{Lem:DefBreveR.Eqn3} follows in view of \eqref{Lem:DefBreveR.Eqn2}+\eqref{Lem:DefBreveR.PEqn1} from
\begin{align*}
\breve{R}_\omega^{\prime\prime}\at{\tilde{x}}=-\tfrac12 \breve{R}_\omega^{\prime\prime}\at{\tilde{x}-2\xi_\om}=-\tfrac12\delta_\omega\alpha_\omega \bar{Y}_\omega^{\prime\prime}\at{\tilde{x}-2\xi_\om}\qquad
\text{for}\quad \xi_\omega\leq \tilde{x}\leq3\xi_\omega
\end{align*}
thanks to the smoothness of $\breve{R}_\om$ at $\tilde{x}=\xi_\om$, and \eqref{Lem:DefBreveR.Eqn1} can be concluded from Lemma~\ref{Lem:AsympODE.Props} and Lemma~\ref{Lem:DefAlpha}. Finally, the claimed monotonicity properties of $\breve{R}_\omega$ and its derivative are implied by \eqref{Lem:DefBreveR.Eqn2} and \eqref{Lem:DefBreveR.Eqn3}.
\par
\emph{\ul{Error estimates}}: %
The properties of $\xi_\omega$ and $\bar{Y}$ ensure
\begin{align*}
-1\leq\breve{R}_\omega\at{\tilde{x}}=-1+\delta_\omega\al_\omega\bar{Y}\at{\tilde{x}}\leq
  -1+\delta_\omega\al_\omega\bar{Y}\at{\xi_\omega}
\leq   -\tfrac14\quad\text{for}\quad \abs{\tilde{x}}\leq\xi_\omega\,,
\end{align*}
  where we also used the convergence results in \eqref{Lem:DefAlpha.Eqn3}.
By   Assumption~\ref{Ass.Pot} we get
\begin{align}
\label{Lem:DefBreveR.PEqn3}
\babs{\breve{K}_\omega\at{\tilde{x}}-\bar{K}\at{\tilde{x}}}\leq C\delta_\omega^{k}\bar{Y}\at{\tilde{x}}^{-m-1+k}
\leq C\delta_\omega^{k}\at{1+\abs{\tilde{x}}}^{-m-1+k}
\qquad \text{for}\quad \abs{\tilde{x}}\leq\xi_\omega
\end{align}
and hence --- for both $k<m$ and $k>m$ --- the desired estimate \eqref{Lem:DefBreveR.EqnA}$_1$ after integration (see also the comment below). Moreover, thanks to
\begin{align*}
-\tfrac{3}{4}\leq\breve{R}_\omega\at{\tilde{x}}\leq0 \quad\text{for}\quad \xi_\omega\leq \abs{\tilde{x}}\leq3\xi_\omega\,,\qquad \breve{R}_\omega\at{\tilde{x}}=0
\quad\text{for}\quad \abs{\tilde{x}}\geq3\xi_\omega
\end{align*}
and the regularity properties of $\Phi$ we find
\begin{align*}
\babs{\breve{K}_\omega\at{\tilde{x}}}\leq
C\delta_\omega^{m+1}\babs{\breve{R}_\omega\at{\tilde{x}}}\leq
C\delta_\omega^{m+1} \quad\text{for}\quad \xi_\omega\leq \abs{\tilde{x}}\leq 3\xi_\omega\,,\qquad \breve{K}_\omega\at{\tilde{x}}=0
\quad\text{for}\quad \abs{\tilde{x}}\geq3\xi_\omega\,,
\end{align*}
  so the claim \eqref{Lem:DefBreveR.EqnA}$_2$ follows   after integration over $\Rset\setminus I_\omega$. In the same way we demonstrate
\begin{align}
\label{Lem:DefBreveR.PEqn4}
\babs{\breve{P}_\omega\at{\tilde{x}}-\bar{P}\at{\tilde{x}}}\bar{Y}\at{\tilde{x}}\leq C\delta_\omega^{k}\bar{Y}\at{\tilde{x}}^{-m-1+k}
\quad \text{for}\quad \abs{\tilde{x}}\leq\xi_\omega
\end{align}
as well as
\begin{align*}
\babs{\breve{P}_\omega\at{\tilde{x}}}\leq C\delta_{\omega}^{m+2}\quad \text{for}\quad \abs{\tilde{x}}\geq\xi_\omega
\end{align*}
to obtain \eqref{Lem:DefBreveR.EqnC}. Finally, the identity \eqref{Lem:DefBreveR.EqnB} holds by construction and the proof is complete.
\end{proof}
The proof of Lemma~\ref{Lem:DefBreveR} exploits --- see the arguments after \eqref{Lem:DefBreveR.PEqn3} and \eqref{Lem:DefBreveR.PEqn4} --- the elementary estimate
\begin{align*}
\xi_\omega^{-k}\int\limits_0^{\xi_\omega}{\at{1+\tilde{x}}}^{k-m-1}\dint{\tilde{x}}
\leq C \left\{\begin{array}{lcl}
\xi_\omega^{-k}&&\text{for $k<m$}\\
\xi_\omega^{-m}\ln\xi_\omega&&\text{for $k=m$}\\
\xi_\omega^{-m}&&\text{for $k>m$}\\
\end{array}\right.
\end{align*}
for either $k< m$ or $k>m$, and this gives rise to the error terms in both \eqref{Lem:DefBreveR.EqnA}$_1$ and
\eqref{Lem:DefBreveR.EqnC}$_1$. However, the assertions of Lemma~\ref{Lem:DefBreveR} --- and hence all asymptotic results derived below --- remain valid for $k=m$ provided that we replace $\delta_\omega^{\min\{{k,m}\}}$  by $\delta_\omega^m\babs{\ln \delta_\omega}$.
\par
We further emphasize that \eqref{Lem:DefBreveR.Eqn2} can be viewed as the analogue to the heuristic formulas \eqref{Eqn:DefTildeY}+\eqref{Eqn:Overview.2}+\eqref{Eqn:Overview.3} as it links  $\breve{R}_\omega$ to $\bar{Y}$, the solution of the asymptotic ODE initial value problem \eqref{Lem:AsympODE.Props.Eqn1}. Moreover, differentiating \eqref{Lem:DefBreveR.Eqn3} gives the analogue to \eqref{Eqn:Overview.9}, and we conclude that the ad hoc definition of $\breve{R}_\omega$ in \eqref{Lem:DefBreveR.Eqn0} encodes all informal ODE arguments from the heuristic discussion in \S\ref{sect:Heuristics}.
\par
Finally, $\breve{R}_\omega$ is completely determined by $\omega$ and $m$, and the corresponding approximation to the unscaled distance profile $R_\omega$ can be obtained as follows:
\begin{enumerate}
\item
compute $\bar{Y}$ as the solution to   the   initial value problem \eqref{Lem:AsympODE.Props.Eqn1} with parameter $m$,
\item
determine $\xi_\omega$ by solving the nonlinear but scalar equation \eqref{Lem:DefAlpha.Eqn1}, which also fixes the values of $\al_\omega$,  $\beta_\omega$ via \eqref{Lem:DefAlpha.Eqn2},
\item
construct both the scaled `tip of the tent' and   the   scaled 'base of the tent' from the explicit formulas \eqref{Lem:DefBreveR.Eqn2}
and \eqref{Lem:DefBreveR.Eqn3} for $\tilde{x}\in I_\omega$ and $\tilde{x}\in 3I_\omega \setminus I_\omega$, respectively, with $I_\omega$ as in \eqref{Eqn:DefChiAndI},
\item
glue the local profiles together, extend trivially, and pass to the original space variable $x$ by scaling according to \eqref{Eqn:SpaceScalign} and \eqref{Eqn:ScaledProfiles}.
\end{enumerate}
We used this strategy to compute the ODE approximations in Figures~\ref{Fig:WavesDist} and~\ref{Fig:WavesErr}.
%
%
%
\subsection{Simplified linear traveling wave equation}\label{sect:AuxProb}
%
In this section we state and prove the key technical result in the chapter on nonlinear waves, which below allows us to demonstrate the existence, uniqueness, and smooth $\omega$-dependence of high-energy waves, and to validate the explicit approximation formulas from the previous section.
\begin{lemma}[solvability of a nonlocal auxiliary problem]
\label{Lem:JordanAuxRes} %
For fixed $\omega$ and any given $\tilde{F}\in\fspaceL^1_\even$ there exists a unique solution $\tilde{U}\in\fspaceL^1_\even$ to
\begin{align}
\label{Lem:JordanAuxRes.Eqn1} %
\tilde{U}=\tilde{\chi}_\omega\ast\tilde{\chi}_\omega\ast \bat{\tilde{\chi}_\omega\bar{P}\,\tilde{U}+\tilde{F}}\,,
\end{align}
which is continuously differentiable and satisfies
\begin{align}
\label{Lem:JordanAuxRes.Eqn2} %
 \bnorm{\tilde{U}}_{*,\,\omega}:=\babs{\tilde{U}\at{0}}+ \bnorm{\tilde{U}^{\prime\prime}}_1+
\delta_\omega^2\bnorm{\tilde{U}}_1+\delta_\omega\bnorm{\tilde{U}}_\infty\leq C\bnorm{\tilde{F}}_1
\end{align}
for some constant $C$ depending only on the parameter $m$. Moreover,
we have
\begin{align}
\label{Lem:JordanAuxRes.Eqn3a}
\xi_\omega\tilde{U}^\prime\nat{\xi_\omega}+\tilde{U}\nat{\xi_\omega}=0\,,\qquad
\supp\,\tilde{U}\subseteq 3 I_\omega
\end{align}
and
\begin{align}
\label{Lem:JordanAuxRes.Eqn3b}
\tilde{U}\at{x}=\tfrac12 \tilde{U}\at{\xi_\omega}-\tfrac12\tilde{U}\at{\tilde{x}-2\xi_\omega}+\tfrac12\tilde{U}^\prime\at{\xi_\omega}\at{\tilde{x}-3\xi_\omega}\quad \text{for}\quad \xi_\omega\leq\tilde{x}\leq 3\xi_\omega
\end{align}
provided that $\tilde{F}$ is supported in $I_\omega$.
\end{lemma}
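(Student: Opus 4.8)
The plan is to convert the nonlocal equation \eqref{Lem:JordanAuxRes.Eqn1} into an equivalent problem built from a forced version of the linearized shape ODE \eqref{Lem:LinODE.Props.Eqn1} on the tip interval $I_\omega$, an explicit reflection recursion on $3I_\omega\setminus I_\omega$, and a single scalar matching condition, and then to solve this by splitting the solution along the fundamental system $\bar{T}_\even,\bar{T}_\odd$ from Lemma~\ref{Lem:LinODE.Props}.

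First I would differentiate \eqref{Lem:JordanAuxRes.Eqn1} twice. Since $\bat{\tilde{\chi}_\omega\ast\tilde{\chi}_\omega\ast\tilde{G}}^{\prime\prime}=\tilde{\Delta}_0\tilde{G}$ with $\tilde{\Delta}_0$ as in \eqref{Eqn:DefModLapl}, every $\tilde{U}\in\fspaceL^1_\even$ solving \eqref{Lem:JordanAuxRes.Eqn1} is automatically continuously differentiable and obeys $\tilde{U}^{\prime\prime}=\tilde{\Delta}_0\bat{\tilde{\chi}_\omega\bar{P}\,\tilde{U}+\tilde{F}}$. On the open interval $\oointerval{-\xi_\omega}{+\xi_\omega}$, where $\tilde{\chi}_\omega\equiv1$ and the shifted arguments $\tilde{x}\pm2\xi_\omega$ leave $I_\omega$, this collapses to
\[
\tilde{U}^{\prime\prime}\at{\tilde{x}}+\tfrac{2}{\bar{Y}\at{\tilde{x}}^{m+2}}\,\tilde{U}\at{\tilde{x}}=\bat{\tilde{\Delta}_0\tilde{F}}\at{\tilde{x}}\,,
\]
i.e. the inhomogeneous counterpart of \eqref{Lem:LinODE.Props.Eqn1} with an $\fspaceL^1$-right hand side of norm at most $4\norm{\tilde{F}}_1$, while on $3I_\omega\setminus I_\omega$ and on $\abs{\tilde{x}}>3\xi_\omega$ one obtains the recursion $\tilde{U}^{\prime\prime}\at{\tilde{x}}=-\tfrac12\tilde{U}^{\prime\prime}\at{\tilde{x}-2\xi_\omega}$ up to explicit shifts of $\tilde{F}$ that vanish when $\supp\,\tilde{F}\subseteq I_\omega$. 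Evaluating \eqref{Lem:JordanAuxRes.Eqn1} at $\tilde{x}=0$, using $\bat{\tilde{\chi}_\omega\ast\tilde{\chi}_\omega}\at{\tilde{y}}=\max\{2\xi_\omega-\abs{\tilde{y}},0\}$ and evenness, and integrating by parts twice with $\tilde{U}^\prime\at{0}=0$ yields the single scalar identity
\[
\xi_\omega\tilde{U}^\prime\at{\xi_\omega}+\tilde{U}\at{\xi_\omega}=\int\limits_{\xi_\omega}^{3\xi_\omega}\bat{4\xi_\omega-s}\tilde{F}\at{s}\dint{s}=:\Lambda\ato{\tilde{F}}\,,
\]
which reduces to \eqref{Lem:JordanAuxRes.Eqn3a}$_1$ precisely when $\tilde{F}$ is supported in $I_\omega$. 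Conversely, since $\tilde{\chi}_\omega\bar{P}\,\tilde{U}$ depends only on the restriction $\tilde{U}|_{I_\omega}$, solving the ODE on $I_\omega$ with $\tilde{U}^\prime\at{0}=0$ and $\tilde{U}\at{0}$ fixed by the scalar identity and then inserting the result into the right-hand side of \eqref{Lem:JordanAuxRes.Eqn1} produces a fixed point, i.e. a genuine $\fspaceC^1$-solution; when $\supp\,\tilde{F}\subseteq I_\omega$ the outer recursion integrates to the closed formula \eqref{Lem:JordanAuxRes.Eqn3b}, whence $\tilde{U}$ and $\tilde{U}^\prime$ vanish at $3\xi_\omega$ and both $\supp\,\tilde{U}\subseteq3I_\omega$ and \eqref{Lem:JordanAuxRes.Eqn3a} follow.

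To settle solvability and the bound \eqref{Lem:JordanAuxRes.Eqn2} I would write $\tilde{U}|_{I_\omega}=\tilde{U}\at{0}\,\bar{T}_\even+u_p$, where $u_p$ is the particular solution of the forced ODE with zero data obtained by variation of constants from $\bar{T}_\even,\bar{T}_\odd$ (their Wronskian equals $1$ by the normalization in Lemma~\ref{Lem:LinODE.Props}). The matching identity then becomes
\[
\tilde{U}\at{0}\cdot\bat{\xi_\omega\bar{T}_\even^\prime\at{\xi_\omega}+\bar{T}_\even\at{\xi_\omega}}=\Lambda\ato{\tilde{F}}-\bat{\xi_\omega u_p^\prime\at{\xi_\omega}+u_p\at{\xi_\omega}}\,,
\]
and by Lemma~\ref{Lem:LinODE.Props} the coefficient on the left equals $-2\sqrt{m/(m+1)}\,\xi_\omega+\DO{1}$, hence is nonzero for all large $\omega$ and of exact order $\xi_\omega$. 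This gives existence and uniqueness of $\tilde{U}\at{0}$, and since the right hand side is $\DO{\xi_\omega\norm{\tilde{F}}_1}$ — by the linear growth of $\bar{T}_\even$, the boundedness of $\bar{T}_\odd$, and the decay of $\bar{T}_\odd^\prime$ — one gets $\abs{\tilde{U}\at{0}}\leq C\norm{\tilde{F}}_1$. The remaining pieces of the norm follow from this and the structural formulas: $\norm{\tilde{U}}_\infty\leq C\delta_\omega^{-1}\norm{\tilde{F}}_1$ from the Green's-function representation on $I_\omega$ and the recursion outside (using $2\delta_\omega\xi_\omega\to\const$ from Lemma~\ref{Lem:DefAlpha}); $\norm{\tilde{U}}_1\leq C\delta_\omega^{-2}\norm{\tilde{F}}_1$ because $\tilde{U}$ is, up to the $\fspaceL^1$-small tail forced by $\tilde{F}$ outside $3I_\omega$, supported in an interval of length $\sim\delta_\omega^{-1}$; and $\norm{\tilde{U}^{\prime\prime}}_1\leq4\norm{\tilde{\chi}_\omega\bar{P}\,\tilde{U}}_1+4\norm{\tilde{F}}_1\leq C\norm{\tilde{F}}_1$, the point being that $\norm{\bar{P}}_1$ and $\norm{\bar{P}\,\bar{T}_\even}_1$ are finite constants depending only on $m$ since $\bar{P}\sim\abs{\tilde{x}}^{-m-2}$ and $\bar{P}\,\bar{T}_\even\sim\abs{\tilde{x}}^{-m-1}$ are integrable. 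Uniqueness in $\fspaceL^1_\even$ is automatic, as the reduction above shows that any solution is determined by $\tilde{U}\at{0}$, which is pinned down by the scalar equation.

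The main obstacle is the uniform-in-$\omega$ bound \eqref{Lem:JordanAuxRes.Eqn2} with a constant depending only on $m$. It rests on two facts. First, the matching coefficient $\xi_\omega\bar{T}_\even^\prime\at{\xi_\omega}+\bar{T}_\even\at{\xi_\omega}$ must not degenerate, i.e. one genuinely needs $\bar{T}_\even^\prime\at{\infty}=-\sqrt{m/(m+1)}\neq0$ from Lemma~\ref{Lem:LinODE.Props}, so that $\tilde{U}\at{0}$ picks up an inverse coefficient of size $\sim\xi_\omega^{-1}\sim\delta_\omega$; this is precisely why the weights $\delta_\omega^2\norm{\tilde{U}}_1$ and $\delta_\omega\norm{\tilde{U}}_\infty$ — rather than their $\delta_\omega$-free analogues — appear in $\norm{\cdot}_{*,\,\omega}$. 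Second, all powers of $\xi_\omega\sim\delta_\omega^{-1}$ entering through the tent kernel of height $2\xi_\omega$ and through the linearly growing kernel $\bar{T}_\even$ in the variation-of-constants formula must be tracked and seen to cancel against these $\delta_\omega$-weights, which forces repeated use of the $m$-only integrability of $\bar{P}$ and $\bar{P}\,\bar{T}_\even$. A secondary difficulty is the case $\supp\,\tilde{F}\not\subseteq I_\omega$, where the reflection recursion carries a nonzero $\tilde{F}$-forcing whose contribution to each of the four terms of $\norm{\cdot}_{*,\,\omega}$ must also be controlled by $\norm{\tilde{F}}_1$.
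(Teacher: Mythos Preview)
Your proposal is correct and follows essentially the same route as the paper: reduce \eqref{Lem:JordanAuxRes.Eqn1} to the forced linearized shape ODE on $I_\omega$, derive the scalar matching condition $\xi_\omega\tilde{U}'(\xi_\omega)+\tilde{U}(\xi_\omega)=\tilde d_\omega$ (your $\Lambda[\tilde F]$ coincides with the paper's $\tilde d_\omega$), solve for $\tilde{U}(0)$ via Duhamel and the nondegeneracy of $\xi_\omega\bar T_\even'(\xi_\omega)+\bar T_\even(\xi_\omega)\sim-\sqrt{m/(m+1)}\,\xi_\omega$, and then read off \eqref{Lem:JordanAuxRes.Eqn3a}--\eqref{Lem:JordanAuxRes.Eqn3b} from the recursion when $\supp\tilde F\subseteq I_\omega$. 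The only cosmetic difference is that the paper obtains the $\delta_\omega\|\tilde U\|_\infty$ and $\delta_\omega^2\|\tilde U\|_1$ bounds directly from Young's inequality applied to the convolution form \eqref{Lem:JordanAuxRes.Eqn1} (after first bounding $\|\tilde\chi_\omega\bar P\,\tilde U\|_1$), which sidesteps your separate ``tail outside $3I_\omega$'' discussion in the general-support case.
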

\begin{proof}
\emph{\ul{Reformulation of the problem}}: %
Since $\tilde{\chi}_\omega$  vanishes for $\abs{\tilde{x}}\geq \xi_\omega$, the even function $\tilde{U}$ is uniquely determined by its restriction to $I_\omega$. Therefore, and since $\tilde{U}$ satisfies the linear advance-delay differential equation
\begin{align}
\label{Lem:JordanAuxRes.PEqn0}
\tilde{U}^{\prime\prime}=\tilde{\Delta}_{0}\bat{\tilde{\chi}_\omega\bar{P}\,\tilde{U}+\tilde{F}}\,,
\end{align}
according to the definition of $\tilde{\chi}_\omega$ in \eqref{Eqn:ScaledKernel}, we conclude that \eqref{Lem:JordanAuxRes.Eqn1} is equivalent to the combination of the ODE
\begin{align}
\label{Lem:JordanAuxRes.PEqn1} %
\tilde{U}^{\prime\prime}\at{\tilde{x}}=-2\bar{P}\at{\tilde{x}}\tilde{U}\at{\tilde{x}}+  \tilde{E}_\omega\at{\tilde{x}} \qquad \text{for}\quad \tilde{x}\in I_\omega
\end{align}
and the nonlocal consistency relation
\begin{align}
\label{Lem:JordanAuxRes.PEqn2} %
\tilde{U}\at{0}=2\int\limits_0^{2\xi_\omega}\at{2\xi_\omega-\tilde{x}}\Bat{\tilde{\chi}_\omega\bat{\tilde{x}}\bar{P}\at{\tilde{x}}\tilde{U}\at{\tilde{x}}+\tilde{F}\at{\tilde{x}}}\dint{\tilde{x}}\,,
\end{align}
where we used the abbreviation
\begin{align*}
\tilde{E}_\omega:=\tilde{\Delta}_{0}\tilde{F}
\end{align*}
in \eqref{Lem:JordanAuxRes.PEqn1} and derived \eqref{Lem:JordanAuxRes.PEqn2} by evaluating the convolution integral in the claim \eqref{Lem:JordanAuxRes.Eqn1} at $x=0$ and by means of \eqref{Eqn:ScaledKernel}. In the next step we show that \eqref{Lem:JordanAuxRes.PEqn2} determines a unique value for $\tilde{U}\at{0}$ and hence a unique even solution to the initial value problem corresponding to \eqref{Lem:JordanAuxRes.PEqn1}.
\par
\emph{\ul{Existence and uniqueness of $\tilde{U} $}}: %
In view of \eqref{Lem:JordanAuxRes.PEqn1}, the consistency relation can be written as
\begin{align}
\label{Lem:JordanAuxRes.PEqn5}
\tilde{U}\at{0}=-\int\limits_0^{\xi_\omega}\at{2\xi_\omega-\tilde{x}}\tilde{U}^{\prime\prime}\at{\tilde{x}}\dint{\tilde{x}}+\tilde{d}_\omega
\end{align}
with
\begin{align*}
\tilde{d}_\omega:=
\int\limits_0^{\xi_\omega}\at{2\xi_\omega-\tilde{x}}\tilde{E}_\omega\at{\tilde{x}}\dint{\tilde{x}}+
2\int\limits_0^{2\xi_\omega}\at{2\xi_\omega-\tilde{x}}\tilde{F}\at{\tilde{x}}\dint{\tilde{x}}\,,
\end{align*}
and after direct computations in \eqref{Lem:JordanAuxRes.PEqn5} we derive the simplified formula
\begin{align}
\label{Lem:JordanAuxRes.PEqn3}
\xi_\omega\tilde{U}^{\prime}\at{\xi_\omega}+\tilde{U}\at{\xi_\omega}=\tilde{d}_\omega
\end{align}
as an equivalent reformulation of \eqref{Lem:JordanAuxRes.PEqn2}. Moreover, the Duhamel Principle applied to \eqref{Lem:JordanAuxRes.PEqn1} provides the representation formulas
\begin{align}
\label{Lem:JordanAuxRes.PEqn4a}
\tilde{U}\at{\tilde{x}}=\tilde{U}\at{0}\bar{T}_\even\at{\tilde{x}}+\int\limits_0^{\tilde{x}}\at{\bar{T}_\odd\at{\tilde{x}}\bar{T}_\even\at{\tilde{y}}-\bar{T}_\even\at{\tilde{x}}\bar{T}_\odd\at{\tilde{y}}}\tilde{E}_\omega\at{\tilde{y}}\dint{\tilde{y}}\qquad \text{for}\quad \tilde{x}\in I_\omega
\end{align}
and
\begin{align*}
\tilde{U}^\prime\at{\tilde{x}}=
\tilde{U}\at{0}\bar{T}_\even^\prime\at{\tilde{x}}+\int\limits_0^{\tilde{x}}\at{\bar{T}_\odd^\prime\at{\tilde{x}}\bar{T}_\even\at{\tilde{y}}-\bar{T}_\even^\prime\at{\tilde{x}}\bar{T}_\odd\at{\tilde{y}}}\tilde{E}_\omega\at{\tilde{y}}\dint{\tilde{y}}\qquad \text{for}\quad \tilde{x}\in I_\omega\,,
\end{align*}
which hold due to the Wronski identities
\begin{align*}
\bar{T}_\odd^\prime\at{\tilde{x}}\bar{T}_\even\at{\tilde{x}}-\bar{T}_\even^\prime\at{\tilde{x}}\bar{T}_\odd\at{\tilde{x}}=
\bar{T}_\odd^\prime\at{0}\bar{T}_\even\at{0}-\bar{T}_\even^\prime\at{0}\bar{T}_\odd\at{0}=1
\end{align*}
and imply

\begin{align*}
\tilde{U}\at{\xi_\omega}=\at{\tilde{U}\at{0}-\int\limits_0^{\xi_\omega}\bar{T}_\odd\at{\tilde{y}}\tilde{E}_\omega\at{\tilde{y}}\dint{\tilde{y}}}\bar{T}_\even\at{\xi_\omega}+
\at{\int\limits_0^{\xi_\omega}\bar{T}_\even\at{\tilde{y}}\tilde{E}_\omega\at{\tilde{y}}\dint{\tilde{y}}}\bar{T}_\odd\at{\xi_\omega}
\end{align*}
and
\begin{align*}
\tilde{U}^\prime\at{\xi_\omega}=\at{\tilde{U}\at{0}-\int\limits_0^{\xi_\omega}\bar{T}_\odd\at{\tilde{y}}\tilde{E}_\omega\at{\tilde{y}}\dint{\tilde{y}}}\bar{T}_\even^\prime\at{\xi_\omega}+
\at{\int\limits_0^{\xi_\omega}\bar{T}_\even\at{\tilde{y}}\tilde{E}_\omega\at{\tilde{y}}\dint{\tilde{y}}}\bar{T}_\odd^\prime\at{\xi_\omega}\,.
\end{align*}
Inserting the latter two identities into \eqref{Lem:JordanAuxRes.PEqn3} --- and employing the asymptotic properties of $\bar{T}_\even$ and $\bar{T}_\odd$ from Lemma~\ref{Lem:LinODE.Props} ---  reveals that $\tilde{U}\at{0}$ is uniquely determined and satisfies
\begin{align*}
\babs{\tilde{U}\at{0}}\leq
\abs{\int\limits_0^{\xi_\omega}\bar{T}_\odd\at{\tilde{y}}\tilde{E}_\omega\at{\tilde{y}}\dint{\tilde{y}}}+C\xi_\delta^{-1}\abs{\int\limits_0^{\xi_\omega}\bar{T}_\even\at{\tilde{y}}\tilde{E}_\omega\at{\tilde{y}}\dint{\tilde{y}}}+
C\xi_\delta^{-1}\tilde{d}_\omega\leq C\bat{\nnorm{\tilde{E}_\omega}_1+\delta_\omega\nabs{\tilde{d}_\omega}}\,,
\end{align*}
which yields the desired bound for $\nabs{\tilde{U}\at{0}}$ since we have $\nnorm{\tilde{E}_\omega}_1\leq 4\nnorm{\tilde{F}}_1$ and $\nabs{\tilde{d}_\omega}\leq C\delta_\omega^{-1}\nnorm{\tilde{F}}_1$.
\par
\emph{\ul{Estimates for $\tilde{U}$}}: %
From \eqref{Lem:JordanAuxRes.PEqn4a} and the properties of   $\tilde{T}_\even$, $\tilde{~T}_\odd$   we further infer
\begin{align*}
\babs{\bar{P}\at{\tilde{x}}\tilde{U}\at{\tilde{x}}}\leq\babs{\tilde{U}\at{0}}\babs{\bar{P}\at{\tilde{x}}\bar{T}_\even\at{\tilde{x}}}+
C\babs{\tilde{x}\bar{P}\at{\tilde{x}}} \bnorm{\tilde{F}}_1\qquad \text{for}\quad \tilde{x}\in I_\omega\,,
\end{align*}
and obtain
\begin{align*}
\nnorm{\tilde{\chi}_\omega\bar{P}\,\tilde{U}}_1\leq \bnorm{\tilde{F}}_1
\end{align*}
after integration with respect to $\tilde{x}\in I_\omega$. Moreover, Young's inequality for convolutions   in \eqref{Lem:JordanAuxRes.Eqn1}   implies
\begin{align*}
\delta_\omega^{2}\bnorm{\tilde{U}}_1+\delta_\omega\bnorm{\tilde{U}}_\infty\leq  C\bnorm{\tilde{\chi}_\omega\bar{P}\,\tilde{U}+\tilde{F}}_1\leq C\bnorm{\tilde{F}}_1
\end{align*}
thanks to $\nnorm{\tilde{\chi}_\omega\ast\tilde{\chi}_\omega }_1\leq C\delta_\omega^{-2}$ and $\nnorm{\tilde{\chi}_\omega\ast\tilde{\chi}_\omega }_\infty\leq C\delta_\omega^{-1}$, while
\begin{align*}
\bnorm{\tilde{U}^{\prime\prime}}_1\leq 4\bnorm{\tilde{\chi}_\omega\bar{P}\,\tilde{U}+\tilde{F}}_1\leq C \bnorm{\tilde{F}}_1
\end{align*}
is a consequence of \eqref{Lem:JordanAuxRes.PEqn0}.
\par
\emph{\ul{Concluding arguments}}: %
Now suppose $\tilde{F}\at{\tilde{x}}=0$ for $x\notin I_\omega$ and notice that this implies $\tilde{d}_\omega=0$ and hence \eqref{Lem:JordanAuxRes.Eqn3a} via \eqref{Lem:JordanAuxRes.PEqn3}. Moreover, differentiating  \eqref{Lem:JordanAuxRes.Eqn1} twice with respect to $\tilde{x}$ gives
\begin{align*}
\tilde{U}^{\prime\prime}\at{\tilde{x}}=-2\bar{P}\bat{\tilde{x}}
\tilde{U}\bat{\tilde{x}}-2\tilde{F}\bat{\tilde{x}}=-2 \tilde{U}^{\prime\prime}\at{\tilde{x}+2\xi_\omega}\qquad\text{for}\quad \abs{\tilde{x}}\leq \xi_\omega\,,
\end{align*}
and we deduce that the function $\tilde{C}$ with
\begin{align}
\label{Lem:JordanAuxRes.PEqn8}
\tilde{C}\at{\tilde{x}}:=\tilde{U}\at{\tilde{x}}+2\tilde{U}\at{\tilde{x}+2\xi_\omega}
\end{align}
is affine on $I_\omega$ as its second derivatives vanishes on that interval. After Taylor expansion and due to
\begin{align*}
\tilde{U}\at{3\xi_\omega}=\tilde{U}^\prime\at{3\xi_\omega}=0
\end{align*}
we therefore find
\begin{align}
\label{Lem:JordanAuxRes.PEqn9}
\tilde{C}\at{\tilde{x}}=\tilde{C}\at{\xi_\omega}+\tilde{C}^\prime\at{\xi_\omega}\at{x-\xi_\omega}
=\tilde{U}\at{\xi_\omega}+\tilde{U}^\prime\at{\xi_\omega}\at{\tilde{x}-\xi_\omega}
\qquad\text{for}\quad \abs{\tilde{x}}\leq \xi_\omega\,.
\end{align}
The claim \eqref{Lem:JordanAuxRes.Eqn3b} finally follows from evaluating \eqref{Lem:JordanAuxRes.PEqn8} and \eqref{Lem:JordanAuxRes.PEqn9} at $\tilde{x}-2\xi_\omega$ and by rearranging terms.
\end{proof}
Notice that  the problem \eqref{Lem:JordanAuxRes.Eqn1} can be regarded as a simplification of the linearized traveling wave equation provided that we are able to control the approximation $\tilde{P}_\omega\approx \tilde{\chi}_\omega\bar{P}$, which is consequently among our main goals in this section. We further emphasize that
we have
\begin{align}
\label{Lem:JordanAuxRes.Eqn4a} %
\bnorm{\tilde{U}^\prime}_\infty\leq   \bnorm{U^{\prime\prime}}_1\leq   \bnorm{\tilde{U}}_{*,\,\omega}
\end{align}
for any even function $\tilde{U}$ and that the properties of   $\bar{Y}$   from Lemma~\ref{Lem:AsympODE.Props} imply the pointwise estimate
\begin{align}
\label{Lem:JordanAuxRes.Eqn4b} %
\babs{\tilde{U}\at{\tilde{x}}}\leq
\babs{\tilde{U}\at{0}}+\bnorm{\tilde{U}^{\prime}}_\infty\abs{\tilde{x}}
\leq
\babs{\tilde{U}\at{0}}+\bnorm{\tilde{U}^{\prime\prime}}_1\abs{\tilde{x}}
\leq C  \bnorm{\tilde{U}}_{*,\,\omega}\bar{Y}\at{\tilde{x}}
\end{align}
for all $\tilde{x}\in\Rset$, which plays a prominent role in the derivation of our asymptotic arguments.
%
%
%
%
%
\subsection{Existence and uniqueness}\label{sect:ExAndUni}
%
We finally combine the approximate solutions from \S\ref{sect:AppForm} with the auxiliary problem from \S\ref{sect:AuxProb} to prove the existence of high-energy waves by means of Banach's Fixed Point Theorem.
\begin{theorem}[$\omega$-family of nonlinear lattice waves]
\label{Thm:ExistenceNonlWaves} %
There exists a constant $D$ which depends only on $\Phi$ such that the scaled traveling wave equation \eqref{Eqn:ScaledTWEqnForDistances} admits for all sufficiently large $\omega$ a unique even solution $\tilde{R}_\omega$ in the set
\begin{align}
\label{Thm:ExistenceNonlWaves.Eqn2}
\bnorm{\tilde{R}_\omega-\breve{R}_\omega}_{*,\,\omega}\leq \delta_\omega^{\min\{k,m\}+1}D\,,
\end{align}
where the approximate wave $\breve{R}_\omega$ and the norm $\nnorm{\cdot }_{*,\,\omega}$ are defined in Lemma~\ref{Lem:DefBreveR} and Lemma~\ref{Lem:JordanAuxRes}. Moreover, $\tilde{R}_\omega$ attains values in $\ocinterval{-1}{0}$ and we have
\begin{align}
\label{Thm:ExistenceNonlWaves.Eqn3}
\bnorm{\tilde{\chi}_\omega\bat{\tilde{K}_\omega-\bar{K}}}_1\leq C\delta_\omega^{\min\{k,m\}}\,,\qquad   \bnorm{\at{1-\tilde{\chi}_\omega}\tilde{K}_\omega}_1\leq C\delta_\omega^{m}
\end{align}
as well as
\begin{align}
\label{Thm:ExistenceNonlWaves.Eqn1}
\bnorm{\tilde{\chi}_\omega\bat{\tilde{P}_\omega-\bar{P}}\bar{Y}}_1\leq C\delta_\omega^{\min\{k,m\}}\,,\qquad   \bnorm{\at{1-\tilde{\chi}_\omega}\tilde{P}_\omega}_\infty\leq C\delta_\omega^{m+2}
\end{align}
for some constant $C$ independent of $\omega$, where $\tilde{P}_\omega$ and $\tilde{K}_\omega$ are defined in \eqref{Eqn:DefTildeP}.
\end{theorem}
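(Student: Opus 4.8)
The plan is to solve the scaled traveling wave equation \eqref{Eqn:ScaledTWEqnForDistances} by a fixed point argument built around the approximate profile $\breve{R}_\omega$ from Lemma~\ref{Lem:DefBreveR} and the solution operator of the nonlocal auxiliary problem from Lemma~\ref{Lem:JordanAuxRes}. Writing $\tilde{R}_\omega=\breve{R}_\omega+\tilde{U}$ and using \eqref{Lem:DefBreveR.EqnB}, the equation for the correction $\tilde{U}$ becomes
\begin{align*}
\tilde{U}=\tilde{\chi}_\omega\ast\tilde{\chi}_\omega\ast\Bat{\delta_\omega^{m+2}\al_\omega^{m+2}\bat{\Phi^\prime\at{\breve{R}_\omega+\tilde{U}}-\Phi^\prime\at{\breve{R}_\omega}}-\breve{E}_\omega}\,.
\end{align*}
First I would linearize the nonlinearity: a Taylor expansion gives $\Phi^\prime(\breve{R}_\omega+\tilde U)-\Phi^\prime(\breve{R}_\omega)=\Phi^{\prime\prime}(\breve{R}_\omega)\tilde U+\tfrac12\Phi^{\prime\prime\prime}(\zeta)\tilde U^2$, so after multiplying by $\delta_\omega^{m+2}\al_\omega^{m+2}$ and recalling $\breve{P}_\omega=\delta_\omega^{m+2}\al_\omega^{m+2}\Phi^{\prime\prime}(\breve{R}_\omega)$, the leading linear part is $\breve{P}_\omega\tilde U$. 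Splitting $\breve{P}_\omega=\tilde{\chi}_\omega\bar{P}+(\breve{P}_\omega-\tilde{\chi}_\omega\bar P)$, I absorb the $\tilde{\chi}_\omega\bar P\,\tilde U$ term into the operator inverted by Lemma~\ref{Lem:JordanAuxRes} and treat everything else as a right hand side. Thus $\tilde U$ is a fixed point of the map $\tilde U\mapsto\calT_\omega[\tilde F(\tilde U)]$, where $\calT_\omega$ is the bounded-by-$C\|\cdot\|_1$ solution operator of \eqref{Lem:JordanAuxRes.Eqn1} and
\begin{align*}
\tilde F(\tilde U):=-\breve{E}_\omega+\bat{\breve{P}_\omega-\tilde{\chi}_\omega\bar{P}}\tilde U+\tfrac12\delta_\omega^{m+2}\al_\omega^{m+2}\Phi^{\prime\prime\prime}\at{\zeta}\tilde U^2\,.
\end{align*}

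Next I would verify that $\calT_\omega\circ\tilde F$ is a contraction on the ball \eqref{Thm:ExistenceNonlWaves.Eqn2}. Since $\|\calT_\omega[\tilde G]\|_{*,\omega}\leq C\|\tilde G\|_1$, it suffices to estimate $\|\tilde F(\tilde U)\|_1$ and its Lipschitz constant. The constant term $\|\breve E_\omega\|_1$ is $O(\delta_\omega^{\min\{k,m\}+1})$ by \eqref{Lem:DefBreveR.EqnB}, \eqref{Lem:DefBreveR.EqnA} and the scaling $\delta_\omega\al_\omega=O(\delta_\omega)$ — this fixes the radius $D$. For the linear term I need $\|(\breve P_\omega-\tilde{\chi}_\omega\bar P)\tilde U\|_1$ to be small times $\|\tilde U\|_{*,\omega}$; here I use \eqref{Lem:DefBreveR.EqnC} together with the pointwise bound \eqref{Lem:JordanAuxRes.Eqn4b}, namely $|\tilde U(\tilde x)|\leq C\|\tilde U\|_{*,\omega}\bar Y(\tilde x)$, so that $\|\tilde{\chi}_\omega(\breve P_\omega-\bar P)\tilde U\|_1\leq\|\tilde{\chi}_\omega(\breve P_\omega-\bar P)\bar Y\|_1\cdot C\|\tilde U\|_{*,\omega}=O(\delta_\omega^{\min\{k,m\}})\|\tilde U\|_{*,\omega}$, and the exterior part is handled by $\|(1-\tilde{\chi}_\omega)\breve P_\omega\|_\infty=O(\delta_\omega^{m+2})$ against $\|\tilde U\|_1=O(\delta_\omega^{-2}\|\tilde U\|_{*,\omega})$. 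The quadratic term: on the ball, $\breve R_\omega+\tilde U$ still takes values bounded away from $-1$ on $I_\omega$ (and near $0$ outside), so $\Phi^{\prime\prime\prime}(\zeta)$ is controlled exactly as in the proof of Lemma~\ref{Lem:DefBreveR}, and the prefactor $\delta_\omega^{m+2}\al_\omega^{m+2}$ times $\tilde U^2$ is $O(\delta_\omega^{\,\text{large}})\|\tilde U\|_{*,\omega}^2$, hence negligible on a ball of radius $\sim\delta_\omega^{\min\{k,m\}+1}$. Collecting these, $\calT_\omega\circ\tilde F$ maps the ball into itself and is Lipschitz with constant $o(1)$ as $\omega\to\infty$; Banach's fixed point theorem then gives existence and uniqueness of $\tilde R_\omega$ in \eqref{Thm:ExistenceNonlWaves.Eqn2}, and the even symmetry is preserved because $\calT_\omega$ and $\tilde F$ respect it.

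Finally I would read off the remaining assertions. That $\tilde R_\omega$ takes values in $\ocinterval{-1}{0}$ follows from the quantitative closeness to $\breve R_\omega$ (which lies in $\ccinterval{-1}{0}$ by Lemma~\ref{Lem:DefBreveR}) on $I_\omega$ and $3I_\omega\setminus I_\omega$ together with the representation \eqref{Lem:JordanAuxRes.Eqn3b} controlling the tail; strict inequality $>-1$ comes from the lower bound $\breve R_\omega\geq-1+\delta_\omega\al_\omega\bar Y(\tilde x)$ on $I_\omega$ minus the small correction. The estimates \eqref{Thm:ExistenceNonlWaves.Eqn3} and \eqref{Thm:ExistenceNonlWaves.Eqn1} for $\tilde K_\omega$ and $\tilde P_\omega$ follow from the corresponding bounds \eqref{Lem:DefBreveR.EqnA}, \eqref{Lem:DefBreveR.EqnC} for $\breve K_\omega$, $\breve P_\omega$ plus a perturbation estimate: on $I_\omega$, $\tilde P_\omega-\breve P_\omega=\delta_\omega^{m+2}\al_\omega^{m+2}\Phi^{\prime\prime\prime}(\zeta)(\tilde R_\omega-\breve R_\omega)$, whose weighted $\fspaceL^1$-norm against $\bar Y$ is controlled by $\|\tilde R_\omega-\breve R_\omega\|_{*,\omega}$ via \eqref{Lem:JordanAuxRes.Eqn4b} and the singularity bounds of Assumption~\ref{Ass.Pot}; the exterior estimates are even easier since everything is uniformly away from the singularity. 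The main obstacle I anticipate is the bookkeeping of the $\omega$-powers in the linear term $(\breve P_\omega-\tilde{\chi}_\omega\bar P)\tilde U$: one must be sure that the loss of $\delta_\omega^{-2}$ from $\|\tilde U\|_1$ or the factor $|\tilde x|$ from the growth of $\bar Y$ on $I_\omega$ (whose length is $\sim\xi_\omega\sim\delta_\omega^{-1}$) is always beaten by the smallness coming from \eqref{Lem:DefBreveR.EqnC}; this is exactly why the weighted norm $\|\cdot\|_{*,\omega}$ with the $\bar Y$-weighted pointwise control \eqref{Lem:JordanAuxRes.Eqn4b} was set up, and it should close, but the constants require care, especially near the critical case $k=m$ where the logarithmic correction enters.
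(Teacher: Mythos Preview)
Your approach is essentially the same as the paper's: write $\tilde R_\omega=\breve R_\omega+\text{(correction)}$, absorb the linear part $\tilde\chi_\omega\bar P\,\tilde U$ via the solution operator of Lemma~\ref{Lem:JordanAuxRes}, and contract using the error bounds \eqref{Lem:DefBreveR.EqnA}, \eqref{Lem:DefBreveR.EqnC} together with the pointwise control \eqref{Lem:JordanAuxRes.Eqn4b}. The paper differs only cosmetically in that it pulls out the scale factor upfront, writing $\tilde R_\omega=\breve R_\omega+\delta_\omega^{l+1}\tilde U_\omega$ and working on a ball of fixed radius $D$; this makes the bookkeeping of powers of $\delta_\omega$ slightly cleaner but is otherwise equivalent to your setup.

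There is one genuine gap in your final paragraph. Your argument for $\tilde R_\omega\in\ocinterval{-1}{0}$ via closeness to $\breve R_\omega$ does not give the upper bound $\tilde R_\omega\leq 0$: outside $3I_\omega$ one has $\breve R_\omega\equiv 0$, and the correction $\tilde U$ (which is \emph{not} supported in $3I_\omega$, since the source $\tilde F(\tilde U)$ is not supported in $I_\omega$ and hence \eqref{Lem:JordanAuxRes.Eqn3b} does not apply) could in principle be positive there. The paper handles this differently and more directly: it first extends $\Phi$ by $\Phi\equiv 0$ on $\oointerval{0}{\infty}$ so that the fixed-point map is globally defined, and then observes a posteriori that the structure of \eqref{Eqn:ScaledTWEqnForDistances} --- a nonnegative kernel $\tilde\chi_\omega\ast\tilde\chi_\omega$ convolved with $\delta_\omega^{m+2}\al_\omega^{m+2}\Phi'(\tilde R_\omega)\leq 0$ --- forces $\tilde R_\omega\leq 0$ everywhere, so the extension is never actually used. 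You should adopt this step; the rest of your outline is correct.
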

\begin{proof}
Within this proof we use the abbreviation
\begin{align*}
l:=\min\{k,m\}
\end{align*}
and   setting $\Phi\at{r}:=0$ for $r>0$ we can consider $\Phi$ as a   continuously differentiable function on $\oointerval{-1}{\infty}$ with piecewise continuous second derivative.   Moreover, we restrict all considerations to the space of even functions.
\par
\emph{\ul{Reformulation as fixed-point problem}}: %
We make the ansatz
\begin{align*}
\tilde{R}_\omega = \breve{R}_\omega + \delta_\omega^{l+1}\tilde{U}_\omega\,,\qquad
\end{align*}
and write the equation for $\tilde{R}_\omega$ first as
\begin{align*}
\tilde{U}_\omega = \tilde{\chi}_\omega\ast \tilde{\chi}_\omega\ast \Bat{\tilde{\chi}_\omega\bar{P}\, \tilde{U}_\omega+\tilde{A}_\omega + \tilde{B}_\omega\tilde{U}_\omega + \tilde{\calC}_\omega\nato{\tilde{U}_\omega}+
\tilde{\calD}_\omega\bato{\tilde{U}_\omega}}
\end{align*}
and afterwards as $\tilde{U}_\omega= \tilde{\calF}_\omega\bato{\tilde{U}_\omega}$ with
\begin{align*}
\calF_\omega\bato{\tilde{U}} :=  \tilde{\calK}_\omega  \Bato{\tilde{A}_\omega + \tilde{B}_\omega\tilde{U} + \tilde{\calC}_\omega\nato{\tilde{U}}+\tilde{\calD}_\omega\bato{\tilde{U}}}\,.
\end{align*}
Here, $ \tilde{\calK}_\omega$   denotes the linear solution operator the auxiliary problem \eqref{Lem:JordanAuxRes.Eqn1} and we have
\begin{align*}
\tilde{A}_\omega := -\delta_\omega^{-l-1}\breve{E}_\omega
\,,\qquad
\tilde{B}_\omega := \tilde{\chi}_\omega\bat{\breve{P}_\omega-\bar{P}}
\end{align*}
with $\breve{E}_\omega$ as in \eqref{Lem:DefBreveR.EqnB}, as well as
\begin{align*}
\tilde{\calC}_\omega\nato{\tilde{U}}:=\delta_\omega^{m+1-l}\al_\omega^{m+2}\tilde{\chi}_\omega\at{\Phi^{\prime}\bat{\breve{R}_\omega+\delta_\omega^{l+1}\tilde{U}}-\Phi^{\prime}\bat{\breve{R}_\omega}-\delta_\omega^{l+1}\Phi^{\prime\prime}\bat{\breve{R}_\omega}\tilde{U}}
\end{align*}
and
\begin{align*}
\tilde{\calD}_\omega\nato{\tilde{U}}:=\at{1-\tilde{\chi}_\omega}\delta_\omega^{m+1-l}\al_\omega^{m+2}\Bat{\Phi^\prime\bat{\breve{R}_\omega+\delta_\omega^{l+1}\tilde{U}}-\Phi^\prime\bat{\breve{R}_\omega}}\,.
\end{align*}
Our strategy is to show that $\tilde{\calF}_\omega$ is contractive on some ball in a certain function space.
\par
\emph{\ul{Properties of the operator $\tilde{\calF}_\omega$}}: %
Suppose that
\begin{align*}
\bnorm{\tilde{U}}_{*,\,\omega}\leq D
\end{align*}
where the constant $D$ will be chosen below, and   observe that \eqref{Lem:DefBreveR.EqnA}, \eqref{Lem:DefBreveR.EqnC} and  \eqref{Lem:JordanAuxRes.Eqn4b}   provide
\begin{align*}
\bnorm{\tilde{A}_\omega}_1\leq C
\end{align*}
  as well as
\begin{align*}
\bnorm{\tilde{B}_\omega\tilde{U}}_1\leq
\bnorm{\tilde{\chi}_\omega\nat{\tilde{P}_\omega-\bar{P}}\bar{Y}}_1\bnorm{\tilde{U}\bar{Y}^{-1}}_{\infty} \leq  C    \bnorm{\tilde{\chi}_\omega\nat{\tilde{P}_\omega-\bar{P}}\bar{Y}}_1\bnorm{\tilde{U}}_{*,\,\omega} \leq C\delta_\omega^l  D\,.
\end{align*}
Now let $\tilde{U}_*$ be another function with $\nnorm{\tilde{U}_*}_{*,\,\omega}\leq D$   and notice that the function
\begin{align*}
u\,\;\mapsto\;\, \psi_{\tilde{x}}\at{u}:=
 \Phi^{\prime}\bat{\breve{R}_\omega\at{\tilde{x}}+\delta_\omega^{l+1}u}-\Phi^{\prime}\bat{\breve{R}_\omega\at{\tilde{x}}}-\delta_\omega^{l+1}\Phi^{\prime\prime}\bat{\breve{R}_\omega\at{\tilde{x}}}u
\end{align*}
satisfies $\psi_{\tilde{x}}\at{0}=0=\psi^\prime_{\tilde{x}}\at{0}$ for any
given $\tilde{x}$. The Mean Value Theorem therefore yields via
\begin{align*}
\psi_{\tilde{x}}\at{u}-\psi_{\tilde{x}}\at{u_*}=\psi_{\tilde{x}}^{\prime}\at{u_1}\bat{u-u_*}=\psi_{\tilde{x}}^{\prime\prime}\at{u_2}u_1\at{u-u_*}\qquad \text{for some $u_1$, $u_2$ depending on $u$ and $u_*$}
\end{align*}
the estimate
\begin{align}
\label{Thm:ExistenceNonlWaves.PEqn2a}
\Babs{\tilde{\calC}_\omega\nato{\tilde{U}}\at{\tilde{x}}-\tilde{\calC}_\omega\nato{\tilde{U}_*}\at{\tilde{x}}}&\leq
C\delta_\omega^{m+l+3}\Babs{\Phi^{\prime\prime\prime}\bat{\breve{R}_\omega\at{\tilde{x}}+\delta_\omega^{l+1}\tilde{U}_2\at{\tilde{x}}}}\abs{\tilde{U}_1\at{\tilde{x}}}\abs{\tilde{U}\at{\tilde{x}}-\tilde{U}_*\at{\tilde{x}}}\,.
\end{align}
Here, both $\tilde{U}_1\at{\tilde{x}}$ and $\tilde{U}_2\at{\tilde{x}}$ belong to the smallest interval containing $\{0,\tilde{U}\at{\tilde{x}},\tilde{U}_*\at{\tilde{x}}\}$ and can be bounded by
\begin{align}
\label{Thm:ExistenceNonlWaves.PEqn2b}
\babs{\tilde{U}_i\at{\tilde{x}}}\leq  \max\Big\{\babs{\tilde{U}\at{\tilde{x}}},\,\; \babs{\tilde{U}_*\at{\tilde{x}}}\Big\}\leq  D\bar{Y}\at{\tilde{x}}\,,\qquad i=1,2
\end{align}
thanks to \eqref{Lem:JordanAuxRes.Eqn4b}, which also implies that
\begin{align}
\label{Thm:ExistenceNonlWaves.PEqn2c}
\babs{\tilde{U}\at{\tilde{x}}-\tilde{U}_*\at{\tilde{x}}}\leq   C   \nnorm{\tilde{U}-\tilde{U}_*}_{*,\,\omega}\bar{Y}\at{\tilde{x}}\,.
\end{align}
From \eqref{Lem:DefBreveR.Eqn2}+\eqref{Lem:JordanAuxRes.Eqn4b} and for
\begin{align*}
\tilde{x}\in I_\omega\,,\qquad\delta_\omega\leq D/C
\end{align*}
we further infer the pointwise estimate
\begin{align}
\label{Thm:ExistenceNonlWaves.PEqn3}
-1+\tfrac12 \delta_\omega \al_\omega \bar{Y}\at{\tilde{x}}\leq
\breve{R}_\omega\at{\tilde{x}}+\delta_\omega^{l+1}\tilde{U}_2\at{\tilde{x}} \leq -1+2\delta_\omega \al_\omega \bar{Y}\at{\tilde{x}} \,,
\end{align}
and conclude in view of Assumption~\ref{Ass.Pot} and by \eqref{Thm:ExistenceNonlWaves.PEqn2a}+\eqref{Thm:ExistenceNonlWaves.PEqn2b}+\eqref{Thm:ExistenceNonlWaves.PEqn2c} that
\begin{align*}
\Babs{\tilde{\calC}_\omega\nato{\tilde{U}}\at{\tilde{x}}-\tilde{\calC}_\omega\nato{\tilde{U}_*}\at{\tilde{x}}}&\leq C\delta_\omega^{l}\frac{\babs{\tilde{U}_1\at{\tilde{x}}}\babs{\tilde{U}\at{\tilde{x}}-\tilde{U}_*\at{\tilde{x}}}}{\bar{Y}\at{\tilde{x}}^{m+3}}
\leq
\delta_\omega^lC D\frac{\bnorm{\tilde{U}-\tilde{U}_*}_{*,\,\omega}}{\bar{Y}\at{\tilde{x}}^{m+1}}\,.
\end{align*}
After integration with respect to $\tilde{x}\in I_\omega$ we finally get
\begin{align*}
\bnorm{\tilde{\calC}_\omega\nato{\tilde{U}}-\tilde{\calC}_\omega\nato{\tilde{U}_*}}_1\leq \delta_\omega^{l}CD\bnorm{\tilde{U}-\tilde{U}_*}_{*,\,\omega}\,,\qquad
\bnorm{\tilde{\calC}_\omega\nato{\tilde{U}}}_1\leq \delta_\omega^{l}CD^2
\end{align*}
for all sufficiently large $\omega$, where the second identity follows from the first one via $\tilde{U}_*=0$.  Using
\begin{align*}
 \breve{R}_\omega\at{\tilde{x}}+\delta_\omega^{l+1} \tilde{U}_\omega\at{\tilde{x}}\geq -\tfrac{3}{4}\qquad \text{for}\quad \tilde{x}\notin I_\omega
\end{align*}
and the analogous result for $\tilde{U}_*$ we further verify
\begin{align*}
\abs{\tilde{\calD}_\omega\bato{\tilde{U}}\at{\tilde{x}}-\tilde{\calD}_\omega\bato{\tilde{U}_*}\at{\tilde{x}}}\leq C\delta_\omega^{m+2}
\babs{\tilde{U}\at{\tilde{x}}-\tilde{U}_*\at{\tilde{x}}}\qquad\text{for}\quad\tilde{x}\notin I_\omega
\end{align*}
  since $\Phi^\prime$ is globally Lipschitz continuous on $\cointerval{-3/4}{\infty}$. We thus obtain
\begin{align*}
\bnorm{\tilde{\calD}_\omega\bato{\tilde{U}}-\tilde{\calD}_\omega\bato{\tilde{U}_*}}_1\leq
\delta_\omega^{m+2}C\bnorm{\tilde{U}-\tilde{U}_*}_1
\leq
\delta_\omega^{m}C\bnorm{\tilde{U}-\tilde{U}_*}_{*,\,\omega}\,,\qquad
\bnorm{\tilde{\calD}_\omega\bato{\tilde{U}}}_{1}\leq \delta_\omega^{m}CD
\end{align*}
by integration over $\Rset\setminus I_\omega$ and setting $\tilde{U}_*=0$.
\par
\emph{\ul{Existence and local uniqueness}}: %
Combining all partial results derived so far we demonstrate the implication
\begin{align*}
\bnorm{\tilde{U}}_{*,\,\omega}\leq D\quad\implies \quad
\bnorm{\tilde{\calF}_\omega\bato{\tilde{U}}}_{*,\,\omega}\leq C\at{1+\delta_\omega^l D+\delta_\omega^l D^2+\delta_\omega^{m}D}\leq D
\end{align*}
for all  sufficiently large $\omega$ provided that $D$ is first chosen sufficiently large and independent of $\omega$. Moreover, the operator $\tilde{\cal F}_\omega$ is --- again for large $\omega$ --- also contractive in the $D$-ball with respect to the
$\nnorm{\cdot}_{*,\,\omega}$-norm, so both the existence and local uniqueness of a fixed point $\tilde{U}_\omega$ is granted by Banach's Contraction Mapping Principle. Notice also that \eqref{Eqn:ScaledTWEqnForDistances} combined with the non-negativity of $-\Phi^\prime$ and $\tilde{\chi}_\omega$ implies $\tilde{R}_\omega\leq0$, i.e. the constructed solution attains in fact  values in the interval $\ocinterval{-1}{0}$ and is hence not affected by the continuation of $\Phi$.
\par
\emph{\ul{Further estimates}}: %
For $\tilde{x}\in I_\omega$, the formulas \eqref{Lem:JordanAuxRes.Eqn4b},  \eqref{Thm:ExistenceNonlWaves.Eqn2} guarantee
\begin{align*}
\babs{\tilde{R}_\omega\at{\tilde{x}}-\breve{R}_\omega\at{\tilde{x}}}\leq   C    \bnorm{\tilde{R}_\omega-\breve{R}_\omega}_{*,\,\omega}  \bar{Y}\at{\tilde{x}}\leq  C D\,   \delta_\omega^{l+1}  \bar{Y}\at{\tilde{x}} \,,
\end{align*}
so in view of the pointwise bounds from \eqref{Thm:ExistenceNonlWaves.PEqn3} and Assumption~\ref{Ass.Pot} we establish the estimates
\begin{align*}
\abs{\breve{K}_\omega\at{\tilde{x}}-\tilde{K}_\omega\at{\tilde{x}}}\leq C\delta_\omega^{-1} \frac{\babs{\tilde{R}_\omega\at{\tilde{x}}-\breve{R}_\omega\at{\tilde{x}}}}{\tilde{Y}\at{\tilde{x}}^{m+2}}\leq C\delta_\omega^l \bar{Y}\at{\tilde{x}}^{-m-1}
\end{align*}
as well as
\begin{align*}
\abs{\breve{P}_\omega\at{\tilde{x}}-\tilde{P}_\omega\at{\tilde{x}}}\bar{Y}\at{\tilde{x}}\leq C\delta_\omega^{-1} \frac{\babs{\tilde{R}_\omega\at{\tilde{x}}-\breve{R}_\omega\at{\tilde{x}}}}{\tilde{Y}\at{\tilde{x}}^{m+3}}\bar{Y}\at{\tilde{x}}\leq C\delta_\omega^l \bar{Y}\at{\tilde{x}}^{-m-1}
\end{align*}
by Taylor arguments. The claims \eqref{Thm:ExistenceNonlWaves.Eqn3}$_1$ and \eqref{Thm:ExistenceNonlWaves.Eqn1}$_1$ now follow after integration with respect to $\tilde{x}\in I_\omega$  from \eqref{Lem:DefBreveR.EqnA}$_1$ and \eqref{Lem:DefBreveR.EqnC}$_1$ by the triangle inequality. For $\tilde{x}\not\in I_\omega$, the regularity of $\Phi$ on compact subsets of $\ocinterval{-1}{0}$  implies improved Taylor estimates. More precisely, we get
\begin{align*}
\bnorm{\at{1-\tilde{\chi}_\omega}\bat{\breve{K}_\omega-\tilde{K}_\omega}}_1\leq C\delta_\omega^{m+1}\bnorm{\tilde{R}_\omega-\breve{R}_\omega}_1\leq
C\delta_\omega^{m-1}\bnorm{\tilde{R}_\omega-\breve{R}_\omega}_{*,\,\omega}\leq C\delta_\omega^{m+l}
\end{align*}
as well as
\begin{align*}
\bnorm{\at{1-\tilde{\chi}_\omega}\bat{\breve{P}_\omega-\tilde{P}_\omega}}_\infty\leq C\delta_\omega^{m+2}\bnorm{\tilde{R}_\omega-\breve{R}_\omega}_\infty\leq
C\delta_\omega^{m+1}\bnorm{\tilde{R}_\omega-\breve{R}_\omega}_{*,\,\omega}\leq C\delta_\omega^{m+l+2}\,,
\end{align*}
so \eqref{Thm:ExistenceNonlWaves.Eqn3}$_2$ and \eqref{Thm:ExistenceNonlWaves.Eqn2}$_2$ can be deduced from
\eqref{Lem:DefBreveR.EqnA}$_2$ and \eqref{Lem:DefBreveR.EqnC}$_2$.
\end{proof}
\begin{figure}[ht!] %
\centering{ %
\includegraphics[width=0.95\textwidth]{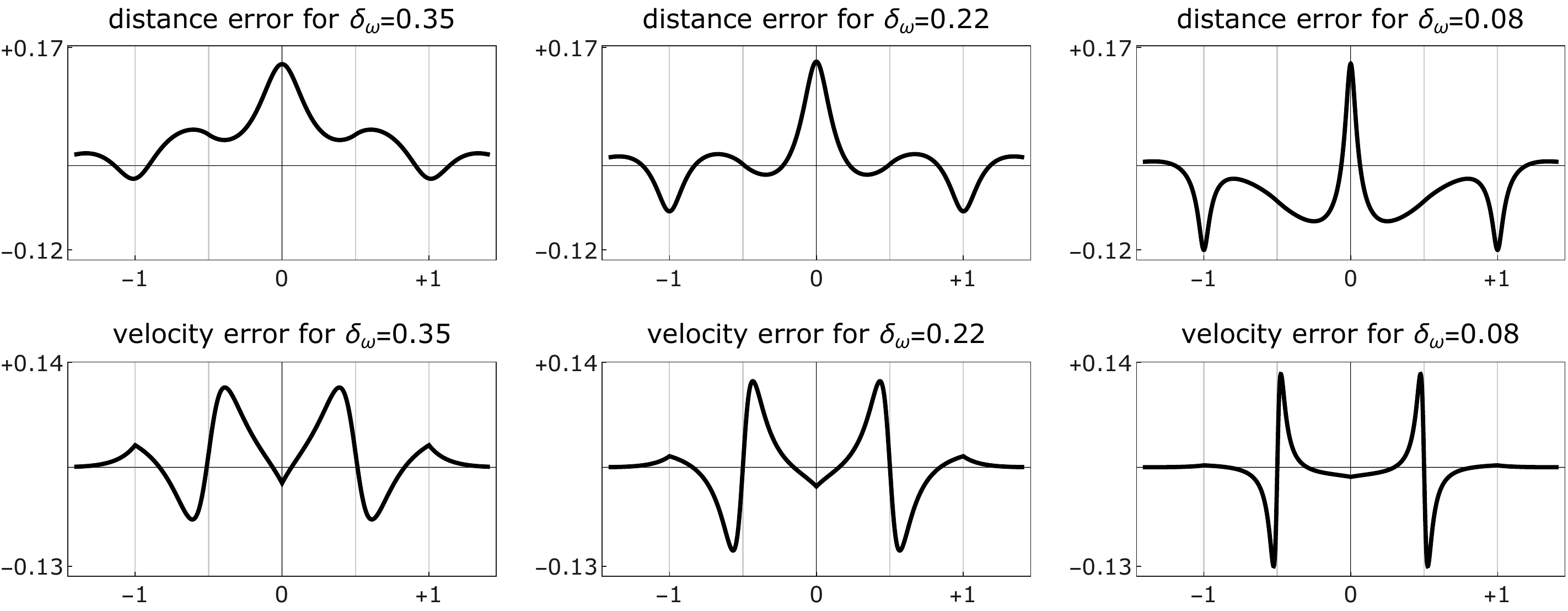}
} %
\caption{ %
Distance-approximation error $\delta_\omega^{-\min\{k,m\}-1}\nat{\tilde{R}_\omega-\breve{R}_\omega}$ (\emph{top row}) and
velocity-approximation error $\delta_\omega^{-\min\{k,m\}}\nat{\om^{-1}\tilde{V}_\omega-\om^{-1}\breve{V}_\omega}$
(\emph{bottom row}) plotted versus $x=\delta_\omega\be_\omega\tilde{x}$ for the waves from Figure~\ref{Fig:WavesDist}.   Recall that in our numerical simulations we first compute $\btriple{\omega}{\tilde{R}_\omega}{\tilde{V}_\omega}$ and
afterwards $(\breve{R}_\omega, \breve{V}_\omega)$ by Lemma~\ref{Lem:DefAlpha} and Lemma~\ref{Lem:DefBreveR}, see the discussion in \S\ref{sect:Intro}.
} %
\label{Fig:WavesErr} %
\end{figure} %
We conclude this section with some comments. First, Theorem \ref{Thm:ExistenceNonlWaves} does not guarantee that the exact wave profile $\tilde{R}_\om$ is unimodal although this properties might be deduced from a more refined analysis, see also the discussion in \cite{HM15}. Secondly, the error estimate \eqref{Thm:ExistenceNonlWaves.Eqn2} implies
\begin{align}
\label{Eqn:PredictedError}
\bnorm{\tilde{R}_\omega-\breve{R}_\omega}_\infty\leq C\delta_\omega^{\min\{k,m\}}\leq C\delta_\omega^{\min\{k,m\}}\bnorm{\breve{R}_\omega}_\infty
\end{align}
as well as
\begin{align*}
\bnorm{\tilde{R}_\omega-\breve{R}_\omega}_1\leq C\delta_\omega^{\min\{k,m\}-1}\leq C\delta_\omega^{\min\{k,m\}}\bnorm{\breve{R}_\omega}_1\,,
\end{align*}
i.e. both the $\fspaceL^\infty$-error and the $\fspaceL^1$-error with respect to the unscaled space variable $x$ are at least of order $\bDO{\delta_\omega^{\min\{k,\,m\}}}$. Moreover, since the formulas in Lemma \ref{Lem:DefBreveR} provide
\begin{align*}
\bnorm{\breve{R}_\omega-\tilde{R}_\infty}_\infty\leq C\delta_\omega\,,\qquad \bnorm{\breve{R}_\omega-\tilde{R}_\infty}_1\leq C\,,
\end{align*}
where $\tilde{R}_\infty$ denotes the rescaled tent map $R_\infty$ from \eqref{Eqn:LimitProfiles}, we conclude that $\tilde{R}_\om-\breve{R}_\om$ is asymptotically smaller than $\tilde{R}_\om-\tilde{R}_\infty$. This observation is also supported by the numerical data from Figure~\ref{Fig:WavesDist}. The simulation in
Figure \ref{Fig:WavesErr}, however, indicate that the real approximation error is even considerably smaller than the global prediction in \eqref{Eqn:PredictedError} and does not exceed the local one, which is given by
\begin{align*}
\bnorm{\tilde{R}_\omega-\breve{R}_\omega}_{\infty, \ccinterval{-\tilde{y}}{+\tilde{y}}}\leq \babs{\tilde{R}_\omega\at{0}-\breve{R}_\omega\at{0}}+C_{\tilde{y}}\bnorm{\tilde{R}_\omega^{\prime\prime}-\breve{R}_\omega^{\prime\prime}}_{1,\Rset}\leq C_{\tilde{y}}\delta_\omega^{\min\{k,m\}+1}
\end{align*}
for any compact set $\ccinterval{-\tilde{y}}{+\tilde{y}}$ independent of $\om$.
\par
Finally, for completeness we state the corresponding approximation result for the velocity profile and recall that the integral identity \eqref{Eqn:TWNonlIdentities.V2}$_1$ predicts the amplitude of both $V_\omega$ and $\tilde{V}_\omega$ to be of order $\DO{\omega}$.
\begin{lemma}[approximate velocity profile]
\label{Lem:ApproxVelProfile}
The even function
\begin{align*}
\breve{V}_\omega:= -\om\,\tilde{\chi}_\omega\ast
\at{\tilde{\chi}_\omega\,\al_\omega^{-m/2}\bar{K}}
\end{align*}
satisfies $\breve{R}_\omega=-\om^{-1}\,\tilde{\chi}_\omega\ast \bat{\delta_\omega\beta_\omega\breve{V}_\omega}$ as well as
\begin{align*}
\breve{V}_\omega\at{\tilde{x}}=-\tfrac12\,\om\,\al_\omega^{-m/2}\at{ \bar{Y}^\prime\bat{\tilde{x}-\xi_\omega}
-\bar{Y}^\prime\at{\xi_\omega}}
 \qquad \text{for}\quad 0\leq \tilde{x}\leq2\xi_\omega
\end{align*}
and $\breve{V}_\omega\at{\tilde{x}}=0$ for $\tilde{x}\geq 2\xi_\omega$. Moreover, we have
\begin{align*}
\delta_\omega \bnorm{\om^{-1}\tilde{V}_\omega-\om^{-1}\breve{V}_\omega}_1+\bnorm{
\om^{-1}\tilde{V}_\omega-\om^{-1}\breve{V}_\omega}_\infty+\bnorm{
\om^{-1}\tilde{V}_\omega^\prime-\om^{-1}\breve{V}_\omega^\prime}_1\leq C \delta_\omega^{\min\{k,m\}}
\end{align*}
for some constant independent of $\omega$, where $\tilde{V}_\omega$ is completely determined by $\tilde{R}_\omega$ via \eqref{Eqn:ScaledTWVelocities}.
\end{lemma}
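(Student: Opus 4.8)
The plan is to verify the three assertions of the lemma in turn; none is deep once the algebra of the scaling parameters is in place.

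\emph{The identity for $\breve{R}_\omega$ and the closed form of $\breve{V}_\omega$.} Inserting the definition of $\breve{V}_\omega$ into the right hand side of the claimed identity and using $\be_\omega=\al_\omega^{m/2+1}$ from \eqref{Eqn:DefBeta}, so that $\be_\omega\al_\omega^{-m/2}=\al_\omega$, gives
\begin{align*}
-\om^{-1}\tilde{\chi}_\omega\ast\bat{\delta_\omega\be_\omega\breve{V}_\omega}=\delta_\omega\be_\omega\al_\omega^{-m/2}\,\tilde{\chi}_\omega\ast\tilde{\chi}_\omega\ast\at{\tilde{\chi}_\omega\bar{K}}=\delta_\omega\al_\omega\,\tilde{\chi}_\omega\ast\tilde{\chi}_\omega\ast\at{\tilde{\chi}_\omega\bar{K}}=\breve{R}_\omega
\end{align*}
by the definition \eqref{Lem:DefBreveR.Eqn0} of $\breve{R}_\omega$. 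For the explicit formula I would use $\bar{K}=-\tfrac12\bar{Y}^{\prime\prime}$, a consequence of \eqref{Lem:AsympODE.Props.Eqn1} and \eqref{Eqn:DefBarPZ}, together with the elementary fact that for any $g$ supported in $I_\omega$ one has $\at{\tilde{\chi}_\omega\ast g}\at{\tilde{x}}=\int_{\tilde{x}-\xi_\omega}^{\xi_\omega}g\at{\tilde{y}}\dint{\tilde{y}}$ for $0\leq\tilde{x}\leq 2\xi_\omega$ and $\at{\tilde{\chi}_\omega\ast g}\at{\tilde{x}}=0$ for $\tilde{x}\geq2\xi_\omega$; integrating $\bar{Y}^{\prime\prime}$ and exploiting the evenness of $\bar{Y}$ and $\tilde{\chi}_\omega$ yields both the stated formula on $\ccinterval{-2\xi_\omega}{+2\xi_\omega}$ and $\breve{V}_\omega\equiv0$ outside, and in particular shows that $\breve{V}_\omega$ is even, Lipschitz, and compactly supported in $2I_\omega$.

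\emph{The error estimate.} The decisive step is to bring $\om^{-1}\tilde{V}_\omega$ and $\om^{-1}\breve{V}_\omega$ into a common form. Using \eqref{Eqn:ScaledTWVelocities}, the definition of $\tilde{K}_\omega$ in \eqref{Eqn:DefTildeP}, and $\om^{-1}=\delta_\omega^{m/2}$ from \eqref{Eqn:DefDelta}, a short computation gives $\om^{-1}\tilde{V}_\omega=-\al_\omega^{-m/2}\,\tilde{\chi}_\omega\ast\tilde{K}_\omega$, whereas $\om^{-1}\breve{V}_\omega=-\al_\omega^{-m/2}\,\tilde{\chi}_\omega\ast\at{\tilde{\chi}_\omega\bar{K}}$ holds by definition. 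Subtracting and splitting $\tilde{K}_\omega=\tilde{\chi}_\omega\tilde{K}_\omega+\at{1-\tilde{\chi}_\omega}\tilde{K}_\omega$ we arrive at
\begin{align*}
\om^{-1}\tilde{V}_\omega-\om^{-1}\breve{V}_\omega=-\al_\omega^{-m/2}\,\tilde{\chi}_\omega\ast\Bat{\tilde{\chi}_\omega\bat{\tilde{K}_\omega-\bar{K}}+\at{1-\tilde{\chi}_\omega}\tilde{K}_\omega}\,.
\end{align*}
Since $\al_\omega^{-m/2}$ stays bounded as $\om\to\infty$ by \eqref{Lem:DefAlpha.Eqn2}, while $\nnorm{\tilde{\chi}_\omega}_\infty=1$ and $\nnorm{\tilde{\chi}_\omega}_1=2\xi_\omega\leq C\delta_\omega^{-1}$, Young's inequality combined with the profile bounds \eqref{Thm:ExistenceNonlWaves.Eqn3}, which give $\bnorm{\tilde{\chi}_\omega\bat{\tilde{K}_\omega-\bar{K}}+\at{1-\tilde{\chi}_\omega}\tilde{K}_\omega}_1\leq C\delta_\omega^{\min\{k,m\}}$, immediately yields $\bnorm{\om^{-1}\tilde{V}_\omega-\om^{-1}\breve{V}_\omega}_\infty\leq C\delta_\omega^{\min\{k,m\}}$ and $\delta_\omega\bnorm{\om^{-1}\tilde{V}_\omega-\om^{-1}\breve{V}_\omega}_1\leq C\delta_\omega^{\min\{k,m\}}$. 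For the derivative term I would invoke $\at{\tilde{\chi}_\omega\ast g}^\prime\at{\tilde{x}}=g\at{\tilde{x}+\xi_\omega}-g\at{\tilde{x}-\xi_\omega}$, hence $\bnorm{\at{\tilde{\chi}_\omega\ast g}^\prime}_1\leq2\nnorm{g}_1$, to obtain $\bnorm{\om^{-1}\tilde{V}_\omega^\prime-\om^{-1}\breve{V}_\omega^\prime}_1\leq C\delta_\omega^{\min\{k,m\}}$; this simultaneously justifies the absolute continuity of $\tilde{V}_\omega$, the membership $\Phi^\prime\bat{\tilde{R}_\omega}\in\fspaceL^1$ needed to evaluate \eqref{Eqn:ScaledTWVelocities} being a consequence of $\tilde{R}_\omega\in\ocinterval{-1}{0}$ (Theorem~\ref{Thm:ExistenceNonlWaves}) together with \eqref{Thm:ExistenceNonlWaves.Eqn3}. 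Adding the three contributions completes the proof.

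\emph{Main obstacle.} Analytically this lemma is essentially bookkeeping around the convolution identity \eqref{Eqn:ScaledTWVelocities}, the algebraic relations between $\delta_\omega$, $\al_\omega$, $\be_\omega$, $\om$, and the already established estimates \eqref{Thm:ExistenceNonlWaves.Eqn3}. The only points needing genuine care are getting the exponents of $\delta_\omega$ and $\al_\omega$ exactly right in the identification $\om^{-1}\tilde{V}_\omega=-\al_\omega^{-m/2}\tilde{\chi}_\omega\ast\tilde{K}_\omega$, and correctly accounting for the boundary contributions when differentiating a convolution with the indicator $\tilde{\chi}_\omega$; both become routine once the convolution is written through an antiderivative of the integrand.
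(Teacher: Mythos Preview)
Your proof is correct and follows essentially the same route as the paper: identify $\om^{-1}\tilde{V}_\omega-\om^{-1}\breve{V}_\omega$ as $-\al_\omega^{-m/2}\,\tilde{\chi}_\omega\ast\bat{\tilde{K}_\omega-\tilde{\chi}_\omega\bar{K}}$, then apply Young's inequality together with the $\fspaceL^1$-bounds on $\tilde{K}_\omega-\tilde{\chi}_\omega\bar{K}$, handling the derivative via the distributional identity $\tilde{\chi}_\omega^\prime=\delta_{-\xi_\omega}-\delta_{\xi_\omega}$. Your explicit verification of the identity $\breve{R}_\omega=-\om^{-1}\tilde{\chi}_\omega\ast\bat{\delta_\omega\be_\omega\breve{V}_\omega}$ and your citation of \eqref{Thm:ExistenceNonlWaves.Eqn3} (rather than \eqref{Lem:DefBreveR.EqnA}, which concerns $\breve{K}_\omega$) are in fact slightly cleaner than the paper's own presentation.
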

\begin{proof}
Our definition implies $\supp \,\breve{V}_\omega=2 I_\omega$ as well as
\begin{align*}
\breve{V}_\omega\at{\tilde{x}}=-\om\,\al_\omega^{-m/2}\int\limits_{\tilde{x}-\xi_\omega}^{\xi_\omega}\bar{K}\at{\tilde{y}}\dint\tilde{y}=\tfrac12\,\om\, \al_\omega^{-m/2}\int\limits_{\tilde{x}-\xi_\omega}^{\xi_\omega}  \bar{Y}^{\prime\prime}\at{\tilde{y}}  \dint\tilde{y}\qquad \text{for}\quad  0\leq\tilde{x}\leq 2\xi_\omega\,,
\end{align*}
so the desired representation formula for $\breve{V}_\omega$ follows immediately. Moreover, by construction --- see \eqref{Eqn:DefDelta}+\eqref{Eqn:DefBeta}+\eqref{Eqn:ScaledTWVelocities}+\eqref{Eqn:FormalResultA} --- we have
\begin{align*}
\tilde{V}_\omega-\breve{V}_\omega=\om\,\tilde{\chi}_\omega\ast\bat{\al_{\omega}^{-\frac{m}{2}}\bat{\tilde{\chi}_\omega\bar{K}-\tilde{K}_\omega}}
\end{align*}
and Young's inequality for convolutions provides
\begin{align*}
\delta_\omega\bnorm{\tilde{V}_\omega-\breve{V}_\omega}_1+\bnorm{\tilde{V}_\omega-\breve{V}_\omega}_\infty+\bnorm{\tilde{V}_\omega^\prime-\breve{V}_\omega^\prime}_1\leq C\omega
\Bat{\delta_\omega\nnorm{\tilde{\chi}_\omega}_1+\nnorm{\tilde{\chi}_\omega}_\infty+\nnorm{\tilde{\chi}_\omega^\prime}_1}\bnorm{\tilde{K}_\omega-\tilde{\chi}_\omega\bar{K}}_1\,,
\end{align*}
where we used that $\tilde{\chi}_\omega^\prime$ is a sum of two signed Dirac distributions with bounded mass. The claimed error estimates are thus a byproduct of \eqref{Lem:DefBreveR.EqnA}.
\end{proof}
%
%
%
\subsection{Smooth parameter dependence}\label{sect:Smoothness}
%
In preparation   for   the spectral analysis in \S\ref{sect:Stability} we prove that the distance profile $R_\omega$ provided by Theorem~\ref{Thm:ExistenceNonlWaves} depends smoothly on $\om$, which is not immediately granted by abstract principles since the fixed point argument from \S\ref{sect:ExAndUni} works with $\om$-dependent norms and operators. More importantly, we   establish almost explicit   approximation formulas for $\partial_\omega R_\omega$ which allow us to compute the base functions for the neutral Jordan modes   up to   high accuracy.
\par
The starting point for our considerations is the identity
\begin{align}
\label{Eqn:ODEDistances}
\partial_\omega R_\omega=\chi\ast\chi\ast\Bat{\omega^{-2}\Phi^{\prime\prime}\bat{R_\omega}
\partial_\omega R_\omega-2\omega^{-3}\Phi^{\prime}\bat{R_\omega}}\,,
\end{align}
which follows by symbolically differentiating the integrated traveling wave equation \eqref{Eqn:NonlFPDist} with respect to $\om$. Due to the scaling \eqref{Eqn:ScaledTWEqnForDistances} and the definitions in \eqref{Eqn:DefTildeP}, this equation can be written as
\begin{align}
\label{Eqn:ODEDistances.Scaled}
\tilde{Q}_\omega=\tilde{\chi}_\omega\ast\tilde{\chi}_\omega\ast\at{\tilde{P}_\omega \tilde{Q}_\omega+\tilde{K}_\omega}\,,
\end{align}
where the unknown
\begin{align}
\label{Eqn:DefCheckQ}
  \tilde{Q}_\omega\at{\tilde{x}}=-\tfrac12\delta_\omega^{-m/2-1}\al_\omega^{-1}\partial_\omega R_\omega\bat{\delta_\omega \beta_\omega\tilde{x}}=
m^{-1}\al_\omega^{-1}\partial_{\delta_\omega} R_\omega\bat{\delta_\omega \beta_\omega\tilde{x}}\,.
\end{align}
can be expected to have amplitudes of order $\DO{1}$ and   represents   a scaled analogue to the derivative of $R_\omega$ with respect to the small quantity $\delta_\omega$.   Notice also that \eqref{Eqn:ODEDistances} is easier to analyze than the corresponding integral equation   for    $\partial_\omega \tilde{R}_\omega$ as the latter involves a convolution kernel which depends explicitly on $\om$.

\par
Our strategy is to prove the existence of $\partial_\omega R_\omega$ by showing that the nonlocal equation \eqref{Eqn:ODEDistances.Scaled} admits a unique solution, and to approximate the latter by a function $\breve{Q}_\omega$, which is well defined by
\begin{align}
\label{Eqn:DefCheckU}
\breve{Q}_\omega=\tilde{\chi}_\omega\ast\tilde{\chi}_\omega\ast \nat{\tilde{\chi}_\omega\bar{P}\,\breve{Q}_\omega+\tilde{\chi}_\omega\bar{K}}
\end{align}
and can be computed explicitly, see the proof of Lemma~\ref{Lem:AsympCheckU}.
\begin{figure}[ht!] %
\centering{ %
\includegraphics[width=0.95\textwidth]{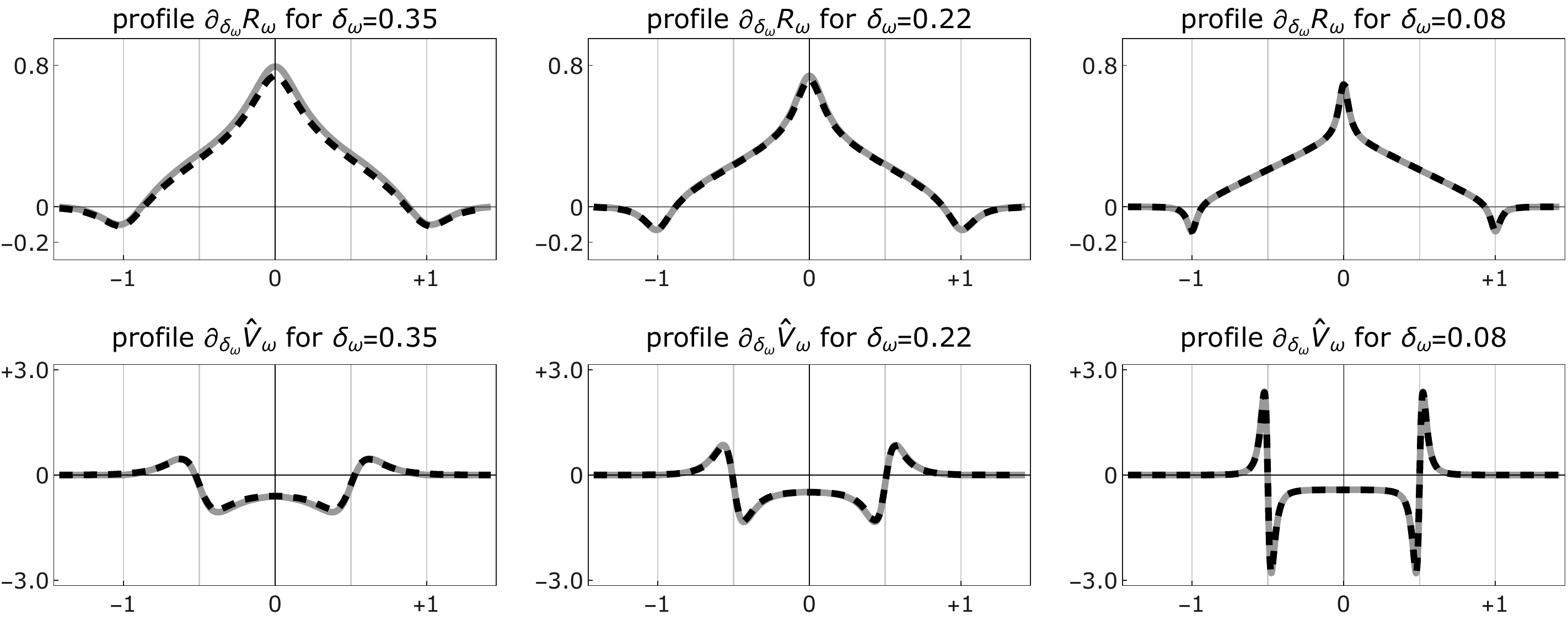}
} %
\caption{ %
  Derivative of the distance profile $R_\omega$ (\emph{top row}) and the normalized velocity profile $\hat{V}_\omega=\om^{-1}V_\omega$ (\emph{bottom row}) with respect to $\delta_\omega$ computed with the numerical data (black, dashed)  from Figures~\ref{Fig:WavesDist} and~\ref{Fig:WavesVel}, and using the approximate formulas (gray, solid) from Lemmas~\ref{Lem:DefAlpha} and~\ref{Lem:DefBreveR}. The corresponding rigorous results are stated in Theorem~\ref{Thm:SmoothnessNonlWaves} and Lemma~\ref{Lem:AsympCheckU}.
} %
\label{Fig:Derivatives} %
\end{figure} %
\begin{theorem}[smooth parameter dependence]
\label{Thm:SmoothnessNonlWaves}
  The family $\nat{\tilde{R}_\omega}_\omega$ from Theorem~\ref{Thm:ExistenceNonlWaves} is $\fspaceC^2$-smooth in the sense that the derivatives
\begin{align*}
\partial_{\tilde{x}}\tilde{R}_\omega,\qquad \partial_{\tilde{\omega}}\tilde{R}_\omega,\,\qquad
\partial_{\tilde{x}}^2\tilde{R}_\omega
,\,\qquad
\partial_{\tilde{x}}\partial_\omega\tilde{R}_\omega
,\,\qquad
\partial_\omega\partial_{\tilde{x}}\tilde{R}_\omega
,\,\qquad
\partial_{\omega}^2\tilde{R}_\omega
\end{align*}
exist for any sufficiently large $\om$ as continuous functions in the variables $\pair{\omega}{\tilde{x}}$, and a similar statement holds for the corresponding family  $\nat{R_\omega}_\omega$ that is provided by the scaling \eqref{Eqn:ScaledProfiles}. Moreover, we have
\begin{align}
\label{Thm:SmoothnessNonlWaves.Eqn1}
\bnorm{\tilde{Q}_\omega-\breve{Q}_\omega}_{*,\,\omega}\leq C\delta_\omega^{\min\{k,m\}}
\end{align}
for all sufficiently large $\omega$ and a constant $C$ independent of $\omega$.
\end{theorem}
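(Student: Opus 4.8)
The plan is to split the proof into two parts. The first, and harder, part is to establish the genuine $\fspaceC^2$-regularity of the map $\om\mapsto\tilde R_\om$; the second is to identify its suitably rescaled $\om$-derivative with the unique solution of the nonlocal equation \eqref{Eqn:ODEDistances.Scaled}, which then yields the quantitative estimate \eqref{Thm:SmoothnessNonlWaves.Eqn1} by a direct application of Lemma~\ref{Lem:JordanAuxRes}. Throughout one may parametrise by either $\om$ or the equivalent small quantity $\delta_\om=\om^{-2/m}$ from \eqref{Eqn:DefDelta}, since these are related by a smooth diffeomorphism.

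\textbf{Regularity.} I would fix a large reference speed $\om_0$ and work in the Banach space $\calX$ of even functions with finite $\nnorm{\cdot}_{*,\om_0}$-norm, noting that for $\om$ in a small neighbourhood of $\om_0$ all the norms $\nnorm{\cdot}_{*,\om}$ are uniformly equivalent on $\calX$. On $\calX$ one then studies the map
\begin{align*}
\calG\bpair{\om}{\tilde R}:=\tilde R-\tilde\chi_\om\ast\tilde\chi_\om\ast\Bat{\delta_\om^{m+2}\al_\om^{m+2}\Phi^\prime\at{\tilde R}}\,,
\end{align*}
whose zeros are precisely the solutions of \eqref{Eqn:ScaledTWEqnForDistances}. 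The $\om$-dependence enters only through the scaling parameters $\xi_\om,\al_\om,\be_\om$, which are smooth by Lemma~\ref{Lem:DefAlpha}, and through the convolution kernel $\tilde\chi_\om\ast\tilde\chi_\om$, which by \eqref{Eqn:ScaledKernel} equals the tent map $\max\{2\xi_\om-\nabs{\tilde x},0\}$. The crucial observation is that this tent map --- unlike $\tilde\chi_\om$ itself, whose $\xi_\om$-derivative is a pair of Dirac masses --- depends smoothly on $\om$ as an element of $\fspaceL^1\at{\Rset}$, so convolution against it is a smooth family of bounded operators on $\fspaceL^1$, hence on $\calX$. Together with the standard smoothness properties of the superposition operator $\tilde R\mapsto\Phi^\prime\at{\tilde R}$ between the relevant spaces --- using $\Phi\in\fspaceC^3$, the smooth extension of $\Phi$ across $r=0$ mentioned after Assumption~\ref{Ass.Pot}, and the fact that the solution stays in a compact subinterval of $\oointerval{-1}{0}$ away from the singularity --- this makes $\calG$ a $\fspaceC^2$-map from a neighbourhood of $\om_0$ times $\calX$ into $\calX$. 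Finally, the linearization $D_{\tilde R}\calG\bpair{\om_0}{\tilde R_{\om_0}}=\id-\tilde\chi_{\om_0}\ast\tilde\chi_{\om_0}\ast\at{\tilde P_{\om_0}\,\cdot\,}$ is boundedly invertible on $\calX$: writing $\tilde P_{\om_0}=\tilde\chi_{\om_0}\bar P+\tilde\chi_{\om_0}\bat{\tilde P_{\om_0}-\bar P}+\at{1-\tilde\chi_{\om_0}}\tilde P_{\om_0}$, the first term produces exactly the operator inverted in Lemma~\ref{Lem:JordanAuxRes}, while the remaining two are small perturbations by \eqref{Thm:ExistenceNonlWaves.Eqn1} combined with the pointwise bound \eqref{Lem:JordanAuxRes.Eqn4b}. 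The implicit function theorem then shows that $\om\mapsto\tilde R_\om$ is $\fspaceC^2$ near $\om_0$, hence for all large $\om$. Since $\nnorm{\cdot}_{*,\om}$ controls $\nnorm{\tilde R^{\prime\prime}}_1$ and $\babs{\tilde R\at{0}}$ we have (for even functions) $\calX\hookrightarrow\fspaceC^1\at{\Rset}$, and the continuity of all six listed partial derivatives (with equality of the mixed ones by Schwarz's theorem) follows by combining this embedding with the identity $\tilde R_\om^{\prime\prime}=\tilde\Delta_0\bat{\delta_\om^{m+2}\al_\om^{m+2}\Phi^\prime\at{\tilde R_\om}}$ read off from \eqref{Eqn:ScaledTWEqnForDistances}; the corresponding statement for $R_\om$ then follows from the smooth $\om$-dependence of the scaling \eqref{Eqn:ScaledProfiles}.

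\textbf{Identification and error bound.} Once $\om\mapsto R_\om$ is known to be $\fspaceC^2$, differentiating the integrated traveling wave equation \eqref{Eqn:NonlFPDist} with respect to $\om$ is rigorous and yields \eqref{Eqn:ODEDistances}, which after the scaling \eqref{Eqn:ScaledProfiles}, \eqref{Eqn:DefTildeP} and \eqref{Eqn:DefCheckQ} is exactly \eqref{Eqn:ODEDistances.Scaled}; thus the rescaled derivative $\tilde Q_\om$ belongs to $\fspaceL^1_\even$ and solves that equation. To pin it down and estimate it I would show that \eqref{Eqn:ODEDistances.Scaled} has a unique $\fspaceL^1_\even$-solution: setting $G_\om:=\tilde P_\om-\tilde\chi_\om\bar P=\tilde\chi_\om\bat{\tilde P_\om-\bar P}+\at{1-\tilde\chi_\om}\tilde P_\om$, equation \eqref{Eqn:ODEDistances.Scaled} becomes the affine fixed-point relation $\tilde Q=\tilde\calK_\om\bato{G_\om\tilde Q+\tilde K_\om}$, where $\tilde\calK_\om$ is the linear solution operator of \eqref{Lem:JordanAuxRes.Eqn1}. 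By Lemma~\ref{Lem:JordanAuxRes}, \eqref{Lem:JordanAuxRes.Eqn4b}, \eqref{Thm:ExistenceNonlWaves.Eqn1} and the bound $\delta_\om^2\nnorm{\tilde Q}_1\leq\nnorm{\tilde Q}_{*,\om}$ one gets $\bnorm{\tilde\calK_\om\bato{G_\om\tilde Q}}_{*,\om}\leq C\delta_\om^{\min\{k,m\}}\nnorm{\tilde Q}_{*,\om}$, whereas $\bnorm{\tilde K_\om}_1\leq C$ by \eqref{Thm:ExistenceNonlWaves.Eqn3} together with $\bnorm{\tilde\chi_\om\bar K}_1\leq\nnorm{\bar K}_1<\infty$; Banach's fixed-point theorem therefore gives a unique solution with $\nnorm{\tilde Q_\om}_{*,\om}\leq C$, which by the previous paragraph must coincide with the rescaled $\om$-derivative. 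Since $\breve Q_\om=\tilde\calK_\om\bato{\tilde\chi_\om\bar K}$ by \eqref{Eqn:DefCheckU}, subtracting the two identities gives
\begin{align*}
\tilde Q_\om-\breve Q_\om=\tilde\calK_\om\Bato{G_\om\tilde Q_\om+\at{1-\tilde\chi_\om}\tilde K_\om+\tilde\chi_\om\bat{\tilde K_\om-\bar K}}\,,
\end{align*}
and \eqref{Lem:JordanAuxRes.Eqn2} together with \eqref{Thm:ExistenceNonlWaves.Eqn1}, \eqref{Thm:ExistenceNonlWaves.Eqn3} and $\nnorm{\tilde Q_\om}_{*,\om}\leq C$ yields $\bnorm{\tilde Q_\om-\breve Q_\om}_{*,\om}\leq C\delta_\om^{\min\{k,m\}}$, i.e. \eqref{Thm:SmoothnessNonlWaves.Eqn1}.

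\textbf{Main obstacle.} The hard part is the regularity step, because the fixed-point construction of Theorem~\ref{Thm:ExistenceNonlWaves} lives in $\om$-dependent spaces with $\om$-dependent operators, so the implicit function theorem cannot be invoked blindly; one must verify that $\calG$ is genuinely differentiable in $\om$ between fixed Banach spaces, and this works precisely because the distance equation involves only the \emph{double} convolution $\tilde\chi_\om\ast\tilde\chi_\om$ (an $\fspaceL^1$-differentiable tent map) rather than a single $\tilde\chi_\om$. An alternative route, avoiding the implicit function theorem, would be a direct difference-quotient argument for $h^{-1}\bat{\tilde R_{\om+h}-\tilde R_\om}$: subtract the two scaled fixed-point equations, use the uniform bound $\nnorm{\tilde R_\om-\breve R_\om}_{*,\om}\leq C\delta_\om^{\min\{k,m\}+1}$ from \eqref{Thm:ExistenceNonlWaves.Eqn2} and the differentiability of $\Phi^\prime$ on the relevant range, and close the argument with the uniform invertibility of $\id-\tilde\calK_\om\circ\at{G_\om\,\cdot\,}$ established above; this is more hands-on but rests on the same estimates. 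Given the regularity, the remaining part is a routine consequence of Lemma~\ref{Lem:JordanAuxRes} and the bounds already contained in Theorem~\ref{Thm:ExistenceNonlWaves}.
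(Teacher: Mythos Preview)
Your identification and error-bound part is correct and matches the paper's argument essentially verbatim: both show that the linearised problem $\tilde U=\tilde\chi_\omega\ast\tilde\chi_\omega\ast(\tilde P_\omega\tilde U+\tilde F)$ has a unique solution by writing it as $\tilde U=\tilde\calK_\omega[G_\omega\tilde U+\tilde F]$ with $G_\omega=\tilde P_\omega-\tilde\chi_\omega\bar P$ and invoking contractivity via \eqref{Thm:ExistenceNonlWaves.Eqn1}, then subtract the equations for $\tilde Q_\omega$ and $\breve Q_\omega$ to get \eqref{Thm:SmoothnessNonlWaves.Eqn1}.

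The regularity part, however, has a genuine gap. Your key claim that the tent map $\tilde\chi_\omega\ast\tilde\chi_\omega=\max\{2\xi_\omega-|\tilde x|,0\}$ depends \emph{smoothly} on $\omega$ as an element of $\fspaceL^1$ is false beyond $\fspaceC^1$: its $\xi_\omega$-derivative is $2\,\mathbf{1}_{[-2\xi_\omega,2\xi_\omega]}$, which lies in $\fspaceL^1$ and varies continuously there, but the second derivative is a pair of Dirac masses and does \emph{not} lie in $\fspaceL^1$. Consequently $\calG$ is only $\fspaceC^1$ between the stated spaces, and the implicit function theorem yields only $\fspaceC^1$-dependence of $\tilde R_\omega$ on $\omega$, not the $\fspaceC^2$ asserted in the theorem (in particular, you do not obtain $\partial_\omega^2\tilde R_\omega$).

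The paper sidesteps this exactly where you run into it: it derives regularity from the \emph{unscaled} equation \eqref{Eqn:NonlFPDist}/\eqref{Eqn:ODEDistances}, where the convolution kernel $\chi\ast\chi$ is $\omega$-independent and all $\omega$-dependence sits in the smooth scalar prefactors $\omega^{-2},\omega^{-3}$. Concretely, having inverted the linearised operator (your $\tilde\calM_\omega$), the paper reads \eqref{Eqn:ODEDistances} as a Banach-valued ODE $\partial_\omega R_\omega=F(\omega,R_\omega)$ with Lipschitz right-hand side, obtains a locally unique $\fspaceC^1$-solution, matches it with the family from Theorem~\ref{Thm:ExistenceNonlWaves} via local uniqueness, and bootstraps to $\fspaceC^2$ by differentiating \eqref{Eqn:ODEDistances} once more. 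The paper even flags your obstacle explicitly: ``\eqref{Eqn:ODEDistances} is easier to analyze than the corresponding integral equation for $\partial_\omega\tilde R_\omega$ as the latter involves a convolution kernel which depends explicitly on $\omega$.'' Your implicit-function route could in principle be repaired by either working on the unscaled equation or by upgrading the target space so that point evaluations (which appear in $\partial_\xi^2$ of the convolution) are continuous, but as written the argument does not reach $\fspaceC^2$.
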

\begin{proof}
\emph{\ul{Linearized traveling wave problem}}: We first show that for any given $\tilde{F}\in\fspaceL^1_\even$ there exists a unique solution $\tilde{U}\in\fspaceL^1_\even$ to the linear problem
\begin{align}
\label{Thm:SmoothnessNonlWaves.PEqn2}
\tilde{U}=\tilde{\chi}_\omega\ast\tilde{\chi}_\omega\ast\bat{\tilde{P}_\omega\tilde{U}+\tilde{F}}\,,
\end{align}
which can equivalently be written as
\begin{align}
\label{Thm:SmoothnessNonlWaves.PEqn0}
\tilde{U}=  \tilde{\calK}_\omega  \Bato{\tilde{\chi}_\omega\bat{\tilde{P}_\omega-\bar{P}}\tilde{U}+\at{1-\tilde{\chi}_\omega}\tilde{P}_\omega\tilde{U}+\tilde{F}}
\end{align}
with   $\tilde{\calK}_\omega$   being the solution operator to the auxiliary problem \eqref{Lem:JordanAuxRes.Eqn1}. In view of Lemma~\ref{Lem:JordanAuxRes} we conclude that any solution $\tilde{U}$ necessarily belongs to the Banach space
\begin{align*}
\tilde{\fspace{X}}_\omega := \big\{\tilde{U}\;:\;\bnorm{\tilde{U}}_{*,\,\omega}<\infty\big\}\,,
\end{align*}
and combining \eqref{Lem:JordanAuxRes.Eqn2} with \eqref{Thm:ExistenceNonlWaves.Eqn1} yields
\begin{align*}
\bnorm{\tilde{\chi}_\omega\bat{\tilde{P}_\omega-\bar{P}}\tilde{U}}_1 \leq
\bnorm{\tilde{\chi}_\omega\bat{\tilde{P}_\omega-\bar{P}}\bar{Y}}_1\bnorm{\tilde{U}\bar{Y}^{-1}}_{\infty}  \leq C
\bnorm{\tilde{\chi}_\omega\bat{\tilde{P}_\omega-\bar{P}}\bar{Y}}_1\bnorm{\tilde{U}}_{*,\,\omega} \leq \delta_\omega^{\min\{k,m\}}C\bnorm{\tilde{U}}_{*,\,\omega}
\end{align*}
thanks to \eqref{Lem:JordanAuxRes.Eqn4b} as well as
\begin{align*}
\bnorm{\at{1-\tilde{\chi}_\omega}\tilde{P}_\omega\tilde{U}}_1\leq
 \bnorm{\at{1-\tilde{\chi}_\omega}\tilde{P}_\omega}_\infty  \bnorm{\tilde{U}}_1  \leq C\delta_\omega^{m}\bnorm{\tilde{U}}_{*,\,\omega}\,.
\end{align*}
We thus conclude --- at least for sufficiently large $\omega$ --- that the right hand side in \eqref{Thm:SmoothnessNonlWaves.PEqn0} defines an affine operator that maps  $\tilde{\fspace{X}}_\omega$ contractively into itself.  The existence and uniqueness of solutions is therefore granted by Banach's Contraction Mapping Principle, and we readily verify the continuity estimate
\begin{align}
\label{Thm:SmoothnessNonlWaves.PEqn1}
\bnorm{\tilde{U}}_{*,\,\omega}\leq C \bnorm{\tilde{F}}_1
\end{align}
for some constant $C$ independent of $\omega$.   We also infer from the definition of the norm $\norm{\cdot}_{*,\,\omega}$ --- see \eqref{Lem:JordanAuxRes.Eqn2} and \eqref{Lem:JordanAuxRes.Eqn4a} --- that $\nnorm{\tilde{U}}_{*,\,\omega}<\infty$ implies that $\tilde{U}$ is continuous with respect to the space variable $\tilde{x}$.
\par
\emph{\ul{ODE for $R_\omega$}}: The identities \eqref{Eqn:ODEDistances.Scaled} and \eqref{Eqn:DefCheckQ} can be written as
\begin{align}
\label{Thm:SmoothnessNonlWaves.PEqn3}
\partial_\omega R_\omega =-2\delta_\omega^{m/2+1}\al_\omega\tilde{\calS}_\omega^{-1}\ato{\tilde{\calM}_\omega\bato{\delta_\omega^{m+1}\al_\omega^{m+1}\Phi^{\prime}\bat{\tilde{\calS}_\omega\ato{R_\omega}}}}\,,
\end{align}
where $\tilde{\calM}_\omega$ denotes the solution operator to \eqref{Thm:SmoothnessNonlWaves.PEqn2} and $\tilde{\calS}_\omega$ abbreviates the scaling $x\mapsto\tilde{x}$, see \eqref{Eqn:SpaceScalign}. Moreover, the superposition operator corresponding to   $\Phi^{\prime}$ is Lipschitz continuous   on the open set
\begin{align*}
\big\{\tilde{R}_\omega\in \tilde{\fspace{X}}_\omega\;:\; \min_{\tilde{x}\in\Rset}{\tilde{R}_\omega\at{\tilde{x}}}>-1\big\}\,,
\end{align*}
where we tacitly assumed --- as in the proof of Theorem~\ref{Thm:ExistenceNonlWaves} --- that $\Phi^\prime$ is trivially extended to the interval $\oointerval{-1}{\infty}$ in order to ensure well-definedness. The initial value problem corresponding to the Banach-valued ODE \eqref{Thm:SmoothnessNonlWaves.PEqn3} is hence locally well-posed and gives rise to a   family   of smooth traveling waves. Due to the local uniqueness we finally conclude that the $\omega$-family from Theorem~\ref{Thm:ExistenceNonlWaves} is in fact   continuously differentiable with respect to $\om$ and that $\partial_\omega R_\omega$ depends continuously on both $x$ and $\om$. Moreover,  differentiating \eqref{Eqn:ODEDistances} with respect to $\om$ and inverting the linear main part by $\tilde{\calM}_\omega$ gives an ODE for $\partial_\omega R_\omega$, which involves --- among other terms --- the  functions $\Phi^{\prime\prime}$ and $\Phi^{\prime\prime\prime}$, but there is no regularity issue as these functions are only evaluated at points $R_\omega\at{x}\in\ocinterval{-1}{0}$. From this we conclude that $\partial_\omega^2 R_\omega$ is well-defined and depends continuously on both $\om$ and $x$. Moreover, the spatial regularization of the convolution with the tent map $\chi\ast\chi$ combined with the integral equations for $R_\omega$ and $\partial_\omega R_\omega$  guarantees that the partial derivatives $\partial_x R_\omega$, $\partial_x\partial_\omega R_\omega$, $\partial_\omega \partial_x R_\omega$, and $\partial_{x}^2 R_\omega$ are all well-defined and continuous with respect to the variables $\pair{\omega}{x}$. Finally, using \eqref{Eqn:ScaledProfiles} we conclude that  $\tilde{R}_\omega=\tilde{\calS}_\omega\ato{R_\omega}$  has also the desired regularity properties.

\par
\emph{\ul{Approximation error}}: By construction,
we have
\begin{align*}
\breve{Q}_\omega =   \tilde{\calK}_\omega   \bato{\tilde{\chi}_\omega\bar{K}}\,,
\qquad
\tilde{Q}_\omega=\tilde{\calM}_\omega\bato{\tilde{K}_\omega}=  \tilde{\calK}_\omega  \bato{\tilde{\chi}_\omega\bar{K}+\tilde{E}_\omega}
\end{align*}
with error terms
\begin{align*}
\tilde{E}_\omega=\tilde{\chi}_\omega\bat{\tilde{P}_\omega-\bar{P}}\tilde{Q}_\omega+\at{1-\tilde{\chi}_\omega}\tilde{P}_\omega\tilde{Q}_\omega+\tilde{\chi}_\omega\bat{\tilde{K}_\omega-\bar{K}}+\at{1-\tilde{\chi}_\omega}\tilde{K}_\omega\,,
\end{align*}
and the continuity of   $\tilde{\calK}_\omega$   and $\tilde{\calM}_\omega$ --- see \eqref{Lem:JordanAuxRes.Eqn1} and \eqref{Thm:SmoothnessNonlWaves.PEqn1} --- imply
\begin{align}
\label{Thm:SmoothnessNonlWaves.PEqn4}
\bnorm{\breve{Q}_\omega}_{*,\,\omega}\leq C \bnorm{\tilde{\chi}_\omega\bar{K}}_1\leq C\,,\qquad
\bnorm{\tilde{Q}_\omega}_{*,\,\omega} \leq
C\bnorm{\tilde{\chi}_\omega\bar{K}}_1+C\bnorm{\tilde{E}_\omega}_1\leq C\bat{1+\bnorm{\tilde{E}_\omega}_1}
\end{align}
as well as
\begin{align}
\label{Thm:SmoothnessNonlWaves.PEqn5}
\bnorm{\tilde{Q}_\omega-\breve{Q}_\omega}_{*,\,\omega}\leq C\bnorm{\tilde{E}_\omega}_1\,.
\end{align}
On the other hand, by \eqref{Lem:JordanAuxRes.Eqn2}, \eqref{Lem:JordanAuxRes.Eqn4b} and H\"older arguments we estimate
\begin{align*}
\bnorm{\tilde{E}_\omega}_{1}\leq C\at{\bnorm{\tilde{\chi}_\omega\bat{\tilde{P}_\omega-\bar{P}}\tilde{Y}_\omega}_1+\bnorm{\at{1-\tilde{\chi}_\omega}\tilde{P}_\omega}_\infty \delta_\omega^{-2}}\bnorm{\tilde{Q}_\omega}_{*,\,\omega}+
\bnorm{\tilde{\chi}_\omega\bat{\tilde{K}_\omega-\bar{K}_\omega}+\at{1-\tilde{\chi}_\omega}\tilde{K}_\omega}_1
\end{align*}
and inserting \eqref{Thm:SmoothnessNonlWaves.PEqn4} as well as the error estimates from Theorem~\ref{Thm:ExistenceNonlWaves} we first obtain
\begin{align*}
\bnorm{\tilde{E}_\omega}_{1}\leq C\bat{\delta_\omega^{\min\{k,m\}}+\delta_\omega^{m}}\bat{1+\bnorm{\tilde{E}_\omega}_{1}}+C\delta_\omega^{m}
\end{align*}
and afterwards the claim \eqref{Thm:SmoothnessNonlWaves.Eqn1} thanks to
\eqref{Thm:SmoothnessNonlWaves.PEqn5}.
\end{proof}
  The regularity   statements in Theorem~\ref{Thm:SmoothnessNonlWaves} are not optimal but sufficient for our purposes as they guarantee the $\fspaceC^2$-regularity of the map $\pair{\omega}{x}\to R_\omega\at{x}$ and hence also the Schwarz identity $\partial_x\partial_\omega R_\omega=\partial_\omega \partial_x R_\omega$. By boot strapping one easily shows that this maps actually admits infinitely many derivatives provided that $\Phi$ does so on the interval $\ocinterval{-1}{0}$.
\par
  We also remark that the error bounds for $\bnorm{\tilde{Q}_\omega-\breve{Q}_\omega}_{*,\,\omega}$ and
$\nnorm{\tilde{R}_\omega-\breve{R}_\omega}_{*,\,\omega}$ differ by one power of $\delta_\omega$ because \eqref{Thm:SmoothnessNonlWaves.Eqn1} and  \eqref{Lem:JordanAuxRes.Eqn2} give
\begin{align*}
\bnorm{\tilde{Q}_\omega-\breve{Q}_\omega}_\infty\leq \delta_\omega^{\min\{k,m\}-1}\leq  \delta_\omega^{\min\{k,m\}-1}\bnorm{\breve{Q}_\omega}_\infty
\end{align*}
as well as
\begin{align*}
\bnorm{\tilde{Q}_\omega-\breve{Q}_\omega}_1\leq \delta_\omega^{\min\{k,m\}-2}\leq  \delta_\omega^{\min\{k,m\}-1}\bnorm{\breve{Q}_\omega}_1\,.
\end{align*}
To conclude our study of the parameter dependence   we finally analyze the defining equation for the approximation $\breve{Q}_\omega$.  In particular,   we show that the latter is completely determined by $\bar{Y}$, see \eqref{Lem:AsympCheckU.PEqn2}.
\begin{lemma}[properties of the approximation $\breve{Q}_\omega$]
\label{Lem:AsympCheckU}
The unique solution $\breve{Q}_\omega$ to \eqref{Eqn:DefCheckU} satisfies
\begin{align*}
\breve{Q}_\omega\quad\xrightarrow{\quad \om\to\infty\quad}\quad-m^{-1}\bar{Y}^\flat
\end{align*}
in the sense of locally uniform convergence. Moreover, we have
\begin{align}
\label{Lem:AsympCheckU.Eqn1}
\babs{m\,\breve{Q}_\omega\at{0}-1}+\babs{\breve{Q}^\prime_\omega\nat{\xi_\omega}}\leq C\delta_\omega
\end{align}
and
\begin{align}
\label{Lem:AsympCheckU.Eqn2}
  \bnorm{\breve{Q}_\omega}_\infty + \bnorm{\breve{Q}_\omega^\prime}_\infty +\delta_\omega \bnorm{\breve{Q}_\omega}_1 \leq C
\end{align}
for some constant $C$ independent of $\omega$.
\end{lemma}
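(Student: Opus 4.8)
The plan is to recognise that \eqref{Eqn:DefCheckU} is precisely the nonlocal auxiliary problem \eqref{Lem:JordanAuxRes.Eqn1} with right-hand side $\tilde{F}=\tilde{\chi}_\omega\bar{K}\in\fspaceL^1_\even$, which is supported in $I_\omega$. Lemma~\ref{Lem:JordanAuxRes} then supplies, for free, the existence and uniqueness of $\breve{Q}_\omega$ in $\fspaceL^1_\even$, its continuous differentiability, the support property $\supp\,\breve{Q}_\omega\subseteq 3I_\omega$, the consistency relation $\xi_\omega\breve{Q}_\omega^\prime\nat{\xi_\omega}+\breve{Q}_\omega\nat{\xi_\omega}=0$ from \eqref{Lem:JordanAuxRes.Eqn3a} (recall that $\tilde{d}_\omega$ vanishes when $\tilde{F}$ is supported in $I_\omega$), and the base-of-the-tent representation \eqref{Lem:JordanAuxRes.Eqn3b} on $3I_\omega\setminus I_\omega$. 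The decisive additional input is an explicit formula for $\breve{Q}_\omega$ on the core interval. Differentiating \eqref{Eqn:DefCheckU} twice and using $\supp\,\tilde{\chi}_\omega=I_\omega$ one finds $\breve{Q}_\omega^{\prime\prime}=-2\bar{P}\,\breve{Q}_\omega-2\bar{K}$ on $I_\omega$, together with $\breve{Q}_\omega^\prime\nat{0}=0$ by evenness. Since $-2\bar{K}=\bar{Y}^{\prime\prime}$ by \eqref{Lem:AsympODE.Props.Eqn1} and \eqref{Eqn:DefBarPZ}, the identity for $\nat{\bar{Y}^\flat}^{\prime\prime}$ in Lemma~\ref{Lem:AsympODE.Props} shows that $-m^{-1}\bar{Y}^\flat$ is an even particular solution of this linear ODE, whereas by Lemma~\ref{Lem:LinODE.Props} the even solutions of the homogeneous equation are precisely the multiples of $\bar{T}_\even$. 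Hence

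\begin{align}
\label{Lem:AsympCheckU.PEqn2}
\breve{Q}_\omega\at{\tilde{x}}=-m^{-1}\bar{Y}^\flat\at{\tilde{x}}+c_\omega\,\bar{T}_\even\at{\tilde{x}}\qquad\text{for}\quad\tilde{x}\in I_\omega
\end{align}

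with a single real constant $c_\omega$.

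Next I would pin down $c_\omega$ by inserting \eqref{Lem:AsympCheckU.PEqn2} into the consistency relation, which yields

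\begin{align*}
c_\omega\bat{\xi_\omega\bar{T}_\even^\prime\nat{\xi_\omega}+\bar{T}_\even\nat{\xi_\omega}}=m^{-1}\bat{\xi_\omega\nat{\bar{Y}^\flat}^\prime\nat{\xi_\omega}+\bar{Y}^\flat\nat{\xi_\omega}}\,.
\end{align*}

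By Lemma~\ref{Lem:LinODE.Props} the function $\bar{T}_\even$ is asymptotically affine with non-vanishing slope $-\sqrt{m/(m+1)}$, so the left bracket equals $-2\sqrt{m/(m+1)}\,\xi_\omega+\DO{1}$; on the other hand $\nat{\bar{Y}^\flat}^\prime\nat{\tilde{x}}=\tilde{x}\bar{Y}^{\prime\prime}\nat{\tilde{x}}=\DO{\at{1+\abs{\tilde{x}}}^{-m}}$ and $\bar{Y}^\flat$ is bounded by Lemma~\ref{Lem:AsympODE.Props}, so the right-hand side stays $\DO{1}$. Combining this with $\xi_\omega\sim\delta_\omega^{-1}$, which follows from \eqref{Lem:DefAlpha.Eqn2} in Lemma~\ref{Lem:DefAlpha}, gives $\babs{c_\omega}\leq C\delta_\omega$. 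I expect this to be the main obstacle: one has to verify that the coefficient of $c_\omega$ does not degenerate — it is genuinely of order $\xi_\omega$, which is why the affine (rather than merely bounded) growth of $\bar{T}_\even$, i.e. $\bar{T}_\even^\prime\nat{\infty}\neq0$, is indispensable — and to keep track of the algebraic error exponents; but this is a short, explicit computation once \eqref{Lem:AsympCheckU.PEqn2} is available.

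The three assertions then follow with little effort. For \eqref{Lem:AsympCheckU.Eqn1} I would evaluate \eqref{Lem:AsympCheckU.PEqn2} using $\bar{Y}^\flat\nat{0}=-\bar{Y}\nat{0}=-1$ and $\bar{T}_\even\nat{0}=1$ to obtain $m\,\breve{Q}_\omega\nat{0}-1=m\,c_\omega=\DO{\delta_\omega}$ and $\breve{Q}_\omega^\prime\nat{\xi_\omega}=-m^{-1}\xi_\omega\bar{Y}^{\prime\prime}\nat{\xi_\omega}+c_\omega\bar{T}_\even^\prime\nat{\xi_\omega}=\DO{\delta_\omega^m}+\DO{\delta_\omega}=\DO{\delta_\omega}$ since $m>1$. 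The locally uniform convergence $\breve{Q}_\omega\to-m^{-1}\bar{Y}^\flat$ is immediate from \eqref{Lem:AsympCheckU.PEqn2}: every fixed compact interval lies inside $I_\omega$ once $\omega$ is large (as $\xi_\omega\to\infty$), and there $c_\omega\to0$ together with the boundedness of $\bar{T}_\even$ on compact sets closes the argument. Finally, for \eqref{Lem:AsympCheckU.Eqn2} I would bound $\breve{Q}_\omega$ and $\breve{Q}_\omega^\prime$ on $I_\omega$ directly from \eqref{Lem:AsympCheckU.PEqn2}, using that $\bar{Y}^\flat$ and $\nat{\bar{Y}^\flat}^\prime=\tilde{x}\bar{Y}^{\prime\prime}$ are bounded on $\Rset$, that $\bar{T}_\even^\prime$ is bounded, and that $\babs{c_\omega}\,\babs{\bar{T}_\even\at{\tilde{x}}}\leq C\delta_\omega\at{1+\xi_\omega}\leq C$ for $\tilde{x}\in I_\omega$; on $3I_\omega\setminus I_\omega$ I would invoke \eqref{Lem:JordanAuxRes.Eqn3b} and its derivative, noting that there $\breve{Q}_\omega\nat{\xi_\omega}$ and $\breve{Q}_\omega\at{\tilde{x}-2\xi_\omega}$ with $\tilde{x}-2\xi_\omega\in I_\omega$ are already controlled, while $\babs{\breve{Q}_\omega^\prime\nat{\xi_\omega}}\cdot\abs{\tilde{x}-3\xi_\omega}\leq C\delta_\omega\cdot2\xi_\omega\leq C$; since $\breve{Q}_\omega$ vanishes outside $3I_\omega$ this yields $\bnorm{\breve{Q}_\omega}_\infty+\bnorm{\breve{Q}_\omega^\prime}_\infty\leq C$, and then $\bnorm{\breve{Q}_\omega}_1\leq 6\xi_\omega\,\bnorm{\breve{Q}_\omega}_\infty\leq C\delta_\omega^{-1}$, which is the remaining bound in \eqref{Lem:AsympCheckU.Eqn2}.
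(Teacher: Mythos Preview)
Your proposal is correct and follows essentially the same route as the paper: identify \eqref{Eqn:DefCheckU} with the auxiliary problem of Lemma~\ref{Lem:JordanAuxRes}, derive the inhomogeneous linearized shape ODE on $I_\omega$, write $\breve{Q}_\omega$ as a particular solution plus an unknown multiple of $\bar{T}_\even$, and determine that constant from the consistency relation \eqref{Lem:JordanAuxRes.Eqn3a}. The only cosmetic difference is the choice of particular solution: the paper takes $(m+2)^{-1}\bar{Y}$ (so that $(m+2)\breve{Q}_\omega-\bar{Y}$ solves the homogeneous equation) and then expands everything in terms of $\bar{Y}$ and $\bar{Y}^\flat$, while you take $-m^{-1}\bar{Y}^\flat$ directly, which makes the locally uniform convergence claim immediate and slightly streamlines the remaining estimates. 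The two representations are linear reparametrisations of each other via $\bar{T}_\even=-\tfrac{m}{2}\bar{Y}-\tfrac{m+2}{2}\bar{Y}^\flat$, and the subsequent use of \eqref{Lem:JordanAuxRes.Eqn3b} for the base-of-the-tent bounds is identical.
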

\begin{proof}
Lemma~\ref{Lem:JordanAuxRes} ensures that $\breve{Q}_\omega$ is well defined with
\begin{align}
\label{Lem:AsympCheckU.PEqn1}
\xi_\omega\breve{Q}_\omega^\prime\nat{\xi_\omega}+\breve{Q}_\omega\nat{\xi_\omega}=0\,,
\end{align}
and differentiating \eqref{Eqn:DefCheckU} twice with respect to $\tilde{x}$ reveals
\begin{align*}
\breve{Q}^{\prime\prime}_\omega\at{\tilde{x}}=-2 \bar{P}\at{\tilde{x}}\breve{Q}_\omega\at{\tilde{x}}-2\bar{K}\at{\tilde{x}}\qquad\text{for}\quad\tilde{x}\in I_\omega\,.
\end{align*}
On the other hand, the asymptotic shape ODE \eqref{Lem:AsympODE.Props.Eqn1} can also be written as
\begin{align*}
\bar{Y}^{\prime\prime}\at{\tilde{x}}=-2\bar{P}\at{\tilde{x}}\bar{Y}\at{\tilde{x}}-\at{m+2}\bar{K}\at{\tilde{x}}\,,
\end{align*}
see \eqref{Eqn:DefBarPZ}, and in view of Lemma~\ref{Lem:LinODE.Props} we conclude that the even function $\at{m+2}\breve{Q}_\omega-\bar{Y}$ is a multiple of $\bar{T}_\even$.  In other words, there exists a constant $c$ such that
\begin{align}
\label{Lem:AsympCheckU.PEqn2}
\bat{m+2}\breve{Q}_\omega\at{\tilde{x}}=\at{1+m c}
\bar{Y}\at{\tilde{x}}+\at{m+2} c\bar{Y}^\flat\at{\tilde{x}}=
  \at{1+m\, c}
\tilde{x}\bar{Y}^\prime\at{\tilde{x}}+\at{2\, c-1}\bar{Y}^\flat\at{\tilde{x}}
\end{align}
holds for $\tilde{x}\in I_\delta$. We therefore have
\begin{align*}
\bat{m+2}\breve{Q}_\omega\at{\xi_\omega}
=
\at{1+m\,c}
\xi_\omega\bar{Y}^\prime\at{\xi_\omega}+\at{2\, c-1}\bar{Y}^\flat\at{\xi_\omega}
\end{align*}
as well as
\begin{align*}
\bat{m+2}\xi_\omega\breve{Q}_\omega^\prime\bat{\xi_\omega}=\at{1+m\,c}\xi_\omega\bar{Y}^\prime\bat{\xi_\omega}+\at{m+2}\,c\,\xi_\omega^{\,2}\bar{Y}^{\prime\prime}\bat{\xi_\omega}\,,
\end{align*}
and inserting this into \eqref{Lem:AsympCheckU.PEqn1} we arrive at
\begin{align*}
0=2\at{1+m\,c}\xi_\omega\bar{Y}^\prime\bat{\xi_\omega}+\at{m+2}\,c\,\xi_\omega^{\,2}\bar{Y}^{\prime\prime}\bat{\xi_\omega}+\at{2\,c}\bar{Y}^\flat\nat{\xi_\omega}\,.
\end{align*}
The asymptotic properties of $\bar{Y}$ --- see Lemma~\ref{Lem:AsympODE.Props} --- imply via
\begin{align*}
0=2\at{1+m\,c}\xi_\omega\bar{Y}^\prime\bat{\infty}-\at{-1+2\,c}\bar{Y}^\flat\nat{\infty}+\DO{\delta_\omega^{m-1}}
\end{align*}
the expansions
\begin{align}
\label{Lem:AsympCheckU.PEqn3}
1+m\,c=\DO{\delta_\omega}\,,c=-\frac{1}{m}+\DO{\delta_\omega}\,,
\end{align}
so using \eqref{Lem:AsympCheckU.PEqn2} we readily verify \eqref{Lem:AsympCheckU.Eqn1} as well as
\begin{align*}
\nnorm{\tilde{\chi}_\omega \breve{Q}_\omega}_\infty+\nnorm{\tilde{\chi}_\omega \breve{Q}_\omega^\prime}_\infty\leq C\,.
\end{align*}
The latter estimate also gives   \eqref{Lem:AsympCheckU.Eqn2} after evaluating \eqref{Lem:JordanAuxRes.Eqn3a} and \eqref{Lem:JordanAuxRes.Eqn3b} with $\tilde{U}=\breve{Q}_\omega$ and noticing that the compact support of $\breve{Q}_\omega$ implies $\nnorm{\breve{Q}_\omega}_1\leq C\delta_\omega^{-1}\nnorm{\breve{Q}_\omega}_\infty$. Finally, the locally uniform convergence claim is a direct consequence of
\eqref{Lem:AsympCheckU.PEqn2} and \eqref{Lem:AsympCheckU.PEqn3}.
\end{proof}
The proof of Lemma~\ref{Lem:AsympCheckU}, see especially \eqref{Lem:AsympCheckU.PEqn2} and \eqref{Lem:AsympCheckU.PEqn3}, reveals that  $\partial_{\delta_\omega}R_\omega$ is a genuine two-scale function in the sense that a tent-shaped background profile is superimposed by three decorations having amplitude of order $\DO{1}$ but width of order $\DO{\delta_\omega}$. This observation is confirmed by the numerical data in Figure~\ref{Fig:Derivatives}, which also illustrate that the decorations near $x=\pm1$ are --- due to the discrete Laplacian in \eqref{Eqn:NonlADDE2Order} --- shifted and scaled variants of the decorations near $x=0$.
%
%
\subsection{Tail estimates}
\label{sect:Tails}
%

%
Our final goal in this chapter is to establish the exponential space localization of the distance profile and its parameter derivative. For later use in \S\ref{sect:Stability} it is convenient to base this analysis on $R_\omega$ instead of $\tilde{R}_\omega$, and to work with
\begin{align}
\label{Eqn:DefQ}
Q_\omega\at{x}=\tilde{Q}_\omega\bat{\delta_\omega^{-1}\beta_\omega^{-1}x }=
m \,\al_\omega \,\partial_{\delta_\omega} R_\omega\at{x}
\end{align}
instead of $\partial_\omega R_\omega$, see \eqref{Eqn:DefCheckQ}.
\par
We first derive some global bounds which are consistent with  the heuristic rules \eqref{Eqn:DefTildeY} and $\tilde{Y}_\omega\approx \bar{Y}$, and show afterwards for large $\om$ that both $R_\omega$ and $Q_\omega$ decay rapidly as $x\to\pm\infty$.
\begin{lemma}[global estimates for $R_\omega$ and $Q_\omega$]
\label{Lem:GlobalBounds}
We have
\begin{align*}
\bnorm{R_\omega}_\infty+ \bnorm{\partial_x R_\omega}_\infty+\bnorm{Q_\omega}_\infty+ \delta_\omega \bnorm{\partial_x Q_\omega}_\infty\leq C
\end{align*}
for some constant $C$ independent of $\om$.
\end{lemma}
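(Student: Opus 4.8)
The plan is to transfer the scaled estimates already established in Lemma~\ref{Lem:DefBreveR}, Theorem~\ref{Thm:ExistenceNonlWaves}, Theorem~\ref{Thm:SmoothnessNonlWaves}, and Lemma~\ref{Lem:AsympCheckU} back to the unscaled variable $x$, keeping careful track of the powers of $\delta_\omega$ and $\beta_\omega$ that the scaling $x=\delta_\omega\beta_\omega\tilde{x}$ introduces into each norm. First I would recall that $\tilde{R}_\omega\at{\tilde{x}}=R_\omega\at{\delta_\omega\beta_\omega\tilde{x}}$, so that $\bnorm{R_\omega}_\infty=\bnorm{\tilde{R}_\omega}_\infty$ and $\bnorm{\partial_x R_\omega}_\infty=\bat{\delta_\omega\beta_\omega}^{-1}\bnorm{\tilde{R}_\omega^\prime}_\infty$; since $\beta_\omega$ is of order $\DO{1}$ by \eqref{Lem:DefAlpha.Eqn2} while $\delta_\omega\to0$, the factor $\bat{\delta_\omega\beta_\omega}^{-1}$ is large and one cannot simply quote $\bnorm{\tilde{R}_\omega^\prime}_\infty\leq C$. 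Instead I would use the approximation $\tilde{R}_\omega=\breve{R}_\omega+\delta_\omega^{l+1}\tilde{U}_\omega$ from Theorem~\ref{Thm:ExistenceNonlWaves} together with $\bnorm{\tilde{U}_\omega}_{*,\,\omega}\leq C\delta_\omega$ and \eqref{Lem:JordanAuxRes.Eqn4a}, which gives $\bnorm{\tilde{U}_\omega^\prime}_\infty\leq\bnorm{\tilde{U}_\omega}_{*,\,\omega}\leq C\delta_\omega$, and the explicit formulas \eqref{Lem:DefBreveR.Eqn2}--\eqref{Lem:DefBreveR.Eqn3} for $\breve{R}_\omega$, from which $\bnorm{\breve{R}_\omega^\prime}_\infty\leq C\delta_\omega\al_\omega\bnorm{\bar{Y}^\prime}_\infty\leq C\delta_\omega$ because $\bar{Y}^\prime$ is bounded by Lemma~\ref{Lem:AsympODE.Props}. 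Hence $\bnorm{\tilde{R}_\omega^\prime}_\infty\leq C\delta_\omega$, and the factor $\delta_\omega$ exactly cancels the $\bat{\delta_\omega\beta_\omega}^{-1}$, yielding $\bnorm{\partial_x R_\omega}_\infty\leq C$.

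Next, for the $Q_\omega$ terms I would use $Q_\omega\at{x}=\tilde{Q}_\omega\bat{\delta_\omega^{-1}\beta_\omega^{-1}x}$ from \eqref{Eqn:DefQ}, so that $\bnorm{Q_\omega}_\infty=\bnorm{\tilde{Q}_\omega}_\infty$ and $\bnorm{\partial_x Q_\omega}_\infty=\bat{\delta_\omega\beta_\omega}^{-1}\bnorm{\tilde{Q}_\omega^\prime}_\infty$. The bound $\bnorm{\tilde{Q}_\omega}_\infty\leq C$ follows by combining $\tilde{Q}_\omega=\breve{Q}_\omega+\bat{\tilde{Q}_\omega-\breve{Q}_\omega}$ with $\bnorm{\breve{Q}_\omega}_\infty\leq C$ from \eqref{Lem:AsympCheckU.Eqn2} and $\bnorm{\tilde{Q}_\omega-\breve{Q}_\omega}_\infty\leq\delta_\omega^{-1}\bnorm{\tilde{Q}_\omega-\breve{Q}_\omega}_{*,\,\omega}\leq C\delta_\omega^{\min\{k,m\}-1}$, which is bounded since $\min\{k,m\}>1$ by Assumption~\ref{Ass.Pot}. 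For the derivative, \eqref{Lem:JordanAuxRes.Eqn4a} applied to the even function $\tilde{Q}_\omega-\breve{Q}_\omega$ gives $\bnorm{\tilde{Q}_\omega^\prime-\breve{Q}_\omega^\prime}_\infty\leq\bnorm{\tilde{Q}_\omega-\breve{Q}_\omega}_{*,\,\omega}\leq C\delta_\omega^{\min\{k,m\}}\leq C\delta_\omega$, while $\bnorm{\breve{Q}_\omega^\prime}_\infty\leq C$ from \eqref{Lem:AsympCheckU.Eqn2}; hence $\bnorm{\tilde{Q}_\omega^\prime}_\infty\leq C$ and thus $\delta_\omega\bnorm{\partial_x Q_\omega}_\infty=\beta_\omega^{-1}\bnorm{\tilde{Q}_\omega^\prime}_\infty\leq C$, since $\beta_\omega$ is bounded away from zero by \eqref{Lem:DefAlpha.Eqn2}.

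Finally, $\bnorm{R_\omega}_\infty\leq1$ is immediate because $R_\omega$ takes values in $\ocinterval{-1}{0}$ by Theorem~\ref{Thm:ExistenceNonlWaves}, and $\bnorm{Q_\omega}_\infty\leq C$ has just been shown. Assembling the four pieces completes the proof. The only mildly delicate point — and the one I would watch carefully — is the bookkeeping of which quantities are $\DO{1}$ and which are $\DO{\delta_\omega}$ in the scaled picture: the naive estimate $\bnorm{\tilde{R}_\omega^\prime}_\infty\leq\bnorm{\tilde{R}_\omega}_{*,\,\omega}$ via \eqref{Lem:JordanAuxRes.Eqn4a} is too weak here because $\bnorm{\breve{R}_\omega}_{*,\,\omega}$ contains the $\DO{1}$ contribution $\babs{\breve{R}_\omega\at{0}}\approx1$, so one must instead exploit that $\tilde{R}_\omega^\prime$ itself is small of order $\delta_\omega$ — equivalently, that only the second derivative and the pointwise value at the origin, not the first derivative, carry the full $\DO{1}$ weight. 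Once this is noted, everything reduces to inserting the already-proven scaled bounds and cancelling powers of $\delta_\omega\beta_\omega$.
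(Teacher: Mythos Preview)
Your proposal is correct and follows essentially the same route as the paper: transfer the scaled bounds from Theorem~\ref{Thm:ExistenceNonlWaves}, Theorem~\ref{Thm:SmoothnessNonlWaves}, Lemma~\ref{Lem:DefBreveR}, and Lemma~\ref{Lem:AsympCheckU} back to the unscaled variable via $\partial_{\tilde{x}}=\delta_\omega\beta_\omega\partial_x$, splitting into the explicit approximation plus the small remainder controlled by $\nnorm{\cdot}_{*,\,\omega}$. One small slip: from Theorem~\ref{Thm:ExistenceNonlWaves} the bound is $\nnorm{\tilde{U}_\omega}_{*,\,\omega}\leq D$, not $\leq C\delta_\omega$; however this is harmless, since the remainder contribution to $\tilde{R}_\omega^\prime$ is $\delta_\omega^{l+1}\nnorm{\tilde{U}_\omega^\prime}_\infty\leq D\delta_\omega^{l+1}\leq C\delta_\omega$ anyway thanks to $l\geq 1$.
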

\begin{proof}
The approximation results from Theorem~\ref{Thm:ExistenceNonlWaves} combined with the definition of $\nnorm{\cdot}_{*,\,\omega}$ --- see \eqref{Lem:JordanAuxRes.Eqn2} and \eqref{Lem:JordanAuxRes.Eqn4a} --- ensure
\begin{align*}
\bnorm{\tilde{R}_\omega - \breve{R}_\omega}_\infty\leq C\delta_\omega ^{l-1}\,,\qquad
\bnorm{\partial_{\tilde{x}}\tilde{R}_\omega - \partial_{\tilde{x}}\breve{R}_\omega}_\infty\leq C\delta_\omega ^l
\end{align*}
with $l=\min\{k,m\}$, while Lemma~\ref{Lem:DefBreveR} provides
\begin{align*}
\bnorm{\breve{R}_\omega}_\infty \leq C\,,\qquad
\bnorm{\partial_{\tilde{x}}\breve{R}_\omega}_\infty \leq C\delta_\omega\,.
\end{align*}
The   claim   for $R_\omega$ is thus granted by the scaling ansatz \eqref{Eqn:ScaledProfiles} and Lemma~\ref{Lem:DefAlpha} since the spatial scaling \eqref{Eqn:SpaceScalign} implies  $\partial_{\tilde{x}}=\delta_\omega\beta_\omega\partial_x$. Similarly, the claim about $Q_\omega$ follows from \eqref{Thm:SmoothnessNonlWaves.Eqn1} and \eqref{Lem:AsympCheckU.Eqn2}.
\end{proof}
\begin{lemma}[rapid exponential decay with respect to $x$]
\label{Lem:TailEstimates}
For any given $a>0$ and all sufficiently large $\omega$ we have
\begin{align}
\label{Lem:TailEstimates.Eqn1}
\babs{R_\omega\at{x}}+
\babs{\partial_{x}{R}_\omega\at{x}}\leq
C\delta_\omega^{m} \exp\at{-a\,\nabs{x}}\qquad
\text{for}\quad \abs{x}\geq \tfrac32
\end{align}
as well as
\begin{align}
\label{Lem:TailEstimates.Eqn2}
  \babs{Q_\omega\at{x}}+\babs{\partial_x Q_\omega\at{x}}\leq
C\delta_\omega^{m-1}\at{1+\abs{x}}\exp\at{-a\abs{x}}\qquad
\text{for}\quad \abs{x}\geq \tfrac32
\end{align}
for some constant $C$ which depends on $a$ and $\Phi$ but not on $\omega$.
\end{lemma}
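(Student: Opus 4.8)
The plan is to work throughout with the convolution forms of the profile equations rather than with the advance-delay equations. Recall that $\chi\ast\chi$ is the tent $y\mapsto\max\{0,1-|y|\}$, supported in $[-1,1]$ with $\norm{\chi\ast\chi}_1=\norm{\chi\ast\chi}_\infty=1$, so that \eqref{Eqn:NonlFPDist} reads $R_\omega=\chi\ast\chi\ast\bat{\omega^{-2}\Phi^\prime(R_\omega)}$ while \eqref{Eqn:ODEDistances} reads $\partial_\omega R_\omega=\chi\ast\chi\ast\bat{\omega^{-2}\Phi^{\prime\prime}(R_\omega)\,\partial_\omega R_\omega-2\omega^{-3}\Phi^\prime(R_\omega)}$; here $\omega^{-2}=\delta_\omega^{m}$ and $\omega^{-3}=\delta_\omega^{3m/2}$. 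By evenness it suffices to argue for $x\ge\tfrac32$. The decisive observation --- which also dictates the threshold $|x|\ge\tfrac32$ --- is that when the kernel $\chi\ast\chi$ is centred at a point $x$ with $|x|\ge\tfrac32$ it only probes $R_\omega$ on $\{|z|\ge\tfrac12\}$, i.e. at a safe distance from the tip $x=0$ where $\Phi^\prime(R_\omega)$ becomes singular.

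\textbf{Step 1 (distance from the singularity).} First I would record a uniform lower bound away from the tip: there is $\rho\in(0,1)$ with $R_\omega(x)\in\ccinterval{-1+\rho}{0}$ for all $|x|\ge\tfrac12$ and all large $\omega$. Indeed, Theorem~\ref{Thm:ExistenceNonlWaves} gives $\bnorm{\tilde R_\omega-\breve R_\omega}_{*,\,\omega}\le D\delta_\omega^{\min\{k,m\}+1}$, hence $\bnorm{\tilde R_\omega-\breve R_\omega}_\infty\le D\delta_\omega^{\min\{k,m\}}\to 0$ because $\min\{k,m\}>1$, while Lemma~\ref{Lem:DefBreveR} gives $\breve R_\omega\ge-\tfrac34$ on $3I_\omega\setminus I_\omega$ and $\breve R_\omega\equiv0$ outside $3I_\omega$; since $|x|\ge\tfrac12$ corresponds to $|\tilde x|\ge\xi_\omega$, the claim follows (e.g. with $\rho=\tfrac18$). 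On the compact interval $\ccinterval{-1+\rho}{0}$ the potential $\Phi$ is $\fspaceC^3$, so there is a constant $L_0=L_0(\Phi)$ with $\babs{\Phi^\prime(r)}\le L_0|r|$ and $\babs{\Phi^{\prime\prime}(r)}\le L_0$ there; in particular $\babs{\Phi^\prime(R_\omega(z))}\le L_0$ for all $|z|\ge\tfrac12$.

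\textbf{Step 2 (iteration for $R_\omega$, then $Q_\omega$).} Then I would run a stepwise iteration whose genuinely small parameter is $\omega^{-2}=\delta_\omega^{m}$. For $|x|\ge\tfrac32$ the convolution identity together with $\babs{\Phi^\prime(R_\omega)}\le L_0$ on $\{|z|\ge\tfrac12\}$ gives the crude estimate $\babs{R_\omega(x)}\le L_0\omega^{-2}=L_0\delta_\omega^{m}$; once $\omega$ is large this is $\le\rho$, so it may be re-inserted through the \emph{linear} bound $\babs{\Phi^\prime(r)}\le L_0|r|$, and since for $|x|\ge\tfrac52$ the kernel only probes $\{|z|\ge\tfrac32\}$ we obtain $\babs{R_\omega(x)}\le L_0^2\delta_\omega^{2m}$, and inductively $\babs{R_\omega(x)}\le L_0^{\,n-1}\delta_\omega^{nm}$ for $|x|\ge n+\tfrac12$. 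Using $\delta_\omega^{m}=\omega^{-2}$ and choosing $\omega$ so large that $2\ln\omega-\ln L_0\ge a$, this collapses (after interpolating over non-integer $x$) to $\babs{R_\omega(x)}\le C\delta_\omega^{m}\mhexp{-a|x|}$ for $|x|\ge\tfrac32$. Differentiating the convolution equation replaces $\chi\ast\chi$ by its bounded, integrable derivative (equal to $+1$ on $(-1,0)$ and $-1$ on $(0,1)$), and, fed with the bound just obtained, yields the same estimate for $\partial_x R_\omega$; this settles \eqref{Lem:TailEstimates.Eqn1}. For \eqref{Lem:TailEstimates.Eqn2} I would use $Q_\omega=-\tfrac12\delta_\omega^{-m/2-1}\al_\omega^{-1}\partial_\omega R_\omega$ (from \eqref{Eqn:DefCheckQ}+\eqref{Eqn:DefQ}, with $\al_\omega$ bounded away from $0$ and $\infty$ by Lemma~\ref{Lem:DefAlpha}), so that Lemma~\ref{Lem:GlobalBounds} gives $\bnorm{\partial_\omega R_\omega}_\infty\le C\delta_\omega^{m/2+1}$, and then repeat the localisation--iteration on \eqref{Eqn:ODEDistances}: for $|x|\ge\tfrac32$ only the restrictions of $R_\omega$ and $\partial_\omega R_\omega$ to $\{|z|\ge\tfrac12\}$ enter, the coefficient $\omega^{-2}\Phi^{\prime\prime}(R_\omega)$ is $\DO{\delta_\omega^{m}}$, and the source $-2\omega^{-3}\Phi^\prime(R_\omega)$ is controlled through $\babs{\Phi^\prime(R_\omega)}\le L_0|R_\omega|$ and the decay of $R_\omega$ from the previous paragraph. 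The same stepwise iteration then gives $\babs{\partial_\omega R_\omega(x)}\le C\delta_\omega^{3m/2}\mhexp{-a|x|}$ for $|x|\ge\tfrac32$, hence $\babs{Q_\omega(x)}\le C\delta_\omega^{m-1}\mhexp{-a|x|}\le C\delta_\omega^{m-1}(1+|x|)\mhexp{-a|x|}$, and differentiating once more produces the bound for $\partial_x Q_\omega$, completing \eqref{Lem:TailEstimates.Eqn2}.

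\textbf{Main obstacle.} I expect the genuinely delicate point to be Step 1, the $\omega$-uniform bound $R_\omega\ge-1+\rho$ on $\{|x|\ge\tfrac12\}$: without it one cannot even claim that $\Phi^\prime(R_\omega)$ is bounded there, and this is exactly where the approximation machinery of \S\ref{sect:NonlWaves} is needed. Everything else is routine Gronwall/iteration bookkeeping, whose only conceptual ingredient is that $\omega^{-2}=\delta_\omega^{m}$ turns the convolution into a contraction on each unit spatial step --- and in fact the argument delivers the slightly stronger estimate without the extra factor $(1+|x|)$, which may simply be retained for later convenience.
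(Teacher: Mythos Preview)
Your approach is essentially identical to the paper's: both use the convolution form \eqref{Eqn:NonlFPDist}, establish a uniform bound $R_\omega\ge -1+\rho$ on $\{|x|\ge\tfrac12\}$ from the approximation results (the paper uses $\rho=\tfrac14$), run a geometric recursion with ratio $D\delta_\omega^m$ on the quantities $r_{\omega,j}=\sup_{x\ge 1/2+j}|R_\omega(x)|$, treat $\partial_x R_\omega$ via the differentiated convolution, and then repeat for $Q_\omega$ with a discrete Duhamel formula handling the inhomogeneous source. The paper works directly with $Q_\omega$ rather than $\partial_\omega R_\omega$, but the two are constant multiples of each other and the bookkeeping is the same.

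One correction to your closing remark: the direct iteration does \emph{not} deliver the estimate without the factor $(1+|x|)$. In the Duhamel sum
\[
q_{\omega,j}\le (D\delta_\omega^m)^j q_{\omega,0}+\sum_{i=0}^{j-1}(D\delta_\omega^m)^{j-1-i}f_{\omega,i}\,,
\]
the forcing satisfies $f_{\omega,i}\le C\delta_\omega^{m-1}r_{\omega,i}\le C\delta_\omega^{-1}(D\delta_\omega^m)^{i+1}$, so every summand has the \emph{same} order $(D\delta_\omega^m)^j$ and the sum contributes $C\delta_\omega^{-1}j\,(D\delta_\omega^m)^j$; this resonance is precisely the origin of the linear factor $(1+|x|)$ in \eqref{Lem:TailEstimates.Eqn2}. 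One could remove it a posteriori by running the whole argument with $2a$ in place of $a$ and then absorbing $(1+|x|)\mhexp{-2a|x|}\le C_a\mhexp{-a|x|}$, but the iteration itself produces the factor. This does not affect the validity of your proof of the lemma as stated.
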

\begin{proof}\emph{\ul{Preliminaries}}:
Within this proof we set
\begin{align}
\label{Lem:TailEstimates.PEqn0}
D := \sup_{-3/4\leq r\leq 0}\abs{ \Phi^{\prime\prime}\at{r}}
\end{align}
and notice that this implies $\abs{\Phi^\prime\at{r}}\leq D\abs{r}$ for all $r\in\ccinterval{-3/4}{0}$ thanks to the Mean Value Theorem and $\Phi^\prime\at{0}=0$. Moreover, for given $a>0$ we can always guarantee the estimate
\begin{align*}
 D\delta_\omega^m\leq \exp\at{-a}
\end{align*}
by choosing $\om$ sufficiently large.
\par
\ul{\emph{Tightness of ${R}_\omega$}}: Since ${R}_\omega$ is even, it suffices to consider ${x}>0$. The approximation results from Lemma~\ref{Lem:DefBreveR} and Theorem~\ref{Thm:ExistenceNonlWaves} ensure
\begin{align}
\label{Lem:TailEstimates.PEqn1} %
-\tfrac34\leq {R}_\omega\at{x}\leq 0
\qquad\text{for}\quad{x}\geq \tfrac12\,,
\end{align}
so using
\begin{align*}
\supp\,\nat{\chi\ast \chi}\subseteq \ccinterval{-1}{+1}\,,\qquad \bnorm{\chi\ast \chi}_1=1
\end{align*}
we infer from the integral equation \eqref{Eqn:NonlFPDist} and \eqref{Lem:TailEstimates.PEqn1} the estimate
\begin{align}
\label{Lem:TailEstimates.PEqn2}
 \babs{{R}_\omega\at{x}}\leq \delta_\omega^m\sup\limits_{{y}\in\ccinterval{{x}-1}{{x}+1}}\Phi^\prime\bat{R_\omega\at{y}} \leq D\delta_\omega^m\sup\limits_{{y}\in\ccinterval{{x}-1}{{x}+1}}
\babs{{R}_\omega\at{{y}}}\,.
\end{align}
Setting
\begin{align*}
r_{\omega,\,j}:=\sup_{x\geq 1/2+j}\babs{{R}_\omega\at{x}}
\end{align*}
the estimate \eqref{Lem:TailEstimates.PEqn2} implies $r_{w,\,j+1}\leq  D \delta_\omega^m r_{\om,\,j}$ and we easily verify
\begin{align}
\label{Lem:TailEstimates.PEqn3}
r_{\om\,,j}\leq \at{D\delta_\omega^m}^j r_{\om,\,0}\leq  C \at{D \delta_\omega^m}^j
\qquad\text{for}\quad j\geq0
\end{align}
by induction over $j$, where the constant $C$ is provided by Lemma~\ref{Lem:GlobalBounds}. For $x>\tfrac32$, this result can be written as
\begin{align*}
\abs{R_\omega\at{x}}\leq r_{\om,\, 1+\floor{x-3/2}}\leq C D\delta_\omega^m \exp\at{-a\floor{x-\tfrac32}}\leq C\exp\at{\tfrac52a} D\delta_\omega^m\exp\at{-ax}\,,
\end{align*}
where $\floor{\cdot}$ denotes the floor variant of the integer part, and yields the desired decay estimate for ${R}_\omega$. Moreover, the integral equation \eqref{Eqn:NonlFPDist} also gives
\begin{align*}
\partial_{x}{R}_\omega = \bat{\partial_{x}\nat{\chi\ast \chi}}\ast\Bat{\om^{-2}\Phi^\prime\bat{{R}_\omega}}\,,
\end{align*}
and due to
\begin{align*}
\bnorm{\partial_{x}\at{\chi\ast\chi}}_\infty=1\,,\qquad \supp\, \bat{\partial_{x}\at{\chi\ast\chi}}\subset\ccinterval{-1}{+1}
\end{align*}
we obtain
\begin{align*}
\babs{\partial_{x}{R}_\omega\at{x}}\leq D\delta_\omega^{m}\int\limits_{x-1}^{x+1}
\abs{{R}_\omega\at{{y}}}\dint{{y}}\,.
\end{align*}
For ${x}\geq \tfrac32 $ we can therefore estimate
\begin{align*}
\babs{\partial_{x}{R}_\omega\at{x}}\leq C D\delta_\omega
\qquad
\end{align*}
thanks to Lemma~\ref{Lem:GlobalBounds}, while for $x>5/2$ we can employ the exponential decay of $R_\omega$ to justify the improved bound
\begin{align*}
\babs{\partial_{x}{R}_\omega\at{x}}\leq CD\delta_\omega^m \int\limits_{x-1}^{+\infty}\exp\at{-ay}\dint{y}\leq C D a^{-1}\delta_\omega^m\exp\at{a}\exp\at{-ax}\,.
\qquad
\end{align*}
The combination of the latter two results yield the decay claim for $\partial_x{R}_\omega$ and hence \eqref{Lem:TailEstimates.Eqn1}.
\par
\emph{\ul{Tightness of ${Q}_\omega$}}:
The nonlocal equation for $\tilde{Q}_\omega$ --- see \eqref{Eqn:ODEDistances}, \eqref{Eqn:DefCheckQ}, and \eqref{Eqn:DefQ} --- transforms into
\begin{align*}
Q_\omega=\chi \ast \chi\ast\at{\om^{-2}\Phi^{\prime\prime}\at{R_\omega}Q_\omega + F_\omega}\,,
\end{align*}
where $F_\omega$ is given in terms of $R_\omega$ and satisfies
\begin{align}
\label{Lem:TailEstimates.PEqn4}
\babs{F_\omega\at{x}}\leq C\delta_\omega^{m-1}\abs{\Phi^\prime\bat{R_\omega\at{x}}}\leq CD\delta_\omega^{m-1} \abs{R_\omega\at{x}}\qquad \text{for}\quad x\geq 1/2
\end{align}
due to our choice of $D$ in \eqref{Lem:TailEstimates.PEqn0}. We define
\begin{align*}
q_{\omega,\,j}:=\sup_{ {x}\geq 1/2+j}\babs{{Q}_\omega\at{x}}\,,\qquad
f_{\omega,\,j}:=\sup_{ {x}\geq 1/2+j}\babs{{F}_\omega\at{x}}
\end{align*}
and derive --- in analogy to the discussion above --- first the recursion
\begin{align*}
q_{\om,\, j+1}\leq \at{D \delta_\omega^m}^j \bat{q_{\om,\,j}+ f_{\om,\,j}}
\end{align*}
and afterwards the discrete Duhamel estimate
\begin{align*}
q_{\om,\, j}\leq \at{D\delta_\omega^m}^jq_{\om,\,0}+\sum_{i=0}^{j-1}\at{D\delta_\omega^{m}}^{j-1-i} f_{\om,\, i}\,.
\end{align*}
Inspecting \eqref{Lem:TailEstimates.PEqn3} and \eqref{Lem:TailEstimates.PEqn4} we verify
\begin{align*}
f_{\om,\,i}\leq C D\delta_\omega^{m-1} r_{\om,i}\leq C\delta_\omega^{-1}\at{D\delta_\omega^m}^{i+1}\,,
\end{align*}
which implies
\begin{align*}
q_{\om,\, j}\leq \at{D\delta_\omega^m}^j\at{q_{\om,\,0}+C\delta_\omega^{-1}j}\qquad \text{for}\quad j\geq0
\end{align*}
and hence the desired decay rate for $Q_\omega$ since Lemma~\ref{Lem:GlobalBounds} provides $q_{\om,\,0}\leq C$. Finally, writing
\begin{align*}
\partial_x Q_\omega=\bat{\partial_x\at{\chi_\omega \ast \chi_\omega}}\ast\at{\om^{-2}\Phi^{\prime\prime}\at{R_\omega}Q_\omega + F_\omega}
\end{align*}
we can argue as above to control the decay of $\partial_x Q_\omega$.

\end{proof}

We emphasize that the decay estimates in Lemma~\ref{Lem:TailEstimates} are not optimal. In fact, inserting the exponential ansatz $R_\omega\at{x}=C_\omega\exp\at{-a_\omega\abs{x}}$ as $\abs{x}\to\infty$ into the second-order advance-delay-differential equation \eqref{Eqn:NonlADDE2Order}, we obtain after Taylor expanding $\Phi^\prime$ the transcendental tail identity
\begin{align}
\label{Eqn:DecayEquation}
\om^2 = \frac{\Phi^{\prime\prime}\at{0}\at{\mhexp{+a_\omega}+\mhexp{-a_\omega}-2}}{a_\omega^2}=\Phi^{\prime\prime}\at{0}\frac{4\sinh^2\at{\tfrac12a_\omega}}{a_\omega^2}\,,
\end{align}
which determines $a_\omega>0$ uniquely provided that $\om$ exceeds the sound speed $\sqrt{\Phi^{\prime\prime}\at{0}}$ and implies the asymptotic law $a_\omega=2\ln\omega+\Do{\ln\omega}\to\infty$ as $\omega\to\infty$. For   our   asymptotic analysis, however, the rough estimates \eqref{Lem:TailEstimates.Eqn1} and \eqref{Lem:TailEstimates.Eqn2} with arbitrary but fixed decay rate are sufficient and guarantee in \S\ref{sect:Stability} that all tail contributions are negligible.
%

%
\section{Linear and nonlinear orbital stability}\label{sect:Stability}
%
%

This chapter is devoted to the dynamical stability of the high-energy waves constructed in \S\ref{sect:NonlWaves}. In particular, we study the linearized traveling wave operator $\calL_\omega$ from \eqref{Eqn:LinFPUOp} and the corresponding eigenproblem
\begin{align}
\label{Eqn:LinEVP}
\at{\partial_x - \la}S + \om^{-1}\nabla W=0\,,\qquad
\at{\partial_x - \la}V + \om^{-1}\nabla \Phi^{\prime\prime}\at{R_\omega}S=0
\end{align}
with eigenvalue $\la\in\Cset$ in certain function spaces and prove that the only non-stable eigenfunctions $\pair{S}{W}$ are the neutral ones that are generated by the shift symmetry and the spatial discreteness.  We next explain why this already implies the nonlinear orbital stability according to the Friesecke-Pego theory and postpone a more detailed overview of our asymptotic analysis of the spectral problem \eqref{Eqn:LinEVP} to \S\ref{sect:LinODEs}.

%

\subsection{The Friesecke-Pego criterion for nonlinear stability}
\label{sect:FPCrit}
%
%
%

%
Recall that the traveling waves from \S\ref{sect:NonlWaves} are supersonic while small perturbations in the tails propagate no faster than the sound speed $\sqrt{\Phi^{\prime\prime}\at{0}}\ll\omega$. On the heuristic level it is hence clear that small perturbations in front of the wave can affect the stability far more easily than those behind the pulse because the latter are only influenced by the exponentially small backward tail while the former will eventually interact with the nonlinear bulk of the wave.   We therefore work   in exponentially weighted norms that penalize perturbations in front of the propagating pulse. This idea   is natural and   has been used in \cite{PeWe94} and \cite{FP02} for PDEs and lattices, respectively, and provides also the framework for our analysis.    The stability with respect to the energy norm has later been studied by Mizumachi
in \cite{Miz09} by adapting ideas of \cite{MaMe05} to split perturbations in the energy space into an exponentially decaying part and another small solution to the nonlinear equation which is outrun in finite time by the main nonlinear wave. The Mizumachi approach, however, also requires to study the linearized stability problem in exponentially weighted spaces, see \cite{Miz09} and the proof of Corollary \ref{Cor:NonlStability} for more details.

\par
  In what follows we fix a decay parameter $a>0$, define   two linear multiplication operators $\calE_{-a}$ and $\calE_{+a}$ by
\begin{align*}
\bat{\calE_{\pm a} U}\at{x}=\mhexp{\pm ax}U\at{x}
\end{align*}
and introduce the spaces
\begin{align*}
\fspaceL_{\pm a}^2:=\calE_{\mp a} \ato{\fspaceL^2}\,,\qquad
\fspaceH_{\pm a}^1:=\calE_{\mp a} \ato{\fspaceH^1}\,,
\end{align*}
which are Banach spaces with respect to their natural, exponentially weighted norms. Notice that these definitions encode that a function $U_+\in \fspaceL^2_{+a}$ decays rapidly for $x\to+\infty$ so that $\calE_{+a}U_{+}$ still belongs to $\fspaceL^2$. Moreover, the operators $\calE_{\pm a}$ can also be applied to functions on $\Zset$, and this gives rise to the exponentially weighted spaces
\begin{align*}
\ell_{\pm a}^2=\calE_{\mp a}\ato{\ell^2}\,.
\end{align*}
as discrete analogues to $\fspaceL^2_{\pm a }$. In what follows we ignore the Hilbert space structure of $\fspaceL^2_{\pm a}$ and identify its dual spaces with $\fspaceL^2_{\mp a}$ according to the standard dual pairing
\begin{align}
\label{Eqn:DualPairing}
\bskp{U_-}{U_+}=\int\limits_{\Rset}U_-\at{x} \ol{U_+\at{x}}\dint{x}\,.
\end{align}
Concerning $a$, we rely from now on the following standing assumption. In particular, we are only interested in eigenfunctions $\pair{R}{S}$ that decay exponentially as $x\to+\infty$ but might grow for $x\to-\infty$.
\begin{assumption}[weighted ansatz space for perturbations]
\label{Def:PrmA}
The real-valued weight parameter $a>0$ is independent of $\om$, and we regard the operator $\calL_\omega$ from \eqref{Eqn:LinFPUOp} as defined on $\fspaceH^1_a{\times}\fspaceH^1_a$ and taking values in $\fspaceL^2_a{\times}\fspaceL^2_a$.
Moreover, any generic constant $C$ is allowed to depend on $a$.%
\end{assumption}
A further key ingredient to the Friesecke-Pego criterion   for stability   is the symplectic product
\begin{align}
\label{Eqn:SympProd}
\sigma\bpair{\pair{S_+}{W_+}}{\pair{S_-}{W_-}}:=\bskp{ S_+}{\nabla^{-1}W_-}+\bskp{ W_+}{\nabla^{-1}S_-}\qquad \text{with}\quad S_{\pm},W_{\pm}\in\fspaceL^2_{\pm a}\,,
\end{align}
where we anticipated a result from Lemma~\ref{Lem:DiscrDiffOperators}, namely that the centered difference operator $\nabla$ is invertible on exponentially weighted spaces. This symplectic product is naturally related to the FPUT dynamics since its first order formulation \eqref{Eqn:FPU} can formally be written in non-canonical form as
\begin{align*}
\begin{pmatrix}
0&\nabla^{-1}\\
\nabla^{-1}&0
\end{pmatrix}\frac{\dint}{\dint t}\begin{pmatrix}
{r}_{j-1/2}\\v_j
\end{pmatrix}
=
\begin{pmatrix}
\partial_r E\bpair{{r}_{j-1/2}}{v_j}\\
\partial_v E\bpair{{r}_{j-1/2}}{v_j}
\end{pmatrix}\,,\qquad E\pair{r}{v}=\tfrac12v^2+\Phi\at{r}
\end{align*}
with skew-symmetric operator on the left hand side and gradient of the energy density on the right hand side. We further introduce
\begin{align}
\label{Eqn:JordanFcts}
\bpair{S_{*,\,\omega}}{W_{*,\,\omega}}
:=
\bpair{\partial_x R_{\omega}}{\partial_x V_{\omega}}\,,
\qquad
\bpair{S_{\#,\,\omega}}{W_{\#,\,\omega}}
:=
\bpair{\partial_{\delta_\omega} R_{\omega}}{\partial_{\delta_\omega} V_{\omega}}
\end{align}
as these functions define the neutral stability modes, see Lemma~\ref{Lem:SpectralProperties}, and are hence important for our asymptotic analysis. Finally, extending the potential $\Phi$ by
\begin{align*}
\Phi\at{r}=\tfrac12\Phi^{\prime\prime}\at{0}r^2\qquad \text{for}\quad  r>0
\end{align*}
to a $\fspaceC^2$-smooth and strictly convex function on $\oointerval{-1}{\infty}$, we can summarize the non-asymptotic part of the Friesecke-Pego theory on orbital stability as follows.
\begin{theorem}[Friesecke-Pego, linear implies nonlinear stability]
\label{Thm:FPcrit}
Suppose that the family
\begin{align*}
  \om \quad\mapsto\quad \pair{R_\omega}{V_\omega}\in\fspaceL^2\times\fspaceL^2
\end{align*}
of solitary waves   with $\om \in\oointerval{\om_-}{\om_+}$   has the following properties:
\begin{enumerate}[leftmargin=.1\textwidth, labelsep=.02\textwidth]
\item[(P1)]\emph{\ul{Local convexity of interaction potential}}.
The distance component $R_\omega$ takes values in the interval $\oointerval{-1}{\infty}$, i.e., in the strict-convexity domain of $\Phi$.
\item[(P2)] \emph{\ul{Smoothness and decay}}.
The shift-sampling-map
\begin{align}
\label{Thm:FPcrit.Eqn1}
\pair{\tau}{\omega}\in\Rset\times\oointerval{\om_-}{\om_+}\quad\mapsto\quad
\Bpair{R_\omega\at{j+\tfrac12-\tau}}{V_\omega\at{j-\tau}}_{j\in\Zset}\in \at{\ell^2_{-a}\cap\ell^2_{+a}}^2
\end{align}
is $\fspaceC^1$-regular.
\item[(P3)]
\emph{\ul{Energy/speed transversality}}.
The wave energy $\om \mapsto h_\omega$ from \eqref{Eqn:TotalEnergy} has no critical point for $\om \in\oointerval{\om_-}{\om_+}$.
\end{enumerate}
Suppose further that a given wave speed $\om_0\in\oointerval{\om_-}{\om_+}$ complies with the following conditions:
\begin{enumerate}[leftmargin=.1\textwidth, labelsep=.02\textwidth]
\item[(S1)] \emph{\ul{Supersonic speed  and rapid decay}}.
We have $\om_0^2 > \Phi^{\prime\prime}\at{0}$ and $a_{\om_0}>a$, where $a_{\om_0}>0$ is the spatial decay rate predicted by the linear theory as in \eqref{Eqn:DecayEquation}.
\item[(S2)]\emph{\ul{No unstable eigenfunctions in symplectic complement of neutral ones}}.
The operator $\calL_{\om_0}$ admits for $\mhRe \lambda \geq0 $ and  $\abs{\mhIm \la} \leq \pi$  no eigenfunction $\pair{S}{W}\in \fspaceL_{a}^2\times\fspaceL_a^2$ that would satisfy
\begin{align*}
\sigma\Bpair{\pair{\calE_{2\pi\iu n }S_{*,\,\om_0}}{\calE_{2\pi\iu n }W_{*,\,\om_0}}}{\pair{S}{W}} &=0
\end{align*}
and
\begin{align*}
\sigma\Bpair{\pair{\calE_{2\pi\iu n }S_{\#,\,\om_0}}{\calE_{2\pi\iu n }W_{\#,\,\om_0}}}{\pair{S}{W}} &=0
\end{align*}
for all $n\in\Zset$.
\end{enumerate}
Then, the solitary wave with speed $\om_0$ is   orbitally stable in the sense of Main Result~\ref{res:Stab}.
\end{theorem}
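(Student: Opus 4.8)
This is the non-asymptotic backbone of the Friesecke--Pego theory, so rather than reproving it from scratch I would organise the argument as a verification that the hypotheses (P1)--(P3) and (S1)--(S2) are precisely the input required by \cite{FP02,FP04a,FP04b}. The plan is the following. First I would introduce the modulation ansatz in the comoving frame, writing a solution of \eqref{Eqn:FPU} near the orbit of $U_{\om_0}$ as $u_j(t)=U_{\om(t)}(j-\tau(t))+(S,W)(t,j-\tau(t))$ with scalar modulation parameters $\om(t)$, $\tau(t)$, and deriving the remainder equation $\partial_t(S,W)=\om(t)\,\calL_{\om(t)}(S,W)+\calN(S,W)+(\text{modulation forcing})$, where $\calN$ is at least quadratic in $(S,W)$ as long as $R_{\om(t)}$ stays inside the strict-convexity domain of $\Phi$; this is where (P1) enters. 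The parameters $\om(t),\tau(t)$ are then fixed, for each $t$, by imposing the two symplectic orthogonality constraints against the $n{=}0$ neutral modes $\pair{S_{*,\,\om}}{W_{*,\,\om}}$ and $\pair{S_{\#,\,\om}}{W_{\#,\,\om}}$ of \eqref{Eqn:JordanFcts}; solvability of the resulting implicit system for $(\dot\tau,\dot\om)$ follows from the $\fspaceC^1$-regularity of the shift-sampling map in (P2) together with the invertibility of the associated $2{\times}2$ modulation matrix, which is exactly the energy/speed transversality (P3).

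Second, I would assemble the linear decay estimate on the symplectic complement. Condition (S1) and the explicit location of the essential spectrum of $\calL_{\om_0}$ in exponentially weighted spaces (\S\ref{sect:expspaces}, following \cite{FP02}) put $\mhRe\,\la<0$ for the essential part as soon as $a<a_{\om_0}$; by the $2\pi\iu$-periodicity of the spectrum it suffices to inspect $\abs{\mhIm\,\la}\le\pi$, and there (S2) removes every eigenvalue with $\mhRe\,\la\ge0$ lying in the symplectic complement of the span of the Floquet-translated neutral modes. Hence $\calL_{\om_0}$ restricted to this closed, invariant complement has spectrum in the open left half-plane and generates a strongly continuous semigroup with exponential decay rate some $b_0>0$; by continuity of the semigroup and of the associated spectral projection in $\om$ near $\om_0$ (using the smoothness from Main Result~\ref{res:Ex} and (P2)) the same decay, with a slightly smaller rate, holds uniformly for $\calL_{\om(t)}$ while $\abs{\om(t)-\om_0}$ stays small.

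Third and last comes the nonlinear closure. Writing $(S,W)(t)$ through the variation-of-constants formula with kernel $\mhexp{\om(s)\calL_{\om(s)}}$, projecting onto the symplectic complement, and inserting the quadratic bound on $\calN$ together with the modulation estimates, a standard continuation argument shows that if the weighted data have size $\eta<\eta_0$ then the weighted norm of $(S,W)(t)$ remains $\DO{\eta}$ and in fact decays like $\mhexp{-b_0 t}$, while $\abs{\dot\tau(t)-\om(t)}$ and $\abs{\dot\om(t)}$ are integrable in $t$. This yields limits $\om(t)\to\om_\infty$ and $\tau(t)-\om_\infty t\to\tau_\infty$ with $\abs{\om_0-\om_\infty}+\abs{\tau_0-\tau_\infty}+\norm{U_{\om_0}-U_{\om_\infty}}_{\fspaceL^2\times\fspaceL^2}\le C_0\eta$, and translating back to the lattice variable $j$ produces the weighted decay estimate of Main Result~\ref{res:Stab}. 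The plain $\ell^2$-bound with the weaker size $C_0\sqrt\eta$ then follows because the unweighted perturbation norm is controlled by the $\sqrt\eta$-size of the initial data plus the $\DO{\eta}$ correction from the wave-speed adjustment, exactly as in \cite{FP04b}.

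I expect the only genuinely delicate point of a self-contained write-up to be this final bootstrap: the weight $a$ controls only perturbations ahead of the pulse, whereas the modulation equations and the drift of $\om(t)$ feed back into the bulk, so one must run the continuation argument simultaneously for $(S,W)$, $\tau$, $\om$ and for the $\om$-dependent spectral projection. Since this is precisely the soft, potential-independent part of the Friesecke--Pego argument, in the present paper I would cite \cite{FP02,FP04a,FP04b} for the semigroup generation, the essential-spectrum location, and the nonlinear iteration, and confine the new work of \S\ref{sect:Stability} to verifying the crucial hypothesis (S2) for high-energy waves, the structural properties (P1)--(P3) and (S1) having already been secured for this family in \S\ref{sect:NonlWaves} (existence and values of $R_\om$ in $\ocinterval{-1}{0}$, smoothness of the sampling map, energy/speed transversality, and the rapid tail decay with rate exceeding any fixed $a$).
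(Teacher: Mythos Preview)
Your approach is essentially the same as the paper's: both treat this theorem as a citation result, deferring the non-asymptotic machinery to \cite{FP02,FP04a}. The paper's proof is considerably more economical than your sketch --- it simply records that the statement is the combination of \cite[Theorem~1.1]{FP02} (linear implies nonlinear stability) with the spectral reformulation \cite[Theorem~2.2 and Eq.~(1.15)]{FP04a}, and then lists the notational translations (wave speed $\omega$ for $c$, potential $\Phi$ for $V$, profiles $\pair{R_{\om_0}}{V_{\om_0}}$ for $\pair{r_*}{p_*}$, neutral modes $\pair{S_{*,\,\omega}}{W_{*,\,\omega}}$ and $\pair{S_{\#,\,\omega}}{W_{\#,\,\omega}}$ for $v_1,v_2$, and the parameter $\delta_\omega$ instead of $\omega$ in the cyclic mode).

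Two small corrections to your write-up. First, the $\ell^2$-estimate with rate $\sqrt\eta$ is already part of the abstract result in \cite{FP02}, not \cite{FP04b}; the latter paper is the KdV-limit verification of (S2) and plays no role here. Second, your final paragraph drifts from proving the abstract theorem into \emph{verifying its hypotheses} for the high-energy family. That verification is not the content of Theorem~\ref{Thm:FPcrit}; in the paper it is carried out separately (Lemma~\ref{Lem:Regularity} for (P2), Lemma~\ref{Lem:SpectralProperties} for (P3), Theorem~\ref{Thm:NoOtherEigenvalues} for (S2)) and assembled in Corollary~\ref{Cor:NonlStability}. Keep the two cleanly separated.
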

\begin{proof}
This theorem is a simple combination of the result that linear implies nonlinear stability \cite[Theorem 1.1]{FP02} with the reformulation of the linear stability condition in terms of the spectral properties of \eqref{Eqn:LinEVP} in \cite[Theorem 2.2]{FP04a} and its restatement using \cite[Equation (1.15)]{FP04a}. We are using the same symplectic product and the same eigenvalue problem, but we are taking into account the extra shift in the distance profile and adapt the notations. For instance, the wave speed is  $\om$ instead of $c$, the potential is called $\Phi$ and not $V$, and the profile components for the wave under investigation are denoted by $\pair{R_{\om_0}}{V_{\om_0}}$ instead of $\pair{r_*}{p_*}$. We also write  $\npair{S_{*,\,\omega}}{W_{*,\,\omega}}$ and $\npair{S_{\#,\,\omega}}{W_{\#,\,\omega}}$ instead of $v_1,v_2$ for the neutral modes and use the derivative with respect to the parameter $\delta_\omega$ instead of $\omega$ in the definition of $\npair{S_{\#,\,\omega}}{W_{\#,\,\omega}}$.
\end{proof}
The high-energy waves form \S\ref{sect:NonlWaves} satisfy (P1) and (S1) by construction and due to \eqref{Eqn:DecayEquation}, while in Lemma~\ref{Lem:Regularity} below we derive (P2) from the regularity part of Theorem~\ref{Thm:SmoothnessNonlWaves} and the decay estimates in Lemma~\ref{Lem:TailEstimates}. Moreover, it has already been observed in \cite[Proposition 1.3, Equation (1.16)]{FP04a}
that (P3)   does not need   to be checked explicitly as it is  a consequence of (S2); see also the related comment after the proof of Lemma~\ref{Lem:SpectralProperties} below. The verification of the linear stability condition (S2), however, is much more involved and requires a careful asymptotic analysis of the linear advance-delay-differential equation \eqref{Eqn:LinEVP} in the limit $\om\to\infty$. The corresponding result for near sonic waves with $\om\gtrapprox\sqrt{\Phi^{\prime\prime}}\at{0}$ has been derived in \cite{FP04b} by regarding the  FPUT chain as a perturbation of the KdV equation, which is completely integrable and well-understood.
%

%
\subsection{Waves and spectrum in the weighted spaces}\label{sect:expspaces}
%

Before we study the spatial dynamics of eigenfunctions in the limit $\om\to\infty$, we
elucidate the role of the exponential weights in greater detail. The first result concerns the smooth parameter dependence.
\begin{lemma}[regularity in continuous and discrete weighted spaces]
\label{Lem:Regularity}
The map
\begin{align*}
\om\quad \mapsto\quad \bpair{R_\omega}{V_\omega}\in \at{\fspaceH^1_{-a}\cap \fspaceH^1_{+a}}^2
\end{align*}
as well as the shift-sampling map from \eqref{Thm:FPcrit.Eqn1} are, for sufficiently large $\om$, $\fspaceC^1$-regular in the sense of Fr\'echet.
\end{lemma}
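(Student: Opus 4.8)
The plan is to deduce $\fspaceC^1$-regularity by combining the pointwise smoothness from Theorem~\ref{Thm:SmoothnessNonlWaves} with the exponential tail bounds of Lemma~\ref{Lem:TailEstimates}; the key feature of the latter is that they hold with an \emph{arbitrary} prescribed decay rate and with constants that are uniform for all large $\om$, so I may work with a rate strictly larger than the weight $a$.

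First I would fix an auxiliary rate $\bar a>a$ and apply Lemma~\ref{Lem:TailEstimates} with decay parameter $\bar a$, obtaining $\babs{R_\om\at{x}}+\babs{\partial_x R_\om\at{x}}\leq C\delta_\om^m\mhexp{-\bar a\abs{x}}$ and $\babs{Q_\om\at{x}}+\babs{\partial_x Q_\om\at{x}}\leq C\delta_\om^{m-1}\at{1+\abs{x}}\mhexp{-\bar a\abs{x}}$ for $\abs{x}\geq\tfrac32$, uniformly in large $\om$, while Lemma~\ref{Lem:GlobalBounds} bounds the same quantities on $\abs{x}\leq\tfrac32$. Since $\bar a>a$, both the weight $\mhexp{\pm ax}$ and the polynomial prefactor are absorbed, so $R_\om\in\fspaceH^1_{-a}\cap\fspaceH^1_{+a}$; by \eqref{Eqn:DefCheckQ} and \eqref{Eqn:DefQ}, $\partial_\om R_\om$ is an $\om$-dependent scalar multiple of $Q_\om$ (recall $\delta_\om=\om^{-2/m}$), so $\partial_\om R_\om$ lies in these spaces as well. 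Using the wave equations in the integrated form $\om V_\om=-\chi\ast\Phi^\prime\at{R_\om}$ and $\om\partial_x V_\om=-\nabla\Phi^\prime\at{R_\om}$ together with their $\om$-derivatives, and invoking $\Phi^\prime\at{0}=0$ and the Lipschitz continuity of $\Phi^\prime$ on $\ccinterval{-3/4}{0}$, the $\mhexp{-\bar a\abs{x}}$-decay transfers to $V_\om$, $\partial_x V_\om$ and $\partial_\om V_\om$; hence $\bpair{R_\om}{V_\om}$ and $\bpair{\partial_\om R_\om}{\partial_\om V_\om}$ belong to $\at{\fspaceH^1_{-a}\cap\fspaceH^1_{+a}}^2$ for all sufficiently large $\om$.

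Next I would prove that $\phi:\om\mapsto\bpair{\partial_\om R_\om}{\partial_\om V_\om}$ is continuous into $\at{\fspaceH^1_{-a}\cap\fspaceH^1_{+a}}^2$ by splitting a weighted $\fspaceH^1$-norm of $\phi\at{\om}-\phi\at{\om^\prime}$ into the parts over $\abs{x}\leq M$ and over $\abs{x}>M$: the tail part is bounded by $C\sup_{\abs{x}>M}\mhexp{a\abs{x}}\at{1+\abs{x}}\mhexp{-\bar a\abs{x}}$, which is small uniformly in large $\om,\om^\prime$, while on the fixed compact set the weight is bounded and Theorem~\ref{Thm:SmoothnessNonlWaves} (continuity of $\partial_\om R_\om$ and $\partial_x\partial_\om R_\om$ in $\pair{\om}{x}$, and of the corresponding $V$-derivatives through the differentiated wave equations) makes this part vanish as $\om^\prime\to\om$. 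Since the elements of $\fspaceH^1_{\pm a}$ are continuous functions, point evaluations are bounded functionals; hence the continuous curve $s\mapsto\phi\at{\om+s}$ has a Bochner integral $\int_0^h\phi\at{\om+s}\dint{s}$ in $\at{\fspaceH^1_{-a}\cap\fspaceH^1_{+a}}^2$ whose value at each $x$ equals $R_{\om+h}\at{x}-R_\om\at{x}$, respectively $V_{\om+h}\at{x}-V_\om\at{x}$, by the pointwise fundamental theorem of calculus licensed by Theorem~\ref{Thm:SmoothnessNonlWaves}. Consequently $\bpair{R_{\om+h}}{V_{\om+h}}-\bpair{R_\om}{V_\om}=\int_0^h\phi\at{\om+s}\dint{s}$ in $\at{\fspaceH^1_{-a}\cap\fspaceH^1_{+a}}^2$, so $\om\mapsto\bpair{R_\om}{V_\om}$ is $\fspaceC^1$-Fr\'echet with derivative $\phi$.

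For the shift-sampling map \eqref{Thm:FPcrit.Eqn1} I would argue directly in $\at{\ell^2_{-a}\cap\ell^2_{+a}}^2$: any $U$ with $\babs{U\at{x}}\leq C\mhexp{-\bar a\abs{x}}$ satisfies $\at{U\at{j+c}}_{j\in\Zset}\in\ell^2_{-a}\cap\ell^2_{+a}$, with norm controlled by $C$ and $c$, and this sampling is continuous in any parameter for which $U$ is locally uniformly $\mhexp{-\bar a\abs{x}}$-bounded and jointly continuous in $\pair{\text{parameter}}{x}$ (split off finitely many coordinates and estimate the rest). Applying this to $R_\om$, $\partial_x R_\om$, $\partial_\om R_\om$ and the corresponding $V$-quantities --- all $\mhexp{-\bar a\abs{x}}$-bounded uniformly in large $\om$ and continuous in $\pair{\om}{x}$, with $\partial_\tau$ producing $-\partial_x$ while $R_\om\in\fspaceC^1$ and $\om\mapsto R_\om\at{x}$ is $\fspaceC^1$ by Theorem~\ref{Thm:SmoothnessNonlWaves} --- the candidate partials $\partial_\tau\Psi=\bpair{-\partial_x R_\om\at{j+\tfrac12-\tau}}{-\partial_x V_\om\at{j-\tau}}$ and $\partial_\om\Psi=\bpair{\partial_\om R_\om\at{j+\tfrac12-\tau}}{\partial_\om V_\om\at{j-\tau}}$ are well-defined elements of $\at{\ell^2_{-a}\cap\ell^2_{+a}}^2$ depending continuously on $\pair{\tau}{\om}$; the same Bochner-integral argument turns them into genuine Fr\'echet partials, and since both are continuous, the shift-sampling map is $\fspaceC^1$. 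The only non-formal point in this scheme --- hence the main obstacle --- is the weighted tail control: one must ensure that $\partial_\om R_\om$ (through $Q_\om$) and the velocity quantities decay in the weighted norms with constants that do not degenerate as $\om$ varies, so that limits in $\om$ commute with the spatial integrals; this is precisely what Lemmas~\ref{Lem:TailEstimates} and~\ref{Lem:GlobalBounds} provide, but only after choosing the auxiliary rate $\bar a$ \emph{strictly} above $a$, since for $\bar a=a$ the function $\mhexp{ax}R_\om\at{x}$ would be merely bounded and the factor $\at{1+\abs{x}}$ in \eqref{Lem:TailEstimates.Eqn2} would not be absorbed.
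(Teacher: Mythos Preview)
Your proposal is correct and follows essentially the same route as the paper: both combine the pointwise $\fspaceC^2$-regularity of Theorem~\ref{Thm:SmoothnessNonlWaves} with the tail bounds of Lemma~\ref{Lem:TailEstimates} applied at a rate strictly larger than $a$ (the paper takes $2a$), transfer the decay to the velocity component via the integrated wave equation \eqref{Eqn:TWNonlIdentities.V2} and the Lipschitz behavior of $\Phi^\prime$ near $0$, and then upgrade pointwise differentiability to Fr\'echet $\fspaceC^1$ by a uniform-tails-plus-compact-core argument. The only cosmetic difference is in the discrete part: the paper rewrites $r_j\pair{\tau}{\omega}$ as $\int_{j+\tau}^{j+\tau+1}F_R\pair{\omega}{x}\dint{x}$ with $F_R=-\om^{-1}V_\om$ and then uses H\"older plus Dominated Convergence, whereas you sample directly and invoke a Bochner-integral identity; both are standard and equivalent ways of turning a continuous pointwise derivative with uniform weighted domination into a genuine Fr\'echet derivative.
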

\begin{proof}
\emph{\ul{Continuous setting}}:
The smoothness of $\om\mapsto R_\omega\in H^1_{\pm a}$ is a consequence of the exponential decay stated in Lemma~\ref{Lem:TailEstimates} and the regularity from   Theorem~\ref{Thm:SmoothnessNonlWaves},   which especially ensures that $\partial_\omega\partial_x R_\omega= \partial_x \partial_\omega R_\omega$. The corresponding properties of the velocities are then granted by \eqref{Eqn:TWNonlIdentities.V2}$_1$ since $\fspaceH^1_{\pm a}$ is invariant under the convolution with the indicator function $\chi$ and because $\Phi^\prime\at{R_\omega\at{x}}$ behaves like $R_\omega\at{x}$ as  $x\to\pm\infty$
thanks to Assumption~\ref{Ass.Pot}.
\par
\emph{\ul{Discrete setting}}:
Using the integral equation \eqref{Eqn:NonlFPDist} we write
\begin{align}
\label{Lem:Regularity.PEqn1}
r_j\pair{\tau}{\omega}:=R_\omega\at{j+\tfrac12+\tau} = \int\limits_{j+\tau}^{j+\tau+1} F_{R}\pair{\omega}{x}\dint{x}\,,
\end{align}
where the function $F_R$ defined by
\begin{align*}
F_R\pair{\omega}{x}:=-  \om^{-1}   V_\omega\at{x}=\om^{-2}\int\limits_{x-1/2}^{x+1/2} \Phi^\prime\at{R_\omega\at{y}}\dint{y}
\end{align*}
is $\fspaceC^2$-smooth and exponentially localized according to Theorem~\ref{Thm:ExistenceNonlWaves} and Lemma~\ref{Lem:TailEstimates}. In order to study the regularity of the map $r=\at{r_j}_{j\in\Zset}$, we verify by direct computation the pointwise derivatives
\begin{align*}
\partial_\tau
r_j\pair{\tau}{\omega}=\int\limits_{j+\tau}^{j+\tau+1} \partial_x F_R\pair{\omega}{x}\dint{x}\,,\qquad
\partial_\omega
r_j\pair{\tau}{\omega}=\int\limits_{j+\tau}^{j+\tau+1} \partial_\omega F_R\pair{\omega}{x}\dint{x}\,.
\end{align*}
For $\partial_\tau r=\at{\partial_\tau r_j}_{j\in\Zset}$  we thus find
\begin{align*}
\bnorm{\partial_\tau r\pair{\tau}{\omega}}^2_{\ell^2_{\pm a}}&=
\sum_{j\in\Zset} \mhexp{\pm2a j}\at{\;\;\int\limits_{j+\tau}^{j+\tau+1} \partial_x F_R\pair{\omega}{x}\dint{x}}^2
\\&\leq
\sum_{j\in\Zset} \mhexp{\pm2a j}
\int\limits_{j+\tau}^{j+\tau+1}\babs{ \partial_x F_R\pair{\omega}{x}}^2\dint{x}
\leq C\mhexp{2a\abs{\tau}} \int\limits_{-\infty}^{+\infty}\mhexp{\pm2ax}\mhexp{-4a\abs{x}}\dint{x}
\leq C\mhexp{2a\abs{\tau}}
\end{align*}
by H\"older's inequality and because Lemma~\ref{Lem:TailEstimates} cannot only be evaluated for the decay rate $a$ but also for the larger one $2a$. We further get
\begin{align*}
\bnorm{\partial_\tau r\pair{\tau+\tilde{\tau}}{\om+\tilde{\omega}}-
\partial_\tau r\pair{\tau}{\omega}}^2_{\ell^2_{\pm a}}&\leq C\mhexp{2a\abs{\tau}}
\int\limits_{-\infty}^{\infty} \mhexp{\pm2 ax}\babs{\partial_x F_R\pair{\om+\tilde{\omega}}{x+\tilde{\tau}}-\partial_x F_R\pair{\omega}{x}}^2\dint{x}\,,
\end{align*}
so the Dominated Convergence Theorem implies
\begin{align*}
\bnorm{\partial_\tau r\pair{\tau+\tilde{\tau}}{\om+\tilde{\omega}}-
\partial_\tau r\pair{\tau}{\omega}}^2_{\ell^2_{\pm a}}\quad\xrightarrow{\;\;\pair{\tilde{\tau}}{\tilde{\omega}}\to\pair{0}{0}\;\;}\quad 0
\end{align*}
due to the exponential localization and the continuity of $\partial_x F_R$. In summary, we have shown that $\partial_\tau r$ belongs to the weighted spaces $\ell^2_{\pm a}$ and depends there continuously on the variables $\pair{\tau}{\omega}$. Repeating all arguments for $\partial_\omega r$, we conclude that the map $r$ from \eqref{Lem:Regularity.PEqn1} is in fact $\fspaceC^1$-regular, and the statement concerning $v=\at{v_j}_{j\in\Zset}$ with
\begin{align*}
v_j\pair{\tau}{\omega}:=V_\omega\at{j+\tau} = \int\limits_{j+\tau-1/2}^{j+\tau+1/2} F_V\pair{\omega}{x}\dint{x}\qquad\text{and}\qquad F_V\pair{\omega}{x}:=-\om^{-1} \Phi^\prime\bat{R_\omega\at{x}}
\end{align*}
follows analogously.
\end{proof}
Our second observation in this section is that the exponential weights allow us to invert discrete differential operators.
\begin{lemma}[differential operators in weighted spaces]
\label{Lem:DiscrDiffOperators}
The operators $\nabla$ and $\Delta$ are continuously invertible on $\fspaceL^2_{\pm a}$. They also commute with each other and with $\partial_x$.
\end{lemma}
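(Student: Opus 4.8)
The plan is to use the Fourier multiplier / symbol representation of the difference operators on exponentially weighted spaces. First I would observe that conjugating by the multiplication operator $\calE_{\pm a}$ turns $\nabla$ and $\Delta$ into operators with exponentially shifted symbols: if $\widehat{U}(\theta)$ denotes the Fourier transform of $U$, then $\nabla$ acts as multiplication by the symbol $n_0(\theta) = \mhexp{\iu\theta/2} - \mhexp{-\iu\theta/2} = 2\iu\sin(\theta/2)$ and $\Delta$ as multiplication by $d_0(\theta) = \mhexp{\iu\theta} + \mhexp{-\iu\theta} - 2 = -4\sin^2(\theta/2)$. Under conjugation with $\calE_{\pm a}$ these become the symbols $n_{\pm a}(\theta) = \mhexp{\pm a/2 + \iu\theta/2} - \mhexp{\mp a/2 - \iu\theta/2}$ and $d_{\pm a}(\theta) = \mhexp{\pm a + \iu\theta} + \mhexp{\mp a - \iu\theta} - 2$, i.e. $n_0$ and $d_0$ with $\iu\theta$ replaced by $\iu\theta \pm a$. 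Concretely, $U \in \fspaceL^2_{\pm a}$ means $\calE_{\pm a} U \in \fspaceL^2$, and $\nabla$ on $\fspaceL^2_{\pm a}$ is unitarily equivalent (via $\calE_{\pm a}$) to the Fourier multiplier with symbol $n_{\pm a}$ on $\fspaceL^2 = \fspaceL^2(\Rset)$, where $\theta$ ranges over $\Rset$ as the Fourier variable.

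The key step is then to show that these shifted symbols are bounded away from zero. For $d_{\pm a}$, write $z = \mhexp{\pm a + \iu\theta}$, so that $d_{\pm a}(\theta) = z + z^{-1} - 2 = (z^{1/2} - z^{-1/2})^2 = -4\sinh^2\bigl(\tfrac12(\pm a + \iu\theta)\bigr)$ after an appropriate branch choice, and since $a \neq 0$ one has $\pm a + \iu\theta \notin 2\pi\iu\Zset$ for all real $\theta$, hence $\sinh$ never vanishes; moreover $|d_{\pm a}(\theta)| \to \infty$ as $|\theta| \to \infty$ because $|z| = \mhexp{\pm a} \neq 1$, so $|d_{\pm a}|$ is bounded below by a positive constant uniformly in $\theta$. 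The same argument with $n_{\pm a}(\theta) = 2\iu\sinh\bigl(\tfrac12(\pm a + \iu\theta)\bigr)$-type expression (up to the phase) handles $\nabla$. Therefore $1/n_{\pm a}$ and $1/d_{\pm a}$ are bounded measurable functions, hence define bounded Fourier multipliers on $\fspaceL^2$, and conjugating back by $\calE_{\mp a}$ gives bounded inverses $\nabla^{-1}$ and $\Delta^{-1}$ on $\fspaceL^2_{\pm a}$. Commutativity is immediate since all these operators are Fourier multipliers (translation-invariant), so they commute with one another and with $\partial_x$, whose symbol is $\iu\theta$; one checks directly that $\Delta = \nabla^2$ on the symbol level if desired, but this is not needed.

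The main (and only mild) obstacle is bookkeeping with the Fourier transform conventions on the weighted spaces: one must verify carefully that the conjugation $\calE_{\pm a}$ intertwines $\nabla$ (a priori only densely defined, or a priori unbounded) on $\fspaceL^2_{\pm a}$ with the multiplier $n_{\pm a}$ on $\fspaceL^2$, that the natural domain is mapped correctly, and that the resulting inverse indeed lands back in $\fspaceH^1_{\pm a}$ — but since the symbol $1/d_{\pm a}$ is bounded and even decays at infinity, the inverse is in fact a smoothing operator, so the $\fspaceH^1$-bound is automatic. For functions on $\Zset$ one uses the discrete Fourier transform on the torus instead, with $\theta \in [-\pi,\pi]$, and the identical estimate $|d_{\pm a}(\theta)| \geq c > 0$ holds; this covers the discrete analogues $\ell^2_{\pm a}$ that are used for the symplectic product in \eqref{Eqn:SympProd}. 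I would present the continuous case in detail and remark that the discrete case is analogous.
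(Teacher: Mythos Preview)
Your approach is essentially identical to the paper's: conjugate by $\calE_{\pm a}$, identify the resulting operators as Fourier multipliers with symbols $2\sinh\bigl(\tfrac12(\iu\kappa \pm a)\bigr)$ and its square, show these are bounded away from zero for $a\neq 0$, and invoke Parseval. The paper gets the explicit lower bound $|d_a(\kappa)|\geq 2\sinh(a/2)$ by interpreting $|\mhexp{\iu\kappa/2+a/2}-\mhexp{-\iu\kappa/2-a/2}|$ as the distance between two concentric circles of radii $\mhexp{\pm a/2}$.

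One small correction: your claim that $|d_{\pm a}(\theta)|\to\infty$ as $|\theta|\to\infty$ is false, since $d_{\pm a}$ is $2\pi$-periodic in $\theta$. The correct reason the infimum is positive is simply that a continuous, periodic, nowhere-vanishing function attains its minimum modulus on a compact period. This does not affect the rest of your argument.
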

\begin{proof}
A direct computation reveals
\begin{align*}
\at{\calE_{+a}\nabla\calE_{-a}U}\at{x}&=\mhexp{-a/2}U\at{x+\tfrac12}-\mhexp{+a/2}U\at{x-\tfrac12}\,,\\
\at{\calE_{+a}\Delta\calE_{-a}U}\at{x}&=\mhexp{-a}U\at{x+1}+\mhexp{+a}U\at{x-1}-2U\at{x}\,,
\end{align*}
and we conclude that both the operators $\calE_{+a}\nabla\calE_{-a}$ and $\calE_{+a}\Delta\calE_{-a}$ diagonalize in Fourier space and correspond to the symbol functions $d_a$ and $d_a^2$, respectively, with $d_a\at\kappa:= 2 \sinh\at{\iu\kappa/2+a/2}$. Moreover, denoting by $\partial D_\varrho$ the circle of radius $\varrho$ in $\Cset$, we find
\begin{align*}
\abs{d_a\at\kappa}\geq  \mathrm{dist}\,\bpair{\partial D_{\mhexp{+a/2}}}{\partial D_{\mhexp{-a/2}}}=2\sinh\at{a/2}>0
\end{align*}
and the invertibility assertions in $\fspaceL^2_{+a}$ follow from Parseval's Theorem and the definition of $\fspaceL^2_{+a}$. The arguments for $\fspaceL^2_{-a}$ are analogous and the claimed commutator relations are obvious.
\end{proof}
\begin{figure}[ht!] %
\centering{ %
\includegraphics[width=0.4\textwidth]{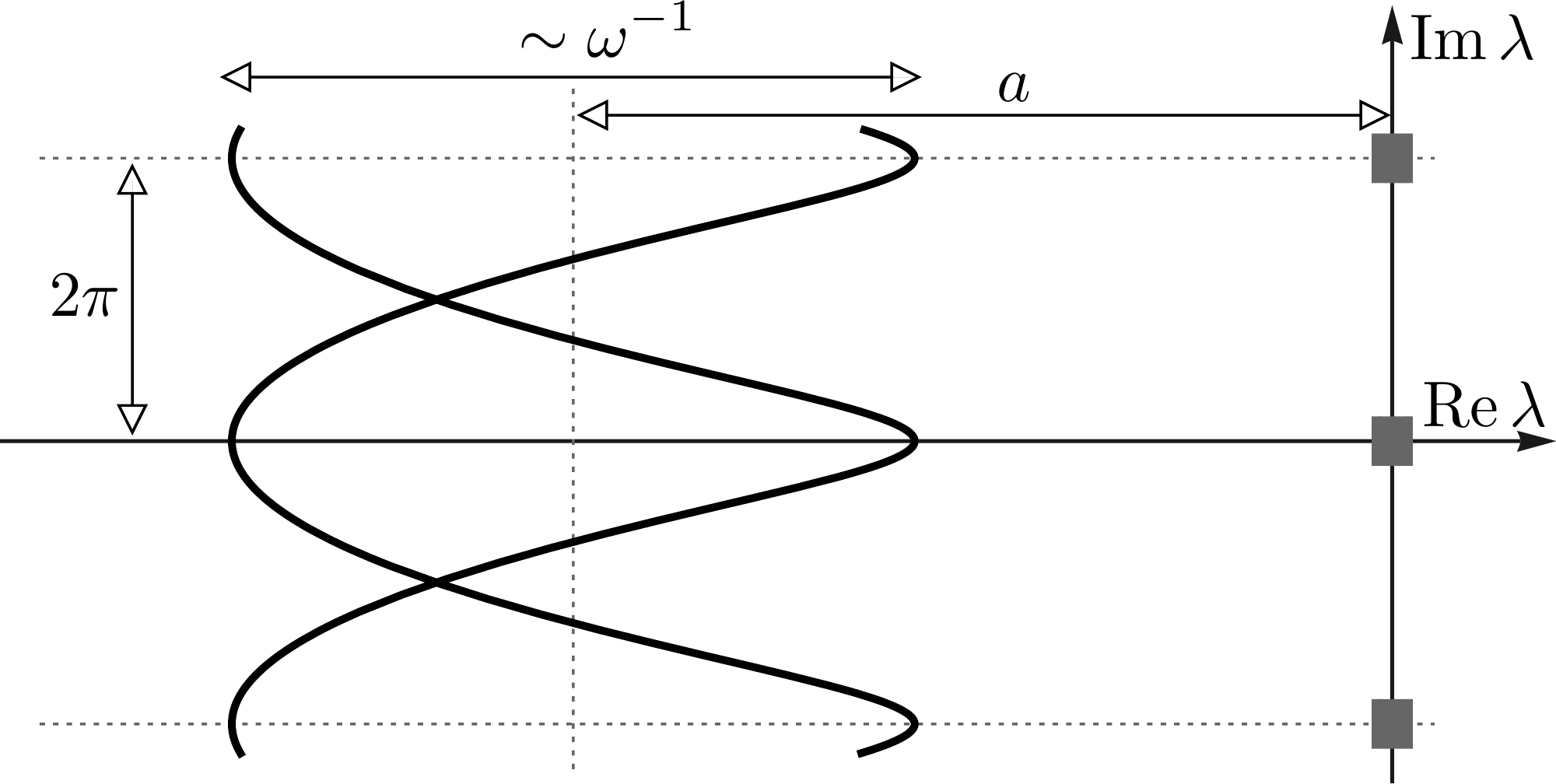}
} %
\caption{ %
On the $2\pi\iu$-periodic spectrum of the operator $\calL_\omega$. The smooth curves represent the essential spectrum as computed in the proof of Lemma~\ref{Lem:SpectralProperties} while the squares indicate the two-dimensional Jordan blocks  stemming from the shift symmetry. In Theorem~\ref{Thm:NoOtherEigenvalues} we show for large $\om$ that there is no further point spectrum in $\mathrm{Re}\,\la>-a$ but stable eigenvalues with $\mathrm{Re}\,\la< -a+\Do{1}$ might exist.} %
\label{Fig:Spectrum}%
\end{figure} %
  We finally derive a preliminary characterization of the spectrum of the operator $\calL_\omega$ from \eqref{Eqn:LinFPUOp}, see Figure~\ref{Fig:Spectrum} for an illustration. More precisely, following \cite{FP02,FP04a} we show $\at{i}$ that the spectrum of $\calL_\omega$ is periodic due to the spatial discreteness of the atomic chain,  $\at{ii}$ that the continuous spectrum has strictly negative real part due to the exponential weight, and $\at{iii}$ that the shift symmetry gives rise to a two dimensional Jordan block of neutral modes related to shifts and accelerations of the traveling wave with speed $\omega$. The key argument concerning linear stability is then to disprove the existence of unstable and other neutral eigenfunctions. This, however, cannot be concluded by adapting the KdV arguments from \cite{FP04b} but requires a different asymptotic analysis; cf. also the discussion in \S\ref{sect:LinODEs}.
\begin{lemma}[elementary properties of $\mathrm{spec}\,\calL_\omega$]
\label{Lem:SpectralProperties}
The $\fspaceL^2_a$-spectrum of $\calL_\omega$ is invariant under
the symmetry  transformation
\begin{align*}
\triple{\la}{S}{W}\rightsquigarrow\triple{\la+\iu 2\pi}{\calE_{\iu 2\pi}S}{\calE_{\iu 2\pi}W}
\end{align*}
and its essential part satisfies
\begin{align}
\notag
\mathrm{ess\,spec}\, \calL_\omega \subseteq \ccinterval{-a -C\omega^{-1}}{-a +C\omega^{-1}}\times \iu \Rset
\end{align}
for some constant $C$ which depends only on $a$. Moreover, the generalized nullspace of $\calL_\omega$ is --- for all sufficiently large $\om$ --- a two-dimensional Jordan block and spanned by the proper kernel function $\pair{S_{*,\,\omega}}{W_{*,\,\omega}}$ and the cyclic kernel function $\pair{S_{\#,\,\omega}}{W_{\#,\,\omega}}$.  These kernel functions   belong to
$\fspaceL_{-a}^2{\times}\fspaceL_{-a}^2\cap\fspaceL_{+a}^2{\times}\fspaceL_{+a}^2$ and  fulfil
\begin{align}
\label{Lem:SpectralProperties.Eqn2}
\delta_\omega^{m/2+1}\sigma\Bpair{\pair{S_{*,\,\omega}}{W_{*,\,\omega}}}{\pair{S_{\#,\,\omega}}{W_{\#,\,\omega}}}=\delta_\omega \partial_{\delta_\omega}h_\omega\quad\xrightarrow{\;\;\om\to\infty\;\;}\quad -\frac{m}{2}\,,
\end{align}
where the energy $h_\omega$ is defined in \eqref{Eqn:TotalEnergy}.
\end{lemma}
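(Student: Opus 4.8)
The plan is to verify the four assertions of Lemma~\ref{Lem:SpectralProperties} essentially in the order they are stated, relying heavily on the results already established for near-sonic waves in \cite{FP02,FP04a} and on the approximation formulas from \S\ref{sect:NonlWaves}. First I would check the \emph{periodicity}: inserting the ansatz $\triple{\la}{S}{W}\rightsquigarrow\triple{\la+2\pi\iu}{\calE_{2\pi\iu}S}{\calE_{2\pi\iu}W}$ into the eigenvalue problem \eqref{Eqn:LinEVP}, one uses that $\calE_{2\pi\iu}$ commutes with $\nabla$ and with $\Phi^{\prime\prime}\at{R_\omega}$ (since $R_\omega$ is real-valued, $\Phi^{\prime\prime}\at{R_\omega}$ is a genuine multiplication operator) and that $\at{\partial_x-\la-2\pi\iu}\calE_{2\pi\iu}=\calE_{2\pi\iu}\at{\partial_x-\la}$; this is a direct computation that shows the transformation maps eigenfunctions to eigenfunctions and hence preserves the spectrum.

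For the \emph{essential spectrum} I would follow the standard route: the essential spectrum of $\calL_\omega$ on $\fspaceL^2_a\times\fspaceL^2_a$ is governed by the asymptotic constant-coefficient operator obtained by sending $R_\omega\to0$, because $\Phi^{\prime\prime}\at{R_\omega}-\Phi^{\prime\prime}\at{0}$ decays exponentially (by Lemma~\ref{Lem:TailEstimates} and Assumption~\ref{Ass.Pot}) and hence the difference is a relatively compact perturbation in the weighted space. The limiting operator conjugated by $\calE_a$ diagonalizes in Fourier space, and the dispersion relation reads, with $d_a\at\kappa=2\sinh\at{\iu\kappa/2+a/2}$ from Lemma~\ref{Lem:DiscrDiffOperators}, that $\la$ lies in the essential spectrum iff $\at{\la+\iu\kappa}^2+\om^{-2}\Phi^{\prime\prime}\at{0}\,d_a\at\kappa^2=0$ for some real $\kappa$; solving for $\mhRe\la$ gives $\mhRe\la=-a+\DO{\om^{-1}}$ uniformly in $\kappa$, which is exactly the claimed strip. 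This is exactly the computation carried out in \cite{FP02,FP04a} for general supersonic waves, so I would cite it and merely track the $\om$-dependence of the width of the strip, which comes from the factor $\om^{-1}$ multiplying $\nabla\Phi^{\prime\prime}\at{R_\omega}$ in \eqref{Eqn:LinFPUOp}.

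The \emph{Jordan block} structure is again a general feature: differentiating the traveling-wave equation \eqref{Eqn:TWNonlIdentities.V1} with respect to $x$ shows that $\pair{S_{*,\omega}}{W_{*,\omega}}=\pair{\partial_x R_\omega}{\partial_x V_\omega}$ lies in the kernel of $\calL_\omega$, and differentiating with respect to the parameter (here $\delta_\omega$, equivalently $\om$) shows that $\pair{S_{\#,\omega}}{W_{\#,\omega}}$ is a cyclic vector, i.e.\ $\calL_\omega\pair{S_{\#,\omega}}{W_{\#,\omega}}$ is a nonzero multiple of $\pair{S_{*,\omega}}{W_{*,\omega}}$; the precise constant follows from the chain rule and the scaling $\partial_{\delta_\omega}=m^{-1}\om\,\partial_\omega\cdot\om^{?}$ encoded in \eqref{Eqn:DefDelta}. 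That these two functions actually lie in $\fspaceL^2_{-a}\cap\fspaceL^2_{+a}$ (both components) is guaranteed by the exponential decay estimates of Lemma~\ref{Lem:TailEstimates} for $\partial_x R_\omega$, $\partial_{\delta_\omega}R_\omega$ together with the corresponding velocity bounds via \eqref{Eqn:TWNonlIdentities.V2}, and the nondegeneracy (exactly two-dimensional, no larger Jordan chain) follows from the transversality relation \eqref{Lem:SpectralProperties.Eqn2} once it is shown to be nonzero, which is the standard Friesecke-Pego argument that the symplectic pairing of the proper and cyclic kernel vectors equals $\partial_\omega h_\omega$ up to the reparametrization Jacobian.

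The main work, and the part I expect to be the actual obstacle, is the \emph{asymptotic evaluation} \eqref{Lem:SpectralProperties.Eqn2}, namely that $\delta_\omega\,\partial_{\delta_\omega}h_\omega\to-m/2$. Here I would compute $\partial_{\delta_\omega}h_\omega$ by differentiating \eqref{Eqn:TotalEnergy} under the integral sign, which gives $\partial_{\delta_\omega}h_\omega=\int_{\Rset}\bat{V_\omega\,\partial_{\delta_\omega}V_\omega+\Phi^\prime\at{R_\omega}\,\partial_{\delta_\omega}R_\omega}\dint x$; then I would pass to the scaled variables $\tilde x$ from \eqref{Eqn:SpaceScalign}, insert the approximations $\tilde R_\omega\approx\breve R_\omega$, $\tilde V_\omega\approx\breve V_\omega$ from Theorem~\ref{Thm:ExistenceNonlWaves} and Lemma~\ref{Lem:ApproxVelProfile}, and the approximation $\tilde Q_\omega\approx\breve Q_\omega\to-m^{-1}\bar Y^\flat$ from Theorem~\ref{Thm:SmoothnessNonlWaves} and Lemma~\ref{Lem:AsympCheckU}, keeping careful track of all powers of $\delta_\omega$ and $\al_\omega$ coming from the scalings in \eqref{Eqn:ScaledProfiles}, \eqref{Eqn:DefTildeP}, \eqref{Eqn:DefCheckQ}, and \eqref{Eqn:DefQ}. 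The leading contribution should reduce, after using the conservation law $\tfrac12\bar Y^{\prime\,2}+\tfrac{2}{m(m+1)}\bar Y^{-m}=\tfrac{2}{m(m+1)}$ from Lemma~\ref{Lem:AsympODE.Props} and the limiting values $\al_\infty=\at{4/(m(m+1))}^{1/m}$, $\bar Y^\prime(\infty)=2/\sqrt{m(m+1)}$, to an explicit integral of functions built from $\bar Y$, $\bar Y^\prime$, $\bar Y^\flat$ that evaluates to $-m/2$; the error terms are controlled by the $\nnorm{\cdot}_{*,\omega}$-estimates already proven plus the tail bounds of Lemma~\ref{Lem:TailEstimates}, so they are $\DO{\delta_\omega^{\min\{k,m\}}}$ or $\DO{\delta_\omega^{\min\{k,m\}-1}}$ and vanish. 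The delicate point is bookkeeping: the velocity profile carries an extra factor $\om$, so one must be sure the $\om$-powers in the two summands of $\partial_{\delta_\omega}h_\omega$ combine correctly to give a finite nonzero limit after multiplication by $\delta_\omega$, and this is where I would spend the most care.
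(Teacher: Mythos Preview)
Your plan is essentially correct and parallels the paper's proof closely for the periodicity, essential spectrum, and Jordan-block parts; these are indeed the standard Friesecke--Pego arguments, adapted only in that the relative compactness of the coefficient perturbation now comes from Lemma~\ref{Lem:TailEstimates} and the width of the essential-spectrum strip scales like $\omega^{-1}$.

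For the key asymptotic \eqref{Lem:SpectralProperties.Eqn2}, however, the paper organizes the computation differently, and the difference matters in practice. Rather than differentiating $h_\omega$ under the integral sign, the paper evaluates the symplectic product $\sigma\pair{U_{*,\omega}}{U_{\#,\omega}}$ directly from its definition \eqref{Eqn:SympProd}, using the integrated traveling-wave identities \eqref{Eqn:TWNonlIdentities.V2} and the antisymmetry of $\nabla^{-1}$ to rewrite both summands in terms of $\omega^{-1}\bskp{\Phi^\prime\at{R_\omega}}{\partial_{\delta_\omega}R_\omega}$ and $\omega^{-2}\at{\partial_{\delta_\omega}\omega}\bskp{\Phi^\prime\at{R_\omega}}{R_\omega}$; the identity $\omega\,\sigma\pair{U_{*,\omega}}{U_{\#,\omega}}=\partial_{\delta_\omega}h_\omega$ then falls out as a byproduct rather than being cited. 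More importantly, the paper's computation reveals that the \emph{dominant} contribution, of order $\delta_\omega^{-m/2-1}$, is the second term $\omega^{-2}\at{\partial_{\delta_\omega}\omega}\bskp{\Phi^\prime\at{R_\omega}}{R_\omega}$, which involves only $R_\omega$ and the Jacobian $\partial_{\delta_\omega}\omega=-\tfrac{m}{2}\delta_\omega^{-m/2-1}$, and reduces to the elementary integral $\int_\Rset \bar Y^{-(m+1)}\dint{\tilde x}=(m+1)\bar Y^\prime\at{\infty}$. The term containing $\partial_{\delta_\omega}R_\omega$ (and hence the approximation $\tilde Q_\omega\approx-m^{-1}\bar Y^\flat$ that you emphasize) is one order of $\delta_\omega$ \emph{smaller} and contributes only to the error. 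In your organization this dominant piece is hidden inside $\int V_\omega\,\partial_{\delta_\omega}V_\omega\dint x$ via the product rule applied to the explicit factor $\omega$ in $V_\omega$; if you pursue your route, make sure you isolate that Jacobian term first, because the shape-ODE integrals you highlight will otherwise lead you to the subleading contribution.
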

\begin{proof}
\emph{\ul{Essential spectrum}}: The operator $\calL_\omega$ can be written as
\begin{align*}
\calL_\omega = \calL_{1,\,\omega}+\calL_{2,\,\omega}
\end{align*}
with
\begin{align*}
 \calL_{1,\,\omega}\bpair{S}{W}:=
\bpair{
\partial_x S+\om^{-1}\nabla W}{
\partial_x W+\om^{-1}c\nabla S}\,,\qquad
 \calL_{2,\,\omega}\bpair{S}{W}:=
\bpair{0}{
\om^{-1}\nabla \at{C_\omega S}}\,,
\end{align*}
where the scalar $c$ and the coefficient function $C_\omega$ are defined by
\begin{align*}
c:=\Phi^{\prime\prime}\at{0}\,,\qquad C_\omega\at{x}=\Phi^{\prime\prime}\at{R_\omega\at{x}}-c\,.
\end{align*}
Since $C_\omega$ decays exponentially as $\abs{x}\to0$ according to Assumption~\ref{Ass.Pot} and Lemma~\ref{Lem:TailEstimates}, we   know   that $\calL_{2,\,\omega}$ is a compact linear operator from $\fspaceH_a^1{\times}\fspaceH^1_a$ to $\fspaceL_a^2{\times}\fspaceL^2_a$. Moreover, the operator $\calE_{+a}\calL_{1,\,\omega}\calE_{-a}$ diagonalizes in Fourier space and corresponds to the symbol matrix
\begin{align*}
L_{1,\,\omega}\at{\kappa}=
\begin{pmatrix}
-\iu\kappa-a&-2\omega^{-1}\sinh\at{\iu\ka/2+a/2}\\
-2\omega^{-1}c\sinh\at{\iu\ka/2+a/2}& -\iu\kappa-a
\end{pmatrix}\,,
\end{align*}
where $\kappa$ denotes the Fourier variable dual to $x$. Using this, we readily compute
\begin{align*}
\mathrm{spec}\, \calL_{1,\,\omega} =\mathrm{ess\,spec}\, \calL_{1,\,\omega} &= \{\la\in\Cset\;:\det \at{L_{1,\,\omega}\at{\kappa} - \la\,\mathrm{id}} =0 \;\text{for some $\kappa\in\Rset$}\;\}\\&=
 \{-\iu\kappa-a\pm\omega^{-1}c^{1/2}\sinh\at{\iu \ka/2+a/2}\;:\;\kappa\in\Rset\}
\end{align*}
and observe --- for large $\om$ --- that $\calL_{1,\,\omega}$ is a continuously invertible Fredholm operator with index $0$ from  $\fspaceH_a^1{\times}\fspaceH^1_a$ to  $\fspaceL_a^2{\times}\fspaceL^2_a$. This implies that  $\calL_{\omega}$ is also a Fredholm-operator with index $0$.
\par
\emph{\ul{Jordan structure of the eigenvalue $0$}}: In the remainder of this proof we abbreviate $U:=\pair{S}{W}$ and notice that the exponential decay of both $U_{*,\,\omega}$ and $U_{\#,\,\omega}$ follows --- thanks to the properties of the discrete differential operators and the convolution with the indicator function $\chi$ of the interval $\ccinterval{-1/2}{+1/2}$ --- from formula \eqref{Eqn:TWNonlIdentities.V2}, Assumption~\ref{Ass.Pot} and the tail estimates in Lemma~\ref{Lem:TailEstimates}. Straight forward computations relying on \eqref{Eqn:TWNonlIdentities.V1} and \eqref{Eqn:JordanFcts} provide the Jordan identities
\begin{align}
\label{Eqn:JordanIdentities}
\calL_\omega U_{*,\,\omega}=0\,,\qquad
\calL_\omega U_{\#,\,\omega}=\tfrac12 m \delta_\omega^{-1}
U_{*,\,\omega}\,,
\end{align}
so it remains to show that $U_{\#,\,\omega}$ does not belong to the image of $\calL_\omega$, which coincides --- thanks to the Fredholm alternative and by \eqref{Eqn:DualPairing} --- with the $\fspaceL^2$-orthogonal complement of the kernel of the dual operator   $\calL_\omega^*$ with
\begin{align*}
\calL_\omega^*\pair{S}{W}
=-
\Bpair{
\partial_x S+\om^{-1}\Phi^{\prime\prime}\at{R_\omega}\nabla W}{\partial_x W+\om^{-1}\nabla S}
\,.%
  \end{align*}
By direct computations we verify
\begin{align*}
\calL_\omega^* =-   \calD^{-1} \calL_\omega\calD^{+1} \,,\qquad \calD^{\pm 1}\pair{S}{W}:=\bpair{\nabla^{\pm1}W}{\nabla^{\pm1}S}\,,
\end{align*}
where we used the $\fspaceL^2$-antisymmetry of $\partial_x$ and $\nabla^{\pm 1}$ and the fact that these operators commute with each other, and conclude in view of \eqref{Eqn:JordanFcts} that  $\calD^{-1}U_{*,\,\omega}$ belongs to the kernel of $\calL_\omega^*$. On the other hand, the symplectic relation  \eqref{Lem:SpectralProperties.Eqn2} (which we derive in a moment) reveals that $U_{\#,\,\omega}$ and $\calD^{-1}U_{*,\,\omega}$ are not $\fspaceL^2$-perpendicular for large $\om$, so $U_{\#,\,\omega}$ cannot annihilate the kernel of $\calL_\omega^*$. We thus have shown that $1$ and $2$ are in fact the geometric and algebraic multiplicities of the eigenvalue $0$.
\par
\emph{\ul{Symplectic product}}: Using the definitions \eqref{Eqn:JordanFcts}, the nonlinear advance-delay-differential equations \eqref{Eqn:TWNonlIdentities.V1}, and the antisymmetry of $\nabla^{-1}$ we demonstrate
\begin{align}
\label{Lem:SpectralProperties.PEqn1}
\bskp{W_{*,\,\omega}}{\nabla^{-1}S_{\#,\,\omega}}&=\bskp{\partial_x V_\omega}{\nabla^{-1}\partial_{\delta_\omega}R_{\omega}}=-\bskp{\nabla^{-1}\partial_x V_\omega}{\partial_{\delta_\omega}R_{\omega}}=
\om^{-1}\bskp{\Phi^\prime\at{R_\omega}}{\partial_{\delta_\omega}R_\omega}\,,
\end{align}
and similarly we find
\begin{align}
\label{Lem:SpectralProperties.PEqn2}
\bskp{S_{*,\,\omega}}{\nabla^{-1}W_{\#,\,\omega}}&=\bskp{\partial_x R_\omega}{\nabla^{-1}\partial_{\delta_\omega}V_\omega}=
-\bskp{\nabla^{-1}\partial_x R_\omega}{\partial_{\delta_\omega}V_\omega}=\om^{-1}\bskp{V_\omega}{\partial_{\delta_\omega}V_\omega}
\\\notag&= %
-\om^{-2}\bskp{\chi\ast\Phi^\prime\at{R_\omega}}{\partial_{\delta_\omega}V_\omega}=-\om^{-2}\bskp{\Phi^\prime\at{R_\omega}}{\chi\ast\partial_{\delta_\omega}V_\omega}
\\\notag&= %
\om^{-2}\bskp{\Phi^\prime\at{R_\omega}}{\partial_{\delta_\omega}\at{\om R_\omega}}=
\om^{-1}\bskp{\Phi^\prime\at{R_\omega}}{\partial_{\delta_\omega}R_\omega}+  \om^{-2}\at{\partial_{\delta_\omega}\omega}\,
\bskp{\Phi^\prime\at{R_\omega}}{R_\omega}
\end{align}
where we also used the integrated traveling wave equation \eqref{Eqn:TWNonlIdentities.V2} as well as the symmetry of the convolution with the indicator function $\chi$. To estimate the different contributions, we first employ   the definitions in Lemma~\ref{Lem:DefAlpha} and \eqref{Eqn:DefDelta}+\eqref{Eqn:SpaceScalign}+\eqref{Eqn:DefTildeP}+\eqref{Eqn:DefCheckQ}   to replace $\Phi^\prime\at{R_\omega}$ and $\partial_{\delta_\omega} R_\omega$  by
$\tilde{K}_\omega$ and $\tilde{Q}_\omega$, respectively. This gives
\begin{align*}
\bskp{\Phi^\prime\at{R_\omega}}{\partial_{\delta_\omega}R_\omega}&= \int\limits_{\Rset}\Bat{\delta_\omega^{-m-1}\al_\omega^{-m-1}\tilde{K}_\omega\at{\tilde{x}}}\Bat{m \al_\omega\tilde{Q}_\omega\at{\tilde{x}}}\dint{\at{\delta_\omega \beta_\omega \tilde{x}}}\\&=
\frac{m}{\delta_\omega^{m}\al_\omega^{m/2-1}}\int\limits_\Rset
\tilde{K}_\omega\at{\tilde{x}}\tilde{Q}_\omega\at{\tilde{x}}\dint{\tilde{x}}\,,
\end{align*}
where   $\tilde{K}_\omega$ and $\tilde{Q}_\omega$ can in turn be approximated via $\bar{K}$ and $\breve{Q}_\omega$ from \eqref{Eqn:DefBarPZ} and \eqref{Eqn:DefCheckU} by the ODE solution $\bar{Y}$ defined in Lemma~\ref{Lem:AsympODE.Props}. More precisely,   from the approximation results in \S\ref{sect:NonlWaves} --- see Theorems~\ref{Thm:ExistenceNonlWaves} and \ref{Thm:SmoothnessNonlWaves}, as well as Lemmas~\ref{Lem:AsympCheckU} and~\ref{Lem:TailEstimates} --- we infer
\begin{align*}
\int\limits_\Rset
\tilde{K}_\omega\at{\tilde{x}}\tilde{Q}_\omega\at{\tilde{x}}\dint{\tilde{x}}
\quad\xrightarrow{\;\;\om\to\infty\;\;}\quad
\frac{1}{2m} \int\limits_\Rset
\bar{Y}^{\prime\prime} \at{\tilde{x}}\bar{Y}^\flat \at{\tilde{x}}\dint{\tilde{x}}=
-\frac{1}{2m} \int\limits_\Rset \tilde{x}
\bar{Y}^\prime\at{\tilde{x}} \bar{Y}^{\prime\prime}  \at{\tilde{x}}\dint{\tilde{x}}
\end{align*}
and hence
\begin{align}
\label{Lem:SpectralProperties.PEqn3}
\om^{-1}\bskp{\Phi^\prime\at{R_\omega}}{\partial_{\delta_\omega}R_\omega}=
\delta_\omega^{-m/2}\bat{-\tilde{C}+\Do{1}}
\end{align}
for some constant $\tilde{C}>0$ thanks to $\om=\delta_\omega^{-m/2}$.   Similarly, by \eqref{Eqn:DefTildeY}+\eqref{Eqn:SpaceScalign}+\eqref{Eqn:DefTildeP}   we have
\begin{align*}
\bskp{\Phi^\prime\at{R_\omega}}{R_\omega}&= \int_{\Rset}\Bat{\delta_\omega^{-m-1}\al_\omega^{-m-1}\tilde{K}_\omega\at{\tilde{x}}}\Bat{-1+\delta_\omega\al_\omega\tilde{Y}_\omega\at{\tilde{x}}}\dint{\at{\delta_\omega \beta_\omega \tilde{x}}}\,,
\end{align*}
and combining this with $\om^{-2}\partial_{\delta_\omega}\om=-\tfrac{m}{2}\delta_\omega^{m/2-1}$ and the approximation results from \S\ref{sect:NonlWaves} we arrive at
\begin{align}
\label{Lem:SpectralProperties.PEqn4}
  \om^{-2}\at{\partial_{\delta_\omega}\omega}\,
\bskp{\Phi^\prime\at{R_\omega}}{R_\omega}=
\delta_\omega^{-m/2-1}\bat{-C+\Do{1}}
\end{align}
with
\begin{align*}
-C &:= \at{\lim_{\om\to\infty}\al_\omega^{-m/2}}\cdot \frac{m}{2\at{m+1}}\cdot\int\limits_{\Rset}\frac{1}{\bar{Y}\at{\tilde{x}}^{m+1}}\dint{\tilde{x}}\\&= \frac{\sqrt{m}\sqrt{m+1}}{2}\cdot \frac{m}{2\at{m+1}}\cdot\at{\at{m+1}\bar{Y}^\prime\at{\infty}}=\frac{m}{2}
\,,
\end{align*}
see  \eqref{Lem:DefAlpha.Eqn2}, \eqref{Eqn:DefBarPZ}, and Lemma~\ref{Lem:AsympODE.Props}. The claim \eqref{Lem:SpectralProperties.Eqn2} is a direct consequence of \eqref{Lem:SpectralProperties.PEqn3} and \eqref{Lem:SpectralProperties.PEqn4},  where the identity
\begin{align}
\label{Eqn:NaturalJordanCondition}
\om\, \sigma\,\pair{U_{*,\,\omega}}{U_{\#,\,\omega}}=\partial_{\delta_\omega} h_\omega
\end{align}
is ensured by
parts of \eqref{Lem:SpectralProperties.PEqn1} and \eqref{Lem:SpectralProperties.PEqn2}.
\end{proof}
Lemma~\ref{Lem:SpectralProperties} reveals that the traveling wave energy $h_\omega$ is strictly increasing with respect to large $\om$. Such an energetic transversality condition is quite natural for traveling waves in spatially extended Hamiltonian systems and is also encoded by the assumptions of the Grillakis-Shatah-Strauss theory \cite{GSS87,GSS90} on orbital stability in dispersive PDEs,   see   \cite{Ang09} and references therein.   Recently \cite{CKVX17,XCKV18}     discussed sign changes  in \eqref{Eqn:NaturalJordanCondition} as a criterion for instability along a family of waves. The Friesecke-Pego theory for atomic chains generalizes the well-established framework for spatially continuous systems for it is not based on a second conserved quantity, which is usually used in the PDE context to rule out perturbations that accelerate the wave, cf. the related discussion in \cite{FP02}.   Notice that the orbital stability of standing waves in discrete nonlinear Sch\"odinger equations is also based on two conservation laws, see for instance \cite{Wei99}.
%
%

%
\subsection{Heuristic arguments for linear stability and overview}\label{sect:LinODEs}
%
%

In what follows we consider families of eigenvalues and eigenfunctions parameterized by $\om$ and write $\triple{\la_\omega}{S_\omega}{V_\omega}$ instead of $\triple{\la}{S}{W}$, where the eigenvalue is additionally split into its real and imaginary part according to
\begin{align}
\label{Eqn:EVSplit}
\la_\omega = \mu_\omega+\iu \nu_\omega\,.
\end{align}
We also observe that the velocity component can easily be eliminated in \eqref{Eqn:LinEVP} and that the distance component must satisfy the second-order advance-delay differential equation
\begin{align}
\label{Eqn:SecondOrderEVP}
\at{\partial_x - \la_\omega}^2 S_\omega = \Delta \bat{\omega^{-2}\Phi^{\prime\prime}\at{R_\omega}S_\omega}\,,
\end{align}
which is linear in ${S}_\omega$ but depends explicitly on $\omega$ and ${R}_\omega$. Since the right hand side in \eqref{Eqn:SecondOrderEVP} is singular due to the properties of $R_\omega$, we pass --- as in \S\ref{sect:NonlWaves} --- to the scaled space variable $\tilde{x}$ from \eqref{Eqn:SpaceScalign} and write
\begin{align}
\label{Eqn:DefTildeS}
\tilde{S}_\omega\at{\tilde{x}}:=
S_\omega\at{\delta_\omega\beta_\omega\tilde{x}}
\end{align}
but we also consider two functions $\tilde{G}_\omega$ and $\tilde{T}_\omega$ defined by
\begin{align}
\label{Eqn:DefTildeG}
\tilde{G}_\omega\at{\tilde{x}}:=\exp\bat{\at{a-\iu\nu_\omega}\delta_\omega\beta_\omega \tilde{x}}\tilde{S}_\omega\at{\tilde{x}}\,,
\end{align}
and
\begin{align}
\label{Eqn:DefTildeT}
\tilde{T}_\omega\at{\tilde{x}}:=
\exp\bat{-\la_\omega\delta_\omega\beta_\omega \tilde{x}}\tilde{S}_\omega\at{\tilde{x}}
=\exp\at{-\eps_\omega\tilde{x}}\tilde{G}_\omega\at{\tilde{x}}\,,
\end{align}
respectively. Here, $\delta_\omega$ and $\be_\omega$ are given as in  \eqref{Eqn:DefDelta} and \eqref{Eqn:DefBeta} and the  quantity
\begin{align}
\label{Eqn:DefEps}
\eps_\omega:=\delta_\omega \be_\omega\at{a+\mu_\omega}\,,
\end{align}
which is positive for $\mu_\omega>-a$, will later be identified as small. Direct computations transform \eqref{Eqn:SecondOrderEVP} into
\begin{align}
\label{Eqn:FPG.ODE}
\at{\partial_{\tilde{x}}-\eps_\omega}^2 \tilde{G}_\omega = \tilde{\Delta}_{-a+\iu\nu_\omega}\bat{\tilde{P}_\omega\tilde{G}_\omega}
\end{align}
and
\begin{align}
\label{Eqn:FPT.ODE}
\partial_{\tilde{x}}^2\tilde{T}_\omega=\tilde{\Delta}_{\la_\omega}\bat{\tilde{P}_\omega\tilde{T}_\omega}
\end{align}
with modified Laplacian as in \eqref{Eqn:DefModLapl}, and in what follows we investigate both equations in the limit $\omega\to\infty$. The introduction of $\tilde{G}_\omega$ and $\tilde{T}_\omega$ is motivated by the different needs in our asymptotic analysis. Since $\tilde{G}$ belongs to $\fspaceL^2$ by construction, we can investigate \eqref{Eqn:FPG.ODE} using nonlocal techniques such as Fourier transform and H\"older's inequality, and this finally provides a priori estimates for the eigenfunctions that are independent of any approximation. The equation for $\tilde{T}_\omega$, however, links the eigenvalue problem for $S_\omega$ to the linearized shape ODE and enables us to disprove the existence of unstable eigenfunctions in the symplectic complement of the neutral ones.

\par

{\bf{Approximation by local ODEs}}.
In order to explain our asymptotic ODE arguments on an informal level, we replace the coefficient profile $\tilde{P}_\omega$ in \eqref{Eqn:FPT.ODE} by its limit $\bar{P}$ from  \eqref{Eqn:DefBarPZ}, see also Theorem~\ref{Thm:ExistenceNonlWaves}, and
assume that $\tilde{T}_\omega$ grows at most linearly as $\tilde{x}\to\pm\infty$. This allows us --- similarly to the discussion in \S\ref{sect:Heuristics} ---  to neglect both the advance and the delay term in \eqref{Eqn:FPT.ODE} on the interval $I_\omega$, and provides
\begin{align}
\notag
\tilde{T}_\omega^{\prime\prime}\at{\tilde{x}}  \approx - 2\bar{P}\at{\tilde{x}}\tilde{T}_\omega\at{\tilde{x}} \qquad \text{for}\quad \tilde{x}\in\oointerval{-\xi_\omega}{+\xi_\omega}\,.
\end{align}
In other words, the transformed eigenfunction $\tilde{T}_\omega$ satisfies on the interval $I_\omega$ the linearized shape ODE \eqref{Lem:LinODE.Props.Eqn1} at least approximately, and we have
\begin{align}
\label{Eqn:LinHeu2}
\tilde{T}_\omega\at{\tilde{x}}  \approx \tilde{T}_\omega\at{0}\bar{T}_\even\at{\tilde{x}}+
\tilde{T}_\omega^\prime\at{0}\bar{T}_\odd\at{\tilde{x}}
 \qquad \text{for}\quad \tilde{x}\in\oointerval{-\xi_\omega}{+\xi_\omega}\,,
\end{align}
where $\bar{T}_\even$ and $\bar{T}_\odd$ are the two independent solutions from Lemma~\ref{Lem:LinODE.Props}. Moreover, on each of the two components of $3I_\omega\setminus I_\omega$ the dominant contribution to the right hand side of \eqref{Eqn:FPT.ODE} stems either from the advance or the delay term, while all terms are small outside of $3I_\omega$. In this way we justify the approximations
\begin{align*}
\tilde{T}_\omega^{\prime\prime}\at{\tilde{x}}  \approx \mhexp{+\la_\omega} \bar{P}\at{\tilde{x}+2\xi_\omega}\tilde{T}_\omega\at{\tilde{x}+2\xi_\omega}
 \approx -\tfrac12\mhexp{+\la_\omega}\tilde{T}^{\prime\prime}_\omega\at{\tilde{x}+2\xi_\omega}
\qquad \text{for}\quad \tilde{x}\in\oointerval{-3\xi_\omega}{-\xi_\omega}
\end{align*}
and
\begin{align}
\label{Eqn:LinHeu3}
\tilde{T}_\omega^{\prime\prime}\at{\tilde{x}}  \approx \mhexp{-\la_\omega} \bar{P}\at{\tilde{x}-2\xi_\omega}\tilde{T}_\omega\at{\tilde{x}-2\xi_\omega}
 \approx -\tfrac12\mhexp{-\la_\omega}\tilde{T}^{\prime\prime}_\omega\at{\tilde{x}-2\xi_\omega}  \qquad \text{for}\quad \tilde{x}\in\oointerval{+\xi_\omega}{+3\xi_\omega}\,,
\end{align}
as well as
\begin{align}
\label{Eqn:LinHeu4}
\tilde{T}_\omega^{\prime\prime}\bat{\tilde{x} }\approx 0\qquad \text{for}\quad
\abs{\tilde{x}}\geq 3\xi_\omega\,,
\end{align}
where the latter formula is consistent with our linear growth assumption.
\par
{\bf Matching conditions and decay constraint}.
The different local approximations for $\tilde{T}_\omega$ must be complemented by matching and decay conditions. On the one hand, $\tilde{T}_\omega\in\fspaceL^2_{\eps_\omega}$ implies
\begin{align}
\label{Eqn:LinHeu5}
\tilde{T}_\omega\at{3\xi_\omega}\approx0\,,\qquad
\tilde{T}_\omega^{\prime}\at{3\xi_\omega}\approx 0
\end{align}
because otherwise  $\tilde{T}_\omega$ would possess a nonvanishing, affine tail for $\tilde{x}\to+\infty$ due to \eqref{Eqn:LinHeu4}. On the other hand, $\tilde{T}_\omega$ and its derivative shall be continuous at $\xi_\omega$. In view of \eqref{Eqn:LinHeu5} and the local differential equation \eqref{Eqn:LinHeu3} we conclude that the continuity of $\tilde{T}_\omega^\prime$ is equivalent to
\begin{align*}
\tilde{T}_\omega^\prime{\at{+\xi_\omega}}-\tilde{T}_\omega^\prime{\at{-\xi_\omega}}=
\int\limits_{-\xi_\omega}^{+\xi_\omega}\tilde{T}_\omega^{\prime\prime}\at{\tilde{x}}\dint{\tilde{x}}\approx -2\mhexp{+\la_\omega}
\int\limits_{+\xi_\omega}^{+3\xi_\omega}\tilde{T}_\omega^{\prime\prime}\at{\tilde{x}}\dint{\tilde{x}}\approx +2\mhexp{+\la_\omega} \tilde{T}_\omega^\prime{\at{+\xi_\omega}},
\end{align*}
and combining this with \eqref{Eqn:LinHeu2} and the asymptotic properties from Lemma~\ref{Lem:LinODE.Props} we derive the first matching condition
\begin{align}
\label{Eqn:LinHeuM1}
\bat{1-\mhexp{+\la_\omega}} \tilde{T}_\omega\at{0}\approx 0
\end{align}
thanks to
\begin{align*}
\tilde{T}_\omega^\prime{\at{\pm\xi_\omega}} \approx \pm \tilde{T}_\omega\at{0}\bar{T}_\even^\prime\at{+\infty}\,.
\end{align*}
In the same manner we compute
\begin{align*}
\tilde{T}_\omega^\flat{\at{+\xi_\omega}}-\tilde{T}_\omega^\flat{\at{-\xi_\omega}}&\approx
\int\limits_{-\xi_\omega}^{+\xi_\omega}\tilde{x}\tilde{T}_\omega^{\prime\prime}\at{\tilde{x}}\dint{\tilde{x}}\approx -2\mhexp{+\la_\omega}
\int\limits_{+\xi_\omega}^{+3\xi_\omega}\at{\tilde{x}-2\xi_\omega}\tilde{T}_\omega^{\prime\prime}\at{\tilde{x}}\dint{\tilde{x}}
\\&\approx +2\mhexp{+\la_\omega}\Bat{ \tilde{T}_\omega^\flat{\at{+\xi_\omega}} -2\xi_\omega \tilde{T}_\omega^\prime\at{\xi_\omega}}
\end{align*}
as well as
\begin{align*}
\tilde{T}_\omega^\flat{\at{\pm\xi_\omega}} \approx \tilde{T}_\omega\at{0}\bar{T}_\even^\flat\at{+\infty}+\tilde{T}_\omega^\prime\at{0}\bar{T}_\odd^\flat\at{+\infty}\,,
\end{align*}
and obtain after rearranging terms the second matching condition
\begin{align}
\label{Eqn:LinHeuM2}
\tilde{T}_\omega^\prime\at{0}\bat{1-\mhexp{+\la_\omega}}\bar{T}_\odd^\flat\at{+\infty}
\approx \tilde{T}_\omega\at{0}\at{\mhexp{+\la_\omega}\bar{T}_\even^\flat\at{+\infty}
-
2\xi_\omega \bar{T}_\even^\prime\at{+\infty}}
\end{align}
for the continuity of both $\tilde{T}_\omega^\flat$ and  $\tilde{T}_\omega$.
\par
{\bf Discussion}. Equations \eqref{Eqn:LinHeuM1} and \eqref{Eqn:LinHeuM2} can be viewed as approximate constraints that couple the eigenvalue $\la_\omega$ via the linearized shape ODE and the required right-sided tail behavior to the initial data $\tilde{T}_\omega\at{0}$ and $\tilde{T}_\omega^\prime\at{0}$. The key observation is that these conditions are very restrictive
for large $\omega$. More specifically, recalling \eqref{Eqn:DefDelta} and \eqref{Eqn:DefXi} we easily check the asymptotic validity of the second matching condition in six typical cases and arrive at the following list, which indicates that the proper kernel functions from \eqref{Eqn:JordanFcts} are in fact the only eigenfunction of $\calL_\omega$:
\begin{center}
\begin{tabular}{c|c|c|c}
&even solution with & odd solution with &mixed solution with \\
&$\tilde{T}_\omega\at{0}\sim1$, $\tilde{T}_\omega^\prime\at{0}=0$&
$\tilde{T}_\omega\at{0}=0$, $\tilde{T}_\omega^\prime\at{0}\sim1$&
$\tilde{T}_\omega\at{0}\sim1$, $\tilde{T}_\omega^\prime\at{0}\sim1$\\
\hline
$\la_\omega=0 $&false & ok &false\\
$\la_\omega\sim1  $&false & false &false\\
\end{tabular}
\end{center}
Of course, this list is not exhaustive and covers neither almost odd eigenfunctions with small even part nor the limits $\la_\omega\to0$ and $\la_\omega\to\infty$. The asymptotic behavior in any of these cases depends crucially on the smallness of the error terms and cannot be characterized informally. Another challenge for the rigorous analysis is to replace the linear growth assumption concerning $\tilde{T}_\omega$ by weaker but more reliable tail estimates.
\par
{\bf Overview}. The remaining part of this chapter is organized as follows. In \S\ref{sect:StabilityEstimates} we study the nonlocal equation for $\tilde{G}_\omega$ in the framework of functional analysis and establish in Lemma~\ref{Lem:Compactness}  a priori bounds for the eigenfunctions. In \S\ref{sect:StabilityODEs} we return to the ODE point of view and show in Lemma~\ref{Lem:Convergence} that eigenvalues with $\mu_\omega>-a$ and $\abs{\nu_\omega}\leq\pi$ must in fact be small and that the corresponding eigenfunctions exhibit a certain asymptotic behavior which is basically independent of $\omega$ and $\la_\omega$. The proof exploits several approximation arguments but the key ingredient, see formulas \eqref{Lem:Convergence.Jump1}--\eqref{Lem:Convergence.Jump4},  is a rigorous analogue to the heuristic matching conditions from above. Our linear stability proof will be concluded in \S\ref{sect:PointSpectrum}. In particular, the proof of Theorem~\ref{Thm:NoOtherEigenvalues} guarantees that the symplectic product between any eigenfunction $\pair{S_\omega}{W_\omega}$ and the cyclic kernel function from \eqref{Eqn:JordanFcts} does not vanish, and hence that the proper kernel function is the only non-stable eigenfunction of $\calL_\omega$.

%
%
\subsection{A priori estimates for rescaled eigenfunctions}
\label{sect:StabilityEstimates} %
%
%

%
From now on we rely on the following standing assumption.
\begin{assumption}[family of eigenfunctions]
\label{Ass:Eigensystem} %
We consider families $\triple{\la_\omega}{S_\omega}{W_\omega}_{\omega}$ of solutions to the eigenvalue problem \eqref{Eqn:LinEVP}
with
\begin{align*}
S_\omega,\,W_\omega\in\fspaceL^2_a\,,\qquad\quad
\liminf_{\omega\to\infty}\mu_\omega>-a\,,\qquad\quad \nu_\omega\in\cointerval{-\pi}{+\pi}\,,
\end{align*}
where $\mu_\omega$ and $\nu_\omega$ abbreviate the real and imaginary part of $\la_\omega$ as in \eqref{Eqn:EVSplit}.
We also impose the normalization condition
\begin{align}
\label{Eqn:Normalization}
\bnorm{\tilde{\chi}_\omega\tilde{P}_\omega\tilde{G}_\omega}_1=1
\end{align}
which involves the functions   $\tilde{\chi}_\omega\in\fspaceL^2$,  $\tilde{P}_\omega\in\fspaceL^\infty$, and $\tilde{G}_\omega\in\fspaceL^2$ from \eqref{Eqn:DefChiAndI},
\eqref{Eqn:DefTildeP}, and \eqref{Eqn:DefTildeG}.
\end{assumption}
The first step of our rigorous analysis is to convert \eqref{Eqn:FPG.ODE} into a fixed point equation for $\tilde{G}_\omega$ by means of the following auxiliary result, which relies on Fourier techniques and is hence independent of initial conditions.

\begin{figure}[ht!] %
\centering{ %
\includegraphics[width=0.7\textwidth]{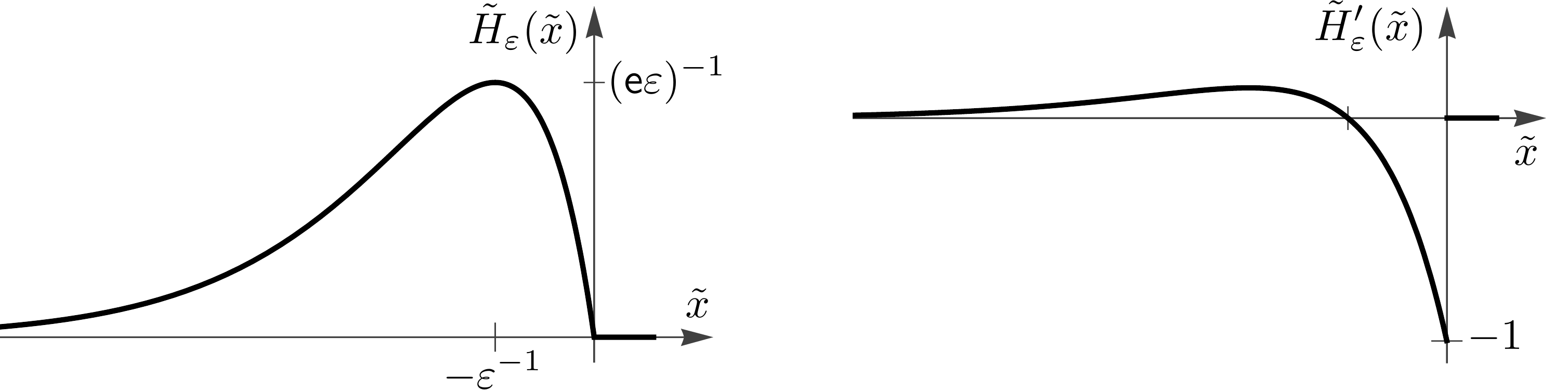} %
} %
\caption{%
The fundamental solution $\tilde{H}_\eps$ from \eqref{Lem:EV.FundSol.Eqn2} to the auxiliary problem \eqref{Lem:EV.FundSol.Eqn1}, which converges for $\eps\to0$ locally uniformly to the piecewise linear function $\tilde{H}_0$ with $\tilde{H}_0\at{\tilde{x}}=\max\{-\tilde{x},0\}$.
} %
\label{Fig:FundSol} %
\end{figure} %
\begin{lemma}[a fundamental solution]
\label{Lem:EV.FundSol}%
For any $\tilde{F}\in\fspaceL^2$ and all $\eps>0$ there exists a unique solution $\tilde{U}\in\fspaceL^2$ to
\begin{align}
\label{Lem:EV.FundSol.Eqn1}
\bat{\partial_{\tilde{x}}-\eps}^2\tilde{U}=\tilde{F}\,,
\end{align}
which can be written as
\begin{align}
\label{Lem:EV.FundSol.Eqn2}
\tilde{U}= \tilde{H}_{\eps}\ast \tilde{F}\qquad\quad \text{with}\qquad\quad \tilde{H}_{\eps}\at{\tilde{x}}:=\left\{\begin{array}{ccl}-\tilde{x}\exp\at{\eps\tilde{x}}&\text{for }&x\leq 0\\
0&\text{for}& x>0\end{array}\right.\,,
\end{align}
see Figure~\ref{Fig:FundSol}. Moreover, we have
\begin{align}
\label{Lem:EV.FundSol.Eqn3}
\bnorm{\tilde{H}_{\eps}}_p\leq C\eps^{-1-1/p}\,,\qquad
\bnorm{\tilde{H}_{\eps}^\prime}_p\leq C\eps^{-1/p}
\end{align}
for all $p\in\ccinterval{1}{\infty}$ (where we set $1/\infty =0$) and a universal constant $C$.
\end{lemma}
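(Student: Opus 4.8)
The plan is to prove Lemma~\ref{Lem:EV.FundSol} by explicitly constructing the fundamental solution and verifying its properties by direct computation. The operator $\bat{\partial_{\tilde{x}}-\eps}^2$ factors as two copies of the first-order operator $\partial_{\tilde{x}}-\eps$, whose fundamental solution decaying at $+\infty$ is $-\mhexp{\eps\tilde{x}}\,\mathbf{1}_{\{\tilde{x}\leq 0\}}$ (indeed, away from the origin $\mhexp{\eps\tilde{x}}$ solves the homogeneous equation, and the jump of size $-1$ at $\tilde{x}=0$ produces the Dirac mass with the correct sign). Convolving this kernel with itself yields $\tilde{H}_\eps$ as in \eqref{Lem:EV.FundSol.Eqn2}; alternatively one checks directly that the stated $\tilde{H}_\eps$ satisfies $\bat{\partial_{\tilde{x}}-\eps}^2\tilde{H}_\eps = \delta_0$ in the distributional sense. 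The cleanest route is the latter: away from $\tilde{x}=0$ one verifies $\tilde{H}_\eps^{\prime\prime}-2\eps\tilde{H}_\eps^\prime+\eps^2\tilde{H}_\eps=0$ by plugging in $\tilde{H}_\eps\at{\tilde{x}}=-\tilde{x}\mhexp{\eps\tilde{x}}$, then computes the jumps of $\tilde{H}_\eps$ and $\tilde{H}_\eps^\prime$ at the origin ($\tilde{H}_\eps$ is continuous with value $0$, while $\tilde{H}_\eps^\prime$ jumps by $-1$) to see that the distributional second derivative contributes exactly $\delta_0$.

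\textbf{Existence and uniqueness in $\fspaceL^2$.} For existence, given $\tilde{F}\in\fspaceL^2$, the candidate $\tilde{U}=\tilde{H}_\eps\ast\tilde{F}$ is well-defined since $\tilde{H}_\eps\in\fspaceL^1$ (from the $p=1$ case of \eqref{Lem:EV.FundSol.Eqn3}, which follows from $\int_{-\infty}^0\abs{\tilde{x}}\mhexp{\eps\tilde{x}}\dint{\tilde{x}}=\eps^{-2}$), so Young's inequality gives $\tilde{U}\in\fspaceL^2$ with $\bnorm{\tilde{U}}_2\leq\bnorm{\tilde{H}_\eps}_1\bnorm{\tilde{F}}_2$, and differentiating the convolution shows $\bat{\partial_{\tilde{x}}-\eps}^2\tilde{U}=\bat{\bat{\partial_{\tilde{x}}-\eps}^2\tilde{H}_\eps}\ast\tilde{F}=\delta_0\ast\tilde{F}=\tilde{F}$. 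For uniqueness, one passes to Fourier space: the symbol of $\bat{\partial_{\tilde{x}}-\eps}^2$ is $\at{\iu\kappa-\eps}^2$, which is bounded away from zero uniformly in $\kappa\in\Rset$ since $\abs{\iu\kappa-\eps}^2=\kappa^2+\eps^2\geq\eps^2>0$; hence the operator is injective on $\fspaceL^2$ (equivalently, the homogeneous equation has no nontrivial $\fspaceL^2$-solution because $\mhexp{\eps\tilde{x}}$ and $\tilde{x}\mhexp{\eps\tilde{x}}$ are not square-integrable).

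\textbf{The $\fspaceL^p$-bounds.} The estimates \eqref{Lem:EV.FundSol.Eqn3} reduce to the elementary one-dimensional integrals $\int_{-\infty}^0\abs{\tilde{x}}^p\mhexp{p\eps\tilde{x}}\dint{\tilde{x}}=\Gamma(p+1)\,(p\eps)^{-p-1}$ for the $\fspaceL^p$-norm of $\tilde{H}_\eps$, giving $\bnorm{\tilde{H}_\eps}_p\leq C_p\,\eps^{-1-1/p}$; and similarly, since $\tilde{H}_\eps^\prime\at{\tilde{x}}=-(1+\eps\tilde{x})\mhexp{\eps\tilde{x}}$ for $\tilde{x}<0$, the substitution $s=-\eps\tilde{x}$ shows $\bnorm{\tilde{H}_\eps^\prime}_p^p=\eps^{-1}\int_0^\infty\abs{1-s}^p\mhexp{-ps}\dint{s}$, whence $\bnorm{\tilde{H}_\eps^\prime}_p\leq C_p\,\eps^{-1/p}$. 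To get a \emph{universal} constant $C$ independent of $p$ as claimed, one notes that $\abs{1-s}^p\mhexp{-ps}\leq\mhexp{-ps}(1+s)^p\leq$ (a $p$-independent bound after optimizing, e.g.\ using $(1+s)\mhexp{-s}\leq 1$) so the $p$-dependence of the prefactors stays bounded; the endpoint $p=\infty$ with $1/\infty=0$ is handled separately by $\bnorm{\tilde{H}_\eps}_\infty=\sup_{\tilde{x}\leq 0}\abs{\tilde{x}}\mhexp{\eps\tilde{x}}=(\eps\mathrm{e})^{-1}\leq C\eps^{-1}$ and $\bnorm{\tilde{H}_\eps^\prime}_\infty\leq C$.

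\textbf{Main obstacle.} There is no serious difficulty here — the lemma is a routine explicit computation. The only point requiring a little care is extracting a genuinely \emph{universal} constant in \eqref{Lem:EV.FundSol.Eqn3} (uniform in both $\eps$ and $p$), as opposed to a $p$-dependent one; this is achieved by the elementary bound $(1+s)\mhexp{-s}\leq 1$ together with $\babs{1-s}\leq 1+s$, which dominates the integrands by $p$-independent quantities before taking $p$-th roots. The convergence $\tilde{H}_\eps\to\tilde{H}_0$ as $\eps\to 0$ mentioned in the figure caption is immediate from dominated convergence on compact sets and is not part of the formal statement.
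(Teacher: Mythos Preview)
Your proof is correct and follows essentially the same approach as the paper: Fourier-space symbol for uniqueness in $\fspaceL^2$, direct distributional verification that $\bat{\partial_{\tilde{x}}-\eps}^2\tilde{H}_\eps=\mathrm{Dirac}$ for the representation formula, and explicit one-dimensional integrals for the $\fspaceL^p$-bounds. The paper is terser and does not spell out the universality of the constant in $p$; your additional remark that $(1+s)\mhexp{-s}\leq 1$ controls the $p$-dependence is a nice touch not present in the original.
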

\begin{proof}
The operator $\bat{\partial_{\tilde{x}}-\eps}^2$ diagonalizes in Fourier space and its symbol function
\begin{align*}
\tilde{h}_{\eps}\nat{\tilde{k}}=\bat{\iu\tilde{k}+\eps}^2
\end{align*}
has no zeros on the real axis. In particular, we have
\begin{align*}
\bnorm{1/\tilde{h}_{\eps}}_\infty = \sup\limits_{\tilde{k}\in\Rset}{\babs{\iu\tilde{k}+\eps}^{-2}}=\eps^{-2}\,.
\end{align*}
and the existence and uniqueness of $\tilde{U}$ as in \eqref{Lem:EV.FundSol.Eqn1} follows immediately from Parseval's Theorem. Moreover,  by direct computations we verify $\tilde{H}_{\eps} \in\fspaceL^p$ and $\tilde{H}_{\eps}^\prime \in\fspaceL^p$ with \eqref{Lem:EV.FundSol.Eqn3}, as well as
\begin{align*}
\bat{\partial_{\tilde{x}}-\eps}^2\tilde{H}_{\eps}=\mathrm{Dirac}\,,
\end{align*}
and this implies the representation formula \eqref{Lem:EV.FundSol.Eqn2}.
\end{proof}

Thanks to Lemma~\ref{Lem:EV.FundSol}, the linear advance-delay-differential equation  \eqref{Eqn:FPG.ODE} is equivalent to the nonlocal integral equation
\begin{align}
\label{Eqn:FPG.Int}
 \tilde{G}_\omega = \tilde{\Delta}_{-a+\iu\nu_\omega}\bat{\tilde{H}_{\eps_\omega}\ast\nat{\tilde{P}_\omega\tilde{G}_\omega}}\,,
\end{align}
which gives rise to the  following estimates.
\begin{lemma}[a priori estimates for $\tilde{G}_\omega$ and smallness of $\eps_\omega$]
\label{Lem:Compactness}
For all sufficiently  large $\omega$, the function $\tilde{G}_\omega$ belongs to $\fspaceL^p\at\Rset$ for any $p\in\ccinterval{1}{\infty}$ and satisfies
\begin{align}
\notag
\bnorm{\tilde{G}_\omega}_p\leq C\eps_\omega ^{-1-1/p}
\end{align}
as well as
\begin{align}
\label{Lem:Compactness.Eqn2}
\babs{\tilde{G}_\omega\at{0}}+\bnorm{\tilde{G}_\omega^\prime}_\infty\leq C
\end{align}
for some constant $C$ independent of $\omega$. Moreover, we have  $\eps_\omega\to0$  as $\omega\to\infty$.
\end{lemma}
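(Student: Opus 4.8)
The plan is to work throughout with the equivalent nonlocal identity \eqref{Eqn:FPG.Int}, abbreviated $\tilde{G}_\omega=\tilde{\Delta}_{-a+\iu\nu_\omega}\bat{\tilde{H}_{\eps_\omega}\ast\tilde{J}_\omega}$ with $\tilde{J}_\omega:=\tilde{P}_\omega\tilde{G}_\omega$. I split $\tilde{J}_\omega=\tilde{\chi}_\omega\tilde{P}_\omega\tilde{G}_\omega+\nat{1-\tilde{\chi}_\omega}\tilde{P}_\omega\tilde{G}_\omega$: the first summand has unit $\fspaceL^1$-norm by the normalization \eqref{Eqn:Normalization}, while $\bnorm{\nat{1-\tilde{\chi}_\omega}\tilde{P}_\omega\tilde{G}_\omega}_1\leq\bnorm{\nat{1-\tilde{\chi}_\omega}\tilde{P}_\omega}_\infty\bnorm{\tilde{G}_\omega}_1\leq C\delta_\omega^{m+2}\bnorm{\tilde{G}_\omega}_1$ by Theorem~\ref{Thm:ExistenceNonlWaves}. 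Since $\tilde{\Delta}_{-a+\iu\nu_\omega}$ acts boundedly on every $\fspaceL^p$ with operator norm $\leq\mhexp{a}+\mhexp{-a}+2$, Young's inequality together with the kernel bounds $\bnorm{\tilde{H}_{\eps_\omega}}_p\leq C\eps_\omega^{-1-1/p}$ from Lemma~\ref{Lem:EV.FundSol} gives $\bnorm{\tilde{G}_\omega}_p\leq C\eps_\omega^{-1-1/p}\bat{1+C\delta_\omega^{m+2}\bnorm{\tilde{G}_\omega}_1}$. Taking $p=1$ and using $\eps_\omega=\delta_\omega\beta_\omega\at{a+\mu_\omega}\geq c\,\delta_\omega$ for large $\omega$ (because $\liminf_{\omega\to\infty}\mu_\omega>-a$ and $\beta_\omega$ is bounded away from $0$), the prefactor $C\delta_\omega^{m+2}\eps_\omega^{-2}\leq C\delta_\omega^{m}$ tends to $0$ (here $m>1$ is needed), so the self-referential term is absorbed and $\bnorm{\tilde{G}_\omega}_1\leq C\eps_\omega^{-2}$; consequently $\bnorm{\tilde{J}_\omega}_1\leq 2$ for large $\omega$, and plugging this back yields $\bnorm{\tilde{G}_\omega}_p\leq C\eps_\omega^{-1-1/p}$ for every $p\in\ccinterval{1}{\infty}$. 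Differentiating \eqref{Eqn:FPG.Int} and using the $\eps$-uniform bound $\bnorm{\tilde{H}_{\eps_\omega}^\prime}_\infty\leq C$ from Lemma~\ref{Lem:EV.FundSol} gives $\bnorm{\tilde{G}_\omega^\prime}_\infty\leq C\bnorm{\tilde{J}_\omega}_1\leq C$.

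For the smallness of $\eps_\omega$ I argue by contradiction. If $\eps_\omega\geq\eps_0>0$ along a subsequence, then $\bnorm{\tilde{G}_\omega}_\infty\leq C\eps_0^{-1}$, $\bnorm{\tilde{G}_\omega^\prime}_\infty\leq C$ and $\bnorm{\tilde{G}_\omega}_1\leq C\eps_0^{-2}$ are uniformly bounded, so after passing to a further subsequence $\tilde{G}_\omega\to\tilde{G}_\infty$ locally uniformly. In \eqref{Eqn:FPG.ODE} the advance and delay terms sample $\tilde{P}_\omega$ at points $\tilde{x}\pm2\xi_\omega\notin I_\omega$, where it is of size $\DO{\delta_\omega^{m+2}}$, so they drop out in the limit; since $\xi_\omega\to\infty$ and $\tilde{P}_\omega\to\bar{P}$ locally uniformly, the limit $\tilde{T}_\infty:=\mhexp{-\eps_\infty\tilde{x}}\tilde{G}_\infty$ solves the linearized shape ODE \eqref{Lem:LinODE.Props.Eqn1} on all of $\Rset$. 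By Lemma~\ref{Lem:LinODE.Props} it is a linear combination of $\bar{T}_\even$ (asymptotically affine) and $\bar{T}_\odd$ (asymptotically constant), hence $\tilde{G}_\infty=\mhexp{\eps_\infty\tilde{x}}\tilde{T}_\infty$ grows exponentially as $\tilde{x}\to+\infty$ unless $\tilde{T}_\infty\equiv0$; since $\tilde{G}_\infty\in\fspaceL^1$ by Fatou, we get $\tilde{T}_\infty\equiv0$ and so $\tilde{G}_\infty\equiv0$. On the other hand the pointwise bound $\babs{\tilde{\chi}_\omega\tilde{P}_\omega}\leq C\bar{P}\leq C\nat{1+\abs{\tilde{x}}}^{-m-2}$ on $I_\omega$ — which follows from the pointwise estimates in the proof of Theorem~\ref{Thm:ExistenceNonlWaves} and the linear growth of $\bar{Y}$ in Lemma~\ref{Lem:AsympODE.Props} — together with $\bnorm{\tilde{G}_\omega}_\infty\leq C$ shows that the portion of the unit $\fspaceL^1$-mass of $\tilde{\chi}_\omega\tilde{P}_\omega\tilde{G}_\omega$ lying outside $\ccinterval{-R}{+R}$ is $\leq C_0 R^{-m-1}$, uniformly in $\omega$; so for $R$ fixed and large the mass in $\ccinterval{-R}{+R}$ stays bounded below, contradicting $\tilde{G}_\omega\to0$ uniformly there. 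Hence $\eps_\omega\to0$.

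It remains to bound $\babs{\tilde{G}_\omega\at{0}}$. Here I evaluate the convolution representation at $\tilde{x}=0$, writing $\tilde{G}_\omega\at{0}=\mhexp{-a+\iu\nu_\omega}\tilde{W}_\omega\at{2\xi_\omega}+\mhexp{a-\iu\nu_\omega}\tilde{W}_\omega\at{-2\xi_\omega}-2\tilde{W}_\omega\at{0}$ with $\tilde{W}_\omega:=\tilde{H}_{\eps_\omega}\ast\tilde{J}_\omega$. The term $\tilde{W}_\omega\at{2\xi_\omega}$ only sees $\tilde{J}_\omega$ on $\cointerval{2\xi_\omega}{\infty}$, where $\tilde{P}_\omega=\DO{\delta_\omega^{m+2}}$, and is therefore $\DO{\delta_\omega^{m-1}}$; for $\tilde{W}_\omega\at{0}$ and $\tilde{W}_\omega\at{-2\xi_\omega}$ I use the relation $\tilde{P}_\omega\tilde{G}_\omega=-\tfrac12\at{\partial_{\tilde{x}}-\eps_\omega}^2\tilde{G}_\omega+\DO{\delta_\omega^{m+2}}$ on $I_\omega$ (the advance/delay terms being again negligible) to integrate by parts, and the pointwise decay $\babs{\tilde{P}_\omega}\leq C\bar{P}$ on $I_\omega$ — which makes $\bar{P}\at{1+\abs{\tilde{x}}}$ integrable precisely because $m>1$ — to control the resulting boundary and moment terms in terms of $\tilde{G}_\omega\at{0}$, $\tilde{G}_\omega^\prime\at{0}$ and the (already controlled) asymptotic data of the shape ODE. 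Depending on whether $\eps_\omega\xi_\omega=\tfrac12\at{a+\mu_\omega}$ stays bounded or tends to $\infty$, the factor $\mhexp{-2\eps_\omega\xi_\omega}$ either suppresses the $-2\xi_\omega$ contribution or attaches a large coefficient to $\tilde{G}_\omega\at{0}$ in the scalar identity, and in both regimes the identity forces $\babs{\tilde{G}_\omega\at{0}}\leq C\bat{1+\babs{\tilde{G}_\omega^\prime\at{0}}}\leq C$. I expect this last step — keeping the error bookkeeping uniform across the two regimes of $\eps_\omega\xi_\omega$ and matching the local ODE solution across the edges of $I_\omega$ — to be the main obstacle; by contrast the $\fspaceL^p$-estimates and the derivative bound are comparatively routine consequences of Lemma~\ref{Lem:EV.FundSol} and the normalization.
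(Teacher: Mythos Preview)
Your $\fspaceL^p$-estimates, the derivative bound $\bnorm{\tilde{G}_\omega^\prime}_\infty\leq C$, and the contradiction argument for $\eps_\omega\to0$ are all essentially the paper's argument (the paper phrases the absorption step as a Neumann series for the operator $\tilde{\calR}_\omega$, and it establishes nontriviality of the limit via the affine bound $\babs{\tilde{G}_\omega}\leq C\bar{Y}$ and dominated convergence rather than your $\fspaceL^1$-tightness plus Fatou, but these are equivalent in spirit).

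The one genuine gap is your treatment of $\babs{\tilde{G}_\omega\at{0}}$. You propose evaluating the convolution at three points, integrating by parts inside $I_\omega$, and splitting into two regimes of $\eps_\omega\xi_\omega=\tfrac12\at{a+\mu_\omega}$; you yourself flag this as the main obstacle and leave the error bookkeeping unfinished. The paper's argument here is a one-liner and uses only what you have already proved: since $\bnorm{\tilde{G}_\omega^\prime}_\infty\leq C$, on the fixed interval $\ccinterval{-1}{+1}$ one has $\babs{\tilde{G}_\omega\at{\tilde{x}}}\geq\babs{\tilde{G}_\omega\at{0}}-C$, and since $\tilde{P}_\omega\to\bar{P}$ locally with $\bar{P}\at{0}=1$, the integral $\int_{-1}^{+1}\babs{\tilde{P}_\omega}$ is bounded below by a positive constant independent of $\omega$. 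Hence
\begin{align*}
1=\bnorm{\tilde{\chi}_\omega\tilde{P}_\omega\tilde{G}_\omega}_1\geq\int_{-1}^{+1}\babs{\tilde{P}_\omega\tilde{G}_\omega}\dint{\tilde{x}}\geq C^{-1}\babs{\tilde{G}_\omega\at{0}}-C\,,
\end{align*}
which forces $\babs{\tilde{G}_\omega\at{0}}\leq C$. No case distinction on $\mu_\omega$, no matching across $\partial I_\omega$, no integration by parts is needed; the normalization condition \eqref{Eqn:Normalization} does all the work once the derivative bound is in hand. Note also that the paper derives $\babs{\tilde{G}_\omega\at{0}}\leq C$ \emph{before} the $\eps_\omega\to0$ step, because its version of the nontriviality argument uses the resulting affine bound; with your Fatou/tightness variant the order is immaterial.
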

\begin{proof}
\ul{$L^p$-estimates}:
We define an approximation $\bar{G}_\omega$ of $\tilde{G}_\omega$ by
\begin{align}
\label{Lem:Compactness.PEqn2}
\bar{G}_\omega := \tilde{\Delta}_{-a+\iu\nu_\omega}\tilde{H}_{\eps_\omega}\ast\bat{\tilde{\chi}_\omega\tilde{P}_\omega\tilde{G}_\omega}
\end{align}
and deduce from Young's inequality and the uniform $\fspaceL^p$-continuity of $\tilde{\Delta}_{-a+\iu\nu_\omega}$ the estimates
\begin{align*}
\bnorm{\bar{G}_\omega}_1\leq C\bnorm{\tilde{H}_{\eps_\omega}}_1\bnorm{\tilde{\chi}_\omega\tilde{P}_\omega\tilde{G}_\omega}_1\leq C\eps_\omega^{-2}\,,\qquad
\bnorm{\bar{G}_\omega}_\infty\leq C\bnorm{\tilde{H}_{\eps_\omega}}_\infty\bnorm{\tilde{\chi}_\omega\tilde{P}_\omega\tilde{G}_\omega}_1\leq C\eps_\omega^{-1}\,,
\end{align*}
  see also \eqref{Eqn:Normalization} and \eqref{Lem:EV.FundSol.Eqn3}.   This implies $\bar{G}_\omega\in\fspaceL^p\at\Rset$ with
\begin{align}
\label{Lem:Compactness.PEqn5}
\bnorm{\bar{G}_\omega}_p\leq C \eps_\omega^{-1-1/p}\leq C \delta_\omega^{-1-1/p}
\end{align}
by interpolation and due to \eqref{Eqn:DefEps}. In view of \eqref{Eqn:FPG.Int} and \eqref{Lem:Compactness.PEqn2} we further write
\begin{align*}
\tilde{G}_\omega = \bar{G}_\omega +\tilde{\calR}_\omega\bato{ \tilde{G}_\omega}\,,
\end{align*}
where the linear operator $\tilde{\calR}_\omega$ is defined by
\begin{align*}
\tilde{\calR}_\omega  \bato{\tilde{U}} :=\tilde{\Delta}_{-a+\iu\nu_\omega}\tilde{H}_\omega\ast \bat{\nat{1-\tilde{\chi}_\omega}\tilde{P}_\omega \tilde{U}}
\end{align*}
and satisfies
\begin{align}
\label{Lem:Compactness.PEqn4}
\bnorm{\tilde{\calR}_\omega\nato{\tilde{U}}}_p\leq   C  \bnorm{\tilde{H}_{\eps_\omega}}_1\bnorm{\nat{1-\tilde{\chi}_\omega}\tilde{P}_\omega}_\infty\bnorm{\tilde{U}}_p\leq C\eps_\omega^{-2}\delta_\omega^{m+2}\bnorm{\tilde{U}}_p \leq C\delta_\omega^m \bnorm{\tilde{U}}_p
\end{align}
by Young's and H\"older's inequalities, where we also used Theorem~\ref{Thm:ExistenceNonlWaves} as well as \eqref{Eqn:DefEps}.
In particular, $\tilde{\calR}_\omega$ maps  the Hilbert space $\fspaceL^2$ contractively into itself for large $\omega$. The Neumann formula hence ensures
\begin{align}
\label{Lem:Compactness.PEqn1}
\tilde{G}_\omega =   \Bat{\mathrm{Id} -\tilde{\calR}_\omega}^{-1}\ato{\bar{G}_\omega} =\Bat{\mathrm{Id} +\tilde{\calR}_\omega+\tilde{\calR}_\omega^2+\tdots}\ato{\bar{G}_\omega}\,,
\end{align}
and we obtain $\tilde{G}_\omega\in\fspaceL^p$ with
\begin{align*}
\bnorm{\tilde{G}_\omega}_p\leq \sum_{k=0}^\infty \at{C\delta_\omega^m}^k\bnorm{\bar{G}_\omega}_p\leq C \bnorm{\bar{G}_\omega}_p\leq C \eps_\omega^{-1-1/p}
\end{align*}
by employing \eqref{Lem:Compactness.PEqn5} and \eqref{Lem:Compactness.PEqn4} once again.
\par
\ul{\emph{Further estimates}}: Young's inequality further provides
\begin{align*}
\bnorm{\bar{G}_\omega^\prime}_\infty\leq C\bnorm{\tilde{H}_{\eps_\omega}^\prime}_\infty \bnorm{\tilde{\chi}_\omega\tilde{P}_\omega\tilde{G}_\omega}_1\leq C
\end{align*}
as well as
\begin{align*}
\bnorm{\nat{\tilde{\calR}_\omega \nato{\tilde{U}}}^\prime}_\infty\leq  C  \bnorm{\tilde{H}_{\eps_\omega}^\prime}_1\bnorm{\nat{1-\tilde{\chi}_\omega}\tilde{P}_\omega}_\infty\bnorm{\tilde{U}}_\infty\leq C\eps_\omega^{-1}\delta_\omega^{m+2}\bnorm{\tilde{U}}_\infty
\leq C \delta_\omega^{m+1}\bnorm{\tilde{U}}_\infty\,,
\end{align*}
and combining this with \eqref{Lem:Compactness.PEqn4} and \eqref{Lem:Compactness.PEqn1} we find via
\begin{align*}
\bnorm{\tilde{G}_\omega^\prime}_\infty\leq \bnorm{\bar{G}_\omega^\prime}_\infty + C\delta_\omega^{m+1}\sum_{k=1}^\infty \bnorm{\tilde{\calR}_\omega^{k-1} \nato{\bar{G}_\omega}}_\infty\leq C+C\delta_\omega^{m+1}\sum_{k=1}^\infty \at{C\delta_\omega^{m}}^{k-1}C\delta_\omega^{-1}  \leq C
\end{align*}
the desired uniform upper bound for $\nnorm{\tilde{G}_\omega^\prime}_\infty$.
From this we infer
\begin{align*}
\babs{\tilde{G}_\omega\at{0}}\leq C
\end{align*}
because otherwise we would obtain via Theorem~\ref{Thm:ExistenceNonlWaves} and the estimate
\begin{align*}
\int\limits_{-1}^{+1}\babs{\tilde{P}_\omega\at{\tilde{x}}\tilde{G}_\omega\at{\tilde{x}}}\dint{\tilde{x}}\geq
  \int\limits_{-1}^{+1}\babs{\tilde{P}_\omega\at{\tilde{x}}}\Bat{\babs{\tilde{G}_\omega\at{0}}-C} \dint{\tilde{x}}
\geq
C^{-1} \babs{\tilde{G}_\omega\at{0}}-C
\end{align*}
a contradiction to the normalization condition \eqref{Eqn:Normalization}. In particular, by \eqref{Lem:Compactness.Eqn2} we have
\begin{align}
\label{Lem:Compactness.PEqn3}
\babs{\tilde{G}_\omega\at{\tilde{x}}}\leq C \bat{1+\abs{\tilde{x}}} \leq C\bar Y\at{\tilde{x}}
\end{align}
for all $\tilde{x}\in\Rset$, and combining this with \eqref{Eqn:Normalization}, Lemma~\ref{Lem:AsympODE.Props}, Theorem~\ref{Thm:ExistenceNonlWaves}, and \eqref{Lem:Compactness.PEqn5} we further estimate
\begin{align*}
1\leq\bnorm{\tilde{\chi}_\omega \nat{\tilde{P}_\omega-\bar{P}}\tilde{G}_\omega}_1+
\bnorm{\tilde{\chi}_\omega\bar{P}\tilde{G}_\omega}_1\leq C\delta_\omega^{\min\{k,\,m\}}+C\bnorm{\tilde{G}_\omega}_\infty\leq  C\at{\delta_\omega^{\min\{k,\,m\}}+\eps_\omega^{-1}}\,,
\end{align*}
which implies $\eps_\omega\leq C$. Similarly,
\begin{align}
\label{Lem:Compactness.PEqn6}
\bnorm{\bar{P}\tilde{G}_\omega}_1\quad\xrightarrow{\;\;\omega\to\infty\;\;}\quad 1
\end{align}
holds by \eqref{Eqn:Normalization} and Theorem~\ref{Thm:ExistenceNonlWaves}.
\par
\ul{\emph{Convergence of $\eps_\omega$}}: %
Suppose for contradiction that $\liminf_{\omega\to\infty}\eps_\omega>0$. By \eqref{Lem:Compactness.Eqn2} and the Arzel\'a-Ascoli Theorem there exists a (not relabeled) subsequence for $\omega\to\infty$ such that
\begin{align*}
\eps_\omega\quad\xrightarrow{\;\;\omega\to\infty\;\;}\quad \eps_\infty >0\qquad\text{and}\qquad
\tilde{G}_\omega\xrightarrow{\;\;\omega\to\infty\;\;}\tilde{G}_\infty\quad\text{in}\quad \fspace{BC}_\loc
\end{align*}
for some limit $\tilde{G}_\infty$. Moreover, the pointwise affine bound \eqref{Lem:Compactness.PEqn3} ensures that the limit $\tilde{G}_\infty$ is nontrivial as it  satisfies $\nnorm{\bar{P}\tilde{G}_\infty}_1=1$ due to \eqref{Lem:Compactness.PEqn6} and the Dominated Convergence Theorem. Testing the differential fixed point equation  \eqref{Eqn:FPG.ODE} with a smooth and compactly supported $\phi$ gives
\begin{align*}
\skp{\tilde{G}_\omega}{\bat{\partial_{\tilde{x}}+\eps_\omega}^2\phi}=\skp{\tilde{P}_\omega\tilde{G}_\omega}{\tilde{\Delta}_{+a-\iu\nu_\omega}\phi}\,,
\end{align*}
where $\skp{\,}{}$ abbreviates the  $\fspaceL^2$-inner product and $\tilde{\Delta}_{+a-\iu\nu_\omega}$ is the dual operator to $\tilde{\Delta}_{-a+\iu\nu_\omega}$.
Splitting
\begin{align*}
\tilde{P}_\omega\tilde{G}_\omega=\tilde{\chi}_\omega\bar{P}\tilde{G}_\omega+
\tilde{\chi}_\omega \bat{\tilde{P}_\omega-\bar{P}}\tilde{G}_\omega+\nat{1-\tilde{\chi}}\tilde{P}_\omega\tilde{G}_\omega
\end{align*}
and using the support properties of $\tilde{\Delta}_{+a-\iu\nu_\omega}\phi$ as well as the approximation result from Theorem~\ref{Thm:ExistenceNonlWaves} we verify
\begin{align*}
\skp{\tilde{P}_\omega\tilde{G}_\omega}{\tilde{\Delta}_{+a-\iu\nu_\omega}\phi}\quad\xrightarrow{\;\;\omega\to\infty\;\;}\quad\skp{\bar{P}\tilde{G}_\infty}{-2\phi}\,,
\end{align*}
and conclude that the function $\tilde{T}_\infty$ with $\tilde{T}_\infty\at{\tilde{x}}:=\exp\at{-\eps_\infty\tilde{x}}\tilde{G}_\infty\at{\tilde{x}}$ satisfies the asymptotic ODE \eqref{Lem:LinODE.Props.Eqn1}. In other words, the nontrivial limit $\tilde{G}_\infty$ satisfies
\begin{align*}
\tilde{G}_\infty\at{\tilde{x}}=\exp\at{+\eps_\infty\tilde{x}}\Bat{
d_\even\bar{T}_\even\at{\tilde{x}}+d_\odd\bar{T}_\odd\at{\tilde{x}}}
\end{align*}
for some coefficients $\pair{d_\even}{d_\odd}\neq\pair{0}{0}$ and grows hence exponentially as $\tilde{x}\to+\infty$, see Lemma~\ref{Lem:LinODE.Props}. This, however, contradicts the affine bound \eqref{Lem:Compactness.PEqn3} for $\omega=\infty$.
\end{proof}
Lemma~\ref{Lem:Compactness} implies that the families $\nat{\tilde{G}_\omega}_\omega$ and $\nat{\tilde{T}_\omega}_\omega$ from \eqref{Eqn:DefTildeG} and \eqref{Eqn:DefTildeT} are compact with respect to local uniform convergence and have the same set of accumulation points. Moreover, the last step in the proof of Lemma~\ref{Lem:Compactness} reveals that any accumulation point solves the linearized  shape ODE \eqref{Lem:LinODE.Props.Eqn1}, but at this moment we do not yet know that the limit is uniquely determined by the normalization condition \eqref{Eqn:Normalization}. We thus refine our asymptotic analysis by means of ODE matching arguments as outlined in \S\ref{sect:LinODEs}.

%
%
%
\subsection{Asymptotic analysis of the eigenproblem}
\label{sect:StabilityODEs} %
%
%

A particular challenge is to exclude the case of large $\mu_\omega$, in which we would have $0<\delta_\omega\ll \eps_\omega\ll 1$. At the moment, however, we have to take into account a hypothetical scale separation between $\delta_\omega$ and $\eps_\omega$ and define a function $\breve{G}_\omega$ by
\begin{align}
\label{Eqn:DefBreveG}
\breve{G}_\omega:= \tilde{\Delta}_{-a+\iu\nu}\Bat{\tilde{H}_{\eps_\omega}\ast\bat{\breve{\chi}_\omega\tilde{P}_\omega\tilde{G}_\omega}}\,,
\end{align}
where the right hand side differs from   the   fixed point relation \eqref{Eqn:FPG.Int} by the cut-off function
\begin{align}
\label{Eqn:DefBreveQuant1}
\breve{\chi}_\omega:=\left\{\begin{array}{cl}1&\text{for $\tilde{x}\in J_\omega$,}\\
0&\text{else}
\end{array}
\right.
\end{align}
where
\begin{align}
\label{Eqn:DefBreveQuant2}
J_\omega:=\ccinterval{-\zeta_\omega}{\zeta_\omega}
\,,\qquad\quad
\zeta_\omega :=\xi_\omega \min\big\{1,\,\frac{a}{a+\mu_\om} \big\}=\min\big\{\xi_\omega,\,\frac{a}{2\eps_\omega}\big\}\,.
\end{align}
In consistency with \eqref{Eqn:DefTildeT} we also introduce
\begin{align}
\label{Eqn:DefBreveT}
\breve{T}_\omega\at{\tilde{x}}:=\exp\at{-\eps_\omega\tilde{x}}\breve{G}_\omega\at{\tilde{x}}\,,
\end{align}
which satisfies due \eqref{Eqn:DefBreveG} and Lemma~\ref{Lem:EV.FundSol} the differential relation
\begin{align}
\label{Eqn:LawBreveT}
\breve{T}_\omega^{\prime\prime}=\tilde{\Delta}_{\la_\omega}\bat{\breve{\chi}_\omega\tilde{P}_\omega\tilde{T}_\omega}
\end{align}
with the right hand side depending on $\tilde{T}_\om$. Notice that $0<\zeta_\omega\leq \xi_\omega$ and $J_\omega\subseteq I_\omega$ hold by construction and that $\nabs{\tilde{G}_\omega}$ bounds $\breve{T}_\omega$ on the interval $J_\omega$ but, at this moment, not necessarily on $I_\omega$.
\par
Our aim is to show that  $\breve{G}_\omega$ is a good approximation of $\tilde{G}_\omega$. This not only implies quite accurate approximation formulas for $\tilde{G}_\omega$ but finally enables us to prove that   $\mu_\delta$ and $\abs{\nu_\delta}$ are asymptotically very negative and small, respectively.
\begin{figure}[ht!] %
\centering{ %
\includegraphics[width=0.8\textwidth]{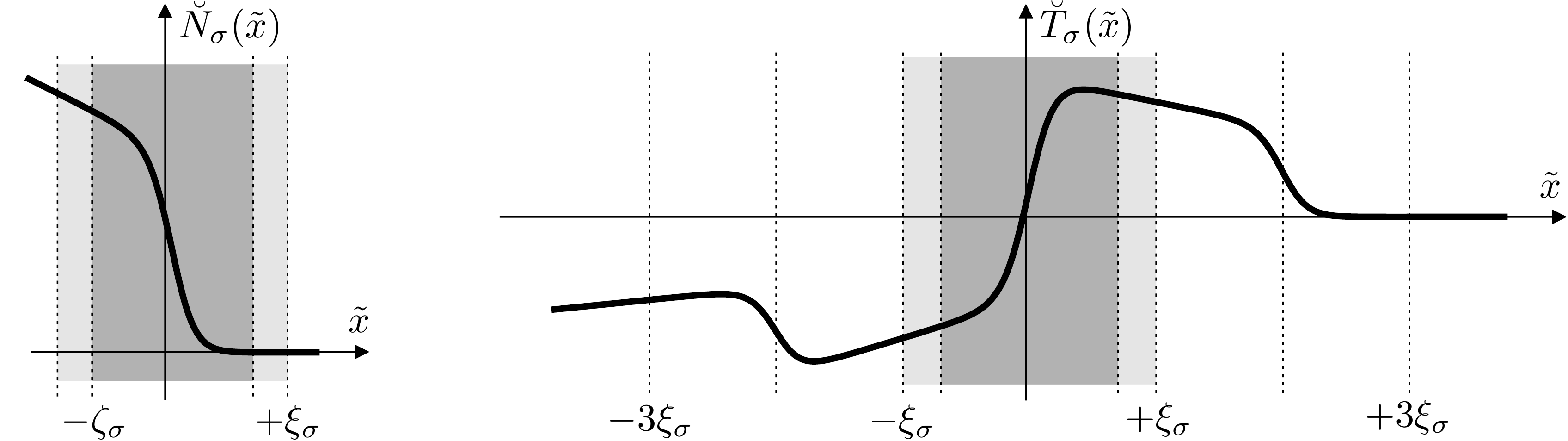} %
} %
\caption{ %
Schematic representation of the functions $\breve{N}_\omega$ and $\breve{T}_\omega$ as used in the proof of Lemma~\ref{Lem:Convergence}, see \eqref{Lem:Convergence.PEqn42} and \eqref{Lem:Convergence.FrmB}, with gray boxes indicating the intervals $J_\omega$ and $I_\omega$. By construction, $\breve{N}_\omega$ vanishes for $\tilde{x}>+\zeta_\omega$ and is affine for $\tilde{x}<-\zeta_\omega$ but our asymptotic analysis guarantees that it is basically constant as $\tilde{x}\to-\infty$ and hence that $\breve{T}_\omega$ equals $\tilde{S}_{*,\,\omega}$ up to small error terms.
} %
\label{Fig:ScaledEigenFcts} %
\end{figure} %
\begin{lemma}[asymptotic smallness of $\la_\omega$ and local asymptotics of $\Delta^{-1}S_\omega$]
\label{Lem:Convergence}
We have
\begin{align*}
\la_\omega\quad\xrightarrow{\quad\omega\to\infty\quad}\quad0
\end{align*}
and there exists a family of constants $\at{b_\omega}_\omega\subset\Rset$ such that $0<\lim_{\omega\to\infty}\abs{b_\omega}<\infty$  and
\begin{align}
\label{Lem:Convergence.Eqn1}
\delta_\omega \int\limits_{-3\xi_\omega}^{0}
\babs{\tilde{B}_\omega^\prime\at{\tilde{x}}}+
\babs{\tilde{B}_\omega\at{\tilde{x}}-b_\omega}\dint{\tilde{x}}+
\delta_\omega \int\limits_0^{+3\xi_\omega}
\babs{\tilde{B}_\omega^\prime\at{\tilde{x}}}+
\babs{\tilde{B}_\omega\at{\tilde{x}}}\dint{\tilde{x}}\quad\xrightarrow{\;\omega\to\infty\;}\quad 0\,,
\end{align}
where $\tilde{B}_\omega:=\tilde{\Delta}_{0}^{-1}\tilde{S}_\omega$ and
$\tilde{S}_\omega$ as in \eqref{Eqn:DefTildeS}.
\end{lemma}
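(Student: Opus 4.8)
\textbf{Proof plan for Lemma~\ref{Lem:Convergence}.}
The strategy is to promote the heuristic matching conditions from \S\ref{sect:LinODEs} into rigorous statements by comparing $\tilde{G}_\omega$ (resp.\ $\tilde{T}_\omega$) with the cut-off approximation $\breve{G}_\omega$ (resp.\ $\breve{T}_\omega$) from \eqref{Eqn:DefBreveG}+\eqref{Eqn:DefBreveT}. First I would show that $\breve{G}_\omega$ is a good $\fspaceL^p$-approximation of $\tilde{G}_\omega$: subtracting \eqref{Eqn:FPG.Int} from \eqref{Eqn:DefBreveG} and using the support properties of $\tilde{\chi}_\omega-\breve{\chi}_\omega$ on $I_\omega\setminus J_\omega$ together with the bound $\babs{\tilde{G}_\omega\at{\tilde{x}}}\leq C\bar{Y}\at{\tilde{x}}$ from \eqref{Lem:Compactness.PEqn3}, the pointwise size $\bar{P}\at{\tilde{x}}\sim\nabs{\tilde{x}}^{-m-2}$, and the Young estimates for $\tilde{H}_{\eps_\omega}$ from \eqref{Lem:EV.FundSol.Eqn3}, one bounds the contribution of the annulus $\nabs{\tilde{x}}\in\ccinterval{\zeta_\omega}{\xi_\omega}$. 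The point is that this region only matters when $\zeta_\omega<\xi_\omega$, i.e.\ when $\eps_\omega$ is not extremely small; the factor $\eps_\omega^{-2}$ from $\nnorm{\tilde{H}_{\eps_\omega}}_1$ is compensated because $\int_{\zeta_\omega}^{\infty}\bar{Y}\bar{P}\,d\tilde{x}\leq C\zeta_\omega^{-m}$ and $\zeta_\omega=a/(2\eps_\omega)$ there, yielding a net power $\eps_\omega^{m-2}$ or better, hence small for $m>1$ after re-summing the Neumann series as in the proof of Lemma~\ref{Lem:Compactness}.

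The core of the argument is the matching step. On $J_\omega$, the function $\breve{T}_\omega$ solves $\breve{T}_\omega''=-2\bar{P}\,\breve{T}_\omega$ up to the error $\tilde{\chi}_\omega\bat{\tilde{P}_\omega-\bar{P}}$ controlled by \eqref{Thm:ExistenceNonlWaves.Eqn1}, so by Lemma~\ref{Lem:LinODE.Props} and Duhamel it is, up to $\DO{\delta_\omega^{\min\{k,m\}}}$ errors, a combination $\breve{T}_\omega\at{0}\bar{T}_\even+\breve{T}_\omega'\at{0}\bar{T}_\odd$ on $J_\omega$. Outside $J_\omega$ the defining convolution \eqref{Eqn:LawBreveT} has right-hand side supported in $J_\omega$ shifted by $\pm 2\xi_\omega$, so $\breve{T}_\omega''$ vanishes on $\Rset\setminus(J_\omega\cup(J_\omega\pm2\xi_\omega))$ and equals $-\tfrac12 e^{\mp\la_\omega}\breve{T}_\omega''(\cdot\mp2\xi_\omega)$ on the two flanking intervals; combined with the right-sided decay $\breve{T}_\omega\at{3\xi_\omega}=\breve{T}_\omega'\at{3\xi_\omega}=0$ (inherited from $\tilde{H}_{\eps_\omega}$ being supported in $\tilde{x}\leq0$), integrating $\breve{T}_\omega''$ and $\tilde{x}\breve{T}_\omega''$ across the flank $\tilde{x}\in\oointerval{\xi_\omega}{3\xi_\omega}$ produces the two rigorous jump relations — the analogues of \eqref{Eqn:LinHeuM1} and \eqref{Eqn:LinHeuM2} — referenced as \eqref{Lem:Convergence.Jump1}--\eqref{Lem:Convergence.Jump4} in the overview. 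Feeding in the asymptotic constants $\bar{T}_\even'\at{\infty}$, $\bar{T}_\even^\flat\at{\infty}$, $\bar{T}_\odd^\flat\at{\infty}$ from Lemma~\ref{Lem:LinODE.Props} and the divergence $\xi_\omega\to\infty$, the first relation forces $\bat{1-e^{\la_\omega}}\breve{T}_\omega\at{0}\to0$ while the normalization \eqref{Eqn:Normalization} (which via \eqref{Lem:Compactness.PEqn6} prevents $\breve{T}_\omega\at{0}$ and $\breve{T}_\omega'\at{0}$ from both degenerating) forces $e^{\la_\omega}\to1$, hence $\la_\omega\to0$ since $\nabs{\mathrm{Im}\,\la_\omega}\leq\pi$; the second relation then pins down the ratio $\breve{T}_\omega'\at{0}/\breve{T}_\omega\at{0}$, and one reads off that $\breve{T}_\omega$ converges locally uniformly to a multiple of $\bar{T}_\even$ — equivalently $\breve{T}_\omega=\tilde{S}_{*,\omega}$ up to small errors, as indicated in Figure~\ref{Fig:ScaledEigenFcts}.

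Finally, to get \eqref{Lem:Convergence.Eqn1} I would translate the statement back to $\tilde{B}_\omega=\tilde{\Delta}_0^{-1}\tilde{S}_\omega$. From $\la_\omega\to0$ we have $\tilde{G}_\omega,\tilde{T}_\omega,\breve{T}_\omega$ all asymptotically equal, and $\breve{T}_\omega$ being a near-multiple of $\bar{T}_\even\propto \tilde{S}_{*,\omega}$ means $\tilde{S}_\omega$ is, up to errors small in the weighted $\fspaceL^1$-norm, proportional to $\partial_x R_\omega$ rescaled. Writing $b_\omega$ for the resulting proportionality constant (nonzero in the limit by normalization), the quantity $\tilde{\Delta}_0^{-1}\tilde{S}_\omega$ inherits from \eqref{Eqn:LawBreveT} the structure that it is affine for $\tilde{x}<-\zeta_\omega$, constant-plus-decaying as $\tilde{x}\to-\infty$ (the affine slope vanishes because the total mass $\int\tilde{S}_\omega$ matches across the flanks), and decaying to $0$ as $\tilde{x}\to+\infty$; the weighted $\fspaceL^1$-integrals with prefactor $\delta_\omega$ are then estimated exactly as the error terms in Lemma~\ref{Lem:DefBreveR} and Theorem~\ref{Thm:ExistenceNonlWaves}, using $\nnorm{\tilde{S}_\omega}_\infty\leq C$ and the rapid decay of $\tilde{P}_\omega$ outside $I_\omega$. \textbf{The main obstacle} I anticipate is handling the hypothetical scale separation $0<\delta_\omega\ll\eps_\omega\ll1$ cleanly — one must verify that all the approximation errors (in the $\breve{G}_\omega\approx\tilde{G}_\omega$ step and in the Duhamel representation on $J_\omega$ when $J_\omega\subsetneq I_\omega$) stay small \emph{uniformly} in this regime, so that the matching conditions close and retroactively rule out $\mu_\omega$ being large; this is precisely why $\zeta_\omega$ is defined with the $\min$ in \eqref{Eqn:DefBreveQuant2} and why the bookkeeping of powers of $\delta_\omega$ versus $\eps_\omega$ must be done carefully rather than routinely.
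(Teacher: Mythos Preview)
Your overall architecture is right --- compare $\tilde{G}_\omega$ with the cut-off approximation $\breve{G}_\omega$, derive matching relations for $\breve{T}_\omega$ at $\pm\zeta_\omega$, and close the argument --- but the matching step itself is mishandled and leads you to the wrong parity.

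You write that the first relation forces $\bat{1-\mhexp{\la_\omega}}\breve{T}_\omega\at{0}\to0$ and that normalization then forces $\mhexp{\la_\omega}\to1$. This does not follow: the normalization \eqref{Eqn:Normalization} only guarantees that $\bpair{\breve{T}_\omega\at{0}}{\breve{T}_\omega^\prime\at{0}}$ stays away from $\pair{0}{0}$, not that $\breve{T}_\omega\at{0}$ itself is bounded below. In fact the paper's proof shows that \emph{both} happen: $\breve{T}_\omega\at{0}\to0$ \emph{and} $\la_\omega\to0$. Consequently $\breve{T}_\omega$ converges to a multiple of $\bar{T}_\odd$, not $\bar{T}_\even$ --- which is also what your own next sentence asserts, since $\tilde{S}_{*,\,\omega}$ is the rescaling of the odd function $\partial_x R_\omega$ and matches $\bar{Y}^\prime\propto\bar{T}_\odd$ on $I_\omega$. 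So your paragraph is internally inconsistent, and the mismatch is not cosmetic: it means the mechanism by which $\la_\omega\to0$ is obtained has not been identified.

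The paper closes this gap by extracting \emph{four} algebraic relations \eqref{Lem:Convergence.Jump1}--\eqref{Lem:Convergence.Jump4} for the moments $m_{0,\,\omega}=\int_{J_\omega}\tilde{P}_\omega\tilde{T}_\omega$ and $m_{1,\,\omega}=\int_{J_\omega}\tilde{x}\,\tilde{P}_\omega\tilde{T}_\omega$, then running a two-stage bootstrap. First, combining \eqref{Lem:Convergence.Jump2} and $\delta_\omega\cdot$\eqref{Lem:Convergence.Jump4} kills the $\breve{T}_\omega^\prime\at{0}$-term and the dangerous factor $\xi_\omega$ simultaneously, yielding $\eps_\omega^{-1}\abs{m_{0,\,\omega}}=\DO{1}+\DO{\eps_\omega^{l-1}\nnorm{\breve{G}_\omega}_\infty}$; feeding this back into the $\fspaceL^\infty$-bound for $\breve{G}_\omega$ upgrades the a priori estimate $\nnorm{\breve{G}_\omega}_\infty\leq C\eps_\omega^{-1}$ to $\nnorm{\breve{G}_\omega}_\infty=\DO{1}$. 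Only after this refinement can one read off from \eqref{Lem:Convergence.Jump1} that $m_{0,\,\omega}$ and $\breve{T}_\omega\at{0}$ are $\DO{\eps_\omega}$, whence $m_{1,\,\omega}$ and $\breve{T}_\omega^\prime\at{0}$ are bounded away from zero by \eqref{Lem:Convergence.Jump3} and normalization. The smallness of $\la_\omega$ then comes from the remaining relation \eqref{Lem:Convergence.PEqn6}, argued by contradiction against the exponential growth of $\mhexp{\mu_\omega}/\mu_\omega^l$ if $\mu_\omega$ were unbounded. Your proposal skips the bootstrap entirely and tries to conclude $\la_\omega\to0$ from a single relation, which cannot work because the error terms in the Duhamel representation are of size $\eps_\omega^l\nnorm{\breve{G}_\omega}_\infty$ and are not yet known to be small at that stage --- this is exactly the ``hypothetical scale separation $\delta_\omega\ll\eps_\omega$'' obstacle you flag at the end, but you have not actually addressed it.
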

\begin{proof}
Within this proof we set $l:=\min\{k,\,m\}$ and always assume that $\omega$ is sufficiently large.
\par
\ul{\emph{Preliminary error bounds}}: Young's inequality gives
\begin{align}
\label{Lem:Convergence.PEqn22}
\bnorm{\breve{G}_\omega}_\infty\leq \bnorm{\tilde{H}_{\eps_\omega}}_\infty\bnorm{\breve{\chi}_\omega\tilde{P}_\omega\tilde{G}_\omega}_1\leq C\eps_\omega^{-1}
\end{align}
thanks to the $J_\omega\subseteq I_\omega$, Lemma~\ref{Lem:EV.FundSol}, and the normalization condition \eqref{Eqn:Normalization}. Moreover, we define
\begin{align}
\label{Lem:Convergence.PEqn48}
\breve{M}_\omega := \tilde{H}_{\eps_\omega}\ast\bat{\breve{\chi}_\omega\tilde{P}_\omega \tilde{G}_\omega}\,,\qquad
\tilde{M}_\omega := \tilde{H}_{\eps_\omega}\ast\bat{\tilde{P}_\omega \tilde{G}_\omega}
\end{align}
and find
\begin{align*}
\breve{M}_\omega-\tilde{M}_\omega=\tilde{H}_{\eps_\omega}\ast\at{\sum_{i=1}^3\breve{F}_{i,\,\omega}}
\,,\qquad
\breve{G}_\omega-\tilde{G}_\omega=\tilde{\Delta}_{-a+\iu\nu_\omega}\Bat{\breve{M}_\omega-\tilde{M}_\omega}
\end{align*}
with  error terms
\begin{align*}
\breve{F}_{1,\,\omega}:=
\nat{1-\tilde{\chi}_\omega}\tilde{P}_\omega\tilde{G}_\omega
\,,\qquad
\breve{F}_{2,\,\omega}:=
\nat{\tilde{\chi}_\omega-\breve{\chi}_\omega}\nat{\tilde{P}_\omega-\bar{P}}\tilde{G}_\omega
\,,\qquad
\breve{F}_{3,\,\omega}:=
\nat{\tilde{\chi}_\omega-\breve{\chi}_\omega}\bar{P}\tilde{G}_\omega\,.
\end{align*}
Theorem~\ref{Thm:ExistenceNonlWaves} combined with Lemma~\ref{Lem:AsympODE.Props}, \eqref{Eqn:DefEps}, and \eqref{Lem:EV.FundSol.Eqn3} implies
\begin{align*}
\bnorm{\tilde{H}_{\eps_\omega}\ast\breve{F}_{1,\,\omega}}_\infty\leq \bnorm{\tilde{H}_{\eps_\omega}}_1\bnorm{\breve{F}_{1,\,\omega}}_\infty\leq C\eps_\omega^{-2}\delta_\omega^{m+2}\bnorm{\tilde{G}_\omega}_\infty\leq C\delta_\omega^m\bnorm{\tilde{G}_\omega}_\infty
\end{align*}
as well as
\begin{align*}
\bnorm{\tilde{H}_{\eps_\omega}\ast\breve{F}_{2,\,\omega}}_\infty\leq \bnorm{\tilde{H}_{\eps_\omega}}_\infty\bnorm{\breve{F}_{2,\,\omega}}_1\leq C\eps_\omega^{-1}\delta_\omega^{l}\bnorm{\nat{\tilde{\chi}_\omega-\breve{\chi}_\omega}\bar{Y}^{-1}}_\infty\bnorm{\tilde{G}_\omega}_\infty\leq C\delta_\omega^l\bnorm{\tilde{G}_\omega}_\infty\,,
\end{align*}
and
\begin{align*}
\bnorm{\tilde{H}_{\eps_\omega}\ast\breve{F}_{3,\,\omega}}_\infty\leq \bnorm{\tilde{H}_{\eps_\omega}}_\infty\bnorm{\breve{F}_{3,\,\omega}}_1\leq C\eps_\omega^{-1}\bnorm{\nat{\tilde{\chi}_\omega-\breve{\chi}_\omega}\bar{P}}_1\bnorm{\tilde{G}_\omega}_\infty\leq C \eps_\omega^m\bnorm{\tilde{G}_\omega}_\infty\,.
\end{align*}
In particular, we have
\begin{align}
\label{Lem:Convergence.PEqn43}
\bnorm{\tilde{M}_\omega-\breve{M}_\omega}_\infty \leq C\sum_{i=1}^3\bnorm{\tilde{H}_{\eps_\omega}\ast\breve{F}_{i,\,\omega}}_\infty\leq C \eps_\omega^l\bnorm{\tilde{G}_\omega}_\infty\,,
\end{align}
and with analogous estimates we show that
\begin{align}
\label{Lem:Convergence.PEqn44}
\bnorm{\tilde{M}_\omega^\prime-\breve{M}_\omega^\prime}_\infty \leq C\sum_{i=1}^3\bnorm{\tilde{H}_{\eps_\omega}^\prime\ast\breve{F}_{i,\,\omega}}_\infty\leq C \eps_\omega^{l+1}\bnorm{\tilde{G}_\omega}_\infty
\end{align}
because the Lebesgue estimates $\tilde{H}_{\eps_\omega}^\prime$ from Lemma~\ref{Lem:EV.FundSol} are better that those for $\tilde{H}_{\eps_\omega}$. The uniform $\fspaceL^p$-continuity of the discrete Laplacian $\tilde{\Delta}_{-a+\iu\nu_\omega}$ finally gives
\begin{align}
\label{Lem:Convergence.PEqn21}
\bnorm{\tilde{G}_\omega-\breve{G}_\omega}_\infty \leq C \eps_\omega^l\bnorm{\breve{G}_\omega}_\infty
\end{align}
after elementary transformations. Moreover, \eqref{Lem:Convergence.PEqn22} guarantees
\begin{align}
\label{Lem:Convergence.PEqn23}
C \eps_\omega^l\bnorm{\breve{G}_\omega}_\infty\leq C\eps_\omega^{l-1}\quad\xrightarrow{\;\;\omega\to\infty\;\;}\quad0
\end{align}
but we improve this bound below.
\par
\emph{\ul{Algebraic relations for $\breve{T}_\omega$}}:
As illustrated in Figure~\ref{Fig:ScaledEigenFcts}, we set
\begin{align}
\label{Lem:Convergence.PEqn42}
\breve{N}_\omega:=\tilde{H}_0\ast \bat{\breve{\chi}_\omega\tilde{P}_\omega\tilde{T}_\omega}\,,\qquad
\tilde{N}_\omega:=\tilde{H}_0\ast \bat{\tilde{P}_\omega\tilde{T}_\omega}\,.
\end{align}
The first formula reads
\begin{align}
\label{Lem:Convergence.FrmA}
\breve{N}_\omega\at{\tilde{x}}=
\int\limits_{\tilde{x}}^{+\infty} %
\bat{\tilde{y}-\tilde{x}}\breve{\chi}_\omega\at{\tilde{y}}\tilde{P}_\omega\at{\tilde{y}}
\tilde{T}_\omega\at{\tilde{y}}\dint\tilde{y}\,,
\end{align}
and since $\breve{\chi}_\omega$ is supported in the interval $J_\omega$ we find
\begin{align}
\label{Lem:Convergence.FrmC}
\breve{N}_\omega\at{\tilde{x}}=m_{1,\,\omega}-m_{0,\,\omega}\tilde{x}\quad \text{for}\quad \tilde{x}\leq- \zeta_\omega\,,\qquad
\breve{N}_\omega\at{\tilde{x}}=0\quad \text{for}\quad \tilde{x}\geq +\zeta_\omega\,,
\end{align}
where $m_{i,\omega}$ is shorthand for the moment integral
\begin{align}
\label{DefMoments}
m_{i,\,\omega}:=\int\limits_{J_\omega}\tilde{y}^i\tilde{P}_\omega\at{\tilde{y}}\tilde{T}_\omega\at{\tilde{y}}\dint{\tilde{y}}\,.
\end{align}
In other words, the function $\breve{N}_\omega$ is affine on the left and vanishes on the right of the compact interval $J_\omega$.   Since $\breve{T}_\omega$ also vanishes identically for large $\tilde{x}$ according to \eqref{Lem:EV.FundSol.Eqn2}+\eqref{Eqn:DefBreveG}+\eqref{Eqn:DefBreveT}, and because \eqref{Eqn:LawBreveT}+\eqref{Lem:Convergence.FrmA} imply  $\breve{T}^{\prime\prime}_\omega=\tilde{\Delta}_{\la_\omega}\breve{N}^{\prime\prime}_\omega$ on $\Rset$, we find
\begin{align}
\label{Lem:Convergence.FrmB}
\breve{T}_\omega =\tilde{\Delta}_{\la_\omega}\breve{N}_\omega
\end{align}
and compute
\begin{align}
\label{Lem:Convergence.PEqn1}
\begin{split}
\tfrac12\Bat{\breve{T}_\omega^\prime\nat{+\zeta_\omega}-
\breve{T}_\omega^\prime\nat{-\zeta_\omega}}&=-m_{0,\,\omega}\,,\\
\tfrac12\Bat{\breve{T}_\omega^\prime\nat{+\zeta_\omega}+
\breve{T}_\omega^\prime\nat{-\zeta_\omega}}&=\bat{1-\mhexp{-\la_\omega}}m_{0,\,\omega}\,.
\end{split}
\end{align}
Similarly we verify
\begin{align*}
\tfrac12\Bat{\breve{T}_\omega\nat{+\zeta_\omega}-
\breve{T}_\omega\nat{-\zeta_\omega}}
&= %
m_{1,\,\omega}+\bat{1-\mhexp{-\la_\omega}}\zeta_\omega m_{0,\,\omega}\,,
\\
\tfrac12\Bat{\breve{T}_\omega\nat{+\zeta_\omega}+\breve{T}_\omega\nat{-\zeta_\omega}}
&=
-\bat{1-\mhexp{-\la_\omega}}m_{1,\,\omega}
+\bat{2\mhexp{-\la_\omega}\tilde{\xi}_\omega-\zeta_\omega}m_{0,\,\omega}\,,
\end{align*}
which in turn gives
\begin{align}
\label{Lem:Convergence.PEqn2}
\begin{split}
\tfrac12\Bat{\breve{T}_\omega^\flat\nat{+\zeta_\omega}-\breve{T}_\omega^\flat\nat{-\zeta_\omega}}&=-m_{1,\,\omega}\,,\\
\tfrac12\Bat{\breve{T}_\omega^\flat\nat{+\zeta_\omega}+\breve{T}_\omega^\flat\nat{-\zeta_\omega}}&=\bat{1-\mhexp{-\la_\omega}}m_{1,\,\omega}-2\mhexp{-\la_\omega}\tilde{\xi}_\omega m_{0,\,\omega}
\end{split}
\end{align}
thanks to \eqref{Lem:Convergence.PEqn1}.
\par
\emph{\ul{Asymptotic formula for $\breve{T}_\omega$ on $J_\omega$}}: The formulas \eqref{Eqn:DefBreveT}+\eqref{Eqn:LawBreveT}+\eqref{Lem:Convergence.FrmA}+\eqref{Lem:Convergence.FrmB} guarantee
\begin{align}
\label{Lem:Convergence.FromG}
\breve{T}_\omega^{\prime\prime}\at{\tilde{x}}=-2 \breve{N}_\omega^{\prime\prime}\at{\tilde{x}}=-2\tilde{P}_\omega\at{\tilde{x}}\tilde{T}_\omega\at{\tilde{x}}
=-2\bar{P}\at{\tilde{x}}\breve{T}_\omega\at{\tilde{x}}+
\breve{E}_\omega\at{\tilde{x}}\qquad \text{for}\quad \tilde{x}\in J_\omega\,,
\end{align}
where the error terms
\begin{align*}
\breve{E}_\omega:= 2\bat{\bar{P}-\tilde{P}_\omega}\breve{T}_\omega+
2\tilde{P}_\omega\bat{\breve{T}_\omega-\tilde{T}_\omega}
\end{align*}
are pointwise bounded by
\begin{align*}
\babs{\breve{E}_\omega\at{\tilde{x}}}&\leq
C\bnorm{\tilde{G}_\omega}_\infty\babs{\bar{P}\at{\tilde{x}}-\tilde{P}_\omega\at{\tilde{x}}}
+
C\eps_\omega^{l}\bnorm{\breve{G}_\omega}_\infty\babs{\tilde{P}_\omega\at{\tilde{x}}}
\\&\leq  C\bnorm{\breve{G}_\omega}_\infty\at{
\babs{\bar{P}\at{\tilde{x}}-\tilde{P}_\omega\at{\tilde{x}}}+\eps_\omega^{l}\babs{\bar{P}\at{\tilde{x}}}}
\end{align*}
because \eqref{Eqn:ApproxEstSbreve} and \eqref{Lem:Convergence.PEqn21} imply
\begin{align}
\label{Eqn:ApproxEstSbreve}
\sup_{x\in J_\omega}\babs{\breve{T}_\omega\at{\tilde{x}}-\tilde{T}_\omega\at{\tilde{x}}}\leq C\eps_\omega^{l}\bnorm{\breve{G}_\omega}_\infty
\end{align}
since $\exp\at{\eps_\omega\tilde{x}}$ is uniformly bounded for $\tilde{x}\in J_\omega$. The function  $\breve{T}_\omega$ restricted to the interval $J_\omega$ can be regarded as an approximate solution to the linearized asymptotic ODE \eqref{Lem:LinODE.Props.Eqn1}.   Using   the Duhamel principle as well as the properties of $\bar{T}_\even$ and $\bar{T}_\odd$ --- see Lemma~\ref{Lem:LinODE.Props} and notice that the Wronski determinant is constant and has been normalized --- we arrive at the representation formula
\begin{align}
\label{Lem:Convergence.PEqn3a}
\begin{pmatrix}\breve{T}_\omega\at{\tilde{x}}\\\breve{T}_\omega^\prime\at{\tilde{x}}
\end{pmatrix}
=\begin{pmatrix}
\bar{T}_\even\at{\tilde{x}}&\bar{T}_\odd\at{\tilde{x}}\\
\bar{T}_\even^\prime\at{\tilde{x}}&\bar{T}_\odd^\prime\at{\tilde{x}}\\
\end{pmatrix}
\cdot
\Bigg(
\begin{pmatrix}
\breve{T}_\omega\at{0}\\\breve{T}_\omega^\prime\at{0}
\end{pmatrix}
+\int\limits_{0}^{\tilde{x}}\begin{pmatrix}
+\bar{T}_\odd^\prime\at{\tilde{y}}&-\bar{T}_\odd\at{\tilde{y}}\\
-\bar{T}_\even^\prime\at{\tilde{y}}&+\bar{T}_\even\at{\tilde{y}}\\
\end{pmatrix}
\cdot
\begin{pmatrix}
0\\\breve{E}_\omega\at{\tilde{y}}
\end{pmatrix}
\dint{\tilde{y}}\Bigg) \,.%
\end{align}
Moreover, the approximation results in Theorem~\ref{Thm:ExistenceNonlWaves} guarantee
\begin{align}
\label{Lem:Convergence.PEqn3b}
\begin{split}
\int\limits_0^{\tilde{x}} \babs{\breve{E}_\omega\at{\tilde{y}}}\Bat{
\babs{\bar{T}_\even\at{\tilde{y}}}+
\babs{\bar{T}_\odd\at{\tilde{y}}}
}\dint{\tilde{y}}&\leq C  \int\limits_{I_\omega}\babs{\breve{E}_\omega\at{\tilde{y}}} \bar{Y}\at{\tilde{y}}\dint{y}\\
&\leq C\bnorm{\tilde{G}_\omega}_\infty \at{
\bnorm{\tilde{\chi}_\omega\bat{\tilde{P}_\omega-\bar{P}}\bar{Y}}_1+\eps_\omega^l
\bnorm{\tilde{\chi}_\omega\bar{P}\bar{Y}}_1}\\&\leq C\eps_\omega^l\bnorm{\tilde{G}_\omega}_\infty \,,
\end{split}
\end{align}
and we get
\begin{align}
\label{Lem:Convergence.PEqn4}
\breve{T}_\omega^\at{\cdot}\at{\tilde{x}}
= %
\bar{T}_\even^\at{\cdot}\at{\tilde{x}}\Bat{\breve{T}_\omega\at{0}+
\bDO{\eps_\omega^l\bnorm{\breve{G}_\omega}_\infty}}
+ %
\bar{T}_\odd^\at{\cdot}\at{\tilde{x}}\Bat{\breve{T}_\omega^\prime\at{0}+\bDO{\eps_\omega^l\bnorm{\breve{G}_\omega}_\infty}}\,,
\end{align}
where $^\at{\cdot}$ stands for either $^\prime$ or $^\flat$. We thus reformulate the four algebraic equations in  \eqref{Lem:Convergence.PEqn1}, \eqref{Lem:Convergence.PEqn2} as
\begin{align}
\label{Lem:Convergence.Jump1} %
-m_{0,\,\omega} &= \bar{T}^\prime_\even\bat{\zeta_\omega}\breve{T}_\omega\at{0}+\bDO{\eps_\omega^l\bnorm{\breve{G}_\omega}_\infty}\,,\\ %
\label{Lem:Convergence.Jump2} %
\bat{1-\mhexp{-\la_\omega}}m_{0,\,\omega} &=\bar{T}_\odd^\prime\bat{\zeta_\omega}\breve{T}_\omega^\prime\at{0}+\bDO{\eps_\omega^l\bnorm{\breve{G}_\omega}_\infty}
\,,\\
\label{Lem:Convergence.Jump3} %
-m_{1,\,\omega}&=\bar{T}_\odd^{\flat}\bat{\zeta_\omega} \breve{T}_\omega^\prime\at{0}+\bDO{\eps_\omega^l\bnorm{\breve{G}_\omega}_\infty}
\,,\\%
\label{Lem:Convergence.Jump4} %
\bat{1-\mhexp{-\la_\omega}}m_{1,\,\omega}-2\xi_\omega\mhexp{-\la_\omega}m_{0,\,\omega}&=
\bar{T}_\even^{\flat}\bat{\zeta_\omega}\breve{T}_\omega\at{0}+
\bDO{\eps_\omega^l\bnorm{\breve{G}_\omega}_\infty}\,,
\end{align}
where Lemma~\ref{Lem:LinODE.Props} ensures
\begin{align}
\label{Lem:Convergence.FrmE} %
\bar{T}_\even^{\prime}\bat{\zeta_\omega}=  -\sqrt{\frac{m}{m+1}}  +\nDO{\eps_\omega^m}\,,\qquad
\bar{T}_\even^{\flat}\bat{\zeta_\omega}=  \gamma +\nDO{\eps_\omega^{m-1}}
\end{align}
and
\begin{align}
\label{Lem:Convergence.FrmF} %
\bar{T}_\odd^{\prime}\bat{\zeta_\omega}=0+\nDO{\eps_\omega^{m+1}}\,,\qquad
\bar{T}_\odd^{\flat}\bat{\zeta_\omega}=  -\sqrt{\frac{m+1}{m}} +\nDO{\eps_\omega^{m}}\,,
\end{align}
  here $\gamma$ depends on $m$ only.   Combining the above Duhamel formula with \eqref{Lem:Compactness.Eqn2}+\eqref{Eqn:DefBreveT}+\eqref{Lem:Convergence.PEqn23} we first deduce
\begin{align}
\label{Lem:Convergence.PEqn5a}
\limsup\limits_{\omega\to\infty}\Bat{\babs{\breve{T}_\omega\at{0}}+\babs{\breve{T}_\omega^\prime\at{0}}}<\infty\,,
\end{align}
and afterwards
\begin{align}
\label{Lem:Convergence.PEqn5b}
\liminf_{\omega\to\infty}\Bat{\babs{\breve{T}_\omega\at{0}}+\babs{\breve{T}_\omega^\prime\at{0}}}>0
\end{align}
because otherwise both $\breve{G}_\omega$ and $\tilde{G}_\omega$ were converging locally uniform to $0$ and hence violating the normalization condition \eqref{Eqn:Normalization}.
\par
\ul{\emph{Refined error bounds}}:
We already observed in \eqref{Lem:Convergence.FromG} that $-2\breve{N}_\omega^{\prime\prime}\at{\tilde{x}}=\breve{T}_\omega^{\prime\prime}\at{\tilde{x}}$ holds for $\tilde{x}\in J_\omega$, so \eqref{Lem:Convergence.FrmC} and \eqref{Lem:Convergence.PEqn4} provide
\begin{align*}
-2\breve{N}_\omega^{\prime}\at{\tilde{x}}&= %
\breve{T}_\omega\at{0}\Bat{\bar{T}_\even^{\prime}\at{\tilde{x}}-
\bar{T}_\even^{\prime}\nat{\zeta_\omega}}
+ %
\breve{T}_\omega^\prime\at{0}\Bat{\bar{T}_\odd^\prime\at{\tilde{x}}-
\bar{T}_\odd^\prime\nat{\zeta_\omega}}+\bDO{\eps_\omega^l\bnorm{\breve{G}_\omega}_\infty}\,.
\end{align*}
This gives
\begin{align}
\label{Lem:Convergence.FrmG}
\sup\limits_{\tilde{x}\in J_\omega}\babs{\breve{N}_\omega\at{\tilde{x}}}
=\bDO{\eps_\omega^{-1}\babs{m_{0,\,\omega}}}+\nDO{1}+\bDO{\eps_\omega^{l-1}\bnorm{\breve{G}_\omega}_\infty}
\end{align}
after integration, where we also used \eqref{Lem:Convergence.FrmC}+\eqref{Lem:Convergence.Jump1}+\eqref{Lem:Convergence.PEqn5a} as well as
\begin{align*}
\int\limits_{J_\omega}\babs{\bar{T}_\even^\prime\bat{\tilde{x}}-\bar{T}_\even^\prime\bat{\zeta_\omega}}\dint{\tilde{x}}
=\nDO{\eps_\omega^{-1}}\,,\qquad
\int\limits_{J_\omega}\babs{\bar{T}_\odd^\prime
\bat{\tilde{x}}-\bar{T}_\odd^\prime\bat{\zeta_\omega}}\dint{\tilde{x}}=
\nDO{1}\,,
\end{align*}
which is a consequence of Lemma~\ref{Lem:AsympODE.Props}. Furthermore, by construction we have
\begin{align*}
\breve{M}_\omega\at{\tilde{x}}=\exp\at{\eps_\omega\tilde{x}}\breve{N}_\omega\at{\tilde{x}}\,,\qquad
\breve{G}_\omega=\tilde{\Delta}_{-a+\iu\nu_\omega}\breve{M}_\omega
\end{align*}
and verify with \eqref{Lem:Convergence.FrmA}+\eqref{Lem:Convergence.Jump3}+\eqref{Lem:Convergence.PEqn5a}+\eqref{Lem:Convergence.PEqn5b}+\eqref{Lem:Convergence.FrmG} the estimate
\begin{align*}
\bnorm{\breve{M}_\omega}_\infty &\leq
\babs{m_{0,\,\omega}}  \sup_{\tilde{x}\leq
-\zeta_\omega}\abs{\tilde{x}}\exp\bat{\eps_\omega\tilde{x}}
+ %
\babs{m_{1,\,\omega}}  \sup_{\tilde{x}\leq
-\zeta_\omega}\exp\bat{\eps_\omega{\tilde{x}}} + \sup\limits_{\tilde{x}\in J_\omega}\babs{\breve{M}_\omega\at{\tilde{x}}}
\\
&\leq
C\eps_\omega^{-1}\babs{m_{0,\,\omega}}+C\babs{m_{1,\,\omega}}
+C\sup\limits_{\tilde{x}\in J_\omega}\babs{\breve{N}_\omega\at{\tilde{x}}}
\\&=\bDO{\eps_\omega^{-1}\babs{m_{0,\,\omega}}}+\bDO{1}
+\bDO{\eps_\omega^{l-1}\bnorm{\breve{G}_\omega}_\infty} \,,
\end{align*}
which finally implies
\begin{align}
\label{Lem:Convergence.PEqn8}
\bnorm{\breve{G}_\omega}_\infty \leq C \bnorm{\breve{M}_\omega}_\infty \leq
\bDO{\eps_\omega^{-1}|m_{0,\,\omega}|}+\DO{1}+\bDO{\eps_\omega^{l-1}\bnorm{\breve{G}_\omega}_\infty}\,.
\end{align}
On the other hand, multiplying \eqref{Lem:Convergence.Jump4} with $\beta_\omega\delta_\omega$, subtracting \eqref{Lem:Convergence.Jump2}, eliminating $\breve{T}_\omega\at{0}$ and $\breve{T}_\omega^\prime\at{0}$ by means of \eqref{Lem:Convergence.Jump1} and \eqref{Lem:Convergence.Jump3}, and inserting \eqref{Lem:Convergence.FrmE}+\eqref{Lem:Convergence.FrmF} we obtain
\begin{align}
\notag%
\beta_\omega\delta_\omega\bat{1-\mhexp{-\la_\omega}}m_{1,\,\omega}-m_{0,\,\omega}&=\bDO{\delta_\omega |m_{0,\,\omega}|}+
\bDO{\eps_\omega^{m+1}}+\bDO{\eps_\omega^{l}\bnorm{\breve{G}_\omega}_\infty}\,,
\end{align}
and hence
\begin{align}
\label{Lem:Convergence.PEqn9}
\eps_\omega^{-1}\babs{m_{0,\,\omega}}=
\bDO{\eps_\omega^{l-1}\bnorm{\breve{G}_\omega}_\infty}+\DO{1}
\end{align}
thanks to
\begin{align*}
\abs{\eps_\omega^{-1}\delta_\omega\at{1-\mhexp{-\la_\omega}}}\,\leq C\frac{1+\mhexp{-\mu_\omega}}{a+\mu_\omega}\, \,{\leq}\,\,C\,.
\end{align*}
Combining \eqref{Lem:Convergence.PEqn8} and \eqref{Lem:Convergence.PEqn9} we finally arrive at
\begin{align}
\label{Lem:Convergence.PEqn17}
\bnorm{\breve{G}_\omega}_\infty=\DO{1}\,,
\end{align}
which is better than \eqref{Lem:Convergence.PEqn22} and reveals that $\eps_\omega^l\bnorm{\breve{G}_\omega}_\infty=\DO{\eps_\omega^l}$.
\par
\ul{\emph{Smallness of $\la_\omega$ and $m_{0,\,\omega}$}}:
From \eqref{Lem:Convergence.Jump1}+\eqref{Lem:Convergence.PEqn9}+\eqref{Lem:Convergence.PEqn17}  we infer
\begin{align*}
\babs{m_{0,\,\omega}}+\babs{\breve{T}_\omega\at{0}}= \nDO{\eps_\omega}\,,
\end{align*}
so \eqref{Lem:Convergence.Jump3}+\eqref{Lem:Convergence.PEqn5a}+\eqref{Lem:Convergence.PEqn5b} provide
\begin{align}
\label{Lem:Convergence.PEqn14}
0<\liminf_{\om\to\infty}\babs{m_{1,\,\omega}}\,,\qquad \limsup_{\om\to\infty}\babs{m_{1,\,\omega}}<\infty
\end{align}
as well as a similar result for $\breve{T}^\prime_\omega\at{0}$. Moreover, \eqref{Lem:Convergence.Jump1}+\eqref{Lem:Convergence.Jump4}+\eqref{Lem:Convergence.PEqn17}+\eqref{Lem:Convergence.PEqn14} imply
\begin{align}
\label{Lem:Convergence.PEqn6}
\beta_\omega\delta_\omega\bat{1-\mhexp{-\la_\omega}}m_{1,\,\omega} - \mhexp{-\la_\omega}m_{0,\,\omega}=\bDO{\delta_\omega\babs{m_{0,\,\omega}}}+\bDO{\delta_\omega\eps_\omega^l}\,.
\end{align}
Now suppose for contradiction that $\liminf_{\omega\to\infty}\abs{\la_\omega}>0$ and recall that $\abs{\nu_\omega}\leq\pi$ holds by Assumption~\ref{Ass:Eigensystem}. Then there exists a subsequence for $\omega\to\infty$ such that the right hand side in \eqref{Lem:Convergence.PEqn6}, which is of order $\nDo{\delta_\omega}$, can be subsumed into the first term on the left hand side, see \eqref{Lem:Convergence.PEqn14}. By \eqref{Lem:Convergence.Jump2} and \eqref{Lem:Convergence.PEqn6} we therefore get
\begin{align}
\label{Lem:Convergence.PEqn11}
\delta_\omega\abs{\frac{1-\mhexp{-\la_\omega}}{\mhexp{-\la_\omega}}}\leq C\babs{m_{0,\,\omega}}\,,\qquad \babs{m_{0,\,\omega}}\leq C\frac{\eps_\omega^l}{\abs{1-\mhexp{-\la_\omega}}}
\end{align}
 and rearranging terms we find
\begin{align}
\label{Lem:Convergence.PEqn12}
\delta_\omega\babs{\mhexp{\mu_\omega+\iu\nu_\omega}+\mhexp{-\mu_\omega-\iu\nu_\omega}-2}\leq C\eps_\omega^l = C\bat{\delta_\omega a+\delta_\omega\mu_\omega}^l\,,
\end{align}
where the estimates $l>1$, $\liminf\mu_\omega >-a $, and $\limsup_{\omega\to\infty}\abs{\nu_\omega}\leq \pi$ hold by Assumptions \ref{Ass.Pot} and \ref{Ass:Eigensystem}. Assuming that either $\lim_{\omega\to\infty}\la_\omega=\la_\infty=\mu_\infty+\iu\nu_\infty\neq0$ or
$\lim\inf_{\omega\to\infty}\mu_\omega=\infty$ holds for a subsequence
we can simplify \eqref{Lem:Convergence.PEqn12} to
\begin{align*}
1\leq C\delta_\omega^{l-1}\qquad\text{or}\qquad
\frac{\mhexp{\mu_\omega}}{\mu_\omega^l}\leq C\delta_\omega^{l-1}\,,
\end{align*}
respectively, and obtain in any of these two cases a contradiction along the chosen subsequence. In particular, we have now shown that
\begin{align}
\label{Lem:Convergence.PEqn13}
\babs{\la_\omega}=\nDo{1}\,,\qquad \eps_\omega=\delta_\omega\bat{a+\nDo{1}}\,.
\end{align}
We finally distinguish the following two cases: For $\nabs{\la_\omega}=\DO{\delta_\omega^l}$ we have $\delta_\omega\nabs{1-\mhexp{-\la_\omega}}\nabs{m_{1,\,\omega}}=\DO{\delta_\omega^{l+1}}$, so \eqref{Lem:Convergence.PEqn6} combined with \eqref{Lem:Convergence.PEqn13} immediately ensures that  $ \nabs{m_{0,\delta }}\,{=}\,\nDO{\delta_\omega^{l+1}}$. Otherwise we can  --- letting the second term on the right hand side of \eqref{Lem:Convergence.PEqn6} be consumed by the first on the left hand side and the term involving $m_{0,\,\omega}$ by the second on the right hand side--- still rely on $\eqref{Lem:Convergence.PEqn11}_2$ to estimate $|m_{0,\,\omega}|$  and obtain after simplification with \eqref{Lem:Convergence.PEqn13} and elementary computations
\begin{align}
\label{Lem:Convergence.PEqn41}
\babs{\la_\omega}=\bDO{\delta_\omega^{\at{l-1}/2}}\,,\qquad \babs{m_{0,\,\omega}}=\bDO{\delta_\omega^{\at{l+1}/2}}\,.
\end{align}
These formulas cover also the first case and imply
\begin{align}
\label{Lem:Convergence.PEqn31}
\frac{\zeta_
\omega}{\xi_\omega}=1+\bDO{\delta_\omega^{\at{l-1}/2}}\,,\qquad \babs{\breve{T}_\omega\at{0}}=\bDO{\delta_\omega^{\at{l+1}/2}}\,,
\end{align}
where the first equation follows from \eqref{Eqn:DefBreveQuant2}, and the second one from \eqref{Lem:Convergence.Jump1}+\eqref{Lem:Convergence.FrmE}+\eqref{Lem:Convergence.PEqn17} and $\at{l+1}/2<l$.
\par
\emph{\ul{Asymptotics of $\breve{T}_\omega$}}:
Since $\babs{\tilde{G}_\omega}\leq C\bar{Y}$ holds by Lemma~\ref{Lem:AsympODE.Props},   Lemma~\ref{Lem:Compactness} and \eqref{Lem:Convergence.PEqn13}, we deduce from \eqref{Thm:ExistenceNonlWaves.Eqn1} the estimate
\begin{align*}
\abs{\bnorm{\tilde{\chi}_\omega \tilde{P}_\omega\tilde{G}_\omega }_1-
\bnorm{\tilde{\chi}_\omega \bar{P}\tilde{G}_\omega }_1}\leq C \bnorm{\tilde{\chi}_\omega \bat{\tilde{P}_\omega-\bar{P}}\bar{Y} }_1\leq C\delta_\omega^l\,,
\end{align*}
and in view of \eqref{Lem:Compactness.Eqn2}+\eqref{Eqn:DefBreveQuant1}+\eqref{Lem:Convergence.PEqn13} we find
\begin{align*}
\abs{\bnorm{\tilde{\chi}_\omega \bar{P}\tilde{G}_\omega }_1-\bnorm{\breve{\chi}_\omega \bar{P}\tilde{G}_\omega }_1}\leq C\babs{\xi_\omega-\zeta_\omega}\sup_{\zeta_\omega\leq \babs{\tilde{x}}\leq \xi_\omega}\abs{\bar{P}\at{\tilde{x}}\bar{Y}\at{\tilde{x}}}\leq C \frac{\babs{\xi_\omega-\zeta_\omega}\xi_\omega}{\zeta_\omega^{m+2}}\leq C\delta_\omega^m\,.
\end{align*}
Moreover, \eqref{Lem:Convergence.PEqn21} and \eqref{Lem:Convergence.PEqn17} ensure
\begin{align*}
\abs{
\bnorm{\breve{\chi}_\omega \bar{P}\tilde{G}_\omega }_1-
\bnorm{\breve{\chi}_\omega \bar{P}\breve{G}_\omega }_1
}\leq \bnorm{\tilde{\chi}_\omega \bar{P}}_1\bnorm{\tilde{G}_\omega - \breve{G}_\omega }_\infty\leq C\delta_\omega^l
\end{align*}
while a simply Taylor argument yields
\begin{align*}
\abs{
\bnorm{\breve{\chi}_\omega \bar{P}\breve{G}_\omega }_1-
\bnorm{\breve{\chi}_\omega \bar{P}\breve{T}_\omega }_1
}\leq C\eps_\omega \int\limits_{-\zeta_\omega}^{+\zeta_\omega}\babs{\tilde{x}\bar{P}\at{\tilde{x}}\breve{G}\at{\tilde{x}}}\dint{\tilde{x}}\leq C\delta_\omega\int\limits_{\Rset}\nabs{\tilde{x}}^2\bar{P}\at{\tilde{x}}\dint{\tilde{x}}\leq C\delta_\omega\,.
\end{align*}
Inserting all these partial results into the normalization condition \eqref{Eqn:Normalization} we finally get
\begin{align}
\label{Lem:Convergence.PEqn32}
\bnorm{\breve{\chi}_\omega \bar{P}\breve{T}_\omega}_1=1+\DO{\delta_\omega}\,.
\end{align}
On the other hand, \eqref{Lem:Convergence.PEqn21}+\eqref{Lem:Convergence.PEqn3a}+\eqref{Lem:Convergence.PEqn3b}+\eqref{Lem:Convergence.PEqn17} provide the pointwise estimate
\begin{align*}
\breve{T}_\omega\at{\tilde{x}}
= %
\bar{T}_\even\at{\tilde{x}}\Bat{\breve{T}_\omega\at{0}+
\bDO{\delta_\omega^l}}
+ %
\bar{T}_\odd\at{\tilde{x}}\Bat{\breve{T}_\omega^\prime\at{0}+\bDO{\delta_\omega^l}}
\end{align*}
for all $\tilde{x}\in J_\omega$, and in view of the asymptotic tail behavior of all involved functions --- see Lemmas~\ref{Lem:AsympODE.Props}   and \ref{Lem:LinODE.Props} --- we finally arrive at
\begin{align}
\label{Lem:Convergence.PEqn33}
\bnorm{\breve{\chi}_\omega \bar{P}\breve{T}_\omega}_1=\babs{\breve{T}_\omega\at{0}} \bnorm{\bar{P}\bar{T}_\even}_1 +
\babs{\breve{T}_\omega^\prime\at{0}} \bnorm{\bar{P}\bar{T}_\odd}_1+\nDO{\delta_\omega^l}\,.
\end{align}
Combining \eqref{Lem:Convergence.PEqn31}+\eqref{Lem:Convergence.PEqn32}+\eqref{Lem:Convergence.PEqn33} and \eqref{Lem:Convergence.Jump3}+\eqref{Lem:Convergence.FrmF} reveals
\begin{align*}
\babs{\breve{T}_\omega^\prime\at{0}}=\bnorm{\bar{P}\bar{T}_\odd}_1^{-1}+\DO{\delta_\omega}\,,\qquad
\abs{m_{1,\,\omega}}=c_\odd+\DO{\delta_\omega}\,,\qquad c_\odd := \bnorm{\bar{P}\bar{T}_\odd}_1^{-1}\bar{T}_\odd^\flat\at{\infty}>0
\,,
\end{align*}
and we conclude that the restriction of $\breve{T}_\omega$ to the interval $J_\omega$ is --- up to small error terms and the undefined sign --- a certain multiple of $\bar{T}_\odd$.
\par
\emph{\ul{Estimates for $\tilde{B}_\omega$ on the interval $3I_\omega$}}:
Using \eqref{Lem:Convergence.FrmA}, the support properties of $\breve{N}_\omega$ from \eqref{Lem:Convergence.FrmC}, and
the bound
\begin{align*}
\sup_{\abs{\tilde{x}}\leq 3\xi_\omega}\babs{\tilde{T}_\omega\at{\tilde{x}}}=
\sup_{\abs{\tilde{x}}\leq 3\xi_\omega}\babs{\exp\at{-\eps_\omega\tilde{x}}\tilde{G}_\omega\at{\tilde{x}}}\leq C\exp\at{3\eps_\omega\xi_\omega}\bnorm{\tilde{G}_\omega}_\infty\leq C\
\end{align*}
we find
\begin{align*}
\int\limits_{0}^{+3\xi_\omega}\abs{\breve{N}_\omega\at{\tilde{x}}}\dint{\tilde{x}}\leq
\int\limits_{0}^{+\zeta_\omega} %
\babs{\tilde{P}_\omega\at{\tilde{y}}\tilde{T}_\omega\at{\tilde{y}}}
\int\limits_{0}^{\tilde{y}}\bat{\tilde{y}-\tilde{x}}\dint{\tilde{x}}
\dint{\tilde{y}}
\leq C \int\limits_{0}^{+\zeta_\omega}\tilde{y}^2\babs{\tilde{P}_\omega\at{\tilde{y}}}\dint{\tilde{y}}
\leq C \int\limits_{0}^{\infty}\tilde{y}^2\bar{P}\at{\tilde{y}}\dint{\tilde{y}}
\leq C
\end{align*}
due to \eqref{Eqn:DefBarPZ} and Theorem~\ref{Thm:ExistenceNonlWaves}, as well as
\begin{align*}
\int\limits_{-3\xi_\omega}^0\abs{\breve{N}_\omega\at{\tilde{x}}-m_{1,\,\omega}+m_{0,\,\omega}\tilde{x}}\dint{\tilde{x}}&\leq
\int\limits_{-\zeta_\omega}^0\int\limits_{-\zeta_\omega}^{\tilde{x}}
\at{\tilde{x}-\tilde{y}}\babs{\tilde{P}_\omega\at{\tilde{y}}\tilde{T}_\omega\at{\tilde{y}}}
\dint{\tilde{y}}
\dint{\tilde{x}}
\\&\leq
C\int\limits_{-\zeta_\omega}^0
\babs{\tilde{P}_\omega\at{\tilde{y}} }\int\limits_{\tilde{y}}^{0}
\at{\tilde{x}-\tilde{y}}
\dint{\tilde{x}
}\dint{\tilde{y}}\leq C
\end{align*}
thanks to \eqref{Lem:Convergence.FrmC}. Similarly, we estimate
\begin{align*}
\int\limits_{0}^{+3\xi_\omega}\abs{\breve{N}_\omega^\prime\at{\tilde{x}}}\dint{\tilde{x}}\leq
\int\limits_{0}^{+\zeta_\omega}\int\limits_{\tilde{x}}^{+\zeta_\omega} %
\babs{\tilde{P}_\omega\at{\tilde{y}}\tilde{T}_\omega\at{\tilde{y}}}
\dint{\tilde{y}}\dint{\tilde{x}}
\leq C \int\limits_{0}^{+\zeta_\omega}\babs{\tilde{y}\tilde{P}_\omega\at{\tilde{y}}}\dint{\tilde{y}}
\leq C
\end{align*}
as well as
\begin{align*}
\int\limits_{-3\xi_\omega}^0\abs{\breve{N}_\omega^\prime\at{\tilde{x}}+m_{0,\,\omega}}\dint{\tilde{x}}&\leq
\int\limits_{-\zeta_\omega}^0\int\limits_{-\zeta_\omega}^{\tilde{x}}
\babs{\tilde{P}_\omega\at{\tilde{y}}\tilde{T}_\omega\at{\tilde{y}}}
\dint{\tilde{y}}
\dint{\tilde{x}}
\leq C \int\limits_{-\zeta_\omega}^0\babs{\tilde{y}\tilde{P}_\omega\at{\tilde{y}}}\dint{\tilde{y}}
\leq C\,.
\end{align*}
Combining these estimates with setting $b_\omega:=m_{1,\,\omega}$ we arrive at
\begin{align}
\label{Lem:Convergence.PEqn45}
\delta_\omega \int\limits_{-3\xi_\omega}^{0}
\babs{\breve{N}_\omega^\prime\at{\tilde{x}}}+
\babs{\breve{N}_\omega\at{\tilde{x}}-b_\omega}\dint{\tilde{x}}+
\delta_\omega \int\limits_0^{+3\xi_\omega}
\babs{\breve{N}_\omega^\prime\at{\tilde{x}}}+
\babs{\breve{N}_\omega\at{\tilde{x}}}\dint{\tilde{x}}\quad\xrightarrow{\;\omega\to\infty\;}\quad 0\,,
\end{align}
where we used that \eqref{Lem:Convergence.PEqn41} implies
\begin{align*}
\delta_\omega \int\limits_{-3\xi_\omega}^0\abs{m_{0,\,\omega}}+\abs{m_{0,\,\omega}\tilde{x}}\dint{\tilde{x}}\leq C\delta_\omega\delta_\omega^{\at{l+1}/2}\Bat{\delta_\omega^{-1}+\delta_\omega^{-2}}\quad
\xrightarrow{\;\omega\to\infty\;}\quad 0\,.
\end{align*}
On the other hand,   writing the convolutions in \eqref{Lem:Convergence.PEqn48} and \eqref{Lem:Convergence.PEqn42} explicitly and using \eqref{Eqn:DefBreveT} we verify
\begin{align*}
\breve{N}_\omega\at{\tilde{x}}=\exp\at{-\eps_\omega\tilde{x}}\breve{M}_\omega\at{\tilde{x}}\,,\qquad \tilde{N}_\omega\at{\tilde{x}}=\exp\at{-\eps_\omega\tilde{x}}\tilde{M}_\omega\at{\tilde{x}}\,,
\end{align*}
so \eqref{Lem:Convergence.PEqn43}+\eqref{Lem:Convergence.PEqn44}+\eqref{Lem:Convergence.PEqn17}+\eqref{Lem:Convergence.PEqn13} provide
\begin{align*}
\sup_{\abs{\tilde{x}}\leq 3\xi_\omega}\abs{\breve{N}_\omega\at{\tilde{x}}-\tilde{N}_\omega\at{\tilde{x}}}\leq \exp\at{3\eps_\omega\xi_\omega}\bnorm{\breve{M}_\omega-\tilde{M}_\omega}_\infty\leq C\delta_\omega^l
\end{align*}
as well as
\begin{align*}
\sup_{\tilde{x}\in 3 I_\omega}\abs{\breve{N}_\omega^\prime\at{\tilde{x}}-\tilde{N}_\omega^\prime\at{\tilde{x}}}\leq \eps_\omega\bnorm{\breve{M}_\omega-\tilde{M}_\omega}_\infty+
\exp\at{3\eps_\omega\xi_\omega}\bnorm{\breve{M}_\omega^\prime-\tilde{M}_\omega^\prime}_\infty\leq C\delta_\omega^{l+1}\,.
\end{align*}
In particular, \eqref{Lem:Convergence.PEqn45} holds also with $\tilde{N}_\omega$ instead of $\breve{N}_\omega$ and this   implies
\begin{align*}
\delta_\omega \int\limits_{-3\xi_\omega}^{+3\xi_\omega}\babs{\tilde{N}_\omega\at{\tilde{x}}}\dint{x}\leq C\,.
\end{align*}
Moreover, our definitions also   ensure
\begin{align*}
\tilde{B}_\omega\at{\tilde{x}}=\exp\at{\la_\omega\delta_\omega\beta_\omega\tilde{x}}\tilde{N}_\omega\at{\tilde{x}}
\end{align*}
and combining this with the Taylor estimate
\begin{align*}
\sup_{\abs{\tilde{x}}\leq 3\xi_\omega}\babs{\exp\at{\la_\omega\delta_\omega\beta_\omega\tilde{x}}-1}
\leq C\abs{\la_\omega\delta_\omega\tilde{x}}\leq C\abs{\la_\omega}
\end{align*}
we deduce that
\begin{align*}
\delta_\omega \int\limits_{-3\xi_\omega}^{+3\xi_\omega}\babs{\tilde{B}_\omega\at{\tilde{x}}-\tilde{N}_\omega\at{\tilde{x}}}\dint{x}\leq C\abs{\la_\omega} \delta_\omega
\int\limits_{-3\xi_\omega}^{+3\xi_\omega}\babs{\tilde{N}_\omega\at{\tilde{x}}}\dint{x}\leq C\abs{\la_\omega}\quad
\xrightarrow{\;\omega\to\infty\;}\quad 0
\end{align*}
as well as
\begin{align*}
\delta_\omega \int\limits_{-3\xi_\omega}^{+3\xi_\omega}\babs{\tilde{B}_\omega^\prime\at{\tilde{x}}-\tilde{N}_\omega^\prime\at{\tilde{x}}}\dint{x}\leq
C\delta_\omega \int\limits_{-3\xi_\omega}^{+3\xi_\omega}
\abs{\la_\omega}\delta_\omega\babs{\tilde{N}_\omega\at{\tilde{x}}}+
\abs{\la_\omega}
 \babs{\tilde{N}_\omega^\prime\at{\tilde{x}}}\dint{\tilde{x}}\quad
\xrightarrow{\;\omega\to\infty\;}\quad 0\,.
\end{align*}
The claim \eqref{Lem:Convergence.Eqn1} now follows immediately and the proof is complete.
\end{proof}
Notice that the proof of Lemma~\ref{Lem:Convergence} provides accurate approximation formulas for the eigenfunctions but the rather rough results in \eqref{Lem:Convergence.Eqn1} are sufficient for showing that \eqref{Eqn:JordanFcts} provides the only eigenfunctions with $\mu_\omega>-a$ and $|\nu_\omega|\leq\pi$ for $\omega$ large enough. Moreover, the asymptotic behavior of the kernel functions can be deduced from Theorem~\ref{Thm:SmoothnessNonlWaves} and Lemma~\ref{Lem:AsympCheckU}.
%
%
\subsection{Computation of the point spectrum and conclusion}\label{sect:PointSpectrum}
%
%
  We are now able to prove our main results in this chapter concerning the orbital stability of solitary waves with high-energy.

\begin{theorem}[non-existence of unstable eigenfunctions]
\label{Thm:NoOtherEigenvalues} %
Let  $\triple{\la_\omega}{S_\omega}{W_\omega}_{\omega}$ be a family of eigenvalues as in Assumption~\ref{Ass:Eigensystem}
and let $\omega$ be sufficiently large. Then $\pair{S_\omega}{W_\omega}$ cannot be symplectically orthogonal the Jordan block from Lemma~\ref{Lem:SpectralProperties}, i.e., we have $\la_\omega=0$ and $\pair{S_\omega}{W_\omega}$ is a multiple of the symmetry eigenfunction $\pair{S_{*,\,\omega}}{W_{*,\,\omega}}$.
\end{theorem}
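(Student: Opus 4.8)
The plan is to combine the asymptotic information on eigenfunctions from Lemma~\ref{Lem:Convergence} with the symplectic identity for the Jordan block from Lemma~\ref{Lem:SpectralProperties}. By Lemma~\ref{Lem:Convergence} any family $\triple{\la_\omega}{S_\omega}{W_\omega}_\omega$ as in Assumption~\ref{Ass:Eigensystem} satisfies $\la_\omega\to0$, and the rescaled function $\tilde{B}_\omega=\tilde{\Delta}_0^{-1}\tilde{S}_\omega$ is, up to small error terms in the weighted $\fspaceL^1$-norm of \eqref{Lem:Convergence.Eqn1}, equal to the tent-shaped profile $b_\omega\breve{\chi}_{(-\infty,0]}$ with $0<\lim_\omega|b_\omega|<\infty$; in particular $\tilde{B}_\omega$ is asymptotically constant as $\tilde{x}\to-\infty$ and vanishes as $\tilde{x}\to+\infty$. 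The crucial point is that precisely this profile is shared — again up to negligible errors — by the rescaled symmetry eigenfunction $\tilde{S}_{*,\,\omega}$ obtained from $\partial_x R_\omega$, since $\tilde\Delta_0^{-1}$ of that profile is governed by the same matching data $\bar{T}_\odd^\flat(\infty)$ according to Lemma~\ref{Lem:DefBreveR} and Lemma~\ref{Lem:AsympODE.Props}. So the first step is to run the argument of Lemma~\ref{Lem:Convergence} for the known kernel function $\pair{S_{*,\,\omega}}{W_{*,\,\omega}}$ and record the corresponding normalized limit profile.

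\textbf{Key steps.}
First I would use the symplectic orthogonality hypotheses: by assumption, for all $n\in\Zset$,
\[
\sigma\Bpair{\pair{\calE_{2\pi\iu n}S_{\#,\,\om}}{\calE_{2\pi\iu n}W_{\#,\,\om}}}{\pair{S_\omega}{W_\omega}}=0 .
\]
Unfolding the definition \eqref{Eqn:SympProd} of $\sigma$ and the periodicity of the spectrum from Lemma~\ref{Lem:SpectralProperties}, the $n=0$ relation reads $\bskp{S_{\#,\,\om}}{\nabla^{-1}W_\omega}+\bskp{W_{\#,\,\om}}{\nabla^{-1}S_\omega}=0$. Now I would eliminate $W_\omega$ via \eqref{Eqn:LinEVP}$_1$, expressing it through $S_\omega$ and $\la_\omega$, and similarly express $W_{\#,\om}$, $S_{\#,\om}=\partial_{\delta_\omega}R_\omega$, $W_{\#,\om}=\partial_{\delta_\omega}V_\omega$ through $R_\omega$ using \eqref{Eqn:TWNonlIdentities.V2} and \eqref{Eqn:JordanFcts}. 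This reduces the symplectic product to a single spatial integral involving $\Phi''(R_\omega)$, $\partial_{\delta_\omega}R_\omega$, and $\nabla^{-1}S_\omega=\chi\ast\nabla^{-1}\Delta^{-1}S_\omega$ or, after scaling to $\tilde x$, to an integral of the form $\int \tilde{P}_\omega\,\tilde{Q}_\omega\,\tilde{B}_\omega\,\dint\tilde x$ up to fixed prefactors (here $\tilde Q_\omega$ is as in \eqref{Eqn:DefCheckQ} and $\tilde B_\omega$ as in Lemma~\ref{Lem:Convergence}). The second step is to pass to the limit $\om\to\infty$ in that integral: by Theorem~\ref{Thm:ExistenceNonlWaves}, Theorem~\ref{Thm:SmoothnessNonlWaves}, Lemma~\ref{Lem:AsympCheckU}, the tail estimates of Lemma~\ref{Lem:TailEstimates}, and \eqref{Lem:Convergence.Eqn1}, the profiles $\tilde{P}_\omega$, $\tilde{Q}_\omega$, $\tilde{B}_\omega$ converge (locally uniformly on $J_\omega$, with controlled weighted $\fspaceL^1$-errors elsewhere) to $\bar{P}$, $-m^{-1}\bar{Y}^\flat$, and the limiting tent-profile, so the symplectic product converges to an explicit nonzero multiple of $\sigma\bpair{U_{*,\,\infty}}{U_{\#,\,\infty}}$. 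But by \eqref{Lem:SpectralProperties.Eqn2} that limit equals $-m/2\ne0$ after the appropriate $\delta_\omega$-rescaling. This contradiction forces $\pair{S_\omega}{W_\omega}$ not to be symplectically orthogonal to the cyclic kernel function, unless it is proportional to the kernel function $\pair{S_{*,\,\omega}}{W_{*,\,\omega}}$ itself; combined with $\la_\omega\to 0$ and the two-dimensionality of the generalized nullspace, this yields $\la_\omega=0$ and $\pair{S_\omega}{W_\omega}\in\spSpan\{\pair{S_{*,\,\om}}{W_{*,\,\om}}\}$, which is the assertion.

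\textbf{Bookkeeping with the periodic copies.}
Strictly speaking, the full symplectic-orthogonality hypothesis of Theorem~\ref{Thm:FPcrit}(S2) involves all the shifted neutral modes $\calE_{2\pi\iu n}U_{*,\,\om}$ and $\calE_{2\pi\iu n}U_{\#,\,\om}$, so I would phrase the contradiction at the level of the single pair $\pair{S_\omega}{W_\omega}$ rather than only the $n=0$ copy: since the spectrum is $2\pi\iu$-periodic, if $\la_\omega$ had nonzero real part one could translate it into the strip $|\mhIm\la|\le\pi$ and apply Lemma~\ref{Lem:Convergence} there, so no generality is lost by assuming $\la_\omega$ already lies in that strip, i.e.\ Assumption~\ref{Ass:Eigensystem} is in force. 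The normalization \eqref{Eqn:Normalization} is used to guarantee that the limiting profile is nontrivial, which is what makes the limiting symplectic product a nonzero multiple of $-m/2$ rather than $0$.

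\textbf{Main obstacle.}
The delicate point is the passage to the limit in the symplectic integral: $\tilde V_\omega$ has amplitude $\DO{\om}$ and $R_\omega$ touches the singularity of $\Phi$, so the individual factors in the integrand blow up, and one must verify that all the $\delta_\omega$-powers coming from the scalings \eqref{Eqn:DefDelta}, \eqref{Eqn:DefBeta}, \eqref{Eqn:DefTildeP}, \eqref{Eqn:DefCheckQ} and from $\nabla^{-1}$, $\Delta^{-1}$ cancel exactly, leaving an $\DO{1}$ limit — essentially the same cancellation already carried out in the proof of \eqref{Lem:SpectralProperties.Eqn2}. Equally important is that the error terms in \eqref{Lem:Convergence.Eqn1} for $\tilde B_\omega$ are measured in the weighted $\delta_\omega\fspaceL^1$-norm on $3I_\omega$, whereas outside $3I_\omega$ one needs the exponential tail decay of $R_\omega$ and $\partial_{\delta_\omega}R_\omega$ from Lemma~\ref{Lem:TailEstimates} to control the contribution of the (possibly growing) left tail of $\tilde B_\omega$ against the rapidly decaying coefficient $\Phi''(R_\omega)$; matching these two regimes so that the whole integral converges is the technical heart of the argument. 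Once that is in place, the nonvanishing of the limit is immediate from \eqref{Lem:SpectralProperties.Eqn2}, and the theorem follows.
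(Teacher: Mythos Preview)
Your overall strategy---show that $\sigma\bpair{U_\omega}{U_{\#,\,\omega}}\neq 0$ by reducing to a spatial integral, feeding in the asymptotics of $\tilde B_\omega$ from Lemma~\ref{Lem:Convergence}, and controlling tails via Lemma~\ref{Lem:TailEstimates}---is exactly the paper's, and your identification of the ``main obstacle'' (matching the $\delta_\omega\fspaceL^1$-control on $3I_\omega$ with the exponential tail decay outside) is on target.

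The execution differs from the paper in one important respect. You propose to eliminate $W_{\#,\,\omega}$ by differentiating the integrated traveling-wave identity \eqref{Eqn:TWNonlIdentities.V2}, arriving at an integral of the schematic form $\int\tilde P_\omega\,\tilde Q_\omega\,\tilde B_\omega$, and then to recognise the limit as a nonzero multiple of $\sigma\bpair{U_{*,\,\omega}}{U_{\#,\,\omega}}$ via \eqref{Lem:SpectralProperties.Eqn2}. The paper instead uses the Jordan identity \eqref{Eqn:JordanIdentities}, namely $\nabla W_{\#,\,\omega}=\omega\bat{\tfrac{m}{2}\delta_\omega^{-1}S_{*,\,\omega}-\partial_x S_{\#,\,\omega}}$, together with $\nabla^{-1}=\Delta^{-1}\nabla$ and the antisymmetry of $\nabla^{\pm1}$, to obtain the clean decomposition
\[
\delta_\omega^{1+m/2}\sigma\bpair{U_\omega}{U_{\#,\,\omega}}
=\tfrac{m}{2}\bskp{\Delta^{-1}S_\omega}{S_{*,\,\omega}}
+2\bskp{\partial_x\Delta^{-1}S_\omega}{\delta_\omega S_{\#,\,\omega}}
-\la_\omega\bskp{\Delta^{-1}S_\omega}{\delta_\omega S_{\#,\,\omega}}\,.
\]
The point is that the Jordan identity \emph{already} performs the comparison with $U_{*,\,\omega}$ that you plan to do by hand: the leading term $\psi_{1,\,\omega}=\tfrac{m}{2}\bskp{\Delta^{-1}S_\omega}{S_{*,\,\omega}}$ involves $S_{*,\,\omega}=\partial_x R_\omega$, so after inserting the step-function limit of $\tilde B_\omega$ from \eqref{Lem:Convergence.Eqn1} one only needs $\int_{-3/2}^{0}\partial_x R_\omega\,\dint x=R_\omega(0)-R_\omega(-3/2)\to -1$, which is immediate from Theorem~\ref{Thm:ExistenceNonlWaves}. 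The remaining two terms are then killed by $\nnorm{\tilde B_\omega^\prime}$-smallness and $\la_\omega\to0$ respectively. Your route should also work, but it requires you to verify separately that $\int\bar P\,\bar Y^\flat\neq 0$ (or to run the full Lemma~\ref{Lem:Convergence} argument for $S_{*,\,\omega}$ and match constants), whereas the Jordan identity sidesteps this entirely.

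Two minor corrections: the identity $\nabla^{-1}S_\omega=\chi\ast\nabla^{-1}\Delta^{-1}S_\omega$ is not right; the relation used in the paper is simply $\nabla^{-1}=\Delta^{-1}\nabla$ (Lemma~\ref{Lem:DiscrDiffOperators}). Also, the conclusion ``$\la_\omega=0$ and $U_\omega\in\spSpan\{U_{*,\,\omega}\}$'' does not need a separate argument beyond the non-orthogonality to $U_{\#,\,\omega}$: once $\sigma\bpair{U_\omega}{U_{\#,\,\omega}}\neq0$, condition (S2) of Theorem~\ref{Thm:FPcrit} fails for every candidate eigenfunction, and the two-dimensionality of the generalised kernel from Lemma~\ref{Lem:SpectralProperties} does the rest.
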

\begin{proof}
As in the proof of Lemma~\ref{Lem:Convergence}, we abbreviate $U=\pair{S}{W}$ and always assume that $\omega$ is sufficiently large.
\par
\emph{\ul{Symplectic product between $U_{\omega}$ and $U_{\#,\,\omega}$}}:
Using the antisymmetry of $\partial_x$ as well as $\nabla^{\pm1}$,   the   symmetry of $\Delta^{\pm1}$, and the identity
$\nabla^{-1}=\Delta^{-1}\nabla$ we verify
\begin{align*}
\sigma\pair{U_\omega}{U_{\#,\,\omega}}&=\bskp{S_\omega}{\nabla^{-1}W_{\#,\,\omega}}
+\bskp{W_\omega}{\nabla^{-1}S_{\#,\,\omega}}
\\&=\bskp{\Delta^{-1}S_\omega}{\nabla W_{\#,\,\omega}}
-\bskp{\nabla W_\omega}{\Delta^{-1}S_{\#,\,\omega}}\,.
\end{align*}
Eliminating the velocity components via the eigenvalue equation \eqref{Eqn:LinEVP} and the Jordan identity \eqref{Eqn:JordanIdentities} we thus obtain
\begin{align*}
\sigma\pair{U_\omega}{U_{\#,\,\omega}}&=\bskp{\Delta^{-1}S_\omega}{\tfrac12m\, \omega\delta_\omega^{-1}S_{*,\,\omega}-\omega\partial_x S_{\#,\,\omega}}
-\bskp{\omega\la_\omega S_\omega - \omega\partial_x S_\omega}{\Delta^{-1}S_{\#,\,\omega}}\,,
\end{align*}
and rearranging terms yields
\begin{align*}
\delta_\omega^{1+m/2}\sigma\pair{U_\omega}{U_{\#,\,\omega}}&=\psi_{1,\,\omega}+\psi_{2,\omega}+ \psi_{3,\,\omega}
\end{align*}
with
\begin{align*}
\psi_{1,\,\omega}:=\frac{m}{2}\bskp{\Delta^{-1}S_\omega}{ S_{*,\,\omega}}\,,\qquad
\psi_{2,\,\omega}:=2\bskp{\partial_x\Delta^{-1}S_\omega}{ \delta_\omega S_{\#,\,\omega}}\,,\quad
\psi_{3,\,\omega}:=-\la_\omega\bskp{  \Delta^{-1}S_\omega}{\delta_\omega S_{\#,\,\omega}}\,.
\end{align*}
Our next goal is to compute the leading order terms for $\psi_{j,\omega}$.
\par
\emph{\ul{Bounds for the tail contributions}}:
By construction --- see \eqref{Eqn:SpaceScalign}+\eqref{Eqn:DefTildeG}+\eqref{Eqn:FPG.Int} --- we have
\begin{align*}
\bat{\Delta^{-1}S_\omega}\at{x}=\exp\bat{-\at{a-\iu \nu_\omega}x}  \tilde{M}_\omega  \bat{\delta_\omega ^{-1}\beta_\omega^{-1}x}\,,
\end{align*}
where   $\tilde{M}_\omega$ has been defined in \eqref{Lem:Convergence.PEqn48} and   satisfies
\begin{align*}
\bnorm{  \tilde{M}_\omega  }_\infty\leq
\bnorm{\tilde{H}_{\eps_\omega}}_\infty\bnorm{\tilde{\chi}_\omega\tilde{P}_\omega\tilde{G}_\omega}_1+
\bnorm{\tilde{H}_{\eps_\omega}}_1\bnorm{\at{1-\tilde{\chi}_\omega}\tilde{P}_\omega}_\infty\bnorm{\tilde{G}_\omega}_\infty
\end{align*}
by Young's inequality.  Using the estimates from Theorem~\ref{Thm:ExistenceNonlWaves}, Lemma~\ref{Lem:EV.FundSol}, and Lemma~\ref{Lem:Compactness}  we demonstrate
\begin{align*}
\bnorm{  \tilde{M}_\omega   }_\infty\leq
C\delta_\omega^{-1}\cdot 1+
C\delta_\omega^{-2}\cdot\delta_\omega^{m+2}\cdot\delta_\omega^{-1}\leq C\delta_\omega^{-1}\,,
\end{align*}
and similarly we derive
\begin{align*}
\delta_\omega^{-1}\bnorm{\partial_{\tilde{x}}  \tilde{M}_\omega  }_\infty&\leq
\bnorm{\delta_\omega^{-1}\partial_{\tilde{x}}\tilde{H}_{\eps_\omega}}_\infty\bnorm{\tilde{\chi}_\omega\tilde{P}_\omega\tilde{G}_\omega}_1+
\bnorm{\delta_\omega^{-1}\partial_{\tilde{x}}\tilde{H}_{\eps_\omega}}_1\bnorm{\at{1-\tilde{\chi}_\omega}\tilde{P}_\omega}_\infty\bnorm{\tilde{G}_\omega}_\infty\leq C\delta_\omega^{-1}\,.
\end{align*}
In particular, the pointwise estimate
\begin{align}
\label{Thm:NoOtherEigenvalues.PEqn1}
\babs{\bat{\Delta^{-1}S_\omega}\at{x}}+\babs{\bat{\partial_x \Delta^{-1}S_\omega}\at{x}}\leq C\delta_\omega^{-1}\exp\at{-ax}
\end{align}
holds for all $x\in\Rset$, where we used that $\delta_\omega\partial_x =\partial_{\tilde{x}}$ commutes with $\Delta^{-1}$ according to Lemma~\ref{Lem:DiscrDiffOperators}. On the other hand, Lemma~\ref{Lem:TailEstimates} ensures for fixed $c>a$ the tails estimate
\begin{align*}
\babs{S_{*,\,\omega}\at{x}}+ \babs{\delta_\omega S_{\#,\,\omega}\at{x}}\leq \delta_\omega^m\exp\at{-c\abs{x}}\abs{x} \qquad \text{for}\quad \abs{x}>\tfrac32\,,
\end{align*}
and combining this with \eqref{Thm:NoOtherEigenvalues.PEqn1} we conclude that the contributions to $\psi_{j,\,\omega}$ that stem from $\abs{x}\geq 3/2$ are of order $\DO{\delta_\omega^{m-1}}=\Do{1}$.
\par
\emph{\ul{Leading order contributions}}:
Employing the tails estimates from above, the scaling \eqref{Eqn:SpaceScalign}, and Lemma~\ref{Lem:Convergence} we verify
\begin{align*}
\psi_{1,\,\omega}=\frac{m\,\delta_\omega}{2} \int\limits_{-3\xi_\omega}^{+3\xi_\omega}
\tilde{B}_\omega\at{\tilde{x}}\tilde{S}_{*,\,\omega}\at{\tilde{x}}\dint{\tilde{x}}+\Do{1}=\frac{m\,b_\omega}{2\beta_\omega} \int\limits_{-3/2}^{0}{S}_{*,\,\omega}\at{x}\dint{x}
+\Do{1}\,,
\end{align*}
so the approximation formulas from Theorem~\ref{Thm:ExistenceNonlWaves}   ensure   that
\begin{align*}
\psi_{1,\,\omega}\quad\xrightarrow{\;\omega\to\infty\;}\quad  c\neq0\,.
\end{align*}
On the other hand, in view of \eqref{Eqn:DefCheckQ} and \eqref{Eqn:JordanFcts} and due to Theorem~\ref{Thm:SmoothnessNonlWaves}, Lemma~\ref{Lem:AsympCheckU}, and Lemma~\ref{Lem:Convergence} we conclude that
\begin{align*}
\abs{\psi_{2,\,\omega}}&\leq C\delta_\omega \int\limits_{-3\xi_\omega}^{+3\xi_\omega}
\babs{\tilde{B}_\omega^\prime\at{\tilde{x}}}\babs{\tilde{S}_{\#,\,\omega}\at{\tilde{x}}}\dint{\tilde{x}}+\Do{1}
\leq \delta_\omega \int\limits_{-3\xi_\omega}^{+3\xi_\omega}
C\babs{\tilde{B}_\omega^\prime\at{\tilde{x}}}\dint{\tilde{x}}+\Do{1}= \Do{1}
\end{align*}
as well as
\begin{align*}
\abs{\psi_{3,\,\omega}}&\leq  \abs{\la_\omega}\delta_\omega\at{\delta_\omega \int\limits_{-3\xi_\omega}^{+3\xi_\omega}
C\babs{\tilde{B}_\omega\at{\tilde{x}}}\dint{\tilde{x}}+\Do{1}}
\leq C \abs{\la_\omega} \delta_\omega
= \Do{1}\,.
\end{align*}
In summary, we have
\begin{align*}
\babs{\sigma\pair{U_\omega}{U_{\#,\,\omega}}}=\delta_\omega^{-m/2-1}\bat{c+\Do{1}}
\end{align*}
for some constant $0<c<\infty$, so $U_\omega$ cannot be symplectically orthogonal to $U_{\#,\,\omega}$.
\end{proof}
  We are now able to deduce the nonlinear orbital stability from the work by Friesecke, Mizumachi, and Pego.

\begin{corollary}[nonlinear orbital stability]
\label{Cor:NonlStability}
Suppose that the wave speed $\omega$ is sufficiently large. Then any high-energy wave $\pair{R_\omega}{V_\omega}$ from \S\ref{sect:NonlWaves} is orbitally stable   both in the sense of Friesecke-Pego (Main Result \ref{res:Stab} and Theorem~\ref{Thm:FPcrit}) and Mizumachi (Main Result \ref{res:StabEner}),
\end{corollary}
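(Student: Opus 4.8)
The strategy is to verify that all hypotheses of the Friesecke--Pego criterion (Theorem~\ref{Thm:FPcrit}) are met by the high-energy family, and then to invoke the non-asymptotic machinery as a black box; the Mizumachi part follows from the same linear input by an additional abstract argument. First I would fix $\om_-$ and $\om_+$ so that the open interval $\oointerval{\om_-}{\om_+}$ consists entirely of "sufficiently large" speeds in the sense of Theorem~\ref{Thm:ExistenceNonlWaves}, Theorem~\ref{Thm:SmoothnessNonlWaves}, Lemma~\ref{Lem:TailEstimates}, Lemma~\ref{Lem:SpectralProperties}, and Theorem~\ref{Thm:NoOtherEigenvalues}. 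On this interval the family $\om\mapsto\pair{R_\omega}{V_\omega}$ exists and is uniquely determined by Theorem~\ref{Thm:ExistenceNonlWaves}. Property (P1) is immediate since Theorem~\ref{Thm:ExistenceNonlWaves} gives $R_\omega$ with values in $\ocinterval{-1}{0}\subset\oointerval{-1}{\infty}$, and the $\fspaceC^2$-extension of $\Phi$ introduced just before Theorem~\ref{Thm:FPcrit} makes $\Phi$ strictly convex on all of $\oointerval{-1}{\infty}$. Property (P2) — $\fspaceC^1$-regularity of the shift-sampling map into $(\ell^2_{-a}\cap\ell^2_{+a})^2$ — is exactly the discrete statement of Lemma~\ref{Lem:Regularity}, which in turn rests on the regularity from Theorem~\ref{Thm:SmoothnessNonlWaves} and the tail estimates of Lemma~\ref{Lem:TailEstimates}. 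Property (P3) — no critical point of $\om\mapsto h_\omega$ — follows from Lemma~\ref{Lem:SpectralProperties}: the symplectic identity \eqref{Lem:SpectralProperties.Eqn2} shows $\delta_\omega\partial_{\delta_\omega}h_\omega\to-\tfrac{m}{2}\neq0$, so $\partial_\omega h_\omega\neq0$ for large $\om$; moreover (P3) is anyway redundant given (S2), as noted after Theorem~\ref{Thm:FPcrit}.

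Next I would check the two speed-specific conditions at a fixed $\om_0$ in this range. Condition (S1): the inequality $\om_0^2>\Phi^{\prime\prime}\at{0}$ is trivial for large $\om_0$, and the rapid-decay requirement $a_{\om_0}>a$ follows from the asymptotic law $a_\omega=2\ln\omega+\Do{\ln\omega}\to\infty$ extracted from the transcendental tail identity \eqref{Eqn:DecayEquation}, while $a$ is fixed and $\om$-independent by Assumption~\ref{Def:PrmA}; so choosing $\om_0$ large enough makes $a_{\om_0}>a$. Condition (S2) — no unstable eigenfunction in the symplectic complement of the two neutral modes — is precisely the content of Theorem~\ref{Thm:NoOtherEigenvalues}: any eigenvalue $\la$ with $\mhRe\la\geq0$ and $\abs{\mhIm\la}\leq\pi$ gives an eigenfunction in $\fspaceL^2_a\times\fspaceL^2_a$ which, by Assumption~\ref{Ass:Eigensystem}, falls into the framework of Lemma~\ref{Lem:Convergence}, and Theorem~\ref{Thm:NoOtherEigenvalues} then forces $\la=0$ and the eigenfunction to be a multiple of $\pair{S_{*,\,\om_0}}{W_{*,\,\om_0}}$, hence not symplectically orthogonal to the cyclic kernel function $\pair{S_{\#,\,\om_0}}{W_{\#,\,\om_0}}$. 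The only mild point of care here is the $2\pi\iu$-periodicity of the spectrum (Lemma~\ref{Lem:SpectralProperties}) and the insertion of the twisted modes $\calE_{2\pi\iu n}S_{*,\,\om_0}$ etc.\ in (S2): one reduces to the strip $\abs{\mhIm\la}\leq\pi$ using the symmetry transformation $\triple{\la}{S}{W}\rightsquigarrow\triple{\la+\iu2\pi}{\calE_{\iu2\pi}S}{\calE_{\iu2\pi}W}$, and observes that the symplectic orthogonality conditions in (S2) are then equivalent to orthogonality to the single pair of Jordan modes at that Fourier sheet. With (P1)--(P3) and (S1)--(S2) verified, Theorem~\ref{Thm:FPcrit} yields orbital stability in the sense of Main Result~\ref{res:Stab}.

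For the energy-space statement (Main Result~\ref{res:StabEner}), the plan is to invoke Mizumachi's extension rather than to reprove it. As recalled in \S\ref{sect:FPCrit}, Mizumachi's argument (following \cite{MaMe05,Miz09}) splits a perturbation in $\ell^2\times\ell^2$ into an exponentially localized part — handled by exactly the weighted linear stability estimate that Theorem~\ref{Thm:FPcrit}/Theorem~\ref{Thm:NoOtherEigenvalues} provides — and a small remainder that is outrun in finite time by the nonlinear bulk. The key structural inputs Mizumachi requires are: the spectral decomposition of $\calL_{\om_0}$ in the exponentially weighted space (supersonic speed, negative essential spectrum off the imaginary axis in $\fspaceL^2_a$ by Lemma~\ref{Lem:SpectralProperties}, and the two-dimensional neutral Jordan block with no other non-stable modes by Theorem~\ref{Thm:NoOtherEigenvalues}); the smooth dependence of the wave family on the speed (Lemma~\ref{Lem:Regularity}); and the energetic transversality $\partial_\omega h_\omega\neq0$ (Lemma~\ref{Lem:SpectralProperties}). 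Since all of these hold for sufficiently large $\om_0$, Mizumachi's theorem applies verbatim and delivers the $\fspaceC^1$-modulation functions $\tau(t),\om(t)$, the uniform $\ell^2$-closeness, the eventual convergence on the moving half-line $\{j\geq\sigma t\}$, and the limits $\om(t)\to\om_\infty$, $\dot\tau(t)\to\om_\infty$ asserted in Main Result~\ref{res:StabEner}.

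\textbf{Main obstacle.} The substantive difficulty is not in this corollary — it is essentially a bookkeeping assembly — but in the already-established Theorem~\ref{Thm:NoOtherEigenvalues}, on which everything rests. Within the present write-up the one place that needs genuine care is matching the normalizations and sign conventions between our setup (the extra half-shift in $R_\omega$, the use of $\delta_\omega$-derivatives in place of $\om$-derivatives for the cyclic mode, potential $\Phi$ versus $V$, speed $\om$ versus $c$) and the hypotheses as stated in \cite{FP02,FP04a} and \cite{Miz09}; this is exactly the translation carried out in the proof of Theorem~\ref{Thm:FPcrit}, and I would simply reuse that dictionary, checking in addition that the symplectic product \eqref{Eqn:SympProd} and the neutral modes \eqref{Eqn:JordanFcts} are the ones appearing in \cite[Theorem 2.2]{FP04a} and \cite[Eq.\ (1.15)]{FP04a}. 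No new analysis is required.
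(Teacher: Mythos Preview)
Your proposal is correct and follows essentially the same approach as the paper: verify the Friesecke--Pego hypotheses (P1)--(P3), (S1)--(S2) by citing Theorem~\ref{Thm:ExistenceNonlWaves}, Lemma~\ref{Lem:Regularity}, Lemma~\ref{Lem:SpectralProperties}, the tail identity/Lemma~\ref{Lem:TailEstimates}, and Theorem~\ref{Thm:NoOtherEigenvalues}, then invoke Mizumachi's extension as a black box. The paper's own proof is more terse and, for the Mizumachi part, points specifically to the four conditions (P1)--(P4) of \cite[Sec.~4]{Miz09}, noting that (P4) coincides with condition (L) of \cite{FP04a} and hence follows from (S2) via \cite[Thm.~1.2]{FP04a}; your informal list of ``key structural inputs'' amounts to the same thing.
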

\begin{proof}
We check the Friesecke-Pego criteria as follows. Condition (P1) holds according to Theorem~\ref{Thm:ExistenceNonlWaves}, while (P2) and (P3) are provided by Lemma~\ref{Lem:Regularity} and Lemma~\ref{Lem:SpectralProperties}, respectively. Moreover, (S1)
is a consequence of the decay results in Lemma~\ref{Lem:TailEstimates} and Theorem~\ref{Thm:NoOtherEigenvalues} is just a reformulation of (S2).
\par
  Mizumachi formulates in \cite[Sec.4]{Miz09} four conditions (P1)-(P4) for the nonlinear orbital stability with respect to the energy space, where the first three ones coincide with the conditions  (P1)-(P3)  in Theorem \ref{Thm:FPcrit} and have already been validated. The fourth condition (P4) is equivalent to the condition (L) from in \cite{FP04a} and follows from the Friesecke-Pego criterion (S2) as it is shown in \cite[Thm. 1.2]{FP04a}.  In particular, the validity of (P4) for high-energy waves is granted by Theorem \ref{Thm:NoOtherEigenvalues}.  
\end{proof}
We finally mention that our stability results in Corollary \ref{Cor:NonlStability} are not uniform with respect to $\omega$. Calling upon the hard-sphere interpretation we believe that solitary waves  are getting more stable as $\omega\to\infty$ in the sense of an increasing basin of attraction. A rigorous statement, however, would require to quantify the $\omega_0$-dependence of the constants $\eta_0$ and $C_0$ in Theorem~\ref{Thm:FPcrit}, and lies beyond the scope of this paper.
%

%
\section*{Acknowledgement}

Both authors are grateful for the support by the \emph{Deutsche Forschungsgemeinschaft} (DFG individual grant HE 6853/2-1) and the \emph{London Mathematical Society} (LMS Scheme 4 Grant, Ref~41326). KM also thanks the University of M\"unster for the kind hospitality during his sabbatical stay.
\appendix %
%
\section{List of symbols}
%
%
\begin{tabular}{lll}
$\Phi$, $m$, $k$& potential, parameters for singular behavior& Assumption~\ref{Ass.Pot}
\\
$t$, $j$, $x$& time, particle index, space in comoving frame&
\\
$\chi$& indicator function for $x\in\ccinterval{-\tfrac12}{+\tfrac12}$&
\\
$a$& exponential weight parameter in \S\ref{sect:Stability} & Assumption~\ref{Def:PrmA}
\\
$C$& generic constant, depends only on $\Phi$ and $a$ &
\bigskip
\\
$\omega$ &wave speed, free parameter&
\\
$\delta_\omega$ &natural small quantity, power of $\omega$&\eqref{Eqn:DefDelta}
\\
$R_\omega$, $V_\omega$ &distance and velocity profile as function of $x$&\eqref{Eqn:TWAnsatz}
\\
$h_\omega$ & total energy of the traveling wave& \eqref{Eqn:TotalEnergy}
\bigskip %
\\
$\bar{Y}$ &solution to the nonlinear shape ODE & \eqref{Lem:AsympODE.Props.Eqn1} and Lemma~\ref{Lem:AsympODE.Props}
\\
$\bar{T}_\even$, $\bar{T}_\odd$ &base solutions to the linearized shape ODE &  \eqref{Lem:LinODE.Props.Eqn1} and Lemma~\ref{Lem:LinODE.Props}
\\
$\bar{K}$, $\bar{P}$ & limits of $\tilde{K}_\omega$ and $\tilde{P}_\omega$, given in terms of $\bar{Y}$ &\eqref{Eqn:DefBarPZ}
\bigskip
\\
$\al_\beta$, $\beta_\omega$&scaling parameters& \eqref{Eqn:DefBeta} and Lemma~\ref{Lem:DefAlpha}
\\
$\tilde{x}$& scaled space variable&\eqref{Eqn:SpaceScalign}
\\
$\xi_\omega$& value of $\tilde{x}$ corresponding to $x=1/2$&\eqref{Eqn:DefXi}
\\
$I_\omega$, $\tilde{\chi}_\omega$& interval $\ccinterval{-\xi_\omega}{+\xi_\omega}$ and its indicator function&\eqref{Eqn:DefChiAndI}
\\
$^\flat$& special differential operator &\eqref{Eqn:FlatOperator}
\bigskip %
\\
$\tilde{R}_\omega$, $\tilde{V}_\omega$ & scaled wave profiles as functions of $\tilde{x}$
&\eqref{Eqn:ScaledProfiles}
\\
$\breve{R}_\omega$, $\breve{V}_\omega$ & approximations to $\tilde{R}_\omega$ and $\tilde{V}_\omega$
&Lemma~\ref{Lem:DefBreveR} and Lemma~\ref{Lem:ApproxVelProfile}
\\
$\tilde{Y}_\omega $&analogue to $\bar{Y}$ in terms of $\tilde{R}_\omega$&\eqref{Eqn:DefTildeY}
\\
$\tilde{K}_\omega$, $\tilde{P}_\omega$ & scaled and normalized variants of $\Phi^\prime\nat{\tilde{R}_\omega}$,
$\Phi^{\prime\prime}\nat{\tilde{R}_\omega}$ &\eqref{Eqn:DefTildeP}
\\
$\tilde{Q}_\omega$, $\breve{Q}_\omega$ & scaled multiple of $\partial_{\delta_\omega} R_\omega$ and its approximation&
\eqref{Eqn:DefCheckQ} and Lemma~\ref{Lem:AsympCheckU}
\bigskip
\\
$\nabla$&centered difference operator, with respect to $x$&\eqref{Eqn:DefNabla}
\\
$\Delta=\nabla^2$&standard discrete Laplacian, with respect to $x$&\eqref{Eqn:DefLapl}
\\
$\tilde{\Delta}_{w}$&modified Laplacian with respect to $\tilde{x}$&\eqref{Eqn:DefModLapl}
\bigskip
\\
$\calL_\omega$ &  linearized traveling wave operator& \eqref{Eqn:LinFPUOp}
\\
$\la_\omega=\mu_\omega+\iu\nu_\omega$ &  eigenvalue of $\calL_\omega$ &  Assumption~\ref{Ass:Eigensystem}
\\
$S_\omega$, $W_\omega$ &  components of the eigenfunction to $\la_\omega$ &
\\
$S_{*,\,\omega}$, $W_{*,\,\omega}$&proper kernel function of $\calL_\omega$&\eqref{Eqn:JordanFcts}
and Lemma~\ref{Lem:SpectralProperties}
\\
$S_{\#,\,\omega}$, $W_{\#,\,\omega}$&cyclic kernel function of $\calL_\omega$&
\bigskip
\\
$\fspaceL^2_{\pm a}$, $\fspaceH^1_{\pm a}$, $\ell^2_{\pm a}$ &  function spaces with exponential weight& Section \S\ref{sect:FPCrit}
\\
$\sigma$ &  symplectic product & \eqref{Eqn:SympProd}
\bigskip
\\
$\eps_\omega$&further real scalar depending on $\la_\omega$& \eqref{Eqn:DefEps}
\\
$\zeta_\omega$, $J_\omega$, $\breve{\chi}_\omega$&analogues to $\xi_\omega$, $I_\omega$, $\tilde{\chi}_\omega$ with $\eps_\omega$ instead of $\delta_\omega$& \eqref{Eqn:DefBreveQuant1} and \eqref{Eqn:DefBreveQuant2}
\bigskip
\\
$\tilde{S}_\omega$ &  scaled eigenfunction $S_\omega$ &  \eqref{Eqn:DefTildeS}
\\
$\tilde{G}_\omega$, $\tilde{T}_\omega$ &  variants of $\tilde{S}_\omega$ with exponential factors &  \eqref{Eqn:DefTildeG}, \eqref{Eqn:DefTildeT}
\\
$\breve{G}_\omega$, $\breve{T}_\omega$ &  approximations to $\tilde{G}_\omega$, $\tilde{T}_\omega$ &  \eqref{Eqn:DefBreveG}, \eqref{Eqn:DefBreveT}
\bigskip
\\
$\tilde{B}_\omega$, $\tilde{M}_\omega$, $\tilde{N}_\omega$ &
$\tilde{\Delta}^{-1}_0 \tilde{S}_\omega$ and analogues for $\tilde{G}_\omega$, $\tilde{T}_\omega$
 &  Proof of Lemma \ref{Lem:Convergence}
\\
$\breve{M}_\omega$, $\breve{N}_\omega$ &
analogues to $\tilde{\Delta}^{-1}_0 \tilde{S}_\omega$ for $\breve{G}_\omega$, $\breve{T}_\omega$  &
\\
$m_{0,\,\omega}$, $m_{1,\,\omega}$ &  moment integrals of $\tilde{P}_\omega\tilde{T}_\omega$ on $J_\omega$&\eqref{DefMoments}
\bigskip
\\
$U$, $\tilde{U}$, $\tilde{F}$, \tdots& different, local meanings
\end{tabular}%
%
%
%
\bibliographystyle{alpha}
\bibliography{paper}
\end{document}